\newcommand{\init}{^{\rm init}}
\newcommand{\subopt}{_\star}
\renewcommand{\sign}{\text{sgn}}
\newcommand{\sphere}{\mathbb{S}}
\newcommand{\equivdist}{\stackrel{{\rm dist}}{=}}
\newcommand{\noise}{\xi}
\newcommand{\quant}{\mathsf{quant}}
\newcommand{\quantile}{\quant}
\newcommand{\median}{\mathsf{med}}
\newcommand{\cg}{^{\rm H}} 
\newcommand{\proj}{P}
\newcommand{\subgauss}{\sigma} 
\newcommand{\gap}{\mathsf{gap}}
\newcommand{\stabfunc}{\kappa_{\rm st}}
\newcommand{\probbig}{p_0}
\newcommand{\pfail}{p_{\rm fail}}
\newcommand{\stabconst}{\lambda}
\newcommand{\Lipconst}{L}
\newcommand{\lipconst}{\Lipconst}
\newcommand{\Real}{{\rm Re}}
\newcommand{\Imag}{{\rm Im}}
\newcommand{\stepsize}{\alpha}
\newcommand{\event}{\mc{E}}
\newcommand{\errorterm}{\nu}
\newtheorem{condition}{Condition}
\newtheorem{model}{Model}
\newcommand{\indset}{\mc{I}}
\newcommand{\outliers}{\mc{I}^{\rm out}}
\newcommand{\inliers}{\mc{I}^{\rm in}}
\newcommand{\inliersin}{\mc{I}^{\rm in}_{\rm sel}}
\newcommand{\outliersin}{\mc{I}^{\rm out}_{\rm sel}}
\newcommand{\qfail}{q_{\rm fail}}
\newcommand{\selected}{\mc{I}_{\rm sel}}
\newcommand{\yesselected}{\mc{I}_{-\epsilon}}
\newcommand{\maybeselected}{\mc{I}_{+\epsilon}}
\newcommand{\imagunit}{\iota}
\newcommand{\includelong}[1]{}
\begin{document}

\title{Solving (most) of a set of quadratic equalities: \\
  Composite optimization for robust phase retrieval}
\author{John C.\ Duchi ~~~~ Feng Ruan \\
Stanford University}

\maketitle

\begin{abstract}
  We develop procedures, based on minimization of the composition $f(x) =
  h(c(x))$ of a convex function $h$ and smooth function $c$, for solving
  random collections of quadratic equalities, applying our methodology to
  phase retrieval problems.  We show that the prox-linear
  algorithm we develop can solve phase retrieval problems---even
  with adversarially faulty measurements---with high probability
  as soon as the number of
  measurements $m$ is a constant factor larger than the dimension $n$ of the
  signal to be recovered. The algorithm requires essentially no tuning---it
  consists of solving a sequence of convex problems---and it is
  implementable without any particular assumptions on the measurements
  taken. We provide substantial experiments investigating our methods,
  indicating the practical effectiveness of the procedures and showing that
  they succeed with high probability as soon as $m / n \ge 2$ when
  the signal is real-valued.
\end{abstract}



\section{Introduction}

We wish to solve the following problem: we have a set of $m$ vectors $a_i
\in \C^n$ and nonnegative scalars $b_i \in \R_+$, $i = 1, \ldots, m$, and
wish to find a vector $x \in \C^n$ such that
\begin{equation}
  \label{eqn:feasibility}
  b_i = |\<a_i, x\>|^2
  ~~ \mbox{for~most~~} i \in \{1, \ldots, m\}.
\end{equation}
As stated, this is a combinatorial problem that is, in the worst case,
NP-hard~\cite{Matthew14}. Yet it naturally arises in a number of
real-world situations, including phase
retrieval~\cite{Fienup78,Fienup82,GerchbergSa72}, in which one receives
measurements of the form
\begin{equation*}
  b_i = |\<a_i, x\subopt\>|^2
\end{equation*}
for known measurement vectors $a_i \in \C^n$, while $x\subopt \in \C^n$ is
unknown.  The problem in phase retrieval arises due to limitations of
optical sensors, where one illuminates an object $x\subopt$, which yields
diffraction patter $A x\subopt$, but sensors may measure only the amplitudes $b
= |A x\subopt|^2$, where $|\cdot|^2$ denotes the elementwise
squared-magnitude~\cite{SchechtmanElCoChMiSe15}.
In the case in which some measurements may
be corrupted, the problem is even more challenging.

An alternative objective for the problem~\eqref{eqn:feasibility} is an exact
penalty formulation~\cite{HiriartUrrutyLe93}, which replaces the equality
constraint $b_i = |\<a_i, x\>|^2$ with a non-differentiable cost measuring
the error $b_i - |\<a_i, x\>|^2$, yielding the formulation
\begin{equation}
  \label{eqn:optimization-problem}
  \minimize_x ~ f(x) \defeq \frac{1}{m} \sum_{i = 1}^m
  \left||\<a_i, x\>|^2 - b_i\right|
  = \frac{1}{m} \lone{|Ax|^2 - b}.
\end{equation}
This objective is a natural replacement of the equality constrained
problem~\eqref{eqn:feasibility}, and the $\ell_1$-loss handles gross errors
in the measurements $b_i$ in a relatively benign way (as is well-known in
the statistics and optimization literature on $\ell_1$-based losses and
median estimators). Moreover, in the case when $b_i = |\<a_i, x\subopt\>|^2$
for all $i$, it is clear that taking $\imagunit = \sqrt{-1}$ to be the
imaginary unit, then the set $\{e^{\imagunit\theta} x\subopt \mid \theta\in
\openright{0}{2\pi}\}$ globally minimizes $f(x)$, though it is only possible
to recover $x\subopt$ up to its phase (or sign flip in the real
case). Cand\`{e}s, Strohmer and Voroninski~\cite{CandesStVo13} and Eldar and
Mendelson~\cite{EldarMe14}, as well as results we discuss later in the
paper, show (roughly) that $f(x)$ stably identifies $x\subopt$, in that $f(x)$
grows very quickly as $\dist(x, X\subopt) = \inf\{\ltwo{x-x\subopt} \mid x\subopt \in
X\subopt\}$, where $X\subopt$ denotes the global minimum of $f$, grows. The
objective is, unfortunately, non-smooth, non-convex---not even locally
convex near $x\subopt$, as is clear in the special case when $x\in \R$ and
$f(x) = |x^2 - 1|$, so that a local analysis based on convexity is
impossible---and at least $f(x)$ \emph{a priori} seems difficult to
minimize.

Nonetheless, the objective~\eqref{eqn:optimization-problem} enjoys a number
of structural properties that, as we explore below, make solving
problem~\eqref{eqn:feasibility} tractable as long as the measurement vectors
$a_i$ are sufficiently random. In particular, we can write $f$ as the
composition $f(x) = h(c(x))$ of a convex function $h$ and smooth function
$c$, a structure known in the optimization literature to be amenable to
efficient algorithms~\cite{FletcherWa80, Burke85, DrusvyatskiyIoLe16}. This
compositional structure lends itself nicely to the \emph{prox-linear}
algorithm, a variant of the Gauss-Newton procedure, which we describe
briefly here for the real case.  The composite optimization problem, which
Fletcher and Watson \cite{FletcherWa80} originally develop (see
also~\cite{Bertsekas77, Polyak79}) and a number of
researchers~\cite{Burke85, BurkeFe95, DrusvyatskiyIoLe16, DrusvyatskiyPa16}
have studied further, is to minimize
\begin{equation}
  \label{eqn:composite-optimization}
  \minimize_x~ f(x) \defeq h(c(x))
  ~~ \subjectto x \in X
\end{equation}
where the function $h : \R^m \to \R$ is convex, $c : \R^n \to \R^m$ is
smooth, and $X$ is a convex set. Extended to the complex
case, this general form
encompasses our objective~\eqref{eqn:optimization-problem}, where
we take $h(z) = \frac{1}{m} \lone{z}$ and $c(x) =
[|\<a_i, x\>|^2 - b_i]_{i = 1}^m$.  Using the common idea of most optimization
schemes---trust region, gradient descent, Newton's method---to build a
simpler to optimize local model of the objective and repeatedly minimize
this model, we can replace $h(c(x))$ in
problem~\eqref{eqn:composite-optimization} by linearizing only $c$. This
immediately gives a convex surrogate and leads to the prox-linear algorithm
developed by Burke and Ferris~\cite{BurkeFe95,Burke85}, among
others~\cite{DrusvyatskiyIoLe16, DrusvyatskiyLe16, DrusvyatskiyPa16}.
Fixing $x \in \R^n$, for any $y \in \R^n$ we define the local
``linearization'' of $f$ at $x$ by
\begin{equation}
  \label{eqn:local-linearization}
  f_x(y) \defeq h\left(c(x) + \nabla c(x)^T  (y - x)\right),
\end{equation}
where $\nabla c(x) \in \R^{n \times m}$ denotes the Jacobian transpose of
$c$ at $x$. This function is evidently convex in $y$, and the prox-linear
algorithm proceeds iteratively $x_1, x_2, \ldots$ by minimizing the
regularized models
\begin{equation}
  \label{eqn:prox-iteration}
  x_{k + 1} = \argmin_{x \in X}
  \left\{f_{x_k}(x) + \frac{1}{2 \stepsize_k} \ltwo{x - x_k}^2
  \right\},
\end{equation}
where $\stepsize_k > 0$ is a stepsize. If $h$ is $L$-Lipschitz and $\nabla
c$ is $\beta$-Lipschitz, then choosing any stepsize $\stepsize \le
\frac{1}{\beta L}$ guarantees that the method~\eqref{eqn:prox-iteration} is
a descent method and finds approximate stationary points of the
problem~\eqref{eqn:composite-optimization}~\cite{DrusvyatskiyLe16,
  DrusvyatskiyIoLe16}. The case in which $c : \C^n \to \R^m$ requires a bit
more elaboration based on the Wirtinger calculus, which we address later.

We briefly summarize our main contribution as follows.  Broadly, this work
provides a general method for robust non-convex modeling, focusing
carefully on the problem~\eqref{eqn:optimization-problem}; our work carries
on a line of work identifying statistical scenarios that nominally
yield non-convex optimization problems yet admit computationally efficient
estimation and optimization procedures~\cite{KeshavanMoOh10, LohWa13,
  ChenCa15, BalakrishnanWaYu17}.
In this direction, our work develops analytic and statistical tools
to analyze a collection of optimization and modeling approaches
beyond gradient-based (and Riemannian gradient-based) procedures,
using both non-smooth and non-convex models while leveraging statistical
structure. More precisely, we show how to apply
prox-linear method~\eqref{eqn:prox-iteration} to any measurement matrix $A$
with no tuning parameters except that the stepsize satisfies $\stepsize \le
(\frac{1}{m} \opnorm{A\cg A})^{-1}$.  Each iteration requires solving a QP in
$n$ variables, which is efficiently solvable using standard convex
programming approaches. We show that---with extremely high probability under
appropriate random measurement models---our prox-linear method exhibits 
local quadratic convergence to the signal as soon as the number of 
measurements $m/n$ is greater than
some numerical constant, meaning we must solve only $\log_2 \log_2
\frac{1}{\epsilon}$ such convex problems to find an estimate $\what{x}$ of
$x\subopt$ such that $\dist (x, X\subopt) \le \epsilon$. In practice, this is 5 convex quadratic programs.
Our procedure applies both in the noiseless setting and when a (constant but
random) fraction of the measurements are even adversarially corrupted.

\subsection{Related work and approaches to phase retrieval}

Our work should be viewed in the context of the recent and successful
collection of work on phase retrieval. A natural strategy for
problem~\eqref{eqn:feasibility}, when we wish to find $x$ satisfying
$|\<a_i, x\>|^2 = b_i$ for all $i \in [m]$, is to lift the problem into
a semidefinite program (SDP) by setting $X = xx\cg$, relaxing the rank one
constraint, and solving
\begin{equation*}
  \minimize_X ~ \tr(X) ~~ \subjectto
  ~ X \succeq 0,
  ~ \tr(X a_i a_i\cg ) = b_i.
\end{equation*}
This is the approach that a number of convex
approaches to phase retrieval
take~\cite{ChaiMoPa11,CandesElStVo13,CandesStVo13,CandesLi14,
  WaldspurgerDaMa15}. Th resulting SDP is computationally
challenging for large $n$, as it requires storing and manipulating an $n
\times n$ matrix variable. Moreover, computation times to achieve
$\epsilon$-accurate solutions to this problem generally scale on the order of 
$n^3 / \mathsf{poly}(\epsilon)$, where $\mathsf{poly}(\epsilon)$
denotes a polynomial in $\epsilon$.

These difficulties have led a number of researchers to consider non-convex
approaches to the phase retrieval problem that---as we do---maintain only a
vector $x \in \C^n$, rather than forming a full matrix $X \in \C^{n
  \times n}$.  We necessarily give only a partial overview,
focusing on recent work on provably convergent schemes. Early work in
computational approaches to phase retrieval is based on (non-convex)
alternating projection approaches, notably those by Gerchberg and
Saxton~\cite{GerchbergSa72} and Fineup~\cite{Fienup82}.  Motivated by the
challenges of convex approaches and the success of alternating
minimization~\cite{GerchbergSa72,Fienup82}, Netrapalli et
al.~\cite{NetrapalliJaSa13} develop an algorithm (AltMinPhase) that
alternates between minimizing $\ltwo{Ax - C b}$ in $x$ and in $C$ over
diagonal matrices of phases (signs) with modulus 1. Their algorithm is
elegant, but the analysis requires resampling a new measurement matrix $A$
and measurements $b$ in each iteration.  More recently, Cand\`{e}s et
al.~\cite{CandesLiSo15} develop Wirtinger flow, a gradient-based method that
performs a careful modification of gradient descent on the objective
\begin{equation*}
  F(x) \defeq \frac{1}{2m} \sum_{i = 1}^m
  (|\langle a_i, x\rangle|^2 - b_i)^2,
\end{equation*}
where $x \in \C^n$ may be complex. Wang et al.~\cite{WangGiEl16} build on
this work by attacking a modification of this objective, showing how to
perform a generalized descent method on
\begin{equation*}
  F(x) \defeq \frac{1}{2m} \sum_{i = 1}^m
  \left(|\langle a_i, x \rangle| - \sqrt{b_i}\right)^2,
\end{equation*}
and providing arguments for the convergence of their method. Wang et
al.\ achieve striking empirical results when the measurements and signals
are real-valued, achieving better than 50\% perfect signal recovery when the
measurement ratio $m/n = 2$, which is essentially at the threshold for
injectivity of the real-valued measurements $b = (Ax\subopt)^2$. Zhang et
al.~\cite{ZhangChLi16} also study a variant of Wirtinger flow based on
median estimates that handles some outliers.  Unfortunately, these
procedures rely fairly strongly on Gaussianity assumptions, and their
gradient descent approaches require subsampling schemes (to select ``good''
terms in the sum); these procedures have parameters chosen carefully to
reflect Gaussianity in the measurement matrices $A$, and it is not always
clear how to extend them to non-Gaussian measurements.

\subsection{Our contributions and outline}

In this paper, we focus on prox-linear methods, the
iterations~\eqref{eqn:prox-iteration} for the non-smooth non-convex
problem~\eqref{eqn:optimization-problem}. In addition to being (to us at
least) aesthetically pleasing, as we minimize the natural
objective~\eqref{eqn:optimization-problem}, our approach yields a number
of theoretical and practical benefits.

In the literature on signal recovery from phaseless measurements,
\emph{stability} of the reconstruction of a signal is of paramount
importance. To solve the phase retrieval problem at all, one requires
intectivity of the measurements $b = |Ax|^2$, which for real $A \in \R^{m
  \times n}$ in general position necessitates $m \ge 2n - 1$ and for complex
$A \in \C^{m \times n}$ in general position necessitates $m \ge 4n - 2$
(cf.~\cite{BalanCaEd06}).  Stability makes this injectivity more robust.
Consider the real-valued case first. Eldar and Mendelson~\cite{EldarMe14}
say that a measurement matrix $A \in \R^{m \times n}$ is $\stabconst \ge 0$
stable if
\begin{subequations}
  \label{eqn:stability}
  \begin{equation}
    \label{eqn:stability-real}
    \lone{(Ax)^2 - (Ay)^2} \ge
    \lambda \ltwo{x - y} \ltwo{x + y}
    ~~ \mbox{for~all~} x, y \in \R^n.
  \end{equation}
  Such conditions, which hold with high probability for suitable designs $A$,
  are also common in semidefinite relaxation approaches to phase retrieval;
  cf.\ Cand\`{e}s et al.~\cite[Lemma 3.2]{CandesStVo13}.  (See also the
  paper~\cite{BandeiraCaMiNe14}.) This condition means that distant signals
  $x, x' \in \R^n$ cannot be confused in the measurement domain $\{(Ay)^2 \mid
  y \in \R^n\} \subset \R^m_+$ because $A$ does a good job of separating them;
  the more stable a measurement matrix, the ``easier'' the recovery problem
  should be.  In the case in which $x$ is complex, the stability
  condition~\eqref{eqn:stability-real} becomes
  \begin{equation}
    \label{eqn:stability-complex}
    \lone{|Ax|^2 - |Ay|^2} \ge \lambda \inf_\theta
    \ltwos{x - e^{\imagunit \theta} y}
    \cdot \sup_\theta \ltwos{x - e^{\imagunit \theta} y}
    ~~ \mbox{for~all~} x, y \in \C^n,
  \end{equation}
\end{subequations}
a slightly stronger condition.  We provide stability guarantees for both
situations for general classes of random matrices by adapting
Mendelson's ``small ball'' techniques~\cite{Mendelson14}
(Sec.~\ref{sec:no-noise-stability}). Most literature on non-convex
approaches to phase retrieval requires such a stability condition---and
usually more because of the quadratic objectives often used---to guarantee
signal recovery.  In contrast, our procedure requires essentially
\emph{only} the stability condition~\eqref{eqn:stability}, a mild bound on
the operator norm $\frac{1}{m} \opnorm{A}^2$, and an initialization within
some constant factor of $\ltwo{x\subopt}$ to guarantee both fast convergence
and exact signal recovery.

With this in mind, in Section~\ref{sec:composite} we develop purely
optimization-based deterministic results, which build off of
classical results on composite optimization, which rely on the stability
condition~\eqref{eqn:stability}.  By identifying the conditions required for
fast convergence and recovery, we can then spend the remainder of the paper
showing how various measurement models guarantee sufficient conditions for
our convergence results. In particular, in Section~\ref{sec:no-noise}, we
show how a number of sensing matrices $A$ suffice to guarantee convergence
and signal recovery in the noiseless setting, that is, when $b = |A
x\subopt|^2$. In Section~\ref{sec:noise}, we extend these results to the case
when a constant fraction of the measurements $b_i$ may be arbitrarily
corrupted, showing that stability and a somewhat stronger condition on
$\opnorm{A}$ are still sufficient to guarantee signal recovery; again, these
results hold for our basic algorithm with no tuning parameters.

In the final sections of the paper, we provide a substantial empirical
evaluation of our proposed algorithms. While our method in principle
requires no tuning---it solves a sequence of explicit \emph{convex}
problems---there is some art in developing efficient methods
for the solution of the sequence of convex optimization problems we
solve. In Section~\ref{sec:optimization-methods}, we describe these
implementation details, and in Section~\ref{sec:experiments} we provide
experimental evidence of the success of our proposed approach.  In
reasonably high-dimensional settings ($n \ge 1000$), with real-valued random 
Gaussian measurements our method achieves perfect signal recovery in 
about 80-90\% of cases even when $m/n = 2$. The method also
handles outlying measurements well, substantially improving state-of-the-art
performance, and we give applications with measurement matrices that
demonstrably fail all of our conditions but for which the method is still
straightforward to implement and empirically successful.

\paragraph{Notation}
We collect our common notation here.  We let $\norm{\cdot}$ and
$\ltwo{\cdot}$ denote the usual vector $\ell_2$-norm, and for a matrix $A
\in \C^{m \times n}$, $\opnorm{A}$ denotes its $\ell_2$-operator norm. The
notation $A\cg$ means the Hermitian conjugate (conjugate transpose) of
$A \in \C^{m \times n}$. For $a \in \C$, $\Real(a)$ denotes its real
part and $\Imag(a)$ its imaginary part.
We take $\<\cdot, \cdot\>$ to be the standard inner product on whatever
space it applies; for $u, v \in \C^n$, this is
$\<u, v\> = u\cg v$, while for $A, B \in \C^{m \times n}$, this is
$\<A, B\> = \tr(A\cg B)$.
Let
$\quant_\alpha(\{c_i\})$ denote the $\alpha$-quantile of a vector $c \in
\R^m$, that is, if $c_{(1)} \le c_{(2)} \le \cdots \le c_{(m)}$, the
$\alpha$th quantile linearly interpolates $c_{(\floor{m \alpha})}$ and
$c_{(\ceil{m \alpha})}$. For a random variable $X$, $\quant_\alpha(X)$
denotes its $\alpha$th quantile.



\section{Composite Optimization, Algorithm and Convergence Analysis}
\label{sec:composite}

\begin{algorithm}[t!]
  \caption{\label{alg:prox-linear}
    Prox-linear algorithm for problem~\eqref{eqn:composite-optimization}}
  \KwData{Initializer $x_0$, stepsize $\stepsize$, tolerance
    $\epsilon > 0$, convex
    $h : \R^m \to \R$, suitably smooth $c : \C^n \to \R^m$}
  \textbf{set} $k = 0$ \\
  \Repeat{$\ltwo{x_{k-1} - x_k} \le \stepsize \epsilon$}{
    $x_{k+1} = \argmin_{x \in X}
    \{f_{x_k}(x) + \frac{1}{2\stepsize} \ltwo{x-x_k}^2\}$
    where $f_x(y) = h(c(x) + \Real(\nabla c(x)\cg (y - x)))$\\
    $k = k + 1$
  }
  \Return $x_k$
\end{algorithm}

We begin our development by providing convergence guarantees---under
appropriate conditions---for the prox-linear algorithm
(the iteration~\eqref{eqn:prox-iteration}) applied to the
composite optimization problem~\eqref{eqn:composite-optimization},
which we recall is
to
\begin{equation*}
  \minimize_x~ f(x) \defeq h(c(x))
  ~~ \subjectto x \in X,
\end{equation*}
where $h : \R^m \to \R$ is convex and $c : \C^n \to \R^m$ is appropriately
smooth. In our context, as $c$ is a real-valued complex function, it cannot
have an ordinary complex derivative, so some care is required; we use the
Wirtinger Calculus, also known as the $\C\R$-calculus, referring the
interested reader to the survey of \citet{KreutzDelgado09} for more (see
especially~\cite[Eq.~(31)]{KreutzDelgado09}). In brief, however, because $c$
is real-valued, we let $\nabla c(x)$ denote the Hermitian conjugate of
the Jacobian of $c$, which allows treatment $c$ as a
mapping from $\R^{2n}$ to $\R$, so that $\nabla c(x) \in \C^{n \times m}$
satisfies
\begin{equation*}
  c(y) = c(x) + \Real(\nabla c(x)\cg (y - x)) + O(\norm{y - x}^2)
\end{equation*}
as $y \to x$.  We summarize the procedure in Alg.~\ref{alg:prox-linear} for
further reference. In our application to quadratic constraints and phase
retrieval, $h(z) = \frac{1}{m} \lone{z}$ and $c(x) = |Ax|^2 - b$, so that
the iteration~\eqref{eqn:prox-iteration} is the solution of a quadratic
problem. (We describe this in more detail in the next section.)

A number of researchers have studied convergence and stopping conditions for
Algorithm~\ref{alg:prox-linear}, showing that it
converges to stationary points~\cite{Burke85}, as well as demonstrating that
the stopping condition $\ltwo{x_k - x_{k-1}} \le \stepsize \epsilon$ holds
after $O(\epsilon^{-2})$ iterations and guarantees
approximate stationarity~\cite{DrusvyatskiyLe16, DrusvyatskiyIoLe16}.
Algorithm~\ref{alg:prox-linear} and the iteration~\eqref{eqn:prox-iteration}
sometimes enjoy fast (local) convergence rates as well. To describe this
phenomenon, define the distance function $\dist(x, S) \defeq \inf_y
\{\ltwo{x-y} \mid y\in S\}$.  We say that $h$ has \emph{weak sharp minima}
if it grows linearly away from its minima, meaning $h(z) \ge \inf_z h(z) +
\lambda \dist(z, \argmin h)$ for some $\lambda > 0$. Under this condition
(with an additional transversality condition between $c$ and $\argmin h$),
Burke and Ferris~\cite{BurkeFe95} show that convergence of the prox-linear
algorithm near points in $X\subopt \defeq \{x : c(x) \in \argmin h\}$ is
quadratic, because the model~\eqref{eqn:local-linearization} of $f$ is
quadratically good, but $h(c(x))$ grows linearly away from $X\subopt$. These
assumptions can be weakened to growth of $h \circ c$ along its minimizing
set~\cite[Thm.~7.2]{DrusvyatskiyLe16}. We build from this elegant
development---though our problems do not precisely satisfy the weak sharp
minima conditions because of outliers---to show how the prox-linear
algorithm provides an effective, easy-to-implement, and elegant approach to
problems involving solution of quadratic equalities, specifically focusing
on phase retrieval.

\subsection{Quadratic convergence and the prox-linear
  method for phase retrieval}

We turn now to an analysis of the prox-linear algorithm for
phase retrieval problems, providing conditions on the function $f$
sufficient for quadratic convergence.
We introduce two conditions on the function
$f(x)$ and its linearized form $f_x(y)$ that suffice for this desidaratum;
as we show in the sequel,
these conditions hold with extremely high probability for a number of random
measurement models. These conditions are the keystones of our
analysis of the (robust) phase retrieval problem. 

As motivation for our first condition, consider the phase retrieval
problem. If the measurement matrix $A$ satisfies
conditions~\eqref{eqn:stability} and the measurements $b_i$ are noiseless
with $b = |Ax\subopt|^2$, then for $f(x) = \frac{1}{m} \lones{|Ax|^2 - b}$ we
have $f(x) - f(x\subopt) \ge \stabconst \dist(x, X\subopt)\ltwo{x\subopt}$ for the
set of signals $X\subopt = \{ e^{\imagunit \theta} x\subopt \mid \theta \in\R\}$.
When the measurements have noise or outliers, this may still hold, prompting
us to define the following
\begin{condition}
  \label{condition:stability}
  There exists $\stabconst > 0$ such that for all $x \in \R^n$ (or $x\in
  \C^n$ in the complex case)
  \begin{equation*}
    f(x) - f(x\subopt) \geq \stabconst \dist(0, X\subopt) \dist(x, X\subopt), 
  \end{equation*}
  where $X\subopt$ denotes the set of global minima of
  $f$.
\end{condition}
\noindent
This condition is a close cousin of Burke and Ferris's sharp minima
condition~\cite{BurkeFe95}, though it does not require that $c(x\subopt) \in
\argmin_z h(z)$; based on their work, it is intuitive that it should prove
useful in establishing fast convergence of the prox-linear algorithm. The
second condition, which is essentially automatically satisfied for the
linear approximation~\eqref{eqn:local-linearization}, is a requirement that
the linearized function $f_x(y)$ is quadratically close to $f(y)$.
\begin{condition}
  \label{condition:approximation}
  There exists $\Lipconst < \infty$ such that for all $x, y\in \R^n$ (or $x, y \in \C^n$ in 
  the complex case)
  \begin{equation*}
    |f(y) - f_x(y)| \leq \frac{\Lipconst}{2} \ltwo{x-y}^2.
  \end{equation*}
\end{condition}

Locally, Condition~\ref{condition:approximation} holds for any composition
$f(x) = h(c(x))$ of a convex $h$ with smooth $c$, but the phase retrieval
objective~\eqref{eqn:optimization-problem} satisfies the bound globally.
Indeed, for $a, x, y \in \C^n$ we have
\begin{equation*}
  |\<a, y\>|^2
  = |\<a, x\>|^2 + 2 \Real(\<x, a\> \<a, y - x\>) +
  |\<a, y - x\>|^2,
\end{equation*}
so the linearization~\eqref{eqn:local-linearization} of
$f$ around $x \in \C^n$ is
\begin{equation}
  \label{eqn:linear-form-pr}
  f_x(y) = \frac{1}{m} \sum_{i=1}^m \left||\<a_i, x\>|^2 - b_i + 
  2 \Real (\<x, a_i\>\<a_i, y-x\>)\right|.
\end{equation}
Letting $A = [a_1 ~ \cdots ~ a_m]\cg$ denote the
measurement matrix, then using the preceding expansion of $|\<a, y\>|^2$, we
have immediately by the triangle inequality that
\begin{align*}
  -(x - y)\cg \left(\frac{1}{m} A\cg A\right) (x - y)
  + f_x(y)
  \le \frac{1}{m} \sum_{i = 1}^m \,& ||\<a_i, y\>|^2 - b_i| = f(y) \\
  & \le
  f_x(y) + (x - y)\cg \left(\frac{1}{m} A\cg A\right) (x - y).
\end{align*}
That is, Condition~\ref{condition:approximation} holds with
$\lipconst = 2 \opnorms{\frac{1}{m} A\cg A}$:
\begin{equation}
  \label{eqn:approximation-opnorm}
  |f(y) - f_x(y)|
  \le \frac{1}{m}\opnorm{A\cg A}
  \ltwo{x - y}^2.
\end{equation}

Given Conditions~\ref{condition:stability}
and~\ref{condition:approximation}, we turn to convergence guarantees for
the prox-linear Algorithm~\ref{alg:prox-linear}, which
in our case requires solving a sequence of convex quadratic
programs. An
implementation of
Alg.~\ref{alg:prox-linear} that
solves iteration~\eqref{eqn:prox-iteration} exactly may be computationally
challenging. Thus, we allow inaccuracy in the solutions, assuming there exists
a sequence of additive accuracy parameters $\epsilon_k \ge 0$ such that the
iterates $x_k$ satisfy
\begin{equation}
  \label{eqn:approximate-minimization}
  f_{x_k}(x_{k + 1})
  + \frac{\Lipconst}{2} \ltwo{x_{k + 1} - x_k}^2
  \le \inf_x \left\{
  f_{x_k}(x)
  + \frac{\Lipconst}{2} \ltwo{x - x_k}^2
  \right\} + \epsilon_k.
\end{equation}
We have the following theorem, 
whose proof we provide in Section~\ref{sec:proof-quadratic-convergence-errors}.

\begin{theorem}
  \label{theorem:quadratic-convergence-errors}
  Let Conditions~\ref{condition:stability} and~\ref{condition:approximation}
  hold.  Assume that in each step of
  Algorithm~\ref{alg:prox-linear}, we solve the
  intermediate optimization problem to accuracy $\epsilon_k$, and define the
  relative error measures $\beta_k = \frac{2 \epsilon_k}{\stabconst
   \dist(0, X\subopt)^2}$. Then
  \begin{equation*}
    \frac{\dist(x_k, X\subopt)}{\dist(0, X\subopt)}
    \le
    \frac{\stabconst}{2 \Lipconst}
    \max\left\{
    \left(\frac{2 \Lipconst}{\stabconst}
    \cdot \frac{\dist(x_0, x\subopt)}{\dist(0, X\subopt)}
    \right)^{2^k},
    \max_{0 \le j < k}
    \left(\frac{4 \Lipconst}{\stabconst}
    \cdot \beta_j\right)^{2^{k - j - 1}}\right\}.
  \end{equation*}
  If $\epsilon_k = 0$ in the solution quality
  inequality~\eqref{eqn:approximate-minimization}, then
  \begin{equation*}
    \frac{\dist(x_k, X\subopt)}{\dist(0, X\subopt)}
    \le \frac{\stabconst}{\Lipconst}
    \left(\frac{\Lipconst}{\stabconst} \cdot
    \frac{\dist(x_0, x\subopt)}{\dist(0, X\subopt)}\right)^{2^k}.
  \end{equation*}
\end{theorem}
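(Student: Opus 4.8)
The plan is to collapse the whole argument into a single scalar recursion for the normalized distance and then iterate it. Write $R \defeq \dist(0, X\subopt)$, $d_k \defeq \dist(x_k, X\subopt)$, and $\rho_k \defeq d_k / R$, and let $\bar{x}_k \in X\subopt$ be a closest point to $x_k$, so $\ltwo{x_k - \bar{x}_k} = d_k$. First I would establish a per-iteration function-value bound. Plugging the comparison point $x = \bar{x}_k$ into the approximate-minimization guarantee~\eqref{eqn:approximate-minimization}, then applying Condition~\ref{condition:approximation} once at the pair $(x_k, x_{k+1})$ to pass upward from $f_{x_k}(x_{k+1}) + \frac{\Lipconst}{2}\ltwo{x_{k+1}-x_k}^2$ to $f(x_{k+1})$, and once at $(x_k, \bar{x}_k)$ to pass downward from $f_{x_k}(\bar{x}_k)$ to $f(\bar{x}_k) + \frac{\Lipconst}{2} d_k^2 = f(x\subopt) + \frac{\Lipconst}{2} d_k^2$ (using that $f$ is constant on $X\subopt$), yields
\begin{equation*}
  f(x_{k+1}) - f(x\subopt) \le \Lipconst\, d_k^2 + \epsilon_k .
\end{equation*}

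Next I would invoke Condition~\ref{condition:stability} at the point $x_{k+1}$, namely $f(x_{k+1}) - f(x\subopt) \ge \stabconst\, R\, d_{k+1}$, and combine it with the display above to get the scalar recursion $d_{k+1} \le (\Lipconst d_k^2 + \epsilon_k)/(\stabconst R)$. Dividing through by $R$ and substituting $\beta_k = 2\epsilon_k/(\stabconst R^2)$ converts this into
\begin{equation*}
  \rho_{k+1} \le \frac{\Lipconst}{\stabconst}\,\rho_k^2 + \frac{\beta_k}{2}.
\end{equation*}
Everything up to here is bookkeeping with the two Conditions; the remaining work is purely the analysis of this quadratic recursion.

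In the error-free case $\epsilon_k = \beta_k = 0$ the recursion reads $\frac{\Lipconst}{\stabconst}\rho_{k+1} \le (\frac{\Lipconst}{\stabconst}\rho_k)^2$, so with $u_k = \frac{\Lipconst}{\stabconst}\rho_k$ we have $u_{k+1}\le u_k^2$ and hence $u_k \le u_0^{2^k}$ by an immediate induction (valid for every $u_0 \ge 0$, no smallness needed); unwinding and replacing $\dist(x_0, X\subopt)$ by the larger $\dist(x_0, x\subopt)$—legitimate since $x\subopt \in X\subopt$ and $t\mapsto t^{2^k}$ is increasing—recovers the second displayed bound exactly. For the general case I would first split the sum crudely as $\rho_{k+1} \le \max\{2\frac{\Lipconst}{\stabconst}\rho_k^2,\, \beta_k\}$ via $a+b\le 2\max\{a,b\}$, then pass to $w_k = \frac{2\Lipconst}{\stabconst}\rho_k$, under which the recursion becomes the clean form $w_{k+1} \le \max\{w_k^2,\, c\,\beta_k\}$ with $c = \frac{2\Lipconst}{\stabconst}$. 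A short induction then gives $w_k \le \max\{ w_0^{2^k},\, \max_{0\le j<k}(c\,\beta_j)^{2^{k-j-1}}\}$: squaring the inductive bound distributes over the max and raises each exponent $2^{k-j-1}$ to $2^{k-j}$, while the new entry $c\beta_k = (c\beta_k)^{2^0}$ supplies precisely the $j=k$ term of the next step. Translating back, and once more enlarging $\dist(x_0,X\subopt)$ to $\dist(x_0, x\subopt)$, yields the claimed max-bound.

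I expect the induction on the quadratic recursion with additive errors to be the only delicate step; the main obstacle is selecting the change of variables that absorbs $\beta_k$ cleanly, which is what generates the telescoping exponents $2^{k-j-1}$—recording that an error committed at iteration $j$ is squared $k-j-1$ times by later steps, so that early errors are damped far more than recent ones. Note that the crude split already produces the constant $\frac{2\Lipconst}{\stabconst}$ in front of $\beta_j$, which is smaller than the stated $\frac{4\Lipconst}{\stabconst}$; since $t\mapsto t^{2^{k-j-1}}$ is increasing, the looser claimed bound follows a fortiori. Finally, although the stated inequality is unconditional, its reading as genuine local quadratic convergence requires $\frac{2\Lipconst}{\stabconst}\dist(x_0,x\subopt) < \dist(0,X\subopt)$ and each $\frac{4\Lipconst}{\stabconst}\beta_j < 1$, so that both terms inside the max decay doubly exponentially in $k$.
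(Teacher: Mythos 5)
Your proof is correct, and in fact slightly sharper than the theorem as stated. The overall skeleton coincides with the paper's: both arguments reduce everything to a one-step recursion for the normalized distance $\rho_k \defeq \dist(x_k, X\subopt)/\dist(0, X\subopt)$ of the form $\rho_{k+1} \le \frac{\Lipconst}{\stabconst}\rho_k^2 + O(\beta_k)$, and the doubling-exponent induction you run on $w_k = \frac{2\Lipconst}{\stabconst}\rho_k$ is exactly the paper's Lemma~\ref{lemma:recursive-error-sequence} after a change of variables. The genuine difference is in how the recursion is obtained. The paper introduces the exact subproblem minimizer $x_{\subopt,k+1}$ and uses $\Lipconst$-strong convexity of $x \mapsto f_{x_k}(x) + \frac{\Lipconst}{2}\ltwo{x - x_k}^2$ twice: once to compare against an arbitrary $x\subopt \in X\subopt$ while retaining the term $-\frac{\Lipconst}{2}\ltwo{x\subopt - x_{\subopt,k+1}}^2$, and once to show $\frac{\Lipconst}{2}\ltwo{x_{k+1} - x_{\subopt,k+1}}^2 \le \epsilon_k$; recombining these through $\ltwo{x_{k+1} - x\subopt}^2 \le 2\ltwo{x_{k+1} - x_{\subopt,k+1}}^2 + 2\ltwo{x_{\subopt,k+1} - x\subopt}^2$ doubles the error term, producing the recursion~\eqref{eqn:error-descending-claim} with additive error $\beta_k$ and hence the constant $\frac{4\Lipconst}{\stabconst}$ in the statement. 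You bypass all of this: plugging the nearest point $\bar{x}_k \in X\subopt$ directly into the approximate-optimality condition~\eqref{eqn:approximate-minimization} and applying Condition~\ref{condition:approximation} twice requires no strong convexity at all, yields the tighter recursion $\rho_{k+1} \le \frac{\Lipconst}{\stabconst}\rho_k^2 + \frac{\beta_k}{2}$, and therefore puts $\frac{2\Lipconst}{\stabconst}\beta_j$ inside the max, which implies the stated bound a fortiori, as you correctly note. The only thing the paper's longer route buys is the intermediate inequality~\eqref{eqn:quadratic-inaccurate}, which keeps the additional term $\frac{\Lipconst}{4}\ltwo{x_{k+1} - x\subopt}^2$ on the left-hand side; since that term is discarded before the recursion is iterated, your shorter argument loses nothing for this theorem and improves the constant.
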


Theorem~\ref{theorem:quadratic-convergence-errors} motivates our approach
for the remainder of the paper: we can
guarantee exact, accurate, and fast solutions to the phase retrieval problem
under the three conditions
\begin{enumerate}[1.]
\item Stability (Condition~\ref{condition:stability}),
\item Quadratic approximation (Condition~\ref{condition:approximation}), 
  via an upper bound on $\opnorm{A\cg A}$ and application of
  inequality~\eqref{eqn:approximation-opnorm} and
\item An initializer
  $x_0$ of the iterations that is good enough, meaning that it satisfies the
  constant relative error guarantee $\dist(x_0, X\subopt) \le \dist(0, X\subopt)
  \frac{\stabconst}{\Lipconst}$.
\end{enumerate}
In the coming sections, we show that each of these three conditions holds in
both noiseless measurement models (Section~\ref{sec:no-noise}) and with
adversarially perturbed measurements $b_i$ (Section~\ref{sec:noise}).

Before continuing, we provide a few additional remarks regarding
Theorem~\ref{theorem:quadratic-convergence-errors}. First, if $\epsilon_k$
are very small because we solve the intermediate
steps~\eqref{eqn:prox-iteration} to (near) machine precision, then for all
intents and purposes about five iterations suffice for machine precision
accurate solutions.  Quadratic convergence is also achievable with errors in
inequality~\eqref{eqn:approximate-minimization}; if the minimization
accuracies decrease quickly enough that $\epsilon_k \le 2^{-2^k}$, then we
certainly still have quadratic convergence.  More broadly,
Theorem~\ref{theorem:quadratic-convergence-errors} shows that the accuracy
of solution in iteration $j$ need not be very high to guarantee high
accuracy reconstruction of the signal $x\subopt$; only in the last few
iterations is moderate to high accuracy necessary.  If it is computationally
cheap, it is thus advantageous---as we explore in our experimental work---to
solve early iterations of the prox-linear method inaccurately.

\subsection{Proof of Theorem~\ref{theorem:quadratic-convergence-errors}}
\label{sec:proof-quadratic-convergence-errors}

We prove the result in two steps: we first provide a per-iteration progress
guarantee, and then we use this guarantee to show quadratic convergence. 

Let $x\subopt \in X\subopt$ be any global optimum of the objective $f(x)$. 
The function $x \mapsto f_{x_k}(x) + \frac{\Lipconst}{2} \ltwo{x - x_k}^2$
is $\Lipconst$-strongly convex in $x$. If we define $x_{\subopt,k + 1}$ to
be the exact minimizer of $f_{x_k}(x) + \frac{\Lipconst}{2} \ltwo{x -
  x_k}^2$, the standard optimality conditions for strongly convex
minimization imply
\begin{align*}
  f_{x_k}(x_{k+1}) + \frac{\Lipconst}{2}
  \ltwo{x_{k + 1} - x_k}^2
  & \le
  f_{x_k}(x_{\subopt,k+1}) + \frac{\Lipconst}{2}
  \ltwo{x_{\subopt,k+1} - x_k}^2
  + \epsilon_k \nonumber \\
  & \le 
  f_{x_k}(x\subopt) + \frac{\Lipconst}{2}
  \ltwo{x\subopt - x_k}^2
  - \frac{\Lipconst}{2}
  \ltwo{x\subopt - x_{\subopt,k+1}}^2
  + \epsilon_k.
\end{align*}
Using the approximation Condition~\ref{condition:approximation},
so that $\frac{\Lipconst}{2} \ltwo{x_k - x_{k+1}}^2 + f_{x_k}(x_{k + 1})
\ge f(x_{k + 1})$ and $f_{x_k}(x\subopt) \le f(x\subopt) + \frac{\Lipconst}{2}
\ltwo{x_k - x\subopt}^2$, we have by substituting in the preceding inequality
that
\begin{align*}
  f(x_{k + 1})
  & \le f_{x_k}(x\subopt) + \frac{\Lipconst}{2} \ltwo{x\subopt - x_k}^2
  - \frac{\Lipconst}{2} \ltwo{x\subopt - x_{\subopt,k+1}}^2
  + \epsilon_k \\
  & \le f(x\subopt) + \Lipconst \ltwo{x_k - x\subopt}^2
  - \frac{\Lipconst}{2} \ltwo{x\subopt - x_{\subopt,k+1}}^2
  + \epsilon_k.
\end{align*}
Summarizing, we get for all $x\subopt \in X\subopt$ and $k \in \N$, 
\begin{equation}
  f(x_{k + 1}) - f(x\subopt)
 + \frac{\Lipconst}{2} \ltwo{x\subopt - x_{\subopt,k+1}}^2
   \le \Lipconst \ltwo{x_k - x\subopt}^2 + \epsilon_k 
\end{equation}
By applying the stability Condition~\ref{condition:stability},
we immediately obtain the progress guarantee
\begin{equation}
  \label{eqn:innacurate-step-amount}
  \begin{split}
    \lambda \ltwo{x\subopt} \dist\left(x_{k+1}, X\subopt\right)
    + \frac{\Lipconst}{2}
    \ltwo{x_{\subopt,k+1} - x\subopt}^2
    & \le \Lipconst \ltwo{x_k - x\subopt}^2 + \epsilon_k 
  \end{split}
\end{equation}

We now transform the guarantee~\eqref{eqn:innacurate-step-amount} into one
involving only $x_k$, $x_{k + 1}$, and $x\subopt$, rather than $x_{\subopt,k+1}$,
by bounding the difference between $x_{\subopt,k+1}$ and $x_{k+1}$. The
$\Lipconst$-strong convexity of $f_{x_k}(\cdot) + \frac{\Lipconst}{2}
\ltwo{\cdot - x_k}^2$ implies that
\begin{align*}
  \epsilon_k +
  f_{x_k}(x_{\subopt,k+1})
  + \frac{\Lipconst}{2} \ltwo{x_{\subopt,k+1} - x_k}^2
  & \ge f_{x_k}(x_{k+1})
  + \frac{\Lipconst}{2} \ltwo{x_{k+1} - x_k}^2 \\
  & \ge f_{x_k}(x_{\subopt,k+1}) + \frac{\Lipconst}{2}
  \ltwo{x_{\subopt,k+1} - x_k}^2
  + \frac{\Lipconst}{2} \ltwo{x_{k+1} - x_{\subopt,k+1}}^2,
\end{align*}
whence we obtain $\frac{\Lipconst}{2} \ltwo{x_{\subopt,k+1} - x_{k + 1}}^2 \le
\epsilon_k$.  Using the standard quadratic inequality $\ltwo{x_{k+1} -
  x\subopt}^2 \le 2 \ltwo{x_{k+1} - x_{\subopt,k+1}}^2 + 2\ltwo{x_{\subopt,k+1} -
  x\subopt}^2$, we thus have by
expression~\eqref{eqn:innacurate-step-amount} that, for all $x\subopt \in X\subopt$, 
\begin{equation}
  \begin{split}
     \lambda \ltwo{x\subopt} \dist\left(x_{k+1}, X\subopt\right)
    + \frac{\Lipconst}{4}
    \ltwo{x_{k+1} - x\subopt}^2
    & \le \Lipconst \ltwo{x_k - x\subopt}^2 + 2\epsilon_k  
    \end{split}
  \label{eqn:quadratic-inaccurate}
\end{equation}
Now, taking infimum over $x\subopt \in X\subopt$ over both sides for 
Eq~\eqref{eqn:quadratic-inaccurate}, we find that, 
\begin{equation*}
  \lambda \dist(0, X\subopt)
  \dist(x_{k+1}, X\subopt)
  \le L \dist(x_k, X\subopt)^2 + 2 \epsilon_k.
\end{equation*}
Dividing each side by $\stabconst  \dist(0, X\subopt)^2$ yields
\begin{equation}
  \frac{\dist(x_{k+1}, X\subopt)}{\dist(0, X\subopt)}
  \le
  \frac{\Lipconst}{\stabconst} \frac{\dist(x_k, X\subopt)^2}{\dist(0, X\subopt)^2}
  + \frac{2 \epsilon_k}{\stabconst \dist(0, X\subopt)^2}.
  \label{eqn:error-descending-claim}
\end{equation}

\newcommand{\condnum}{\kappa}

Inductively applying inequality~\eqref{eqn:error-descending-claim} when
$\epsilon_k = 0$ yields the second statement of the theorem.  When
$\epsilon_k > 0$, a brief technical lemma shows the
convergence rate:
\begin{lemma}
  \label{lemma:recursive-error-sequence}
  Let the sequences $a_k \ge 0$, $\epsilon_k \ge 0$ satisfy 
  $a_k \le \condnum a_{k-1}^2 + \epsilon_{k-1}$.
  Then
  \begin{equation*}
    a_k
    \le \max\left\{
    (2 \condnum)^{2^k - 1} a_0^{2^k},
    \max_{0 \le j < k}
    (2 \condnum)^{2^{k - j - 1} - 1}
    (2 \epsilon_j)^{2^{k - j - 1}}
    \right\}.
  \end{equation*}
\end{lemma}
\begin{proof}
  The proof is by induction. For $a_1$, we certainly
  have $a_1 \le 2 \condnum \vee 2 \epsilon_0$ because both
  sequences are non-negative.
  For the general case, assume the result holds for $a_k$, where
  $k$ is arbitrary. Then
  \begin{align*}
    a_{k + 1}
    & \le \condnum a_k^2 + \epsilon_k
    \le \max\left\{2 \condnum a_k^2, 2 \epsilon_k\right\} \\
    & \le 2 \condnum (2 \condnum)^{2^{k + 1} - 2}
    a_0^{2^{k + 1}}
    \vee \max_{0 \le j < k}
    \left\{2 \condnum (2 \condnum)^{2^{k - j} - 2}
    (2 \epsilon_j)^{2^{k - j}}\right\}
    \vee 2 \epsilon_k \\
    & = \max \left\{
    (2 \condnum)^{2^{k + 1} - 1} a_0^{2^{k + 1}},
    \max_{0 \le j < k + 1}
    (2 \condnum)^{2^{k - j} - 1}
    (2 \epsilon_j)^{2^{k - j}}\right\}
  \end{align*}
  as desired.
\end{proof}

Applying Lemma~\ref{lemma:recursive-error-sequence}
in inequality~\eqref{eqn:error-descending-claim} with
$\condnum = \frac{\Lipconst}{\lambda}$ and
$a_k = \frac{\dist(x_k, X\subopt)}{\dist(0, X\subopt)}$ yields the theorem.



\section{Noiseless Phase Retrieval Problem}
\label{sec:no-noise}

We begin our discussion of the phase retrieval problem by considering the
noiseless case, that is, when the observations $b_i = |\<a_i,
x\subopt\>|^2$.  Throughout this section and the remainder of the paper, the
vectors $a_i$ are assumed to be independent and identically distributed
copies of a random vector $a \in \C^n$.  Based on our discussion after
Theorem~\ref{theorem:quadratic-convergence-errors}, we show that (i) the
objective~\eqref{eqn:optimization-problem} is
stable~\ref{condition:stability}, (ii) is quadratically
approximable~\ref{condition:approximation}, and (iii) that we have a good
initializer $x_0$.
%
%
In the coming three sections, we address each of these in turn, providing
progressively stronger assumptions that are sufficient for each condition
to hold with high probability as soon as the number of measurements
$m/n > c$, where $c$ is a numerical constant. In
Section~\ref{sec:no-noise-summary} we provide a summary theorem
that encapsulates our results.
For readability, we defer proofs to Sec.~\ref{sec:no-noise-proofs}.

\subsection{Stability}
\label{sec:no-noise-stability}

Our first step is to provide conditions under which stability holds.
We divide our discussion of stability conditions into
the real and complex cases, as the real case is considerably easier.

\subsubsection{Stability for real-valued vectors}

With the stability condition in mind, we make the following assumption
on the random measurement vectors $a\in \R^n$: 
\begin{assumption}
  \label{assumption:stability}
  There exist constants $\stabfunc > 0$ and
  $\probbig>0$ such that for all $u, v \in \R^n$,
  $\ltwo{u} = \ltwo{v} = 1$, we
  have
  \begin{equation*}
    \P\left(|\<a, u\>| \wedge |\<a,v\>| \geq \stabfunc \right)
    \geq \probbig.
  \end{equation*}
\end{assumption}
\noindent
Intuitively, Assumption~\ref{assumption:stability} says that the measurement
vectors $a \in \R^n$ have sufficient support in all
directions $u, v \in \R^n$.
One simple sufficient condition for this is a type of small-ball
condition~\cite{Mendelson14}, as follows,
which makes it clear that
Assumption~\ref{assumption:stability} requires no light
tails: just that the probability of one of $|\<a, u\>|$ and $|\<a, v\>|$
being small is small.

\begin{example}[Stability by the small-ball method]
  \label{example:small-ball}
  Assume that we may choose positive but small enough
  $\varepsilon > 0$ that
  \begin{equation*}
    \sup_{\ltwo{u} = 1} \P\left(|\<a, u\>| < \varepsilon\right) < \half.
  \end{equation*}
  Then by the union bound, the choice
  $p_0 = 1 - 2 \sup_{\ltwo{u}} \P(|\<a, u\>| < \varepsilon) > 0$ and
  $\stabfunc = \varepsilon$ immediately yields
  \begin{align*}
    \P\left(|\<a, u\>| \wedge |\<a,v\>| \ge \stabfunc\right)
    & = 1 - \P\left(|\<a, u\>| < \varepsilon
    ~ \mbox{or} ~
    \wedge |\<a,v\>| < \varepsilon \right) \\
    & \ge 1 - \P\left(|\<a, u\>| < \varepsilon\right)
    - \P\left(|\<a, u\>| < \varepsilon\right)
    \ge \probbig.
  \end{align*}
  As a further specialization, if
  $a \sim \normal(0, I_n)$ is an isotropic Gaussian, then
  the choice $\stabfunc = .31$ yields
  $\P(|\<a, u\>| \le \stabfunc) \le \frac{1}{4}$, so
  we may take $\stabfunc = .31$ and $\probbig = \half$.
\end{example}

As we note in the discussion preceding Condition~\ref{condition:stability},
for the objective~\eqref{eqn:optimization-problem} it is immediate that in
the noiseless case that,
\begin{equation*}
  f(x) - f(\pm x\subopt) = f(x) = 
  \frac{1}{m} \lone{(Ax)^2 - (A x\subopt)^2}
  = \frac{1}{m} \sum_{i=1}^m |\<a_i, x-x\subopt\>\<a_i, x+x\subopt\>|.
\end{equation*}
Thus, if we can show the stability
condition~\eqref{eqn:stability-real}---equivalently, that $\sum_{i = 1}^m
|\< a_i, u \> \< a_i, v \>| \ge \stabconst m$ for $u, v
\in \sphere^{n-1}$---then Condition~\ref{condition:stability} holds for 
$X\subopt = \{\pm x\subopt\}$.
To that end, we provide the following guarantee, which we prove in
Sec.~\ref{sec:proof-small-ball}.
\begin{proposition}
  \label{proposition:small-ball}
  Let Assumption~\ref{assumption:stability} hold. There exists a
  numerical constant $c < \infty$ such that for any $t \ge 0$,
  \begin{equation*}
    \P\left(\frac{1}{m} \sum_{i=1}^m
    |\<a_i, u\>\<a_i, v\>| \ge \stabfunc^2 \cdot 
    	\left(\probbig - c \sqrt{\frac{n}{m}}-\sqrt{2t}\right)
     ~\mbox{for all}~ u, v \in \sphere^{n-1}\right)
    \ge 1 - 2 e^{-mt}.
  \end{equation*}
\end{proposition}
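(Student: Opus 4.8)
The plan is to apply Mendelson's small-ball method, exploiting that the summands are products of characteristic functions and hence bounded \emph{regardless} of the tails of $a$. First I would discard the scale of each summand by the elementary pointwise bound
\[
|\<a,u\>\<a,v\>| \ge \stabfunc^2 \, \ind\{|\<a,u\>| \wedge |\<a,v\>| \ge \stabfunc\},
\]
so that, writing $Z_{u,v} \defeq \frac{1}{m}\sum_{i=1}^m \ind\{|\<a_i,u\>| \wedge |\<a_i,v\>| \ge \stabfunc\}$, it suffices to lower bound $\inf_{u,v\in\sphere^{n-1}} Z_{u,v}$. By Assumption~\ref{assumption:stability} each indicator has mean $\E Z_{u,v} = \P(|\<a,u\>|\wedge|\<a,v\>|\ge\stabfunc) \ge \probbig$, so with $W \defeq \sup_{u,v\in\sphere^{n-1}} (\E Z_{u,v} - Z_{u,v})$ we obtain the deterministic reduction $\inf_{u,v} Z_{u,v} \ge \probbig - W$, whence $\frac{1}{m}\sum_i |\<a_i,u\>\<a_i,v\>| \ge \stabfunc^2(\probbig - W)$ uniformly in $u,v$. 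Everything then reduces to showing $W \le c\sqrt{n/m} + \sqrt{2t}$ with probability at least $1 - 2e^{-mt}$.

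Next I would bound $\E W$ by the standard symmetrization inequality, giving $\E W \le 2\,\E\sup_{u,v} \frac{1}{m}\sum_i \varepsilon_i \ind\{|\<a_i,u\>| \wedge |\<a_i,v\>| \ge \stabfunc\}$ for independent Rademacher signs $\varepsilon_i$. The key structural observation is that each set $\{a : |\<a,u\>| \ge \stabfunc\}$ is a union of two affine half-spaces, so as $(u,v)$ ranges over $\sphere^{n-1}\times\sphere^{n-1}$ the $\{0,1\}$-valued class $\{a\mapsto \ind\{|\<a,u\>|\wedge|\<a,v\>|\ge\stabfunc\}\}$ has Vapnik--Chervonenkis dimension $O(n)$. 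Haussler's covering-number bound together with Dudley's entropy integral then yield $\E\sup_{u,v}\frac1m\sum_i\varepsilon_i\ind\{\cdots\} \le c\sqrt{n/m}$, with no spurious logarithmic factor since the entropy integral for a VC class converges; this produces the $c\sqrt{n/m}$ term.

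Finally, to upgrade this in-expectation estimate to the stated high-probability bound, I would invoke bounded differences: replacing a single $a_i$ changes $W$ by at most $1/m$, so McDiarmid's inequality gives $\P(W \ge \E W + \sqrt{2t}) \le e^{-4mt}$, comfortably within the claimed failure probability $2e^{-mt}$, and combining with $\E W \le c\sqrt{n/m}$ places us on the event $\{W \le c\sqrt{n/m} + \sqrt{2t}\}$; substituting back yields the proposition. The main obstacle is the second paragraph: because Assumption~\ref{assumption:stability} imposes \emph{no} moment or light-tail control on $a$, one cannot pass to a Lipschitz surrogate and invoke the Ledoux--Talagrand contraction principle on the linear functionals $a\mapsto\<a,u\>$ (whose Rademacher complexity would demand control of $\E\|\sum_i\varepsilon_i a_i\|$). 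Working directly with the bounded, discontinuous indicators and bounding their complexity combinatorially through the VC dimension of slab complements is precisely what keeps the argument tail-free and delivers the clean $\stabfunc^2\probbig$ leading constant.
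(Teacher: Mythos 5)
Your proposal is correct and follows essentially the same route as the paper's proof: the same pointwise reduction of $|\<a_i,u\>\<a_i,v\>|$ to $\stabfunc^2$ times the indicator of $\{|\<a_i,u\>| \wedge |\<a_i,v\>| \ge \stabfunc\}$, the same $O(n)$ VC-dimension bound for this class of sets obtained from unions and intersections of half-spaces, followed by uniform concentration over the class. The only difference is cosmetic: the paper cites standard VC concentration inequalities as a black box after its VC-dimension lemma, whereas you unpack that black box into symmetrization, Haussler/Dudley entropy control, and McDiarmid's inequality, which is precisely how those standard inequalities are proved.
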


Proposition~\ref{proposition:small-ball}
immediately yields the following corollary, which shows that
the stability condition~\ref{condition:stability} holds with 
high probability for $m/n \gtrsim 1$.
\begin{corollary}
  \label{corollary:no-noise-stability-of-objective}
  Let Assumption~\ref{assumption:stability} hold. Then there exists a
  numerical constant $c < \infty$ such that if $m \probbig^2
  \ge c n$, then
  \begin{equation*}
    \P\left(f(x) - f(x\subopt) \geq \half \stabfunc^2 
    \probbig \ltwo{x-x\subopt}\ltwo{x+x\subopt}~\mbox{for all $x \in \R^n$} \right)
    \ge
    1 - 2 \exp\left(-\frac{m\probbig^2}{32}\right).
  \end{equation*}
\end{corollary}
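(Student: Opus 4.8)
The plan is to reduce the corollary directly to Proposition~\ref{proposition:small-ball} using the factorization of $f$ recorded just before that proposition, and then to choose the deviation parameter $t$ so as to convert the bound's prefactor into the stated $\frac{1}{2}\probbig$.

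First I would recall that in the noiseless case $f(x\subopt) = 0$ and
\[
  f(x) - f(x\subopt) = \frac{1}{m}\sum_{i=1}^m
  |\<a_i, x-x\subopt\>\<a_i, x+x\subopt\>|.
\]
For $x \neq \pm x\subopt$, set $u = (x-x\subopt)/\ltwo{x-x\subopt}$ and $v = (x+x\subopt)/\ltwo{x+x\subopt}$, both unit vectors, and pull the norms out by bilinearity to get
\[
  f(x) - f(x\subopt) = \ltwo{x-x\subopt}\,\ltwo{x+x\subopt}
  \cdot\frac{1}{m}\sum_{i=1}^m |\<a_i,u\>\<a_i,v\>|.
\]
The degenerate cases $x = \pm x\subopt$ are trivial, since the claimed lower bound is then zero while $f(x)-f(x\subopt)=0$. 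Because the event in Proposition~\ref{proposition:small-ball} is uniform over \emph{all} $u,v\in\sphere^{n-1}$, on that event the per-$x$ directions above are automatically controlled, so it suffices to make the proposition's prefactor at least $\frac{1}{2}\probbig$.

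Next I would choose $t = \probbig^2/32$, so that $\sqrt{2t} = \probbig/4$, and impose a sample-size condition $m\probbig^2 \ge cn$ (with $c$ a numerical constant to be fixed) strong enough that $c_0\sqrt{n/m} \le \probbig/4$, where $c_0$ denotes the constant from Proposition~\ref{proposition:small-ball}; squaring shows $c \ge 16 c_0^2$ suffices. Under these choices,
\[
  \probbig - c_0\sqrt{n/m} - \sqrt{2t}
  \ge \probbig - \frac{\probbig}{4} - \frac{\probbig}{4}
  = \frac{\probbig}{2},
\]
and the proposition's failure probability $2e^{-mt}$ becomes exactly $2\exp(-m\probbig^2/32)$, matching the statement. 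Combining the two steps gives, on this high-probability event, $f(x) - f(x\subopt) \ge \frac{1}{2}\stabfunc^2\probbig\,\ltwo{x-x\subopt}\ltwo{x+x\subopt}$ uniformly in $x$.

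There is no genuine analytic obstacle here, as the content lies entirely in Proposition~\ref{proposition:small-ball}; the only points requiring care are bookkeeping. I would (i) keep the two numerical constants distinct---renaming the proposition's $c$ to $c_0$ so it is not confused with the corollary's $c$---while verifying that the single condition $m\probbig^2 \ge cn$ simultaneously controls the $\sqrt{n/m}$ term and leaves the clean exponent $m\probbig^2/32$ in the failure probability, and (ii) confirm that the uniform-over-sphere guarantee transfers to a uniform-over-$x$ guarantee, which it does precisely because the factorization sends each admissible $x$ to a pair of unit vectors already covered by the proposition's event.
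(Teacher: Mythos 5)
Your proposal is correct and is precisely the argument the paper intends: the paper presents this corollary as an immediate consequence of Proposition~\ref{proposition:small-ball}, using the factorization $f(x) - f(x\subopt) = \ltwo{x - x\subopt}\ltwo{x + x\subopt} \cdot \frac{1}{m}\sum_{i=1}^m |\<a_i, u\>\<a_i, v\>|$ stated just before the proposition, together with the choice $t = \probbig^2/32$ and a sample-size condition absorbing the $\sqrt{n/m}$ term. Your bookkeeping (distinguishing the two constants, verifying $\sqrt{2t} = \probbig/4$, and handling the degenerate cases $x = \pm x\subopt$) is sound and matches the paper's failure probability exactly.
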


\citet{EldarMe14} establish the stability
Condition~\ref{condition:stability} under more restrictive assumptions on
the distribution of the measurement vectors $\{a_i\}_{i=1}^m$. Concretely,
they require that the distribution of $a$ is subgaussian (see
Definition~\ref{def:sub-gaussian-vector} to come) and isotropic (meaning
that $\E [aa^T] = I_n$). As Example~\ref{example:small-ball} makes
clear, our result only
requires weaker small ball assumptions without any restrictions on
tails or the covariance structure of the random vector $a$.

\subsubsection{Complex Case}

\newcommand{\smallfunc}{h}  
\newcommand{\subg}{\sigma}  
\newcommand{\absWconst}{\tau}  

We now investigate conditions sufficient for stability of $A$ in the
measurement vectors $a_i$ are complex-valued. In this case, the argument is
not quite so simple, as stability for complex vectors requires more uniform
notions of function growth.  Accordingly, we make the following two
assumptions on the random vectors $a_i \in \C^n$. The first is a small-ball
type assumption, while the second requires that the vectors $a_i$ are
appropriately uniform in direction.  To fully state our assumptions, we
require an additional definition on the sub-Gaussianity of random
vectors. We define this in terms of Orlicz norms (following
\cite[Ch.~2.2]{Vershynin12,VanDerVaartWe96}).
\begin{definition}
  \label{def:sub-gaussian-vector}
  The random vector $a \in \C^n$ is \emph{$\subgauss^2$-sub-Gaussian} 
  if for all $v \in \C^n$, $\ltwo{v} = 1$,
  \begin{equation*}
    \E \left[\exp\bigg(\frac{|\<a, v\>|^2}{\subgauss^2}
      \bigg)\right] \leq e. 
  \end{equation*}
\end{definition}

With this definition, we now provide assumptions on the random measurement 
vector $a \in \C^n$.
\begin{assumption}
  \label{assumption:complex-small-ball}
  There exists a non-increasing function $\smallfunc :
  \R_+ \to \R_+$ with $\smallfunc(0) = 0$ such that for $0 \le \epsilon$,
  \begin{equation*}
    \P(\ltwo{a} \le \epsilon \sqrt{n})
    \le \smallfunc(\epsilon).
  \end{equation*}
\end{assumption}
\noindent
We also require that the $a$, when normalized, are sufficiently
uniform.
\begin{assumption}
  \label{assumption:rank-2-growth}
  Let $w = \sqrt{n} a / \ltwo{a}$. The random vector $w$ is
  $\subg^2$-sub-Gaussian (Definition~\ref{def:sub-gaussian-vector}).
  In addition,
  for any matrix $X \in \C^{n \times n}$ with rank at most $2$,
  \begin{equation*}
    \E\left[\left|w\cg X w\right|\right]
    \ge \absWconst^2 \lfro{X}.
  \end{equation*}
\end{assumption}
\noindent
Assumption~\ref{assumption:rank-2-growth} may seem somewhat challenging
to verify, but it holds for any rotationally symmetric distribution, and
moreover, in this case we have that
$\absWconst \ge c\subg$ for a numerical constant $c > 0$.

\begin{example}[Rotationally symmetric measurements]
  \label{example:rotation-invariance}
  Let the measurement vectors $a_i$ be rotationally symmetric, so that for
  unitary $U \in \C^{n \times n}$, the distribution of $U a$ is identical to
  $a$. We show that Assumption~\ref{assumption:rank-2-growth} holds.  In
  this case, $w = \sqrt{n} a / \ltwo{a}$ is uniform on the radius-$\sqrt{n}$
  sphere in $\C^n$, and standard results in convex geometry~\cite{Ball97}
  show $w$ is $O(1)$-sub-Gaussian
  (Definition~\ref{def:sub-gaussian-vector}). As $a$ is rotationally
  symmetric, $w$ is also equal in distribution to $\sqrt{n} z / \ltwo{z}$
  where $z$ is standard complex normal; thus, we have for any rank $2$ or
  less Hermitian $X$ that
  \begin{equation*}
    \E[|w\cg X w|]
    = n \E[|z\cg X z|] \E[1 / \ltwo{z}^2]
    \stackrel{(i)}{\ge} \E[|z \cg X z|]
    \stackrel{(ii)}{\ge} \frac{2 \sqrt{2}}{\pi} \lfro{X}
  \end{equation*}
  where inequality~$(i)$ is a consequence of
  $\E[1 / \ltwo{z}^2] \ge \frac{1}{n}$ and
  inequality~$(ii)$ is a calculation
  (see Lemma~\ref{lemma:funny-rank-2-gaussian}
  in Appendix~\ref{sec:properties-gaussians})
  as $X$ is rank $2$.
\end{example}

With these assumptions in place, we have the following stability guarantee
for the random matrix $A$. We defer the proof to
Appendix~\ref{sec:proof-complex-growth}.
\begin{proposition}
  \label{proposition:complex-growth}
  Let Assumptions~\ref{assumption:complex-small-ball}
  and~\ref{assumption:rank-2-growth} hold.
  Let $c > 0$ be chosen such that
  $\smallfunc(c) \le \frac{1}{2(1 + e)} \frac{\absWconst^4}{\subg^4}$.
  There
  exist numerical constants $c_0 > 0$ and $c_1 < \infty$
  such that
  with probability at least
  \begin{equation*}
    1 - \exp\left(-c_0 m \frac{\absWconst^4}{\subg^4}
      + c_1 n \log \frac{\subg^2}{\absWconst^2}\right),
  \end{equation*}
  we have
  \begin{equation*}
    \frac{1}{m} \lone{|Ax|^2 - |Ay|^2}
    \ge \frac{c^2 \absWconst^2}{4}
    \cdot \inf_\theta \ltwos{x - e^{\imagunit\theta} y}
    \cdot \sup_\theta \ltwos{x - e^{\imagunit\theta} y}
  \end{equation*}
  simultaneously for all $x, y \in \C^n$.
\end{proposition}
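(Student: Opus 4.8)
The plan is to reduce the statement to a uniform lower bound on an average of quadratic forms indexed by rank-two Hermitian matrices, and then to establish that lower bound by a first-moment (Bernstein) argument combined with a covering of the rank-two sphere. Writing $\langle a_i, x\rangle = a_i\cg x$, the $i$th coordinate of $|Ax|^2 - |Ay|^2$ equals $a_i\cg(xx\cg - yy\cg)a_i$, so setting $X \defeq xx\cg - yy\cg$ (Hermitian, of rank at most two) gives $\frac1m\lone{|Ax|^2 - |Ay|^2} = \frac1m\sum_{i=1}^m |a_i\cg X a_i|$. Since $\lfro{X}^2 = \ltwo{x}^4 + \ltwo{y}^4 - 2|\langle x,y\rangle|^2$ while $\inf_\theta\ltwos{x - e^{\imagunit\theta}y}^2\sup_\theta\ltwos{x - e^{\imagunit\theta}y}^2 = (\ltwo{x}^2 + \ltwo{y}^2)^2 - 4|\langle x,y\rangle|^2$, the elementary inequality $2a^2 + 2b^2 \ge (a+b)^2$ yields $\lfro{X} \ge 2^{-1/2}\inf_\theta\ltwos{x - e^{\imagunit\theta}y}\sup_\theta\ltwos{x - e^{\imagunit\theta}y}$. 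Hence it suffices to show that $\frac1m\sum_i|a_i\cg X a_i| \ge \kappa\lfro{X}$ for all Hermitian $X$ of rank at most two, where $\kappa$ is a constant multiple of $c^2\absWconst^2$; by homogeneity we may fix $\lfro{X} = 1$.

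First I would pass to the normalized vectors $w_i = \sqrt n\, a_i/\ltwo{a_i}$, for which $\ltwo{w_i} = \sqrt n$ holds deterministically and $a_i\cg X a_i = (\ltwo{a_i}^2/n)\, w_i\cg X w_i$. Restricting to the good event $G_i \defeq \{\ltwo{a_i} \ge c\sqrt n\}$ loses only a factor $c^2$, since $\frac1m\sum_i |a_i\cg X a_i| \ge c^2\cdot\frac1m\sum_i |w_i\cg X w_i|\,\ind_{G_i}$, so it remains to lower bound $\frac1m\sum_i V_i(X)$ uniformly over the rank-two Frobenius sphere, where $V_i(X) \defeq |w_i\cg X w_i|\,\ind_{G_i} \ge 0$. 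For the pointwise expectation, Assumption~\ref{assumption:rank-2-growth} gives $\E|w\cg X w| \ge \absWconst^2$, while Definition~\ref{def:sub-gaussian-vector} yields a second-moment bound $\E|w\cg X w|^2 \le C\subgauss^4\lfro{X}^2$ for rank-two $X$ and an absolute constant $C$ (the hypothesis is calibrated to $C = 1+e$). By Cauchy--Schwarz and Assumption~\ref{assumption:complex-small-ball}, $\E[|w\cg X w|\,\ind_{G^c}] \le (\E|w\cg X w|^2\cdot\P(\ltwo{a} < c\sqrt n))^{1/2} \le (C\subgauss^4\,\smallfunc(c))^{1/2} \le \absWconst^2/\sqrt2$, the last step using $\smallfunc(c) \le \frac1{2(1+e)}\absWconst^4/\subgauss^4$. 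Therefore $\E V(X) \ge (1 - 2^{-1/2})\absWconst^2 \ge \absWconst^2/4$.

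Next I would make this a uniform-in-$X$ statement. For fixed $X$ on the sphere, $V(X)$ is nonnegative and, because $w$ is $\subgauss^2$-sub-Gaussian, sub-exponential with parameter $O(\subgauss^2)$; Bernstein's inequality with deviation $t \asymp \absWconst^2$ and the bound $\absWconst \le \subgauss$ (valid since $\absWconst^2 \le \E|\langle w, v\rangle|^2 \le \subgauss^2$ for unit $v$) gives $\P(\frac1m\sum_i V_i(X) \le \E V(X) - t) \le \exp(-c_0 m\, \absWconst^4/\subgauss^4)$, where the $\absWconst^4/\subgauss^4$ exponent (rather than $\absWconst^2/\subgauss^2$) is what survives the $\min$ in Bernstein's bound. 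The set $\{X = X\cg : \mathrm{rank}(X) \le 2,\ \lfro{X} = 1\}$ is parametrized by $O(n)$ real coordinates, hence admits a $\delta$-net $\cover$ in Frobenius norm with $\log|\cover| \le C n\log(1/\delta)$. Taking $\delta$ a small constant multiple of $\absWconst^2/\subgauss^2$ makes $\log|\cover| \le c_1 n\log(\subgauss^2/\absWconst^2)$, and a union bound of the Bernstein estimate over $\cover$ produces exactly the exponent $-c_0 m\,\absWconst^4/\subgauss^4 + c_1 n\log(\subgauss^2/\absWconst^2)$.

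The step I expect to be the main obstacle is controlling the oscillation of $\frac1m\sum_i V_i(X)$ between a point of $\cover$ and a nearby $X$, since $|V_i(X) - V_i(X')| \le |w_i\cg(X - X')w_i|$ with $X - X'$ of rank at most four. A purely deterministic bound using $\ltwo{w_i}^2 = n$ would force $\delta \asymp 1/n$ and introduce a spurious $\log n$; to keep $\delta$ constant in $n$, I would instead establish a uniform upper bound $B \defeq \sup\{\frac1m\sum_i |w_i\cg\Delta w_i| : \Delta = \Delta\cg,\ \mathrm{rank}(\Delta)\le 4,\ \lfro{\Delta} \le 1\} \le C'\subgauss^2$ with high probability. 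This low-rank quadratic-form bound follows from operator-norm concentration of $\frac1m\sum_i w_i w_i\cg$ together with the dual description of rank-constrained matrices through the sum of the largest few singular values; it is precisely the rank-two structure (rather than working over all of $\C^{n\times n}$, which would cost $n^2$) that yields the dimension dependence $n$, and it is here that sub-Gaussianity of the normalized $w_i$---rather than the uncontrolled upper tail of $\ltwo{a_i}$---is essential. Granting $B = O(\subgauss^2)$, the oscillation is at most $B\delta \lesssim \subgauss^2\delta$, which the constant choice of $\delta$ renders smaller than $t$; combining the resulting uniform bound $\frac1m\sum_i V_i(X) \ge \absWconst^2/4 - t - B\delta$ with the factor $c^2$ and the Frobenius inequality of the first paragraph completes the proof.
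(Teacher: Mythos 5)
Your proposal is correct in outline and follows essentially the same route as the paper's proof: reduce via a Frobenius-norm comparison to a uniform lower bound on $\frac{1}{m}\sum_i |w_i\cg X w_i|$ over unit-Frobenius rank-two Hermitian $X$ after truncating to $\ltwo{a_i} \ge c\sqrt{n}$, lower-bound the expectation by Cauchy--Schwarz together with the small-ball function and sub-Gaussian fourth moments, and then combine one-sided Bernstein concentration with a $\delta$-net ($\delta \asymp \absWconst^2/\subg^2$) whose oscillation term is controlled by operator-norm concentration of $\frac{1}{m}\sum_i w_i w_i\cg$ --- this is exactly the paper's argument (Lemmas~\ref{lemma:frob-rank-two}, \ref{lemma:continuity-F-func-complex}, and~\ref{lemma:cover-F-complex}), with your cover taken directly in Frobenius norm (and oscillation handled through rank-four differences) rather than in the paper's $(U,\Lambda)$ parametrization, an immaterial difference. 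The only deviations are numerical: your elementary inequality loses a factor $\sqrt{2}$ where Lemma~\ref{lemma:frob-rank-two} is sharp, and your truncation-error calibration (taking $C = 1+e$ where the rank-two Cauchy--Schwarz bound forces $2(1+e)$) has the same slack as the paper's own inequality~\eqref{eqn:truncated-a-good-growth-complex}, so both arguments require tightening the hypothesis constant $\frac{1}{2(1+e)}$ by a fixed numerical factor.
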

\noindent
The proposition shows that if the ratio between the growth constant
$\absWconst$ and the sub-Gaussian constant $\subg$ in
Assumption~\ref{assumption:rank-2-growth} is bounded, as it is in the case
of rotationally invariant vectors $a$
(Example~\ref{example:rotation-invariance}), then we have the
complex stability guarantee~\eqref{eqn:stability-complex} as soon as
$m \gtrsim n$ with extremely high probability.

\subsection{Quadratic Approximation}
\label{sec:no-noise-qa}

With the stability condition~\ref{condition:stability} in place, we turn to
a discussion of the approximation condition~\ref{condition:approximation}.
As implied by the estimate in inequality~\eqref{eqn:approximation-opnorm},
the quadratic approximatio ncondition is satisfied with parameter $\lipconst
= 2\opnorms{\frac{1}{m} A\cg A}$.  To control this quantity, we require that
the rows of the matrix $A \in \C^{m \times n}$ be sufficiently light-tailed.
\begin{assumption}
  \label{assumption:sub-gaussian-vector}
 The random vector $a \in \C^n$ is $\subgauss^2$-sub-Gaussian.
\end{assumption}
\noindent
It is of course possible to bound $\opnorm{A}$ when the rows 
have heavier tails using appropriate symmetrization
techniques and matrix Khintchine inequalities
(cf.~\cite[Sec.~2.6]{Vershynin12});
the extension is clear, so we do not address such issues.

Certainly, not all measurement vectors $a_i$ satisfy
Assumption~\ref{assumption:sub-gaussian-vector}. Using that $\E[e^{\lambda
    Z^2}] =\hinge{1 - 2 \lambda}^{-\half}$ for $Z \sim \normal(0, 1)$, it
holds for standard real normal vectors $a_i \simiid \normal(0, I_n)$ or
complex normal vectors $a_i \simiid \frac{1}{\sqrt{2}}\left(\normal(0,
I_n) + \imagunit \normal(0, I_n)\right)$
with $\subgauss^2 = \frac{2 e^2}{e^2 - 1}
\approx 2.313$. Similarly, it also holds for $a_i$ uniform on
$\sphere^{n-1}$ with $\subgauss^2 = O(1) \cdot \frac{1}{n}$.  In practice it
may be useful to apply our algorithm to the transformed data $\{a_i /
\ltwo{a_i}\}_{i = 1}^m$ and $\{b_i / \ltwo{a_i}^2\}_{i=1}^m$, which (in the
noiseless case or case with infrequent but arbitrary corruptions of $b_i$)
is likely to make the problem better conditioned.  There are two heuristic
motivations for this: first, those measurement vectors with larger
magnitudes $\norm{a_i}$ tend to place a higher weight in the optimization
problem~\eqref{eqn:optimization-problem}, and thus normalization can make
the observations ``comparable'' to each other; second, normalization
guarantees the measurement vectors $\{a_i\}_{i=1}^m$ satisfy
Assumption~\ref{assumption:sub-gaussian-vector}, yielding easier
verification of Condition~\ref{condition:approximation}.  (It may be more
challenging to verify Assumption~\ref{assumption:stability}, but if the
$a_i$ are sufficiently isotropic this presents no special difficulties.)

Standard results guarantee that the random matrices $A$ have well-behaved
singular vectors whenever Assumption~\ref{assumption:sub-gaussian-vector}
holds; we provide one  such result due to Vershynin~\cite[Thm.~39,
  Eq.~(25)]{Vershynin12} with constants that are achievable by
tracing his proof.
\begin{lemma}
  \label{lemma:matrix-concentration}
  Let Assumption~\ref{assumption:sub-gaussian-vector} hold
  and $\Sigma = \E[aa\cg]$. Then
  for all $t \ge 0$,
  \begin{equation*}
    \P\left(\opnorm{\frac{1}{m} A\cg A - \Sigma}
    \ge 11 \subgauss^2 \max\left\{
    \sqrt{\frac{4n}{m} + t},
    \frac{4n}{m} + t \right\}
    \right) \le \exp(-m t).
  \end{equation*}
  Moreover, $\opnorm{\Sigma} \le \subgauss^2$ and
  $\E[|\langle a, v\rangle|^k] \le \Gamma(\frac{k}{2} + 1) e \subgauss^k$
  for all $k \ge 0$.
\end{lemma}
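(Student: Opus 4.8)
The plan is to establish the two moment statements first, since they feed the main concentration bound, and then to control $\opnorm{\frac1m A\cg A - \Sigma}$ by a standard covering-plus-Bernstein argument. For the moment claims, fix a unit vector $v$ and write $Z = |\<a,v\>|$, so that Assumption~\ref{assumption:sub-gaussian-vector} reads $\E[\exp(Z^2/\subgauss^2)] \le e$. Since $\exp$ is convex, Jensen's inequality gives $\exp(\E[Z^2]/\subgauss^2) \le \E[\exp(Z^2/\subgauss^2)] \le e$, hence $\E[Z^2] = v\cg \Sigma v \le \subgauss^2$; as $\Sigma$ is Hermitian and positive semidefinite, taking the supremum over unit $v$ yields $\opnorm{\Sigma} \le \subgauss^2$. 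For the moment bound I would set $Y = Z^2/\subgauss^2 \ge 0$ and use the pointwise inequality $Y^s \le \Gamma(s+1)\, e^Y$, which follows from $\sup_{y\ge 0} y^s e^{-y} = s^s e^{-s} \le \Gamma(s+1)$. Taking $s = k/2$ and expectations gives $\E[Z^k] = \subgauss^k \E[Y^{k/2}] \le \Gamma(\frac{k}{2}+1)\, e\, \subgauss^k$, which is the claimed bound.

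For the operator-norm estimate, note that $M \defeq \frac1m A\cg A - \Sigma$ is Hermitian, and $v\cg M v = \frac1m \sum_{i=1}^m |\<a_i,v\>|^2 - \E[|\<a,v\>|^2]$, so $\opnorm{M} = \sup_{v\in\sphere^{n-1}} |v\cg M v|$. I would discretize this supremum: let $\cover$ be a $\frac14$-net of the complex unit sphere, which has cardinality $|\cover| \le 9^{2n}$ since the sphere sits in $\R^{2n}$. The standard net bound for Hermitian matrices then gives $\opnorm{M} \le 2 \max_{v\in\cover} |v\cg M v|$, reducing the problem to a finite maximum.

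For each fixed $v\in\cover$, the quantity $v\cg M v = \frac1m\sum_i X_i$ is an average of the i.i.d.\ centered variables $X_i = |\<a_i,v\>|^2 - \E[|\<a,v\>|^2]$. Because $\<a_i,v\>$ is $\subgauss$-sub-Gaussian, each $|\<a_i,v\>|^2$ is sub-exponential with Orlicz-$\psi_1$ norm of order $\subgauss^2$ --- precisely the content controlled by the moment bounds just derived --- so $X_i$ is centered sub-exponential with parameter $K \asymp \subgauss^2$. Bernstein's inequality for sub-exponential sums then yields, for each $v$, a tail of the form $\P(|\frac1m\sum_i X_i| \ge s) \le 2\exp(-cm\min\{s^2/K^2,\, s/K\})$. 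Taking a union bound over $\cover$ and absorbing $\log|\cover| \lesssim n$ into the exponent produces a bound that, solved for $s$, takes exactly the two-regime form $s \asymp \subgauss^2\max\{\sqrt{n/m+t},\, n/m+t\}$; combining with the factor $2$ from the net gives the stated inequality.

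The conceptual steps --- Jensen, the net reduction, and Bernstein --- are routine, so the real work is bookkeeping the numerical constants to reach the stated $11\subgauss^2$ prefactor and the $4n$ appearing inside the maximum. These follow the route of Vershynin's proof \cite[Thm.~39]{Vershynin12}, where the $4n$ tracks the covering number of the complex sphere together with the sub-exponential parameter, and the $11$ accumulates the constants in the net bound and in Bernstein's inequality; I would simply trace his argument to pin these down. I expect this constant-tracking, rather than any single inequality, to be the main obstacle.
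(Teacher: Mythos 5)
Your proposal is correct and takes essentially the same route as the paper: the paper gives no proof of this lemma at all, instead citing Vershynin's Theorem 39 with constants ``achievable by tracing his proof,'' which is precisely the net-plus-sub-exponential-Bernstein argument you sketch and defer the constant bookkeeping to. Your Jensen argument for $\opnorm{\Sigma} \le \subgauss^2$ and the pointwise inequality $y^s \le \Gamma(s+1)e^y$ yielding $\E[|\<a,v\>|^k] \le \Gamma(\tfrac{k}{2}+1)e\subgauss^k$ are both valid, and in fact supply details the paper leaves entirely to the citation.
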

\noindent
Thus, we have the following 
corollary of Lemma~\ref{lemma:matrix-concentration}, which
guarantees that Condition~\ref{condition:approximation} holds with
high probability for $m/n \gtrsim 1$.
\begin{corollary}
  \label{corollary:no-noise-quadratic-approximation}
  Let Assumption~\ref{assumption:sub-gaussian-vector} hold.
  Then there exists a numerical constant $c < \infty$ such that whenever
  $m \ge c n$
  \begin{equation*}
    \P\left(\left|f_x(y) - f(y)\right| \leq 2 \subgauss^2 \ltwo{x-y}^2 
    ~\mbox{holds uniformly for}~ x, y\in \R^n\right)
    \ge 1- \exp\left(-\frac{m}{c}\right).
  \end{equation*}
\end{corollary}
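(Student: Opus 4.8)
The plan is to reduce the claim to a bound on the operator norm $\opnorm{\frac{1}{m} A\cg A}$, since this is precisely the quantity that controls the quadratic approximation error. Recall from inequality~\eqref{eqn:approximation-opnorm} that for every $x, y \in \R^n$ one has the purely deterministic bound
\[
  |f(y) - f_x(y)| \le \opnorm{\tfrac{1}{m} A\cg A}\, \ltwo{x - y}^2,
\]
which already holds simultaneously for all $x, y$ once the matrix $A$ is fixed. Consequently it suffices to show that, with probability at least $1 - \exp(-m/c)$, the single event $\opnorm{\frac{1}{m} A\cg A} \le 2\subgauss^2$ occurs; on this event the desired uniform guarantee follows immediately.

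To control $\opnorm{\frac{1}{m} A\cg A}$, I would set $\Sigma = \E[a a\cg]$ and split via the triangle inequality,
\[
  \opnorm{\tfrac{1}{m} A\cg A} \le \opnorm{\Sigma} + \opnorm{\tfrac{1}{m} A\cg A - \Sigma}.
\]
Lemma~\ref{lemma:matrix-concentration} supplies both pieces at once: it gives the deterministic bound $\opnorm{\Sigma} \le \subgauss^2$, and it controls the deviation $\opnorm{\frac{1}{m} A\cg A - \Sigma}$ by $11 \subgauss^2 \max\{\sqrt{4n/m + t},\, 4n/m + t\}$ outside an event of probability at most $\exp(-mt)$.

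The remaining step is to choose $t$ and the constant $c$ so that the deviation term is at most $\subgauss^2$, which forces $\opnorm{\frac{1}{m} A\cg A} \le 2\subgauss^2$. Since $\max\{\sqrt{s}, s\} \le \frac{1}{11}$ whenever $s \le \frac{1}{121}$, I would require $4n/m + t \le \frac{1}{121}$. Taking $m \ge cn$ with $c$ large enough makes $4n/m$ at most half of $\frac{1}{121}$, leaving room to fix a \emph{constant} $t$ of the same order; the failure probability $\exp(-mt)$ then has the form $\exp(-m/c)$ after enlarging $c$ if necessary so that a single $c$ simultaneously enforces $m \ge cn$ and produces the clean exponent. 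This bookkeeping of numerical constants is the only genuinely fiddly part, and it is entirely routine.

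Putting the pieces together on the intersection of these (a single) event yields $\opnorm{\frac{1}{m} A\cg A} \le 2\subgauss^2$, and combining with inequality~\eqref{eqn:approximation-opnorm} gives $|f(y) - f_x(y)| \le 2\subgauss^2 \ltwo{x - y}^2$ uniformly in $x, y \in \R^n$, as claimed. The substance of the argument lives entirely in Lemma~\ref{lemma:matrix-concentration}; the corollary is essentially a direct consequence once the deterministic reduction~\eqref{eqn:approximation-opnorm} is in hand.
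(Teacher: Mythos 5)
Your proposal is correct and follows essentially the same route as the paper's own proof: a deterministic reduction via inequality~\eqref{eqn:approximation-opnorm} to the event $\opnorm{\frac{1}{m} A\cg A} \le 2\subgauss^2$, which is then established from Lemma~\ref{lemma:matrix-concentration} (the bound $\opnorm{\Sigma} \le \subgauss^2$ plus the deviation bound with a suitably small constant $t$) and the triangle inequality. Your write-up is in fact slightly more careful than the paper's, which states the triangle inequality with the terms inadvertently transposed.
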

\begin{proof}
  Assume $m \geq cn$ for $c$ large enough, and
  choose $t$ small enough in Lemma~\ref{lemma:matrix-concentration}
  that $11 \subgauss^2 \sqrt{4n / m + t} \le \subgauss^2$.
  Applying the triangle inequality to
  $\opnorms{\frac{1}{m} A\cg A - \Sigma} \le
  \opnorms{\frac{1}{m} A\cg A} + \subgauss^2$ yields the result
  by equation~\eqref{eqn:approximation-opnorm}.
\end{proof}

\newcommand{\smallprob}{\kappa}
\newcommand{\perploss}{\phi}
\newcommand{\residual}{\Delta}

\subsection{Initialization}
\label{sec:no-noise-init}

The last ingredient in achieving strong convergence guarantees for our
prox-linear procedure for phase retrieval is to provide a good
initialization. There are a number of initialization strategies in the
literature~\cite{CandesLiSo15, WangGiEl16, ZhangChLi16} based on spectral
techniques, which work as follows. We decompose the initialization into two
steps: we (i) find an estimate of the direction direction $d\subopt \defeq
x\subopt/\ltwo{x\subopt}$, and (ii) estimate the magnitude $r\subopt \defeq
\ltwo{x\subopt}$. The latter is easy: assuming that $\E[aa\cg] = I_n$, one
simply uses $\what{r}^2 = m^{-1}\sum_{i = 1}^m b_i^2$, which is unbiased and
tightly concentrated. The former, the direction estimate, is somewhat
trickier.

Wang, Giannakis, and Eldar~\cite{WangGiEl16} provide an empirically excellent initialization whose
heuristic justification is as follows. First, for random vectors $a_i$ in
high dimensions, we expect $a_i$ to usually be orthogonal to the direction
$d\subopt$. Thus, by extracting the smallest magnitude $b_i = |\<a_i,
x\subopt\>|^2$, we have the vectors $a_i$ that are ``most'' orthogonal to the
direction $d\subopt$; letting $\selected$ be these small indices, the
eigenvector corresponding to the smallest eigenvalue (for simplicity, we
simply call this the smallest eigenvector) of $\sum_{i \in \selected} a_i
a_i\cg$ should be close to the direction $d\subopt$. A variant of this procedure
is to note that $\frac{1}{m} \sum_{i = 1}^m a_i a_i\cg \approx I_n$ when the
$a_i$ are isotropic, so that the largest eigenvector of $\sum_{i \not \in
  \selected} a_i a_i\cg$ should also be close to $d\subopt$. This initalization
strategy has the added benefit that---unlike the original spectral
initialization schemes developed by Cand\`{e}s et al.~\cite{CandesLiSo15}, which rely on
eigenvectors of $\sum_{i = 1}^m b_i a_i a_i\cg$ that may not concentrate at
sub-Gaussian rates (as the sum involves fourth moments of random
vectors)---the sums $\sum_i a_i a_i\cg$ are tightly concentrated.

Unfortunately, we believe Wang et al.'s proof that this
initialization works contains a mistake (note that they consider only the 
case where $\{a_i\}_{i=1}^m$ are real):  letting $U \in \R^{n \times (n-1)}$
be an orthogonal matrix whose columns are all orthogonal to $d\subopt$, in the
proof of Lemma 2 (Eqs.~(68)--(70) in~\cite{WangGiEl16}) they assert that
$|\selected^c|^{-1} \sum_{i \in \selected^c} U\cg a_i a_i U \to I_{n-1}$ when
the $a_i$ are uniform on $\sqrt{n} \sphere^{n-1}$. This is not true
(nor does appropriate normalization by $n$ or $n-1$ make it true), as it
ignores the subtle effects of conditioning in the construction of
$\selected$. In spite of this issue, the initialization
they propose works remarkably well, and as we show
presently, it provably provides a good estimate $\what{d}$ of $d\subopt$.
We include the initialization procedure in
Algorithm~\ref{alg:non-noisy-init}.

\begin{algorithm}[t]
\label{algorithm:non-noisy-initialization}
  \caption{\label{alg:non-noisy-init} Initialization procedure for
    non-noisy data}
  \KwData{Measurement matrix $A \in \C^{m \times n}$ and signals
    $b = |Ax\subopt|^2$}
\Begin{
    Set $\what{r}^2 = \frac{1}{m} \sum_{i = 1}^m b_i$ \\
    Select indices
    $\selected \defeq \{i \in [m] : b_i \le \half \what{r}^2
    \}$ \\
    Construct directional estimates $\what{d}$ by
    \begin{equation*}
      X\init \defeq \sum_{i \in \selected} a_i a_i\cg,
      ~~~
      \what{d} = \argmin_{d \in \sphere^{n-1}} d\cg X\init d
    \end{equation*} \\
    \KwRet{$(\what{r}, \what{d})$}
  }
\end{algorithm}


\newcommand{\probdir}{\mathsf{p}_0(d\subopt)}

With the previous discussion in mind, we provide a general
assumption that is sufficient for
Alg.~\ref{alg:non-noisy-init} to return a direction and magnitude estimate
sufficiently accurate for phase retrieval.
\label{sec:no-noise-initialization}
\begin{assumption}
  \label{assumption:well-spread}
  For some
  $\epsilon_0 \in \openleft{0}{1}$ and
  $\probdir > 0$, the following hold.
  \begin{enumerate}[(i)]
  \item \label{item:continuity-direction}
    For all $\epsilon \in \openleft{0}{\epsilon_0}$,
    the following continuity and directional likelihood
    conditions hold:
    \begin{equation*}
      \P\left(|\<a, d\subopt\>|^2 \in \left[\frac{1-\epsilon}{2}, \frac{1+\epsilon}{2}\right]\right) 
      \leq \smallprob \epsilon	
      ~~\text{and}~~\P\left(|\<a, d\subopt\>|^2 \leq \frac{1-\epsilon}{2}\right) 
      \geq \probdir >0.
  \end{equation*}
  \item \label{item:conditional-id} There exist
    functions 
    $\perploss : [0, \epsilon_0] \to \R_+$ and
    $\residual : [0, \epsilon_0] \to \C^{n \times n}$
    such that
    \begin{equation*}
      \E \left[aa\cg \mid |\<a, d\subopt\>|^2 \leq \frac{1-\epsilon}{2} \right]
      = I_n - \perploss(\epsilon) d\subopt d\subopt\cg + \residual(\epsilon)
      ~~ \mbox{for} ~ \epsilon \in [0, \epsilon_0].
    \end{equation*}
  \item \label{item:isotropic} $\E[aa\cg] = I_n$.
  \end{enumerate}
\end{assumption}
\noindent
Assumption~\ref{assumption:well-spread} on its face seems complex, but each
of its components is not too
stringent. Part~\eqref{item:continuity-direction} says that $|\<a,
d\subopt\>|^2$ has no point mass at $|\<a, d\subopt\>|^2 = \half$ and that
$|\<a, d\subopt\>|^2$ has reasonable probability of being smaller than
$\half$. Part~\eqref{item:isotropic} simply states that in expectation, $a$
is isotropic (and a rescaling of $a$ can guarantee this).
Part~\eqref{item:conditional-id} is the most subtle and essential for our
derivation; it says that $a \in \R^n/\C^n$ is reasonably isotropic, even if
we condition on $|\<a, d\subopt\>|$ being near zero for some direction
$d\subopt$, so that most mass of $aa\cg$ is distributed uniformly in the
orthogonal directions $I_n - d\subopt d\subopt\cg$.  The error terms
$\perploss$ and $\residual$ allow non-trivial latitude in this condition, so
that Assumption~\ref{assumption:well-spread} holds for more than just
Gaussian vectors. That said, for concreteness we provide the following
example.

\begin{example}[Gaussian vectors and conditional directions]
  We consider the standard cases that $a_i \simiid \normal(0, I_n)$, or the
  complex counterpart $a_i \simiid \frac{1}{\sqrt{2}} \left(\normal(0, I_n)
  + \imagunit \normal(0, I_n)\right)$, showing that such $a_i$ satisfy
  Assumption~\ref{assumption:well-spread} for any $\epsilon_0 \in (0, 1)$
  with residual error $\residual \equiv 0$.  Clearly
  Part~\eqref{item:isotropic} holds.  For
  Part~\eqref{item:continuity-direction}, note that $|\<a, d\subopt\>|^2$ is
  $\chi_1^2$-distributed with density $f(t) = e^{-t/2} (2 \pi t)^{-\half}$.
  Integrating the density using its upper bound, we may set $\smallprob \le
  f(\frac{1 - \epsilon_0}{2}) = \exp(\frac{\epsilon_0 - 1}{4}) / \sqrt{\pi
    (1 - \epsilon_0)}$ and $\probdir = \P(\chi_1^2 \leq
  \frac{1-\epsilon_0}{2}) \ge \half \sqrt{1 - \epsilon_0}$.
  Part~\eqref{item:conditional-id} is all that remains. By the rotational
  invariance of $a \sim \normal(0, I_n)$, we see for any $t \in \R_+$ that
  \begin{equation*}
    \E[aa\cg \mid |\<a, d\subopt\>|^2 \leq t]
    = I_n - d\subopt d\subopt\cg
    + \E [|\<a, d\subopt\>|^2 \mid |\<a, d\subopt\>|^2 \leq t]
    d\subopt d\subopt\cg.
  \end{equation*}
  We claim the following lemma, whose proof we provide in 
  Appendix~\ref{sec:proof-ez-square}.
  \begin{lemma}
    \label{lemma:ez-square}
    Let $Z$ be a continuous random variable with a density symmetric about zero 
    and decreasing on $\R_+$. Then for any $c \in \R$
    we have
    $\E[Z^2 \mid Z^2 \le c^2] \le \frac{c^2}{3}$.
  \end{lemma}
  \noindent
  Thus, setting $\perploss(\epsilon) = 1 - \E[|\<a, d\subopt\>|^2 \mid 
  |\<a,  d\subopt\>|^2 \le \frac{1 - \epsilon}{2}] \ge 1 - \frac{(1 - \epsilon)^2}{6}
  \ge \frac{5}{6}$, we see that part~\eqref{item:conditional-id} of
  Assumption~\ref{assumption:well-spread}
  holds with $\perploss(\epsilon) \ge \frac{5}{6}$
  and $\residual(\epsilon) \equiv 0$.
\end{example}

We now state our main proposition of this section.
\begin{proposition}
  \label{proposition:non-noisy-initialization}
  Let Assumptions~\ref{assumption:sub-gaussian-vector}
  and~\ref{assumption:well-spread} hold and let $\epsilon_0$ and $\probdir$ be
  as in Assumption~\ref{assumption:well-spread}.
  Define the error measure
  \begin{equation*}
    \errorterm(\epsilon)
    \defeq \left[1
      + \frac{4 e (1 + \log \frac{1}{1 \wedge \smallprob \epsilon})
        \subgauss^2 \smallprob}{\probdir}
      + \frac{2 \smallprob (1+\epsilon)}{{\probdir}^2}
    \right] \epsilon.
  \end{equation*}
  There exists a numerical constant
  $c > 0$ such that the following
  holds. Let $(\what{r}, \what{d})$ be the estimated magnitude
  and direction of Alg.~\ref{alg:non-noisy-init} and
  define $x_0 = \what{r} \what{d}$. For any $\epsilon \in [0, \epsilon_0]$,
  if
  \begin{equation*}
    c \cdot \frac{m}{n}
    \ge \frac{\subgauss^4 \log^2 \probdir}{
      \probdir \epsilon^2}
    \vee
    \frac{1}{(\smallprob \epsilon)^2}
  \end{equation*}
  then
  \begin{equation}
    \label{eqn:no-noise-dist}
    \frac{\dist(x_0, X\subopt)}{\ltwo{x\subopt}}
    \le \sqrt{2(1 + \epsilon)}\frac{
      \opnorm{\residual(\epsilon)} + \errorterm(\epsilon)}{
      \hinge{\perploss(\epsilon)
        - \opnorm{\residual(\epsilon)}
        - \errorterm(\epsilon)}}
    + \epsilon
  \end{equation}
  with probability at least
  \begin{equation*}
    1
    - \exp\left(-\frac{c m \epsilon^2}{\subgauss^4}\right)
    - 2\exp\left(-c m \epsilon^2 \smallprob^2\right)
    - \exp\left(-\frac{m \probdir^2}{2}\right)
    - \exp\left(-\frac{cm \smallprob \epsilon^2}{
      \subgauss^4 \log^2 \probdir}\right).
  \end{equation*}
\end{proposition}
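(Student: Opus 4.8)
The plan is to control the magnitude estimate $\what{r}$ and the direction estimate $\what{d}$ separately, and then to combine them into a bound on $\dist(x_0, X\subopt)$. The central difficulty---precisely the step where the argument of Wang et al.\ errs---is that the selected set $\selected = \{i : b_i \le \half \what{r}^2\}$ is \emph{not} independent across the samples $a_i$: it is built from $\what{r}$, which itself depends on all of the data. I would remove this coupling by a sandwiching argument. Writing $b_i = \ltwo{x\subopt}^2 |\<a_i, d\subopt\>|^2$, membership $i \in \selected$ is equivalent to $|\<a_i, d\subopt\>|^2 \le \what{r}^2 / (2\ltwo{x\subopt}^2)$, so whenever the magnitude estimate obeys $\what{r}^2/\ltwo{x\subopt}^2 \in [1-\epsilon, 1+\epsilon]$ we have
\[
  \yesselected = \{i : |\<a_i, d\subopt\>|^2 \le \frac{1-\epsilon}{2}\}
  \subseteq \selected \subseteq
  \maybeselected = \{i : |\<a_i, d\subopt\>|^2 \le \frac{1+\epsilon}{2}\}.
\]
The key point is that $\yesselected$ and $\maybeselected$ use \emph{deterministic} thresholds on $|\<a_i, d\subopt\>|^2$, so their membership indicators are i.i.d.\ across $i$; standard i.i.d.\ matrix concentration applies to $\sum_{i \in \yesselected} a_i a_i\cg$ and to the ``margin'' $\maybeselected \setminus \yesselected$, whereas it cannot be applied directly to $\selected$.

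Next I would establish the high-probability events. Since $\what{r}^2 = \frac{\ltwo{x\subopt}^2}{m} \sum_i |\<a_i, d\subopt\>|^2$ is an average of i.i.d.\ sub-exponential variables with mean $\ltwo{x\subopt}^2 \E[|\<a, d\subopt\>|^2] = \ltwo{x\subopt}^2$ (using the isotropy part \eqref{item:isotropic} of Assumption~\ref{assumption:well-spread}), a Bernstein bound valid under the sub-Gaussian Assumption~\ref{assumption:sub-gaussian-vector} gives $\what{r}^2/\ltwo{x\subopt}^2 \in [1-\epsilon, 1+\epsilon]$ except with probability $\exp(-c m \epsilon^2/\subgauss^4)$, which both furnishes the sandwich and supplies the first claimed exponential term. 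Binomial/Bernstein concentration of the counts $|\yesselected|$ and $|\maybeselected \setminus \yesselected|$ around $m\,\P(|\<a, d\subopt\>|^2 \le \frac{1-\epsilon}{2}) \ge m\,\probdir$ and $m\,\P(\text{margin}) \le m\,\smallprob\epsilon$ (the two halves of part~\eqref{item:continuity-direction}) accounts for the remaining exponential terms, guaranteeing that $|\yesselected|/m$ is bounded below and that the margin set is genuinely small.

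I then turn to the direction. Abbreviate $p = \P(|\<a, d\subopt\>|^2 \le \frac{1-\epsilon}{2}) \ge \probdir$. On the sandwich event, decompose $X\init = \sum_{i \in \yesselected} a_i a_i\cg + R$ with $0 \preceq R \preceq \sum_{i \in \maybeselected \setminus \yesselected} a_i a_i\cg$, so that $\opnorm{R}$ is at most the operator norm of the margin sum. I.i.d.\ sub-Gaussian matrix concentration applied to the deterministic-threshold sum gives $\frac{1}{m}\sum_{i \in \yesselected} a_i a_i\cg = p\,\E[a a\cg \mid |\<a, d\subopt\>|^2 \le \frac{1-\epsilon}{2}] + E_0 = p(I_n - \perploss(\epsilon) d\subopt d\subopt\cg + \residual(\epsilon)) + E_0$ by part~\eqref{item:conditional-id}, with $\opnorm{E_0}$ small. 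Folding $\residual$, $E_0$, and $R/m$ into a single Hermitian perturbation yields
\[
  \frac{1}{m} X\init
  = p\left(I_n - \perploss(\epsilon) d\subopt d\subopt\cg\right) + E,
  \qquad
  \opnorm{E} \le p\,\opnorm{\residual(\epsilon)} + \opnorm{E_0} + \opnorm{R}/m.
\]
The clean matrix has smallest eigenvalue $p(1 - \perploss(\epsilon))$ with eigenvector \emph{exactly} $d\subopt$ and a spectral gap $p\,\perploss(\epsilon)$ to its other eigenvalue $p$. As $\what{d}$ is the bottom eigenvector of $X\init$, the Davis--Kahan $\sin\theta$ theorem gives
\[
  \sin\angle(\what{d}, d\subopt)
  \le \frac{\opnorm{E}/p}{\perploss(\epsilon) - \opnorm{E}/p}
  \le \frac{\opnorm{\residual(\epsilon)} + \errorterm(\epsilon)}{
    \hinge{\perploss(\epsilon) - \opnorm{\residual(\epsilon)} - \errorterm(\epsilon)}},
\]
where $\errorterm(\epsilon)$ collects $(\opnorm{E_0} + \opnorm{R}/m)/p$ and the map $t \mapsto t/(\perploss - t)$ is increasing. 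Finally I convert the angle to the signal distance: aligning the phase $\theta$ so that $e^{\imagunit\theta} d\subopt$ matches the $d\subopt$-component of $\what{d}$ and splitting $\what{d}$ into parts parallel and orthogonal to $d\subopt$, a direct computation gives $\dist(x_0, X\subopt)^2 \le \what{r}^2 \sin^2\angle(\what{d}, d\subopt) + (\what{r} - \ltwo{x\subopt})^2 \le \ltwo{x\subopt}^2\big[(1+\epsilon)\sin^2\angle(\what{d}, d\subopt) + \epsilon^2\big]$, and subadditivity of $\sqrt{\cdot}$ produces the claimed prefactor $\sqrt{2(1+\epsilon)}$ and additive $\epsilon$.

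The main obstacles are twofold. The first is conceptual: correctly handling the dependence introduced by selecting on $\what{r}$, which is exactly where the prior argument breaks down; the sandwich between $\yesselected$ and $\maybeselected$ resolves it by reducing everything to i.i.d.\ deterministic-threshold sums, at the cost of the margin remainder $R$. The second is technical: bounding $\opnorm{\sum_{i \in \maybeselected \setminus \yesselected} a_i a_i\cg}$. Although this sum has only about $m\,\smallprob\epsilon$ terms in expectation, each rank-one $a_i a_i\cg$ is merely sub-exponential, so a naive estimate loses a factor; controlling the operator norm sharply---via truncation of $\ltwo{a_i}^2$ together with a Bernstein/covering bound for the bounded part---is what produces the $\log \frac{1}{1 \wedge \smallprob\epsilon}$ factor and the $\subgauss^2 \smallprob/\probdir$ scaling appearing in $\errorterm(\epsilon)$, and it is this term, together with the concentration error $\opnorm{E_0}/p \lesssim \subgauss^2 \sqrt{n/m}\,/\,p$, that forces the sample-complexity requirement $c\,m/n \ge \frac{\subgauss^4 \log^2 \probdir}{\probdir\,\epsilon^2} \vee (\smallprob\epsilon)^{-2}$ needed to drive $\errorterm(\epsilon)$ down to order $\epsilon$.
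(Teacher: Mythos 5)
Your overall architecture is the same as the paper's: the sandwich $\yesselected \subseteq \selected \subseteq \maybeselected$ on the event that $\what{r}^2 \in [1\pm \epsilon]\ltwo{x\subopt}^2$, H\"older-based control of the margin sum $\sum_{i \in \maybeselected \setminus \yesselected} a_i a_i\cg$ (which is exactly what produces the $\log\frac{1}{1 \wedge \smallprob\epsilon}$ factor in $\errorterm$), a Davis--Kahan perturbation step, and the final triangle-inequality assembly. The gap is in your treatment of the main term $\frac{1}{m}\sum_{i \in \yesselected} a_i a_i\cg$. You apply \emph{unconditional} sub-Gaussian matrix concentration to the indicator-weighted sum, obtaining $\opnorm{E_0} \lesssim \subgauss^2 \sqrt{n/m}$ around the mean $p\, \E[aa\cg \mid |\<a, d\subopt\>|^2 \le \frac{1-\epsilon}{2}]$, where $p \ge \probdir$, and then divide by $p$ before invoking Davis--Kahan. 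This gives $\opnorm{E_0}/p \lesssim \subgauss^2\sqrt{n/m}/\probdir$, and making that of order $\epsilon$ requires $m/n \gtrsim \subgauss^4/(\probdir^2 \epsilon^2)$. The proposition, however, only grants $c\, m/n \ge \subgauss^4 \log^2 \probdir/(\probdir\epsilon^2)$, under which your error term is of order $\epsilon/(\sqrt{\probdir}\,|\log \probdir|)$, which diverges relative to $\epsilon$ as $\probdir \to 0$. Note that $\errorterm(\epsilon)$ allots only the additive ``$1 \cdot \epsilon$'' of slack for this error (its other two bracket terms are consumed by the margin and normalization contributions), so your claim that $\opnorm{E_0}/p \lesssim \subgauss^2 \sqrt{n/m}/p$ ``forces'' the stated sample complexity is backwards: it forces a strictly stronger requirement, worse by a factor of roughly $1/(\probdir \log^2 \probdir)$ when $\probdir$ is small, and the proof as written does not establish the proposition in its stated form.

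The missing idea is the crux of the paper's argument (its event $\event_5$ and Lemma~\ref{lemma:non-noisy-initial-five}): condition on the selection rather than integrating it out. Conditional on $\yesselected = \indset$, the vectors $\{a_i\}_{i \in \indset}$ are i.i.d.\ draws from the conditional law $\mc{L}(a \mid |\<a, d\subopt\>|^2 \le \frac{1-\epsilon}{2})$, and a H\"older argument on the conditional moment generating function shows this law is still sub-Gaussian, with parameter inflated only to $\tau^2 = \subgauss^2 \log\frac{e}{\probdir}$. Matrix concentration then applies at effective sample size $|\yesselected| \ge \half \probdir m$ \emph{directly} to $\opnorm{\frac{1}{|\yesselected|}\sum_{i \in \yesselected} a_i a_i\cg - (I_n - \perploss(\epsilon) d\subopt d\subopt\cg + \residual(\epsilon))}$, whose target is $\epsilon$ rather than $p\epsilon$; the resulting deviation $\tau^2\sqrt{n/(\probdir m)}$ is what yields the stated sample complexity and the final probability term $\exp(-cm\probdir\epsilon^2/(\subgauss^4\log^2\probdir))$. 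Your $m$-normalized formulation could in principle be repaired instead with a Bernstein-type matrix inequality whose variance proxy exploits that each summand vanishes with probability $1 - p$ (plus truncation and H\"older moment bounds), but the crude sub-Gaussian bound you invoke cannot see this sparsity, and that is precisely where the factor is lost.
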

\noindent
We prove Proposition~\ref{proposition:non-noisy-initialization} in two
parts. In the first part
(Sec.~\ref{sec:proof-non-noisy-initialization-cond}), we define a number of
events and proceed conditionally, showing that if each of the events occurs
then the conclusion~\eqref{eqn:no-noise-dist} holds.  In the second part
(Sec.~\ref{sec:proof-non-noisy-initialization-prob}) we show that the events
occur with high probability.

We provide a few remarks to make the result clearer. Let us make the
simplifying assumptions that the constants in
Assumption~\ref{assumption:well-spread} are absolute (which
is true for Gaussian measurements), that is, that that
$\subgauss^2 = O(1)$ and $\probdir = \Omega(1)$
(it is no loss of generality to assume $\smallprob \ge 1$).
Then for numerical constants $c > 0, C < \infty$ we have for any
$\epsilon \in [0, \epsilon_0]$ that
the error measure $\errorterm(\epsilon)
\le C \epsilon \log \frac{1}{\epsilon}$, and
with probability at least $1 - 5 \exp(-c m \epsilon^2)$ that
\begin{equation*}
  \frac{m}{n} \ge \frac{C}{\epsilon^2}
  ~~ \mbox{implies} ~~
  \frac{\dist(x_0, X\subopt)}{\ltwo{x\subopt}}
  \le C\frac{\opnorm{\residual(\epsilon)} + \errorterm(\epsilon)}{
      \hinge{\perploss(\epsilon) - \opnorm{\residual(\epsilon)}
        - \errorterm(\epsilon)}}
  + \epsilon.
\end{equation*}
Here, we see three competing terms. The first two, the separation
$\perploss(\epsilon)$ and error $\residual(\epsilon)$, arise from the
conditional expectation of
Assumption~\ref{assumption:well-spread}\eqref{item:conditional-id}, where
$\E[aa\cg \mid \langle a, d\subopt\rangle^2 \le \frac{1 - \epsilon}{2}] =
I_n - \perploss(\epsilon) d\subopt d\subopt\cg + \residual(\epsilon)$.
This is intuitive: the larger the separation $\perploss(\epsilon)$ from
uniformity in the conditional expectation of $aa\cg$, the easier it is for
spectral initialization to succeed; larger error $\residual(\epsilon)$ will
hide the directional signal $d\subopt$.  The last term is the error
$\errorterm(\epsilon) \lesssim \epsilon \log \frac{1}{\epsilon}$, which
approaches 0 nearly as quickly as $\epsilon \to 0$.

In the case that the error $\residual(\epsilon) = 0$ and gap
$\perploss(\epsilon)$ is bounded away from zero, which holds for elliptical
distributions with identity covariance---the Gaussian distribution and
uniform distribution on the sphere being the primary
examples---we thus see that as soon as $\frac{m}{n} \gtrsim \epsilon^{-2}$
we have relative error
\begin{equation}
  \frac{\dist(x_0, X\subopt)}{\ltwo{x\subopt}}
  \lesssim \epsilon \log \frac{1}{\epsilon}
  ~~ \mbox{with~probability~}
  \ge 1 - 5 \exp(-c m \epsilon^2).
  \label{eqn:good-close-initialization}
\end{equation}
That is, we can construct an arbitrarily good initialization with
large enough sample size. (This proves that the initialization scheme
of Wang et al.~\cite{WangGiEl16} also succeeds with high probability.)
On the other hand, when $\residual(\epsilon) \neq 0$ for all $\epsilon \in
[0, \epsilon_0]$, then
Proposition~\ref{proposition:non-noisy-initialization} cannot guarantee
arbitrarily good initialization: the error
term $\opnorms{\residual(\epsilon)}$ is never zero.
However, if it is small enough, we still achieve initializers that
are within \emph{constant} relative distance of $x\subopt$, which
is good enough for Theorem~\ref{theorem:quadratic-convergence-errors}.


\subsection{Summary and success guarantees}
\label{sec:no-noise-summary}
\newcommand{\finalstabconst}{\nu}

We now have guarantees of stability, quadratic approximation, and
good initialization for appropriate measurement matrices $A \in \R^{m \times
  n}$ when the observations $b = |Ax\subopt|^2$.  We
provide a summary theorem showing that our composite optimization procedure
works as soon as the sample size is large enough.  In stating the
theorem, we assume that each of Assumptions~\ref{assumption:stability},
\ref{assumption:sub-gaussian-vector}, and~\ref{assumption:well-spread}
holds with all of their constants actually numerical constants.
The one somewhat technical assumption we
require is that relating the sub-Gaussian parameter
$\subgauss^2$, the stability parameters
$\stabfunc$ and $\probbig$ (and in the complex case 
$\absWconst$ and the function $\smallfunc(\cdot)$), and
the error $\residual(\epsilon)$ and directional separation
constants $\perploss(\epsilon)$. Denote the stability constant 
$\finalstabconst = \stabfunc^2 \probbig$ in the real case and 
$\finalstabconst = \left(\smallfunc^{-1}\left(\frac{\absWconst^4}{2(1+e)\subgauss^4}\right)\absWconst\right)^2$. 
Then, in particular, if we assume that for a suitably small numerical constant 
$c > 0$, we have
\begin{equation*}
  \opnorm{\residual(\epsilon)}
  \le c \frac{\nu}{\subgauss^2},
  ~~~
  \perploss(\epsilon) \ge \half \opnorm{\residual(\epsilon)},
  ~~ \mbox{and} ~~
  \perploss(\epsilon) \ge \perploss\subopt > 0
\end{equation*}
for all $\epsilon \in [0, \epsilon_0]$.  We then have the following theorem,
which follows by combining our convergence
Theorem~\ref{theorem:quadratic-convergence-errors}
with Proposition~\ref{proposition:small-ball},
Corollary~\ref{corollary:no-noise-quadratic-approximation},
and Proposition~\ref{proposition:non-noisy-initialization}.
\begin{theorem}
  \label{thm:no-noise-retrieval}
  Let the conditions of the preceding paragraph hold.  There exists
  a numerical
  constant $c > 0$ such that if $\frac{n}{m} < c$, then (i) the
  initializer $x_0$ returned by Alg.~\ref{alg:non-noisy-init} satisfies
  $\dist(x_0, X\subopt) \le \frac{\nu }{8 \subgauss^2} \ltwo{x\subopt}$ and
  (ii) assuming no error in the minimization steps of the prox-linear
  method (Alg.~\ref{alg:prox-linear}),
  \begin{equation*}
    \dist(x_k, X\subopt) \le \ltwo{x\subopt} 2^{-2^k}
  \end{equation*}
  for all $k \in \N$ with probability at least
  $1 - e^{-c m}$.
\end{theorem}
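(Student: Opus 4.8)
The plan is to assemble Theorem~\ref{thm:no-noise-retrieval} from the three ingredients proved earlier in this section and then invoke the convergence Theorem~\ref{theorem:quadratic-convergence-errors}. I would argue the real case in detail (where $X\subopt = \{\pm x\subopt\}$ and $\dist(0, X\subopt) = \ltwo{x\subopt}$) and note that the complex case is identical after replacing Corollary~\ref{corollary:no-noise-stability-of-objective} by Proposition~\ref{proposition:complex-growth} and using $X\subopt = \{e^{\imagunit\theta} x\subopt\}$. The strategy is to realize Conditions~\ref{condition:stability} and~\ref{condition:approximation}, together with the initialization hypothesis of Theorem~\ref{theorem:quadratic-convergence-errors}, on one event whose complement has probability $e^{-cm}$, and then read off the rate.

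First I would convert the stability estimate. Corollary~\ref{corollary:no-noise-stability-of-objective} gives, on an event of probability at least $1 - 2\exp(-m\probbig^2/32)$, the bound $f(x) - f(x\subopt) \ge \half \stabfunc^2 \probbig\, \ltwo{x-x\subopt}\ltwo{x+x\subopt}$ for all $x$. Since one of the factors $\ltwo{x \mp x\subopt}$ equals $\dist(x, X\subopt)$ while the other is at least $\ltwo{x\subopt} = \dist(0, X\subopt)$ (the two cases correspond to a definite sign of $\langle x, x\subopt\rangle$), this is exactly Condition~\ref{condition:stability} with $\stabconst = \half \finalstabconst$ and $\finalstabconst = \stabfunc^2 \probbig$. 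Next, Corollary~\ref{corollary:no-noise-quadratic-approximation} supplies Condition~\ref{condition:approximation} with $\Lipconst = 4\subgauss^2$ on an event of probability at least $1 - \exp(-m/c)$, so the contraction constant entering Theorem~\ref{theorem:quadratic-convergence-errors} is $q := \Lipconst/\stabconst = 8\subgauss^2/\finalstabconst \ge 1$.

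The delicate step is the initialization, and I expect it to be the main obstacle. I would apply Proposition~\ref{proposition:non-noisy-initialization} at a numerical constant $\epsilon \in (0, \epsilon_0]$ to be chosen, using the smallness hypotheses of the preceding paragraph -- $\opnorm{\residual(\epsilon)} \le c\finalstabconst/\subgauss^2$, $\perploss(\epsilon) \ge \perploss\subopt$, and the resulting positive lower bound on the denominator $\perploss(\epsilon) - \opnorm{\residual(\epsilon)} - \errorterm(\epsilon)$ -- to reduce the guarantee~\eqref{eqn:no-noise-dist} to
\[
  \frac{\dist(x_0, X\subopt)}{\ltwo{x\subopt}}
  \lesssim \frac{\opnorm{\residual(\epsilon)} + \errorterm(\epsilon)}{\perploss\subopt} + \epsilon,
  \qquad \errorterm(\epsilon) \lesssim \epsilon \log\frac{1}{\epsilon}.
\]
The crux is a two-stage choice of numerical constants: because every assumption constant ($\finalstabconst$, $\subgauss^2$, $\perploss\subopt$, $\probbig$, $\probdir$, $\smallprob$) is numerical, I would first take the constant $c$ in $\opnorm{\residual(\epsilon)} \le c\finalstabconst/\subgauss^2$ small, and then shrink $\epsilon$, so as to force
\[
  \dist(x_0, X\subopt) \le \frac{\finalstabconst}{16\subgauss^2}\ltwo{x\subopt} = \frac{1}{2q}\ltwo{x\subopt}.
\]
This is strictly stronger than claim (i) (which only requires $\finalstabconst/(8\subgauss^2) = 1/q$), and it is what the doubly exponential rate actually needs: with $\rho_0 := \dist(x_0, X\subopt)/\ltwo{x\subopt} = 1/q$ the bound of Theorem~\ref{theorem:quadratic-convergence-errors} degenerates to the constant $1/q$, so a margin below $1/q$ is essential. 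I anticipate the bookkeeping here to be fussy precisely because $1/(2q)$ can be a small numerical constant, so $\epsilon$ and the residual bound must be taken commensurately small while keeping the sample-size demand $m/n \ge C$ at a constant.

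Finally I would intersect the three events. With $\epsilon$ fixed at a numerical constant, each of the four exponential terms of Proposition~\ref{proposition:non-noisy-initialization} is at most $\exp(-c'm)$, so a union bound gives total probability at least $1 - e^{-cm}$; every ingredient needs only $m/n$ above a numerical constant, which is the hypothesis $n/m < c$. On this event Theorem~\ref{theorem:quadratic-convergence-errors} with $\epsilon_k = 0$ yields
\[
  \frac{\dist(x_k, X\subopt)}{\ltwo{x\subopt}}
  \le \frac{1}{q}\left(q \rho_0\right)^{2^k}
  \le \frac{1}{q}\, 2^{-2^k}
  \le 2^{-2^k},
\]
using $q\rho_0 \le \half$ and $q \ge 1$, which is claim (ii).
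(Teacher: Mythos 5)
Your proposal is correct and takes essentially the same route as the paper, whose entire proof of Theorem~\ref{thm:no-noise-retrieval} is the statement that it follows by combining Theorem~\ref{theorem:quadratic-convergence-errors} with Proposition~\ref{proposition:small-ball} (through Corollary~\ref{corollary:no-noise-stability-of-objective}), Corollary~\ref{corollary:no-noise-quadratic-approximation}, and Proposition~\ref{proposition:non-noisy-initialization} on a common high-probability event. Your extra care with the margin---forcing $\dist(x_0, X\subopt) \le \frac{\nu}{16 \subgauss^2} \ltwo{x\subopt}$, half the bound in claim (i), so that the contraction factor $\frac{\Lipconst}{\stabconst}\cdot\frac{\dist(x_0,X\subopt)}{\ltwo{x\subopt}}$ is at most $\half$ and the rate $2^{-2^k}$ actually follows---correctly fills in a detail the paper leaves implicit.
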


We make two brief summarizing remarks. First, it is necessary to have $m
\gtrsim n$ to achieve exact recovery of the signal, as the parameter
$x\subopt \in \C^n$ has $2n$ unknowns and we have $m$ equations (indeed, $m
\ge 4n - 2$ is necessary for inectivity of the measurements in the complex
case~\cite{BalanCaEd06}). Thus, the sample complexity
Theorem~\ref{thm:no-noise-retrieval} specifies is optimal to within
numerical constants. Second, Theorem~\ref{thm:no-noise-retrieval} shows that
the prox-linear algorithm exhibits local quadratic convergence to the signal
$x\subopt$, which is in contrast to the local linear convergence of other
non-convex methods based on gradients and generalized
gradients~\cite{CandesLiSo15,ChenCa15,WangGiEl16}.  In contrast, however,
each iteration of our algorithm requires solving a structured convex
quadratic program, which is somewhat more expensive than the typical
gradient iterations; as we demonstrate in our experiments
(Section~\ref{sec:experiments}), this means our methods are about four
times
slower in overall run time than the best gradient-based methods, though
their recovery properties are better.



\newcommand{\projperp}{P_\perp}
\providecommand{\sgn}{\mathop{\rm sgn}}

\section{Phase retrieval with outliers}
\label{sec:noise}

The objective~\eqref{eqn:optimization-problem} is the analogue of the
least-absolute deviation estimator---the median in $\R$---so in analogy with
our understanding of robustness~\cite{HuberRo09}, it is natural to expect it
should be robust to outliers.  We show that this is indeed the case, and the
prox-linear method we develop is effective. For simplicity in our
development, we assume for this section that all measurements and signals
are real-valued, and we consider the following corruption model: let
$\{\noise_i\} \subset \R$ be an arbitrary sequence, and given the $m$
measurement vectors $a_i$, we observe
\begin{equation*}
  b_i = \begin{cases}
    \<a_i, x\subopt\>^2 & \mbox{if $i\in \inliers$}\\
    \noise_i & \mbox{if $i\in \outliers$},
  \end{cases}
\end{equation*}
where $\outliers \subset [m]$ and $\inliers \subset [m]$ denote the outliers
and inliers, respectively. We assume there is a pre-specified measurement
failure probability $\pfail \in \openright{0}{\half}$, and $|\outliers| =
\pfail m$, and the indices $i \in \outliers$ are chosen
randomly. That is, measurement failures are random, though the noise
sequence $\noise_i$ may depend on $a_i$ (even adversarially), as we specify
presently. We assume no prior knowledge of which indices $i \in [m]$
actually satisfy $i \in \outliers$, or even of $\pfail$.



We consider the following two models for errors:
\begin{model}
  \label{model:full-independence}
  The measurement vectors $\{a_i\}_{i=1}^m$ are independent of the
  all the values $\{\noise_i\}_{i=1}^m$.
\end{model}
\begin{model}
  \label{model:partial-independence}
  The inlying measurement vectors $\{a_i\}_{i\in \inliers}$ are independent
  of the values $\{\noise_i\}_{i \in \outliers}$ of the corrupted
  observations.
\end{model}
\noindent
Model~\ref{model:full-independence} requires independence between the
noise and measurements: the adversary may only corrupt $\noise_i$ without
observing $a_i$.  Model~\ref{model:partial-independence} relaxes this,
allowing arbitrary dependence between the corrupted
data and the measurement vectors $a_i$ for $i \in \outliers$.
This is natural in scenarios in which the corruption may depend on
the measurement $a_i$.

The arbitrary corruption causes some technical challenges, but we may still
follow the outline in our analysis of phase retrieval without
noise in Sec.~\ref{sec:no-noise}. As we show in
Section~\ref{sec:noise-stability}, the objective $f(x)$ is still stable
(Condition~\ref{condition:stability}) as long as the measurement vectors are
light-tailed, though Gaussianity is unnecessary.  The quadratic
approximation conditions (Condition~\ref{condition:approximation}) are
completely identical to those in Sec.~\ref{sec:no-noise-qa}, so we ignore
them. Thus, as long as $\opnorm{A}$ is not too large (meaning $f_x(y)
\approx f(y)$) and we can find a good initializer, the prox-linear
iterations~\eqref{eqn:prox-iteration} will converge quadratically to
$x\subopt$. Finding a good initializer $x_0$ is somewhat trickier, but in
Section~\ref{sec:noise-initialization} we provide a spectral method,
inspired by Wang et al.~\cite{WangGiEl16}, that works with high probability as soon as
$m/n \ge C$ for some numerical constant $C$. We defer our arguments
to Appendix~\ref{sec:proofs-noisy}.

\subsection{Stability}
\label{sec:noise-stability}

The outlying indices, even when corruptions are chosen adversarially, have
limited effect on the growth and identification behavior of $f(x) =
\frac{1}{m} \lone{(Ax)^2 - b}$.  In particular, for $\pfail$ smaller than a
numerical constant, which we can often specify, the stability
condition~\ref{condition:stability} holds with high probability whenever
$m/n$ is large. More precisely, we have the following proposition, which
applies to independent $\subgauss^2$-sub-Gaussian measurements.
(See Sec.~\ref{sec:proof-noise-stability} for a proof.)

\begin{proposition}
  \label{proposition:noise-stability}
  Let Assumption~\ref{assumption:sub-gaussian-vector} hold
  and $\stabfunc = \inf_{u, v \in \sphere^{n-1}}
  \E[|\<a, u\> \<a, v\>|]$.
  Then under either of the models~\ref{model:full-independence}
  or~\ref{model:partial-independence}, there are
  numerical constants $c > 0$ and $C < \infty$ such that
  \begin{equation*}
    f(x) - f(x\subopt)
    \ge \left(\stabfunc -
    2 \pfail -
    C \subgauss^2 \sqrt[3]{\frac{n}{m}}
    - C \subgauss^2 t
    \right) \ltwo{x - x\subopt}
    \ltwo{x + x\subopt} ~~ \mbox{for~all~} x \in \R^n
  \end{equation*}
  with probability at least $1 - 2 e^{-c m} - 2 e^{-m t^2}$.
\end{proposition}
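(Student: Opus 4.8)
The plan is to reduce, via the reverse triangle inequality, to a uniform lower bound over $u,v \in \sphere^{n-1}$ on a sum of products $|\<a_i,u\>\<a_i,v\>|$ minus twice the analogous sum over the (random) outlier set, and then to control both pieces by truncation together with a covering argument over the sphere.

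First I would expand the objective. Writing $u,v$ for the unit vectors in the directions $x-x\subopt$ and $x+x\subopt$ and setting $g_i(u,v) = |\<a_i,u\>\<a_i,v\>|$, the inlier identity $\<a_i,x\>^2 - \<a_i,x\subopt\>^2 = \<a_i,x-x\subopt\>\<a_i,x+x\subopt\>$ makes the inlier terms of $f(x)-f(x\subopt)$ contribute exactly $\frac1m\sum_{i\in\inliers} g_i \cdot \ltwo{x-x\subopt}\ltwo{x+x\subopt}$, since $b_i = \<a_i,x\subopt\>^2$ there. For an outlier index the reverse triangle inequality $\big||A-\noise_i| - |B-\noise_i|\big| \le |A-B|$ with $A = \<a_i,x\>^2$, $B = \<a_i,x\subopt\>^2$ shows each such term contributes at least $-g_i\cdot\ltwo{x-x\subopt}\ltwo{x+x\subopt}$, independently of the value $\noise_i$. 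Hence
$$f(x) - f(x\subopt) \ge \Big[\frac1m\sum_{i=1}^m g_i(u,v) - \frac2m\sum_{i\in\outliers} g_i(u,v)\Big]\ltwo{x-x\subopt}\ltwo{x+x\subopt},$$
so it suffices to lower bound $\inf_{u,v\in\sphere^{n-1}}\big\{\frac1m\sum_i g_i - \frac2m\sum_{\outliers} g_i\big\}$. Crucially this quantity never involves $\noise$, so the two error models are handled identically: all that matters is that $\{a_i\}$ are i.i.d.\ $\subgauss^2$-sub-Gaussian and that the outlier index set $\outliers$ is random and independent of $\{a_i\}$, which holds under both Model~\ref{model:full-independence} and Model~\ref{model:partial-independence} (these differ only in whether $\noise_i$ may depend on $a_i$, which is irrelevant here).

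Next I would introduce a truncation level $\tau > 0$ and split $g_i = (g_i\wedge\tau) + (g_i-\tau)_+$. Lower bounding the full sum by its truncated version gives
$$\frac1m\sum_i g_i - \frac2m\sum_{\outliers} g_i \ge \frac1m\sum_i (g_i\wedge\tau) - \frac2m\sum_{\outliers}(g_i\wedge\tau) - \frac2m\sum_i (g_i-\tau)_+.$$
I would then control the three pieces uniformly in $u,v$. For the first, a covering of $\sphere^{n-1}\times\sphere^{n-1}$ by a net of cardinality $e^{O(n)}$ plus a concentration inequality for the bounded summands $g_i\wedge\tau\in[0,\tau]$ yields $\frac1m\sum_i(g_i\wedge\tau)\ge\E[g\wedge\tau] - \mathrm{dev}$, and $\E[g\wedge\tau]\ge\stabfunc - \E[(g-\tau)_+]$. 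The tail term is handled through the sub-exponentiality of $g$: as a product of two $\subgauss^2$-sub-Gaussians one has $\P(g>t)\lesssim e^{-t/\subgauss^2}$ and $\E[g^2]\lesssim\subgauss^4$, which bounds both the bias $\E[(g-\tau)_+]$ and, uniformly, the empirical tail $\frac2m\sum_i(g_i-\tau)_+$. For the outlier piece I would exploit that $\outliers$ is a uniformly random size-$\pfail m$ subset independent of $\{a_i\}$, so that $\frac2m\sum_{\outliers}(g_i\wedge\tau)$ concentrates about $2\pfail\cdot\frac1m\sum_i(g_i\wedge\tau)\le 2\pfail\,\E[g]\le 2\pfail$ up to a lower-order fluctuation (the final inequality under the isotropic normalization $\E[aa\cg]=I_n$, for which $\sup_{u,v}\E[g]\le 1$; in general the factor is $2\pfail\,\opnorm{\E[aa\cg]}$). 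Collecting these gives $\inf_{u,v}\{\cdots\}\ge\stabfunc - 2\pfail - (\mathrm{bias}+\mathrm{deviation})$.

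Finally I would choose $\tau$ to balance the truncation bias/tail against the empirical-process deviation: the covering bound produces a deviation growing with the range $\tau$, while $\E[(g-\tau)_+]$ shrinks in $\tau$, and optimizing this trade-off (absorbing logarithmic and lower-order factors into a clean form) yields a combined error of order $\subgauss^2\sqrt[3]{n/m}$, together with the user-controlled slack $C\subgauss^2 t$ coming from evaluating the deviation inequality at confidence $e^{-mt^2}$; the hypothesis $m\gtrsim n$ is precisely what makes the $e^{-cm}$-probability net bound effective. I expect the main obstacle to be the uniform-over-the-sphere concentration of the truncated product process and, in particular, controlling the outlier sum through the \emph{randomness} of $\outliers$ rather than the adversarial values $\noise_i$, while tracking the truncation level carefully enough to land on the stated rate. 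By comparison, the reverse-triangle reduction and the equivalence of the two models are routine.
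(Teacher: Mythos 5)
Your opening reduction is exactly the paper's: the same reverse-triangle-inequality manipulation gives
$f(x) - f(x\subopt) \ge \frac{1}{m}\lone{(Ax)^2 - (Ax\subopt)^2} - \frac{2}{m}\sum_{i \in \outliers} |\<a_i, x - x\subopt\>\<a_i, x + x\subopt\>|$,
and the paper likewise notes that the corrupted values $\noise_i$ drop out, so both models reduce to independence of the outlier index set from $\{a_i\}$. Where you diverge is in the two concentration steps. For the outlier sum, the paper is simpler and cleaner: writing $W_i = \indic{i \in \outliers}$, Cauchy--Schwarz gives $\frac{1}{m}\sum_{i\in\outliers} |\<a_i,u\>\<a_i,v\>| \le \frac{1}{m}\opnorm{A^T \diag(W) A}$ uniformly over unit $u, v$, and a single application of sub-Gaussian matrix concentration to the vectors $W_i a_i$ bounds this by $\pfail + C\subgauss^2\sqrt{n/m + t}$ --- no net, no truncation, and hence no tail term at all. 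For the main sum, the paper also avoids truncation: since one only needs a \emph{lower} bound on a sum of nonnegative variables, the one-sided Bernstein inequality (Lemma~\ref{lemma:one-sided-bernstein}) requires only the variance bound $\var(|\<a,u\>\<a,v\>|) \lesssim \subgauss^4$, and uniformity follows from an $\epsilon$-net plus the Lipschitz estimate $|Z_{u,v} - Z_{u',v'}| \le \frac{\opnorm{A}^2}{m}(\ltwo{u - u'} + \ltwo{v-v'})$; balancing entropy $n/\epsilon$ against deviation yields the $\sqrt[3]{n/m}$ rate (this is the paper's Lemma~\ref{lemma:eldar-mendelson}). Your truncated treatment of the main term is a workable substitute for this.

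Two concrete problems arise from your outlier treatment, both consequences of truncating there instead of using the matrix bound. First, the chain ``concentrates about $2\pfail\cdot\frac{1}{m}\sum_i (g_i\wedge\tau) \le 2\pfail\,\E[g]$'' bounds an empirical average by a population mean; that inequality is false as stated and needs its own uniform upper-tail concentration step (fixable with the same net machinery, but missing). Second, and more substantively, the term $\frac{2}{m}\sum_i (g_i - \tau)_+$ is the genuinely hard step of your route, and ``handled through the sub-exponentiality of $g$ \ldots uniformly'' names no mechanism: pointwise sub-exponential Bernstein does not give uniformity, and the crude uniform bounds (e.g.\ Cauchy--Schwarz against $\sup_u \frac{1}{m}\sum_i \<a_i,u\>^4$) carry dimension-dependent factors that destroy the rate. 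The step can be completed --- sub-exponential Bernstein on the net, together with the observation that $x \mapsto \hinge{x - \tau}$ is a contraction so the tail process inherits the same $\opnorm{A}^2/m$ Lipschitz constant, then $\tau \asymp \subgauss^2 \log(m/n)$ --- but this is precisely the net-plus-Lipschitz argument of the paper's main-term lemma, duplicated, whereas the paper's Cauchy--Schwarz/matrix-concentration route for the outliers makes the entire issue moot. In short: same skeleton, viable alternative for the main term, but your outlier step as written has a gap whose repair is more work than the argument it replaces.
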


We continue our running example of Gaussian random
variables to motivate the proposition.
\begin{example}[Gaussian vectors]
  We claim that for $a \sim \normal(0, I_n)$ we have
  \begin{equation*}
    \stabfunc \defeq \inf_{u, v \in \sphere^{n-1}}
    \E[|\langle a, u \rangle \langle a, v \rangle |] =
    \frac{2}{\pi}.
  \end{equation*}
  Let $Z_u = \<a, u\>$, $Z_v = \<a, v\>$,
  and let $X, Y$ be independent $\normal(0, 1)$. Then
  \begin{align*}
    \inf_{u, v \in \sphere^{n-1}}
    \E[|Z_u Z_v|]
    & = \inf_{\rho \in [0, 1]}
    \left\{f(\rho) \defeq
    \E\left[\left|\rho X^2 - (1- \rho) Y^2\right|\right]\right\}.
  \end{align*}
  The function $f(\cdot)$ is convex and symmetric around $\half$.
  Thus $f(\rho) \geq f(1/2) = 2/\pi$.
\end{example}

\noindent
In the Gaussian measurement case, whenever
$\pfail < \frac{1}{\pi} \approx .318$, there
is a numerical constant $\lambda > 0$ such that
we have the stability
$f(x) - f(x\subopt) \ge \lambda \norm{x - x\subopt} \norm{x + x\subopt}$ for
as long
as $m/n$ is larger than some numerical constant.


\subsection{Initialization}
\label{sec:noise-initialization}

\begin{algorithm}[t]
  \caption{\label{alg:noisy-initialization}
    Initialization procedure with outliers}
  \KwData{Measurement matrix $A \in \R^{m \times n}$ and corrupted
    signals $b$}
\Begin{ 
    Define indices $\selected \defeq \left\{i \in [m] : b_i \leq
    \median\left(\{b_i\}_{i=1}^m\right)\right\}$.\\

    Construct directional and norm estimates $\what{d}$ and
    $\what{r}$ by
    \begin{align*}
      X\init & \defeq \frac{1}{m} \sum_{i=1}^m a_i a_i^T
      \indic{i \in \selected},
      ~~~
      \what{d} = \argmin_{d \in \sphere^{n-1}} d^T X\init d \\
      \what{r}^2 & \defeq \argmin_r
      G(r) \defeq
      \frac{1}{m} \sum_{i=1}^m \left|b_i -
      r \big\langle a_i, \what{d}\big\rangle^2
      \right|.
    \end{align*}
    \KwRet{$(\what{r}, \what{d})$}
  }
\end{algorithm}

The last ingredient for achieving strong convergence guarantees for the
prox-linear algorithm for phase retrieval is to provide a good
initialization $x_0 \approx x\subopt$. The strategies in the noiseless setting
in Section~\ref{sec:no-noise} will fail because of corruptions.  With this
in mind, we present Algorithm~\ref{alg:noisy-initialization}, which
provides an initializer in corrupted problems.

Before turning to the analysis, we provide some intuition for the
algorithm. We must construct two estimates: an estimate $\what{d}$ of the
direction $d\subopt = x\subopt / \ltwo{x\subopt}$ and an estimate $\what{r}$ of the
radius, or magnitude, of the signal $r\subopt = \ltwo{x\subopt}$. For the former,
a variant of the spectral initialization
(Alg.~\ref{alg:non-noisy-init})
suffices. If we take the $\selected \subset [m]$ to be the set of indices
$\selected$ corresponding to the smallest (say) $b_i$, in either
model~\ref{model:full-independence} or~\ref{model:partial-independence} the
indices $i \in \outliers$ are independent of the measurements $a_i$, so we
expect as in Section~\ref{sec:no-noise-init}
that $X\init = |\selected|^{-1} \sum_{i \in \selected} a_i a_i^T = z
I_n - z' d\subopt {d\subopt}^T + \Delta$, where $z, z'$ are random positive
constants and $\Delta$ is an error
matrix coming from both randomness in the $a_i$ and the corruptions. As long
as the error $\Delta$ is small, the minimum eigenvector of $X\init$ should
be approximately $d\subopt$. Once we have a good initializer $\what{d} \approx
d\subopt$, a natural idea to estimate $r\subopt$ is to pretend that $\what{d}$
\emph{is} the direction of the signal, substitute the variable $x = \sqrt{r}
\what{d}$ into the objective~\eqref{eqn:optimization-problem}, and solve for
$r$ to get a robust estimate of the signal strength $\norm{x\subopt}$. As
we show presently, this procedure succeeds with high probability
(and the estimate $\what{r}$ is good even when the data are non-Gaussian).



Let us make these ideas precise. First, we show that our estimate $\what{r}$ of
$\ltwo{x\subopt}$ is accurate
(See Sec.~\ref{sec:proof-good-direction-to-good-radius} for a proof).
\begin{proposition}
  \label{proposition:good-direction-to-good-radius}
  Let Assumption~\ref{assumption:sub-gaussian-vector} hold
  and $\E[aa^T] = I_n$.
  Let $\delta \in [0, 1]$ and $\pfail \in [0, \half]$. There exist
  numerical constants $0 < c$, $C < \infty$ such that if
  $\what{d}$ is an estimate of $d\subopt$ for which
  \begin{equation}
    \label{eqn:technical-condition-numerical}
    \delta \defeq
    \frac{C \subgauss^2}{1 - 2 \pfail} \dist(\what{d}, \{\pm d\subopt\})
    \le 1,
  \end{equation}
  then with probability at least $1 - 2 e^{-c m (1 - 2 \pfail)^2 /
    \subgauss^4}$
  all minimizers $\what{r}^2$ of
  $G(r) = \frac{1}{m} \sum_{i = 1}^m|b_i - r \langle a_i, \what{d}
  \rangle^2|$, defined in Alg.~\ref{alg:noisy-initialization},
  satisfy $\what{r}^2 \in [1 \pm \delta] \ltwo{x\subopt}^2$.
\end{proposition}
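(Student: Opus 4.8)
The plan is to exploit that $G$ is convex in $r$ and reduce the claim to a sign condition on its one-sided derivatives at the two endpoints $r_\pm \defeq (1 \pm \delta)\ltwo{x\subopt}^2$. Writing $w_i \defeq \<a_i, \what d\>^2 \ge 0$, the function $G(r) = \frac{1}{m}\sum_i |b_i - r w_i|$ is a sum of absolute values of affine functions of $r$, hence convex, with one-sided derivative $G'(r) = \frac{1}{m}\sum_i w_i\,\sign(r w_i - b_i)$. If I can show $G'(r_+) > 0$ and $G'(r_-) < 0$, then by convexity every minimizer of $G$ lies in $[r_-, r_+] = [1 \pm \delta]\ltwo{x\subopt}^2$, which is exactly the conclusion. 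Thus the whole argument reduces to controlling the sign of $G'$ at the two endpoints.

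For $i \in \inliers$ we have $b_i = \ltwo{x\subopt}^2 \<a_i, d\subopt\>^2$, so with $h_i \defeq \<a_i, d\subopt\>^2$ the inlier terms of $G'(r_\pm)$ carry the sign $\sign((1 \pm \delta) w_i - h_i)$; for $i \in \outliers$ I use only the crude bound $|\frac{1}{m}\sum_{i \in \outliers} w_i \sign(\cdots)| \le \frac{1}{m}\sum_{i \in \outliers} w_i = \what d^T (\frac{1}{m}\sum_{i \in \outliers} a_i a_i^T) \what d \le \opnorm{\frac{1}{m}\sum_{i \in \outliers} a_i a_i^T}$. Since $|\outliers| = \pfail m$ and the $a_i$ are $\subgauss^2$-sub-Gaussian with $\E[aa^T] = I_n$, applying the matrix concentration bound (Lemma~\ref{lemma:matrix-concentration}) to the $\pfail m$ outlying samples shows this operator norm is at most $\pfail(1 + o(1))$ once $\pfail m \gtrsim n$, so the outlier contribution to $G'(r_\pm)$ is bounded in magnitude by roughly $\pfail$.

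The heart of the proof is the inlier term. Writing $\what d = \cos\phi\, d\subopt + \sin\phi\, e$ with $e \perp d\subopt$ and $\sin\phi \asymp \rho \defeq \dist(\what d, \{\pm d\subopt\})$, I decompose $\frac{1}{m}\sum_{i \in \inliers} w_i \sign((1+\delta) w_i - h_i) = \frac{1}{m}\sum_{i \in \inliers} w_i - \frac{2}{m}\sum_{i \in \inliers} w_i\, \ind\{h_i > (1+\delta) w_i\}$, and symmetrically at $r_-$. The first sum concentrates near $1 - \pfail$ (again by matrix concentration, since $\E[w] = \ltwo{\what d}^2 = 1$), while the second is the penalty I must show is small. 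To first order in the small angle $\phi$ one has $w_i \approx h_i + 2\phi \<a_i, d\subopt\>\<a_i, e\>$, so the event $\{h_i > (1+\delta) w_i\}$ forces the ratio $\<a_i, e\>/\<a_i, d\subopt\>$ to exceed a threshold of order $\delta/\phi$. By the definition $\delta = \frac{C \subgauss^2}{1 - 2\pfail}\rho$ this threshold is a large constant $K \asymp C\subgauss^2/(1 - 2\pfail)$ that is independent of $\rho$; using the sub-Gaussian moment bounds of Lemma~\ref{lemma:matrix-concentration} I can bound the $w_i$-weighted probability of this rare sign-flip event by $O(\subgauss^2/K) = O((1 - 2\pfail)/C)$. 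Hence the penalty is at most a $1/C$ fraction of the gap $1 - 2\pfail$, and choosing $C$ large makes it negligible.

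Finally, each empirical average above is a sum of i.i.d.\ sub-exponential terms of sub-exponential norm $O(\subgauss^2)$, and I only need them to concentrate at the scale $1 - 2\pfail$; Bernstein's inequality then bounds the probability of a deviation exceeding $c(1 - 2\pfail)$ by $2 e^{-c m (1 - 2\pfail)^2/\subgauss^4}$, matching the stated bound. Assembling the pieces, at $r_+$ I obtain $G'(r_+) \ge (1 - \pfail) - \text{penalty} - \pfail(1 + o(1)) - \text{deviation} > 0$, and symmetrically $G'(r_-) < 0$, so convexity sandwiches every minimizer in $[1 \pm \delta]\ltwo{x\subopt}^2$. I expect the main obstacle to be the inlier penalty estimate: making the first-order angle expansion rigorous (rather than the heuristic small-$\phi$ computation) and controlling the $w_i$-weighting on the rare sign-flip event for general sub-Gaussian, not merely Gaussian, measurements. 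A secondary technical point is the data-dependence of $\what d$; since the stated failure probability carries no factor of $n$, the cleanest route is to treat $\what d$ as fixed, satisfying the deterministic distance hypothesis~\eqref{eqn:technical-condition-numerical}, and argue that pointwise concentration suffices rather than union-bounding over a net of directions.
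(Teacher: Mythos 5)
There is a genuine gap, and it is exactly the point you try to dismiss in your final paragraph: the data-dependence of $\what{d}$. Two of your ingredients are indeed uniform in $\what{d}$ (the outlier term and $\frac{1}{m}\sum_{i \in \inliers} w_i$ are controlled through $\opnorm{\frac{1}{m}\sum_i a_i a_i^T \indic{i \in \outliers}}$ and $\opnorm{\frac{1}{m}\sum_i a_i a_i^T\indic{i\in\inliers}}$, which do not involve $\what{d}$), but the heart of your argument---concentration of the sign-flip penalty $\frac{2}{m}\sum_{i \in \inliers} w_i \indic{h_i > (1+\delta) w_i}$ around its expectation---is a pointwise statement for a fixed, data-independent direction. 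In the paper this proposition is applied to the $\what{d}$ produced by Alg.~\ref{alg:noisy-initialization} from the very same measurements $\{a_i, b_i\}$, so ``treat $\what{d}$ as fixed'' is not admissible: both the weights $w_i = \<a_i, \what{d}\>^2$ and the event $\{h_i > (1+\delta)w_i\}$ (and $\delta$ itself) are functions of the data, and pointwise concentration gives no control. Repairing this with a net over directions is also not routine, since the indicator is discontinuous in $\what{d}$, so one needs a genuine empirical-process (VC/chaining) argument, and the failure probability would then pick up an $e^{Cn}$ factor requiring an explicit sample-size condition. (Your other flagged worry---rigorizing the first-order angle expansion---is real but fixable for a fixed $\what{d}$: the event forces $|\<a_i, e\>| \gtrsim K |\<a_i, d\subopt\>|$ with $K = \delta / \dist(\what{d},\{\pm d\subopt\})$, and a split at $|\<a_i,d\subopt\>| \le \tau$ plus Cauchy--Schwarz and sub-Gaussian tails bounds the weighted expectation by $O(\subgauss^2/K)$, with no Gaussianity needed.)

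The paper's proof is organized precisely to avoid this issue, and the contrast shows how to repair your argument. The paper first proves a purely deterministic lemma: writing $\what{g}(\delta) = G((1+\delta)\ltwo{x\subopt}^2)$ and comparing it to the idealized objective $g(\delta) = \frac{1}{m}\sum_i |b_i - (1+\delta)\<a_i, x\subopt\>^2|$ built from the \emph{true} direction, it shows $g$ grows linearly, $g(\delta) \ge g(0) + \what{\subgauss}_{\outliers}^2 |\delta|$ with $\what{\subgauss}_{\outliers}^2 = \frac{1}{m}\sum_{i\in\inliers}\<a_i,x\subopt\>^2 - \frac{1}{m}\sum_{i\in\outliers}\<a_i,x\subopt\>^2$ (the inlier subgradient terms are exactly $\sgn(\delta)\<a_i,x\subopt\>^2$), and that
\begin{equation*}
  |g(\delta) - \what{g}(\delta)|
  \le \frac{(1+\delta)\ltwo{x\subopt}^2}{m} \sum_{i=1}^m \left|\<a_i,d\subopt\>^2 - \<a_i,\what{d}\>^2\right|
  \le 4 \ltwo{x\subopt}^2 \cdot \frac{\opnorm{A^TA}}{m} \cdot \dist(\what{d},\{\pm d\subopt\})
\end{equation*}
by Cauchy--Schwarz, an inequality valid for \emph{every} $\what{d}$ simultaneously. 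Hence the localization of minimizers holds on a single event defined only through $\opnorm{A^TA}$ and $\what{\subgauss}_{\outliers}^2$, and the probabilistic step (matrix concentration, uniform over unit vectors) is automatically uniform in $\what{d}$. If you wish to keep your endpoint-derivative formulation, the same trick rescues it: on the sign-flip event one has $h_i - w_i > \delta w_i$, so $w_i \indic{h_i > (1+\delta)w_i} \le |h_i - w_i|/\delta$, and therefore the penalty is bounded \emph{deterministically} by $\frac{1}{\delta m}\sum_i |h_i - w_i| \le \frac{2\dist(\what{d},\{\pm d\subopt\})}{\delta}\cdot\frac{\opnorm{A^TA}}{m} = \frac{2(1-2\pfail)}{C\subgauss^2}\cdot\frac{\opnorm{A^TA}}{m}$, which is a $O(1/C)$ fraction of $1 - 2\pfail$ on the event $\frac{1}{m}\opnorm{A^TA} \lesssim \subgauss^2$---no distributional analysis of the sign flips, and no fixed-$\what{d}$ assumption, is then required.
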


Given Proposition~\ref{proposition:good-direction-to-good-radius}, finding a
good initialization of $x\subopt$ reduces to finding a good estimate $\what{d}$
of the direction $d\subopt = x\subopt / \ltwo{x\subopt}$.  To make this precise, let
$r\subopt \defeq \norm{x\subopt}$ and assume that $\delta = \frac{C \subgauss^2}{1
  - 2\pfail} \dist(\what{d}, \{\pm d\subopt\}) \le 1$ as in
Proposition~\ref{proposition:good-direction-to-good-radius}; assume also the
relative error guarantee $|\what{r} - r\subopt| \le \delta r\subopt$.  Using the
triangle inequality and
Proposition~\ref{proposition:good-direction-to-good-radius}, for $x_0 =
\what{r} \what{d}$ we have
\begin{equation*}
  \dist(x_0, X\subopt) \leq \what{r} \dist(\what{d}, \{\pm d\subopt\}) + 
  |r\subopt - \what{r}|
  \leq r\subopt\left[(1+\delta)\dist(\what{d}, \{\pm d\subopt\}) + \delta\right] 
  \leq 2\delta r\subopt
  = \frac{C \subgauss^2}{1 - 2\pfail} \dist(\what{d}, \{\pm d\subopt\}) r\subopt,
\end{equation*}
as claimed.
We turn to the directional estimate; to make the analysis
cleaner we make the normality
\begin{assumption}
  \label{assumption:normal}
  The measurement vectors $a_i \simiid \normal(0, I_n)$.
\end{assumption}
\noindent
To state our guarantee on $\what{d}$, we require additional
notation for quantiles of Gaussian and $\chi^2$-random variables.  Let
$W \sim \normal(0, 1)$ and define the constant $\qfail$ and its associated
$\chi^2$-quantile $w_q^2$ by
\begin{equation}
  \label{eqn:q-fail}
  \P(W^2 \le w_q^2) =
  \qfail \defeq \frac{1}{2(1-\pfail )} + \frac{1-2\pfail }{4(1-\pfail)}
  = \frac{3 - 2 \pfail}{4(1 - \pfail)} < 1.
\end{equation}
Second, define the constant $\delta_q = 1 - \E[W^2 \mid W^2 \le w_q^2]$.
We have the following guarantee.
\begin{proposition}
  \label{proposition:noisy-good-initial-direction}
  Let $\what{d}$ be the smallest eigenvector of $X\init \defeq \frac{1}{m}
  \sum_{i=1}^m a_i a_i^T \indic{i\in \selected}$,
  as in Alg.~\ref{alg:noisy-initialization}. Let the constant $M = 0$ if
  Model~\ref{model:full-independence} holds and $M = 1$ if
  Model~\ref{model:partial-independence} holds.  There are numerical
  constants $0 < c, C < \infty$ such that for $t \ge 0$, with probability at
  least
  \begin{equation*}
    1 - \exp(-mt) - \exp(-c(1 - 2\pfail) m) -
    \exp\left(-c \frac{m \delta_q^2}{w_q^2}\right)
  \end{equation*}
  we have
  \begin{equation*}
    \dist(\what{d}, \{\pm d\subopt\})
    \le \frac{C \sqrt{\frac{n}{m} + t}}{
      \hinge{(1 - 2\pfail) \delta_q - C \sqrt{\frac{n}{m} + t}
        - M \pfail}}.
  \end{equation*}
\end{proposition}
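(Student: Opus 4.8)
The plan is to show that $X\init$ equals, up to a controllable perturbation, a positive multiple of $I_n - \delta_q\, d\subopt d\subopt\cg$, whose unique smallest eigenvector is $\pm d\subopt$ with spectral gap proportional to $\delta_q$; the conclusion then follows from an eigenvector-perturbation argument. Writing $P_\perp = I_n - d\subopt d\subopt\cg$ and splitting $a = \<a,d\subopt\> d\subopt + P_\perp a$ (independent Gaussian parts), rotational invariance of $a \sim \normal(0,I_n)$ gives, for any deterministic threshold $s$,
\begin{equation*}
  \E\big[a a\cg\, \indic{\<a, d\subopt\>^2 \le s}\big]
  = \P(W^2 \le s)\, P_\perp
  + \E\big[W^2 \indic{W^2 \le s}\big]\, d\subopt d\subopt\cg
  = \P(W^2 \le s)\big(I_n - \delta_s\, d\subopt d\subopt\cg\big),
\end{equation*}
where $\delta_s = 1 - \E[W^2 \mid W^2 \le s]$. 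If the selected inliers were exactly $\{i \in \inliers : \<a_i, d\subopt\>^2 \le s\}$, the inlier part of $X\init$ would concentrate around $(1-\pfail)\P(W^2\le s)(I_n - \delta_s d\subopt d\subopt\cg)$, whose smallest eigenvector is $\pm d\subopt$ with gap $(1-\pfail)\P(W^2\le s)\delta_s$. The whole proof makes this rigorous in the presence of (i) a random, data-dependent threshold, and (ii) adversarially placed outliers.

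\emph{Step 1 (threshold localization).} The selection uses the empirical median $\tau = \median(\{b_i\})$, so the effective inlier threshold is the random $s = \tau/\ltwo{x\subopt}^2$; I must replace it by deterministic bounds. Counting how many of the $m/2$ smallest $b_i$ are inliers, and using that at most $\pfail m$ outliers can fall below $\tau$, shows the inlier quantile $\P(W^2 \le s)$ lies in $[\tfrac{1-2\pfail}{2(1-\pfail)}, \tfrac{1}{2(1-\pfail)}]$; the constant $\qfail = \tfrac{3-2\pfail}{4(1-\pfail)}$ is engineered to sit strictly above this interval precisely so that, after absorbing the $O(1/\sqrt m)$ fluctuation of the empirical quantile, $s \le w_q^2$ holds with high probability. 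Because $\indic{b_i \le \tau}$ is monotone in $\tau$, I can then sandwich $X\init$ between the matrices obtained from deterministic thresholds, decoupling the dependence between $\tau$ and the inlier vectors $a_i$. Since $\delta_s \ge \delta_q$ (from $s \le w_q^2$) and $\P(W^2\le s) \ge \tfrac{1-2\pfail}{2(1-\pfail)}$, the realized population gap is at least a fixed multiple of $(1-2\pfail)\delta_q$. This counting/quantile concentration is a binomial (DKW-type) estimate and supplies the $\exp(-c(1-2\pfail)m)$ term.

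\emph{Step 2 (concentration of the two blocks).} With the threshold localized, a sub-Gaussian matrix Bernstein bound in the spirit of Lemma~\ref{lemma:matrix-concentration} controls the inlier Gram deviation in operator norm by $C\sqrt{n/m + t}$ (contributing $\exp(-mt)$), while a scalar Bernstein bound on $\frac1m\sum \<a_i,d\subopt\>^2\indic{\cdot}$---whose summands are bounded by $w_q^2$---controls the $d\subopt$-directional mass, hence the realized gap coefficient, to additive error $\delta_q$, supplying the $\exp(-cm\delta_q^2/w_q^2)$ term. For the outliers I exploit that $\frac1m\sum_{i\in\outliersin} a_i a_i\cg \succeq 0$ and, being a subsum, is Loewner-dominated by the full outlier Gram $\frac1m\sum_{i\in\outliers}a_ia_i\cg$; since $|\outliers|=\pfail m$, its operator norm is at most $\pfail + C\sqrt{n/m+t}$ \emph{uniformly} over the adversary's data-dependent choice of which outliers to select---no covering over subsets is needed. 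Under Model~\ref{model:full-independence} the outlier vectors are independent of the selection, so this block is isotropic in expectation and only fluctuates, whence $M=0$; under Model~\ref{model:partial-independence} it may bias the spectrum by up to $\pfail$, whence $M=1$.

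\emph{Step 3 (variational perturbation), and the main obstacle.} Write $X\init = z(I_n - \delta_q d\subopt d\subopt\cg) + E$ with $z\delta_q$ of order $(1-2\pfail)\delta_q$, splitting $E = E_{\mathrm{in}} + E_{\mathrm{out}}$ into the sign-indefinite inlier fluctuation, $\opnorm{E_{\mathrm{in}}} \le C\sqrt{n/m+t}$, and the PSD outlier term, $\opnorm{E_{\mathrm{out}}} \le M\pfail + C\sqrt{n/m+t}$. Decomposing the bottom eigenvector as $\what d = \cos\phi\, d\subopt + \sin\phi\, u$ with $u \perp d\subopt$ (and $\cos\phi \ge 0$, giving distance to $\{\pm d\subopt\}$) and using minimality $\what d\cg X\init \what d \le d\subopt\cg X\init d\subopt$, a short Rayleigh-quotient computation yields $\sin\phi \le 2|d\subopt\cg E u|/(u\cg X\init u - d\subopt\cg X\init d\subopt)$. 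The crux is that $E_{\mathrm{out}}\succeq 0$: for any $u \perp d\subopt$ it can only raise $u\cg X\init u$, so the denominator loses at most $M\pfail + C\sqrt{n/m+t}$, while in the numerator the Cauchy--Schwarz bound $|d\subopt\cg E_{\mathrm{out}} u| \le (d\subopt\cg E_{\mathrm{out}} d\subopt)^{1/2}(u\cg E_{\mathrm{out}} u)^{1/2}$ forces a trade-off---the worst attack, dumping outlier mass along $d\subopt$, shrinks the gap but produces no cross term. Hence $M\pfail$ enters only the denominator, giving the stated bound with numerator $C\sqrt{n/m+t}$ and denominator $\hinge{(1-2\pfail)\delta_q - C\sqrt{n/m+t} - M\pfail}$ after tracking constants. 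The main obstacle throughout is the conditioning created by the data-dependent threshold together with the adversarial dependence in Model~\ref{model:partial-independence}; both are tamed by Step 1's monotone sandwiching and Step 2's Loewner domination of the outlier block, after which only routine concentration and the variational estimate remain.
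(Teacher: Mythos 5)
Your overall architecture---a population matrix $z(I_n - \delta_q d\subopt {d\subopt}^T)$ with eigengap of order $(1-2\pfail)\delta_q$, quantile concentration to tame the data-dependent median threshold (source of the $e^{-c(1-2\pfail)m}$ term), a bounded-summand Hoeffding/Bernstein bound on the directional mass (source of the $e^{-cm\delta_q^2/w_q^2}$ term), matrix concentration for the inlier blocks, Loewner domination of the selected-outlier Gram by the full outlier Gram, and an eigenvector-perturbation finish---is essentially the paper's (cf.\ the decomposition~\eqref{eqn:x-init-outlier-expansion} and Lemmas~\ref{lemma:z0-bound}--\ref{lemma:z3-bound}). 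One secondary difference: the paper never sandwiches $X\init$ between deterministic-threshold matrices. It instead uses that the selected inliers are exactly the inliers with the \emph{smallest} $\<a_i, d\subopt\>^2$, so their average of $1 - \<a_i,d\subopt\>^2$ can only exceed the average over $\{i \in \inliers : \<a_i,d\subopt\>^2 \le w_q^2\}$, and it decouples the perpendicular components by conditioning (given the selection, they remain i.i.d.\ Gaussian). Your Loewner sandwich leaves a residual band of inliers of cardinality $\Theta(\pfail m)$ (outliers can displace the median by up to $\pfail m$ positions), whose Gram matrix has operator norm $\Theta(\pfail)$; bounding that residual by its norm alone would inject a spurious $\pfail$ even under Model~\ref{model:full-independence}, so you would need the band's structure, not just its size. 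That part is repairable.

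The fatal gap is Step 3. The claim that positive semidefiniteness of $E_{\rm out}$ plus Cauchy--Schwarz confines the adversarial contribution to the denominator is false, and the worst attack is not ``dumping outlier mass along $d\subopt$.'' Write $\alpha = {d\subopt}^T E_{\rm out} d\subopt$ and $\beta = u^T E_{\rm out} u$; your variational bound reads $\sin\phi \le (2|{d\subopt}^T E_{\rm in} u| + 2\sqrt{\alpha\beta})\,/\,(z\delta_q - 2\opnorm{E_{\rm in}} + \beta - \alpha)$. Taking $E_{\rm out} \approx \pfail v v^T$ with $v = (d\subopt + u)/\sqrt{2}$ gives $\alpha = \beta = \sqrt{\alpha\beta} = \pfail/2$: the denominator loses nothing, while the numerator gains $\pfail$. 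This attack is realizable under Model~\ref{model:partial-independence}: the adversary sees $\{a_i\}_{i \in \outliers}$ and corrupts so that precisely the outliers with $\<a_i, v\>^2$ large are selected; then $\frac{1}{m}\sum_{i \in \outliersin} a_i a_i^T \approx c_1 \pfail I_n + c_2 \pfail v v^T$ with $c_2 > 0$ a constant, and a two-by-two computation shows the bottom eigenvector of $z(I_n - \delta_q d\subopt {d\subopt}^T) + c_2 \pfail v v^T$ tilts away from $d\subopt$ by an angle of order $\pfail / ((1-2\pfail)\delta_q)$, which does \emph{not} vanish as $n/m \to 0$ and $t \to 0$. Hence no argument can place $M\pfail$ only in the denominator: under Model~\ref{model:partial-independence} the numerator necessarily carries a term of order $\pfail$. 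This is exactly what the paper's proof produces---the final display of Section~\ref{sec:proof-noisy-good-initial-direction} bounds $2^{-1/2}\dist(\what{d}, \{\pm d\subopt\})$ by $(\pfail + C\sqrt{n/m+t})\,/\,\hinge{(1-2\pfail)\delta_q - C\sqrt{n/m+t} - \pfail}$ via Lemma~\ref{lemma:simple-sin} with $\opnorm{Z_3}$ folded into the perturbation---and the proposition's displayed bound, which you took literally as your target, appears to have dropped $M\pfail$ from the numerator. In short: Steps 1--2 can be fixed to match the paper, but Step 3 attempts to prove a statement that is false, and the trade-off argument it rests on collapses at forty-five degrees.
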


For intuition, we provide a few simplifications of
Proposition~\ref{proposition:noisy-good-initial-direction} by bounding the
quantities $w_q$ and $\delta_q$ defined in Eq.~\eqref{eqn:q-fail}. Using the
conditional expectation bound in Lemma~\ref{lemma:ez-square} and a more
careful calculation for Gaussian random variables (see
Lemma~\ref{lemma:conditional-expectation-truncated-gaussian} in the
appendices) we have
\begin{equation*}
  \E \left[a_{i, 1}^2 \mid a_{i, 1}^2 \leq w_q^2 \right]
  \leq \min\left\{\frac{w_q^2}{3},
  1 - \half w_q^2 \P(a_{i,1}^2 \ge w_q^2)\right\} < 1,
\end{equation*}
so $\delta_q \ge \max\{1 - \frac{w_q^2}{3}, \half w_q^2 \P(W^2 \ge
w_q^2)\}$.  For $\pfail \le \frac{3}{8}$,
we may take $w_q^2 \le 2.71$
and $\delta_q > \frac{1}{11}$.  More generally, a standard Gaussian
calculation that $\Phi^{-1}(p) \ge \sqrt{|\log(1 - p)|}$ as $p \to 1$ shows
that $w_q^2 \ge \log \frac{8(1 - \pfail)}{1 - 2\pfail}$
as $\pfail \to \half$, so $\delta_q \ge \frac{1 -
  2\pfail}{8(1 - \pfail)} \log \frac{8(1 - \pfail)}{1 - 2\pfail}$.
Under Model~\ref{model:full-independence},
then, as long as the sample is large enough and $\pfail < \half$, we can
achieve constant accuracy in the directional estimate $\dist(\what{d},
\{\pm d\subopt\})$ with probability of failure $e^{-cm}$.
Under the more adversarial noise model~\ref{model:partial-independence}, we
require a bit more; more precisely, we must have $(1 - 2\pfail) \delta_q -
\pfail > 0$ to achieve accurate estimates.  A numerical calculation shows
that if $\pfail < \frac{1}{4}$, then this condition holds, so that under
Model~\ref{model:partial-independence} we can achieve constant accuracy
in the directional estimate $\dist(\what{d}, \{\pm d\subopt\})$ with high probability.



\subsection{Summary and success guarantees}

With the guarantees of stability, quadratic approximation, and good
initialization for suitably random matrices $A \in \R^{m \times n}$, we can
provide a theorem showing that the prox-linear approach to the composite
optimization phase retrieval objective succeeds with high probability.
Roughly, once $m/n$ is larger than a numerical constant,
the prox-linear method with noisy initialization succeeds with exponentially
high probability, even with outliers.  Combining the convergence
Theorem~\ref{theorem:quadratic-convergence-errors} with
Propositions~\ref{proposition:noise-stability},
\ref{proposition:good-direction-to-good-radius},
\ref{proposition:noisy-good-initial-direction}, and
Corollary~\ref{corollary:no-noise-quadratic-approximation},
we have the following theorem.
\begin{theorem}
  Let Assumptions~\ref{assumption:normal} hold. There exist numerical
  constants $c > 0$ and $C < \infty$ such that the following hold for any $t
  \ge 0$. Let the independent outliers Model~\ref{model:full-independence}
  hold and $\pfail < \frac{1}{\pi}$ or the adversarial outliers
  Model~\ref{model:partial-independence} hold and $\pfail < \frac{1}{4}$.
  Let $x_0$ be the initializer returned by
  Alg.~\ref{alg:noisy-initialization}, and assume the iterates $x_k$ of
  Alg.~\ref{alg:prox-linear} are generated without
  error. Then
  \begin{equation*}
    \dist(x_0, X\subopt) \le \frac{C}{(1 - 2\pfail)^2}
    \sqrt{\frac{n}{m} + t} \cdot \ltwo{x\subopt}
    ~~~ \mbox{and} ~~~
    \dist(x_k, X\subopt) \le
    \ltwo{x\subopt} 2^{-2^k} ~~\mbox{for all}~ k \in \N
  \end{equation*}
  with probability at least $1 - 4 e^{-mt} - e^{-c(1 - 2 \pfail)^2
    m}$.
\end{theorem}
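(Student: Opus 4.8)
The plan is to assemble this theorem by verifying the two deterministic hypotheses of the convergence Theorem~\ref{theorem:quadratic-convergence-errors}, bounding the initialization error $\dist(x_0, X\subopt)$ by combining the directional and radial guarantees, and finally controlling all failure events with a single union bound. Throughout, Assumption~\ref{assumption:normal} (Gaussianity) supplies every distributional hypothesis the component results need: it implies Assumption~\ref{assumption:sub-gaussian-vector} with a numerical $\subgauss^2$ together with $\E[aa^T] = I_n$, so that Proposition~\ref{proposition:noise-stability}, Corollary~\ref{corollary:no-noise-quadratic-approximation}, Proposition~\ref{proposition:good-direction-to-good-radius}, and Proposition~\ref{proposition:noisy-good-initial-direction} all apply.

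First I would translate Proposition~\ref{proposition:noise-stability} into Condition~\ref{condition:stability}. Since $X\subopt = \{\pm x\subopt\}$ we have $\dist(0, X\subopt) = \ltwo{x\subopt}$, and from $\ltwo{x - x\subopt} + \ltwo{x + x\subopt} \ge 2\ltwo{x\subopt}$ one obtains the elementary inequality $\ltwo{x - x\subopt}\,\ltwo{x + x\subopt} \ge \dist(x, X\subopt)\,\dist(0, X\subopt)$, so the growth bound of Proposition~\ref{proposition:noise-stability} gives Condition~\ref{condition:stability} with $\stabconst = \stabfunc - 2\pfail - C\subgauss^2\sqrt[3]{n/m} - C\subgauss^2 t$, where $\stabfunc = 2/\pi$ under Assumption~\ref{assumption:normal}. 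The threshold $\pfail < 1/\pi$ is exactly what makes $\stabfunc - 2\pfail > 0$, so for $m/n$ above a numerical constant and $t$ below a threshold, $\stabconst$ is a positive numerical constant. Condition~\ref{condition:approximation} holds with $\Lipconst = 4\subgauss^2$ directly from Corollary~\ref{corollary:no-noise-quadratic-approximation} (equivalently inequality~\eqref{eqn:approximation-opnorm}).

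Next I would bound $\dist(x_0, X\subopt)$. Proposition~\ref{proposition:noisy-good-initial-direction} controls $\dist(\what{d}, \{\pm d\subopt\})$ through the denominator $\hinge{(1 - 2\pfail)\delta_q - C\sqrt{n/m + t} - M\pfail}$: under Model~\ref{model:full-independence} ($M = 0$) this is $\gtrsim (1 - 2\pfail)\delta_q$, while under Model~\ref{model:partial-independence} ($M = 1$) the extra $-\pfail$ forces $(1 - 2\pfail)\delta_q - \pfail > 0$, which is precisely the content of the stricter threshold $\pfail < 1/4$; in both regimes $\delta_q$ is bounded below by a numerical constant since $\pfail$ is bounded away from $\half$. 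Hence $\dist(\what{d}, \{\pm d\subopt\}) \le C\sqrt{n/m + t}/(1 - 2\pfail)$. Feeding this into Proposition~\ref{proposition:good-direction-to-good-radius}, which yields $\dist(x_0, X\subopt) \le \frac{C\subgauss^2}{1 - 2\pfail}\dist(\what{d}, \{\pm d\subopt\})\ltwo{x\subopt}$, produces exactly the first claimed bound $\dist(x_0, X\subopt) \le \frac{C}{(1 - 2\pfail)^2}\sqrt{n/m + t}\,\ltwo{x\subopt}$. Taking $m/n$ larger than a numerical constant and $t$ below a threshold then forces $\dist(x_0, X\subopt) \le \frac{\stabconst}{2\Lipconst}\ltwo{x\subopt}$; invoking the error-free case of Theorem~\ref{theorem:quadratic-convergence-errors} and using $\frac{\Lipconst}{\stabconst}\cdot\frac{\dist(x_0, X\subopt)}{\ltwo{x\subopt}} \le \half$ together with $\stabconst \le \Lipconst$ gives $\dist(x_k, X\subopt) \le \ltwo{x\subopt}\,2^{-2^k}$. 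A union bound over the failure events of the four component results, absorbing the various $e^{-cm}$, $e^{-c(1 - 2\pfail)m}$, and $e^{-cm\delta_q^2/w_q^2}$ terms into $e^{-c(1 - 2\pfail)^2 m}$ (using $(1 - 2\pfail)^2 \le (1 - 2\pfail) \le 1$) and collecting the $t$-dependent terms, yields the stated probability $1 - 4e^{-mt} - e^{-c(1 - 2\pfail)^2 m}$.

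I expect the only genuinely delicate step to be the bookkeeping that ties the two failure thresholds to the correct ingredient and keeps every constant numerical. One must simultaneously check that the stability constant $\stabconst \approx 2/\pi - 2\pfail$ stays positive (needing $\pfail < 1/\pi$ under both models) and that the denominator of Proposition~\ref{proposition:noisy-good-initial-direction} stays bounded below—this is where the adversarial term $M\pfail$ forces $\pfail < 1/4$ under Model~\ref{model:partial-independence} but not under Model~\ref{model:full-independence}. Tracking how two factors of $(1 - 2\pfail)^{-1}$ compound across the directional and radial estimates to give the $(1 - 2\pfail)^{-2}$ in the initialization bound, and verifying that $\delta_q$ (hence $\delta_q^2/w_q^2$) is uniformly controlled for $\pfail$ below these thresholds, is the crux; the remainder is assembly.
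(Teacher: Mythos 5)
Your proposal is correct and follows essentially the same route as the paper, which itself proves this theorem by combining Theorem~\ref{theorem:quadratic-convergence-errors} with Propositions~\ref{proposition:noise-stability}, \ref{proposition:good-direction-to-good-radius}, \ref{proposition:noisy-good-initial-direction}, and Corollary~\ref{corollary:no-noise-quadratic-approximation}; your filled-in details (the reduction of the growth bound to Condition~\ref{condition:stability} via $\ltwo{x-x\subopt}\ltwo{x+x\subopt} \ge \dist(x,X\subopt)\,\dist(0,X\subopt)$, the value $\stabfunc = 2/\pi$ with the $\pfail < 1/\pi$ and $\pfail < 1/4$ thresholds tied to the correct ingredients, the compounding of two $(1-2\pfail)^{-1}$ factors, and the absorption of failure probabilities) match the paper's intended assembly.
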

\noindent
Thus, we see that the method succeeds with high probability as long as the
sample size is large enough, though there is non-trivial degradation 
when $\pfail$ is large.



\section{Optimization methods}
\label{sec:optimization-methods}

In practice, we require some care to solve the
sub-problems~\eqref{eqn:prox-iteration},
to minimize
$f_x(y) + \frac{\lipconst}{2} \ltwo{x - y}^2$, at
large scale. In this section, we describe the three schemes we
use to solve the sub-problems (one is simply using
the industrial \texttt{Mosek} solver). We evaluate these more carefully in
the experimental section to come.

To fix notation, let $\phi(\cdot)$ be the elementwise square operator.
Recalling that $f(x) = \frac{1}{m} \lone{\phi(A x) - b}$,
we perform a few simplifications to more easily describe the
schemes we use to solve the prox-linear sub-problems.
Assuming we begin an iteration at point $x_0$,
defining the diagonal matrix $D = 2 \diag(A x_0) \in \R^{m \times
  m}$ and setting $c = b + \phi(A x_0)$, we recall
inequality~\eqref{eqn:approximation-opnorm} and have
\begin{align*}
  f(y)
  & \le f_{x_0}(y)
  + \frac{1}{m} (y - x_0)^T A^T A (y - x_0)
  \le \frac{1}{m} \lone{D A y - c}
  + \opnorm{m^{-1} A^T A} \ltwo{y - x_0}^2.
\end{align*}
Rewriting this with appropriately rescaled diagonal matrices $D$ and vector
$c$, implementing Algorithm~\ref{alg:prox-linear}
becomes equivalent to solving a sequence of optimization problems of the
form
\begin{equation}
  \label{eqn:optimization-we-solve}
  \minimize_x ~ \lone{D A x - c} + 
  \half \ltwo{x}^2.
\end{equation}

In small scale scenarios, the problem~\eqref{eqn:optimization-we-solve} is
straightforward to solve via standard interior point method software; we use
\texttt{Mosek} via the \texttt{Convex.jl} toolbox~\cite{UdellMoZeHoDiBo14}.
We do not describe this further.  In larger-scale scenarios, we use more
specialized methods, which we now describe.


\subsection{Graph splitting methods for the prox-linear sub-problem}
\label{sec:pogs}

\newcommand{\mult}{\rho}
\newcommand{\residp}{r^{\rm pri}}
\newcommand{\residd}{r^{\rm dual}}

When the matrices are large, we use a variant of the Alternating Directions
Method of Multipliers (ADMM) procedure known as the proximal operator graph
splitting (POGS) method of Parikh and Boyd~\cite{ParikhBo13,ParikhBo14},
which minimizes objectives of the form $f(x) + g(y)$ subject to a linear
constraint $Bx = y$.  We experimented with a number of specialized
first-order and interior-point methods for solving
problem~\eqref{eqn:optimization-we-solve}, but
in our experience, POGS offers the
empirically best performance. Let us describe the POGS method.
Let the matrix $B = DA$ for shorthand; evidently,
problem~\eqref{eqn:optimization-we-solve} has precisely this form and is
equivalent to
\begin{equation*}
  \minimize_{x \in \R^n, y \in \R^m}
  ~ \lone{y - c} + \half \ltwo{x}^2
  ~~ \subjectto B x = y.
\end{equation*}
The POGS method iterates to solve problem~\eqref{eqn:optimization-we-solve}
as follows.  Introduce dual variables $\lambda_k \in \R^n$ and $\nu_k
\in \R^m$ associated to $x$ and $y$, and consider the iterations
\begin{equation}
  \begin{split}
    x_{k + \half} & =
    \argmin_x \left\{\half \ltwo{x}^2
    + \frac{\mult}{2} \ltwo{x - (x_k - \lambda_k)}^2 \right\} \\
    y_{k + \half} & =
    \argmin_y \left\{\lone{y - c}
    + \frac{\mult}{2} \ltwo{y - (y_k - \nu_k)}^2 \right\} \\
    \left[\begin{matrix} x_{k + 1} \\
        y_{k + 1} \end{matrix}\right]
    & = \left[\begin{matrix} I_n & B^T \\ B & -I_m \end{matrix}\right]^{-1}
    \left[\begin{matrix} I_n & B^T \\ 0 & 0 \end{matrix}\right]
    \left[\begin{matrix} x_{k + \half} + \lambda_k \\
        y_{k + \half} + \nu_k \end{matrix} \right] \\
    \lambda_{k+1} & = \lambda_k + (x_{k + \half} - x_{k + 1}),
    ~~ \nu_{k+1} = \nu_k + (y_{k + \half} - y_{k + 1}).
  \end{split}
  \label{eqn:pogs-update}
\end{equation}
Each of the steps of the method~\eqref{eqn:pogs-update}
is trivial except for the matrix inversion, or the ``graph projection''
step, which projects the pair
$(x_{k + \half} + \lambda_k, y_{k + \half} + \nu_k)$ to the
set $\{x, y : Bx = y\}$.
The first two updates amount to
\begin{equation*}
  x_{k + \half} = \frac{\mult}{1 + \mult}
  (x_k - \lambda_k)
  ~~ \mbox{and} ~~
  y_{k + \half} =
  c + \sign(y_k - \nu_k - c) \odot
    \hinge{|y_k - \nu_k - c| - 1/\mult}
\end{equation*}
where $\odot$ denotes elementwise multiplication and each operation
is element-wise.
The matrix $B$ is tall, so setting $v_k
= x_{k + \half} + \lambda_k + B^T (y_{k + \half} + \nu_k) \in \R^n$, then
the solution of the system
\begin{equation}
  \left[\begin{matrix} I_n & B^T \\ 
      B & -I_n \end{matrix} \right]
  \left[\begin{matrix} x \\ y \end{matrix} \right]
  = \left[\begin{matrix} v_k \\ 0_n \end{matrix}\right]
  ~~ \mbox{is} ~~
  x_{k+1} = (I_n + B^T B)^{-1} v_k
  ~~ \mbox{and} ~~
  y_{k+1} = B x_{k+1}.
  \label{eqn:tall-matrix-update}
\end{equation}
In this iteration, it is straightforward to cache the matrix $(I_n + B^T
B)^{-1}$, or a Cholesky factorization of the matrix, so that we can
repeatedly compute the multiplication~\eqref{eqn:tall-matrix-update} in time
$n^2 + nm$.

Following Parikh and Boyd~\cite{ParikhBo14}, we define the primal and dual residuals
\begin{equation*}
  \residp_{k + 1} \defeq
  \left[ \begin{matrix} x_{k+1} - x_{k + \half} \\
      y_{k+1} - y_{k + \half} \end{matrix}\right]
  ~~ \mbox{and} ~~
  \residd_{k + 1} \defeq
  \mult \left[\begin{matrix} x_k - x_{k + 1} \\ y_k - y_{k + 1}
      \end{matrix}\right].
\end{equation*}
These residuals define a natural stopping criterion~\cite{ParikhBo14}, where
one terminates the iteration~\eqref{eqn:pogs-update} once the residuals
satisfy
\begin{equation}
  \label{eqn:residual-stopping}
  \ltwo{\residp_k} < \epsilon \left(\sqrt{n}
  + \max\{\ltwo{x_k}, \ltwo{y_k}\}\right)
  ~~ \mbox{and} ~~
  \ltwo{\residd_k} < \epsilon \left(\sqrt{n}
  + \max\{\ltwo{\lambda_k},\ltwo{\nu_k}\}\right)
\end{equation}
for some $\epsilon > 0$, which must be specified. In our case, the quadratic
convergence guarantees of Theorem~\ref{theorem:quadratic-convergence-errors}
suggest a strategy of decreasing $\epsilon$ across
iterative solutions of the sub-problem~\eqref{eqn:optimization-we-solve}.
That is, we begin with some
$\epsilon = \epsilon_0$. We perform iterations of the prox-linear
Algorithm~\ref{alg:prox-linear} using the POGS
iteration~\eqref{eqn:pogs-update} (until the residual
criterion~\eqref{eqn:residual-stopping} holds) to solve the inner
problem~\eqref{eqn:optimization-we-solve}. Periodically, in the outer
prox-linear iterations of Alg.~\ref{alg:prox-linear},
we decrease $\epsilon$ by some large multiple.

In our experiments with the POGS method for sub-problem solutions, we
perform two phases. In the first phase, we iterate the prox-linear method
(Alg.~\ref{alg:prox-linear}) until either $k = 25$ or
$\ltwo{x_k - x_{k + 1}} \le \delta / \opnorm{(1/m) A^TA}$, where $\delta =
10^{-3}$, using accuracy parameter $\epsilon = 10^{-5}$ for POGS (usually,
the iterations terminate well-before $k = 25$). We then decrease this
parameter to $\epsilon = 10^{-8}$, and begin the iterations again.

\subsection{Conjugate gradient methods for sub-problems}
\label{sec:cg-sub-prob}

In a number of scenarios, it is possible to multiply by the matrix $A$
quickly, though the direct computation $(I_n + A^T D^2 A)^{-1}$ in
expression~\eqref{eqn:tall-matrix-update} (recall $B = DA$) may be
difficult. For example, if $A$ is structured (say, a Fourier or Hadamard
transform matrix or or sparse),
computing multiplication by $I_n + A^T D^2 A$ quickly is possible. This
suggests~\cite[Part VI]{TrefethenBa97} using conjugate gradient methods.

We make this more explicit here to mesh with our experiments to come.
Let $H_n$ be an $n \times n$ orthogonal matrix for which computing the
multiplication $H_n v$ is efficient (e.g.\ a Hadamard
or discrete cosine transform matrix).  We
assume that the measurement matrix $A$ takes the form of repeated randomized
measurements under $H_n$, that is,
\begin{equation}
  A = \left[\begin{matrix}
      H_n S_1 & H_n S_2 & \cdots & H_n S_k \end{matrix}\right]^T
  \label{eqn:hadamard-sensing}
\end{equation}
where $S_l \in \R^{n \times n}$ are (random) diagonal matrices, so that $A
\in \R^{m \times n}$ with $m = kn$.
In this case, the expensive part of the
system~\eqref{eqn:tall-matrix-update} is the solution of
$(I + A^T D^2 A) x = v$.
This is a positive definite system, and it is
possible to compute the multiplication $(I + A^T D^2 A)x$ in time $O(k n + k
T_{\rm mult})$, where $T_{\rm mult}$ is the time to muliply an $n$-vector by
the matrix $H_n$ and $H_n^T$.  When $S$ is a random sign matrix
and $H_n$ is a Hadamard or FFT matrix, the matrix $A^TD^2 A$
is
well-conditioned, as the analysis of random (subsampled)
Hadamard and Fourier transforms shows~\cite{Tropp11b}. Thus, in our
experiments with structured matrices we use the conjugate gradient method
\emph{without} preconditioning~\cite{TrefethenBa97}, iterating until we have
relative error $\norm{(I + A^T D A)x - v} \le \epsilon_{\rm cg} / \norm{v}$
where $\epsilon_{\rm cg} = 10^{-6}$.





\section{Experiments}
\label{sec:experiments}

We perform a number of simulations as well as experiments on real
images to evaluate our method and compare with other state-of-the-art
(non-convex) methods for real-valued phase retrieval problems. To standardize notation
and remind the reader, in each experiment, we generate data via
(variants) of the following process. We take a measurement matrix $A \in
\R^{m \times n}$, from one of a few distributions and generate a signal
$x\subopt \in \R^n$ either by drawing $x\subopt \sim \normal(0, I_n)$ or by taking
$x\subopt \in \{-1, 1\}^n$ uniformly at random. We receive observations of the
form
\begin{equation*}
  b_i = \<a_i, x\subopt\>^2,
\end{equation*}
and with probability $\pfail \in [0, \half]$, we corrupt the measurements
$b_i$.

In our experiments, to more carefully isolate the relative performance of
the iterative algorithms, rather than initialization used, we compare three
initializations.  The first two rely on Gaussianity of the
measurement matrix $A$.
\begin{enumerate}[(i)]
\item\label{item:big} \textbf{Big:} 
  The initialization of Wang et al.~\cite{WangGiEl16}. Defining
  \begin{equation*}
    \mc{I}_0 \defeq \left\{i \in [m] : b_i / \ltwo{a_i}^2
    \ge \quant_{5/6}(\{b_i / \ltwo{a_i}^2\})\right\}
    ~~ \mbox{and} ~~
    X\init \defeq \sum_{i \in \mc{I}_0} \frac{1}{\ltwo{a_i}^2} a_i a_i^T,
  \end{equation*}
  we set the direction $\what{d} = \argmax_{\ltwo{d} = 1} d^T X\init
  d$ and $x_0 = (\frac{1}{m} \sum_{i = 1}^m b_i)^\half \what{d}$.
\item \label{item:med} \textbf{Median:} The initialization of Zhang et al.~\cite{ZhangChLi16}. 
	We set $\what{r}^2 = \quant_\half(\{b_i\}) / .455$ and define
  \begin{equation*}
    \mc{I}_0 \defeq \left\{i \in [m] : |b_i| \le 9 \lambda_0\right\}
    ~~ \mbox{and} ~~
    X\init \defeq \sum_{i \in \mc{I}_0} b_i a_i a_i^T.
  \end{equation*}
  We then set the direction $\what{d} = \argmax_{\ltwo{d} = 1} d^T X\init
  d$ and $x_0 = \what{r} \what{d}$.
\item \label{item:small} \textbf{Small:} The
  outlier-aware initialization we describe in
  Sec.~\ref{sec:noise-initialization}.
\end{enumerate}

Our convergence results in Section~\ref{sec:composite} rely on four
quantities: the quality of the initialization, $\dist(x_0, X\subopt)$; the
stability parameter $\lambda$ such that $f(x)
- f(x\subopt) \ge \lambda \ltwo{x - x\subopt} \ltwo{x + x\subopt}$; the quadratic
upper bound guaranteed by our linearized models $f_x(y)$, so that $|f_x(y) -
f(y)| \le \frac{\lipconst}{2} \ltwo{x - y}^2$; and the accuracy to which we
solve the optimization problems.  The random matrix $A$ governs the middle
two quantities---stability $\lambda$ and closeness $\lipconst$---and we take
$\lipconst = \frac{2}{m}\opnorm{A}^2$. Thus, in our experiments we directly
vary the initialization scheme to generate $x_0$ and the accuracy to which
we solve the sub-problems~\eqref{eqn:prox-iteration}, that is,
\begin{equation*}
  x_{k + 1} = \argmin_x \left\{
  \frac{1}{m} \sum_{i = 1}^m \left|\<a_i, x_k\>^2
  + 2 \<a_i, x_k\> \<a_i, x - x_k\>
  - b_i \right| + \frac{\Lipconst}{2} \ltwo{x - x_k}^2\right\}.
\end{equation*}

We perform each of our experiments on a server with a 16 core 2.6 GhZ Intel
Xeon processor with 128 GB of RAM using \texttt{julia} as our language, with
\texttt{OpenBLAS} as the BLAS library. We restrict each optimization method
to use 4 of the cores (\texttt{OpenBLAS} is multi-threaded).

\subsection{Simulations with zero noise and Gaussian matrices}
\label{sec:simulations-gaussians}

For our first collection of experiments, we evaluate the performance of our
method for recovering random signals $x\subopt \in \R^n$ using Gaussian
measurement matrices $A \in \R^{m \times n}$, varying the number of
measurements $m$ over the ten values $m \in \{1.8 n, 1.9 n, \ldots, 2.7
n\}$. (Taking $m \ge 2.7n$ yielded 100\% exact recovery in all the methods
we experiment with.) We assume a noiseless measurement model, so that for $A
= [a_1 ~ \cdots ~ a_m]^T \in \R^{m \times n}$, we have $b_i = \<a_i,
x\subopt\>^2$ for each $i \in [m]$. We perform experiments with dimensions $n
\in \{400, 600, 800, 1000, 1500, 2000, 3000\}$, and within each experiment
we vary a number of problem parameters, including initialization scheme and
the algorithm we use to solve the sub-problems~\eqref{eqn:prox-iteration}.

To serve as our baseline for comparison, we use
Truncated Amplitude Flow (TAF)~\cite{WangGiEl16}, given that it outperforms
other non-convex iterative methods for phase retrieval, including Wirtinger
Flow~\cite{CandesLiSo15}, Truncated Wirtinger Flow~\cite{ChenCa15}, and
Amplitude Flow~\cite{WangGiEl16}.  Setting $\psi_i = \sqrt{b_i}$, TAF tries
to minimize the loss $\frac{1}{2 m} \sum_{i = 1}^m (\psi_i - |\<a_i,
x\>|)^2$ via a carefully designed (generalized) gradient method.  For
convenience, we replicate the method (as described by Wang et al. \cite{WangGiEl16}) in
Alg.~\ref{alg:taf}.
\begin{algorithm}[th]
  \caption{\label{alg:taf}
    Truncated amplitude flow}
  \KwData{Initializer $x_0 \in \R^n$, i.i.d.\ standard Gaussian
    matrix $A \in \R^{m \times n}$, $\psi_i = |\<a_i, x\subopt\>|$,
    and parameters $\gamma = .7$ and $\stepsize = .6$}
  \For{$k = 0, 1, \ldots, K - 1$}{
    Set $\mc{I}_k = \{i \in [m] : |\<a_i, x_k\>| \ge \frac{1}{
      1 + \gamma} \psi_i \}$ \\
    Update $x_{k+1} = x_k
    - \frac{\stepsize}{m} \sum_{i \in \mc{I}_k}
    \left(\<a_i, x_k\> - \psi_i \frac{\<a_i, x_k\>}{|\<a_i, x_k\>|}
    \right) a_i$}
  \Return $x_K$
\end{algorithm}

\subsubsection{Low-dimensional experiments with accurate prox-linear steps}

We begin by describing our experiments with dimensions $n \in \{400, 600,
800\}$. For each of these, we solve the iterative sub-problems to machine
precision, using \texttt{Mosek} and the Julia package
\texttt{Convex.jl}~\cite{UdellMoZeHoDiBo14}.  We plot representative results
for $n = 400$ and $n = 800$ in Figure~\ref{fig:smalldim-zero-noise}, which
summarize 400 independent experiments, and we generate the true $x\subopt$ by
taking $x\subopt \in \{-1, 1\}^n$ uniformly at random.  (In separate
experiments, we drew $x\subopt \sim \normal(0, I_n)$, and the results were
essentially identical.) In these figures, we use the ``big''
initialization~\eqref{item:big} (the initialization of
Wang et al., as it yields the best empirical
performance. Following Wang et al. \cite{WangGiEl16}, we perform 1000 iterations of TAF
(Alg.~\ref{alg:taf}), and within each experiment, both TAF and the
prox-linear method use identical initializer and data matrix $A$.  We run
the prox-linear method until sequential updates $x_k$ and $x_{k + 1}$
satisfy $\ltwo{x_k - x_{k + 1}} \le \epsilon = 10^{-5}$, which in
\emph{every} successful experiment we perform (with these data settings)
requires 6 or fewer iterations. We declare an experiment successful if the
output $\what{x}$ of the algorithm satisfies
\begin{equation}
  \label{eqn:def-success}
  \dist(\what{x}, X\subopt)
  = \min\left\{\ltwos{\what{x} - x\subopt},
  \ltwos{\what{x} - x\subopt}\right\}
  \le \epsilon_{\rm acc} \ltwos{x\subopt}
  ~~ \mbox{where} ~~
  \epsilon_{\rm acc} = 10^{-5}.
\end{equation}
In Fig.~\ref{fig:smalldim-zero-noise}(a), we plot the number of times that
one method succeeds (out of the 400 experiments) while the other does not as
a function of the ratio $m/n$. We see that for these relatively small
dimensional problems, the prox-linear method has mildly better performance
for $m/n$ small than does truncated amplitude flow.  In
Fig.~\ref{fig:smalldim-zero-noise}(b), we plot the fraction of successful
runs of the algorithms, again against $m/n$ for $n = 400$, and in
Fig.~\ref{fig:smalldim-zero-noise}(c) we plot the fraction of successful
runs for $n = 800$. Even when $m/n = 2$, the prox-linear method
has success rate of around .6.

\begin{figure}[t]
  \begin{center}
    \begin{tabular}{ccc}
      \hspace{-.3cm}
      \includegraphics[width=.34\columnwidth]{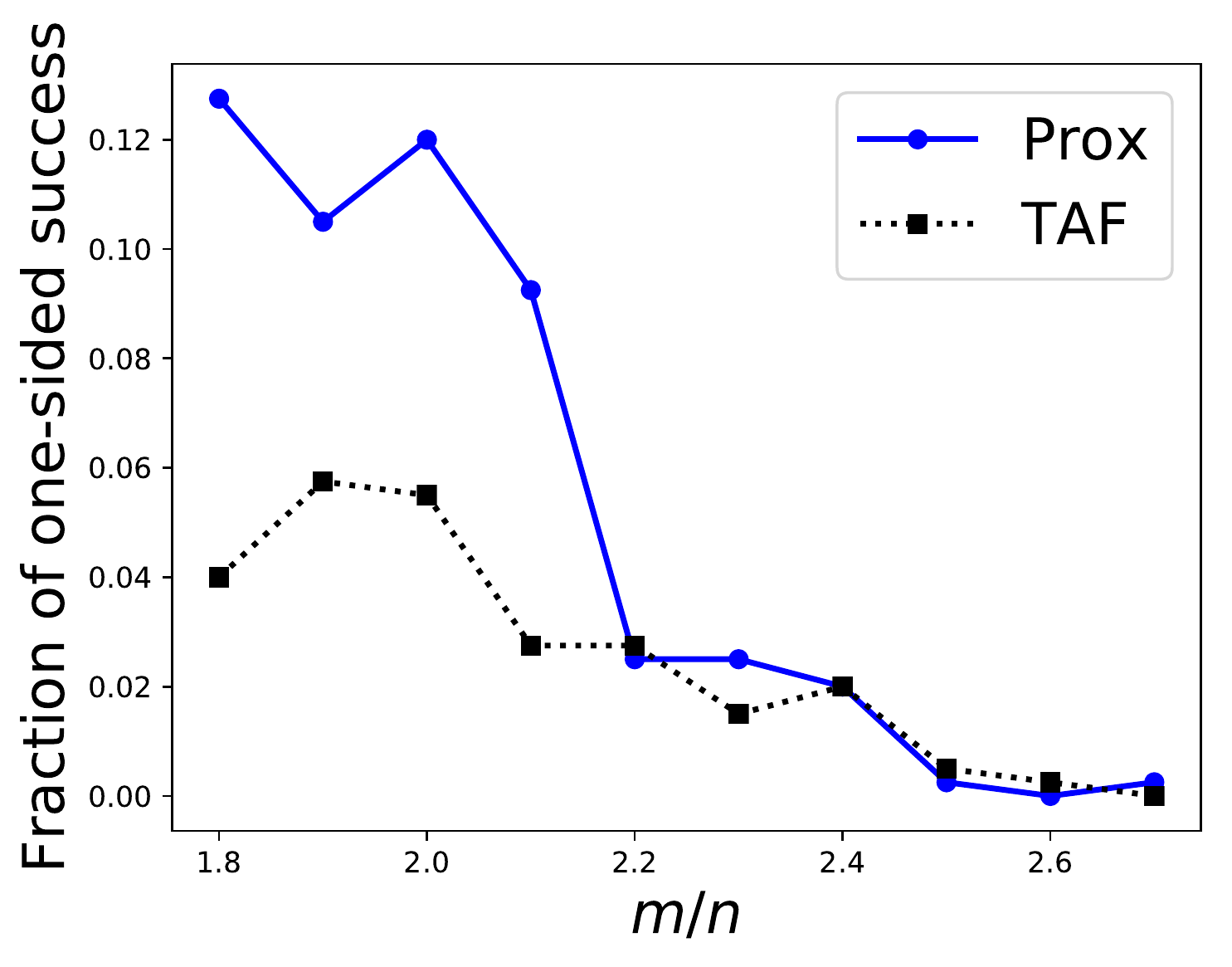} &
      \hspace{-.5cm}
      \includegraphics[width=.34\columnwidth]{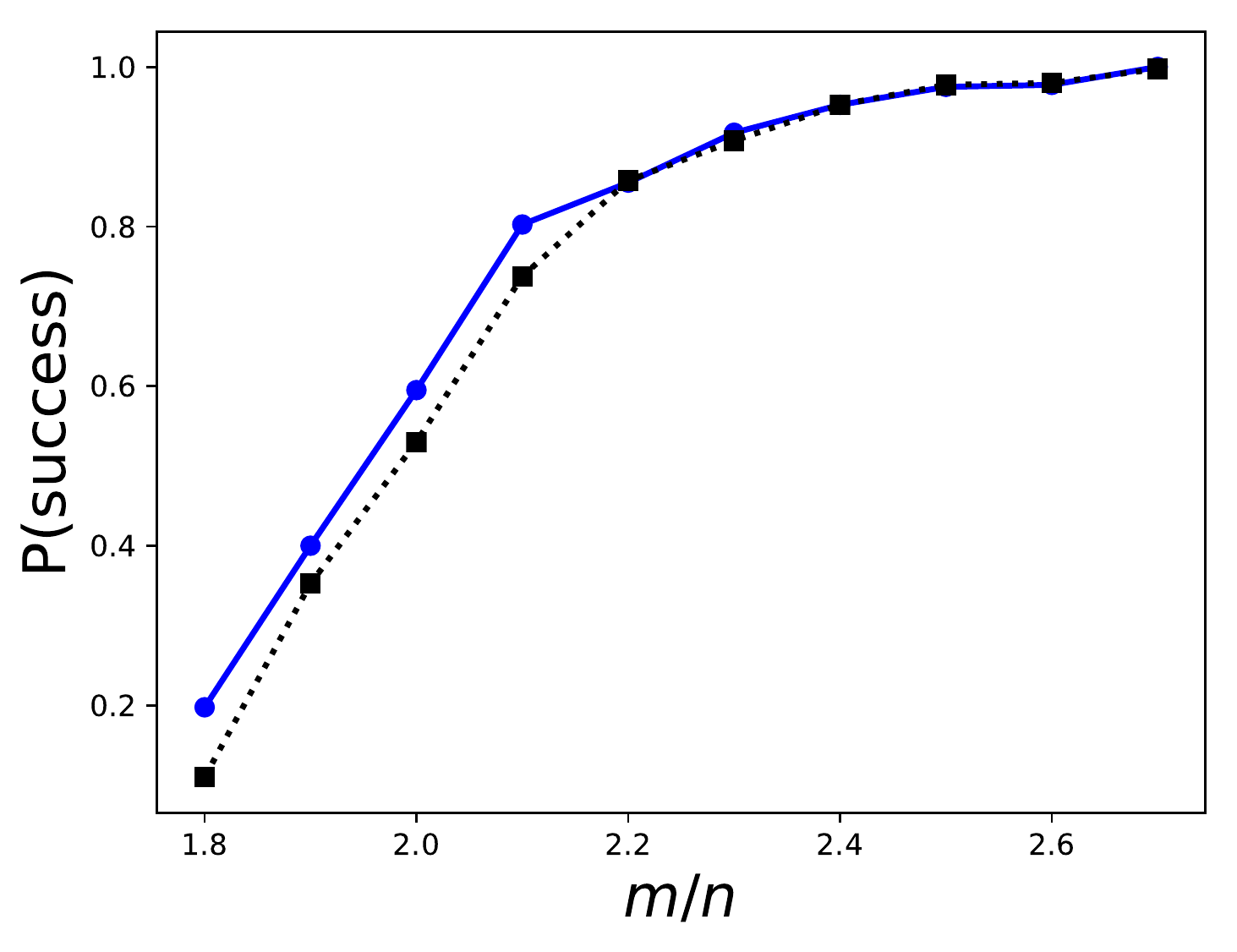} &
      \hspace{-.5cm}
      \includegraphics[width=.34\columnwidth]{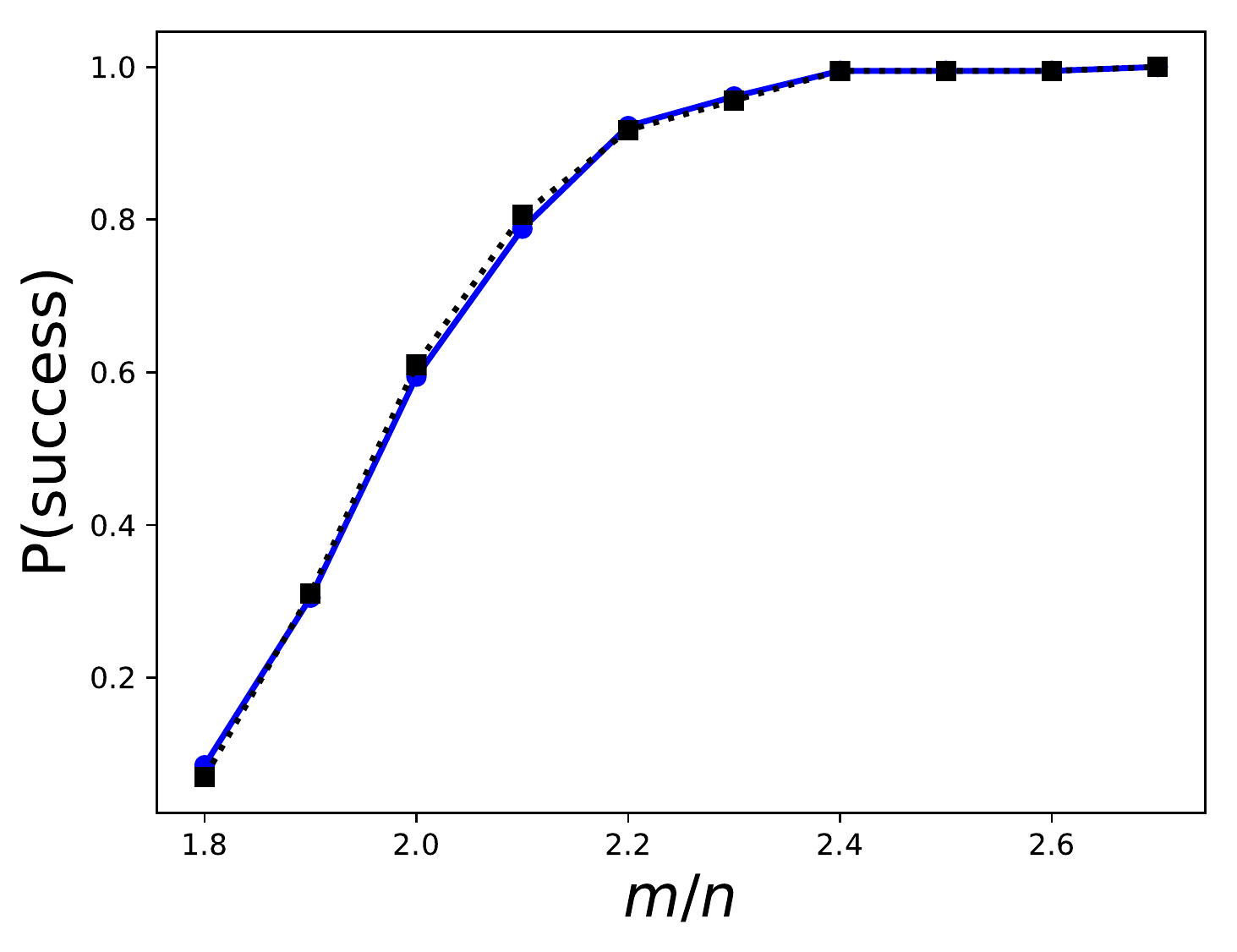} \\
      (a) & (b) & (c)
    \end{tabular}
    \caption{\label{fig:smalldim-zero-noise}
      Zero-noise experiments in dimensions $n = 400$ and $n = 800$.
      (a) Fraction of times (in 400 experiments) that
      one method succeeded while other failed for $n = 400$.
      (b) Fraction of successful recoveries for $n = 400$.
      (c) Fraction of successful recoveries for $n = 800$.
    }
  \end{center}
\end{figure}

\subsubsection{Medium dimensional experients with inaccurate
  prox-linear steps}

We now shift to a description of our experiments in dimensions $n \in
\{1000, 1500, 2000, 3000\}$, again without noise in the measurements; we
perform between 100 and 400 experiments for each dimension in this regime,
and we use the ``big'' initialization~\eqref{item:big}~\cite{WangGiEl16}. In
this case, we use Parikh and Boyd's proximal operator graph
splitting (POGS) method~\cite{ParikhBo13} for the prox-linear steps, as we
describe in Section~\ref{sec:pogs}. We
perform two phases of the prox-linear method: the first performing POGS
until the residual errors~\eqref{eqn:residual-stopping} are less than
$\epsilon = 10^{-5}$ within the prox-linear steps, the second to accuracy
$\epsilon = 10^{-8}$. We apply the prox-linear method to the matrix $A /
\sqrt{m}$ with data $b / m$, which is equivalent but numerically more stable
because $A / \sqrt{m}$ and $I_n$ are comparable (see the
recommendations~\cite{ParikhBo13}). In these accuracy
regimes, the time for solution of the prox-linear method and that required
for 1000 iterations of truncated amplitude flow are comparable; TAF requires
about 1.5 seconds while the two-phase prox-linear method requires around 4
seconds in dimension $n = 1000$, and TAF requires about 5 seconds while the
the two-phase prox-linear method requires around 20 seconds in dimension $n
= 2000$, each with $m = 2n$.

\begin{figure}[t]
  \begin{center}
    \begin{tabular}{ccc}
      \hspace{-.3cm}
      \includegraphics[width=.34\columnwidth]{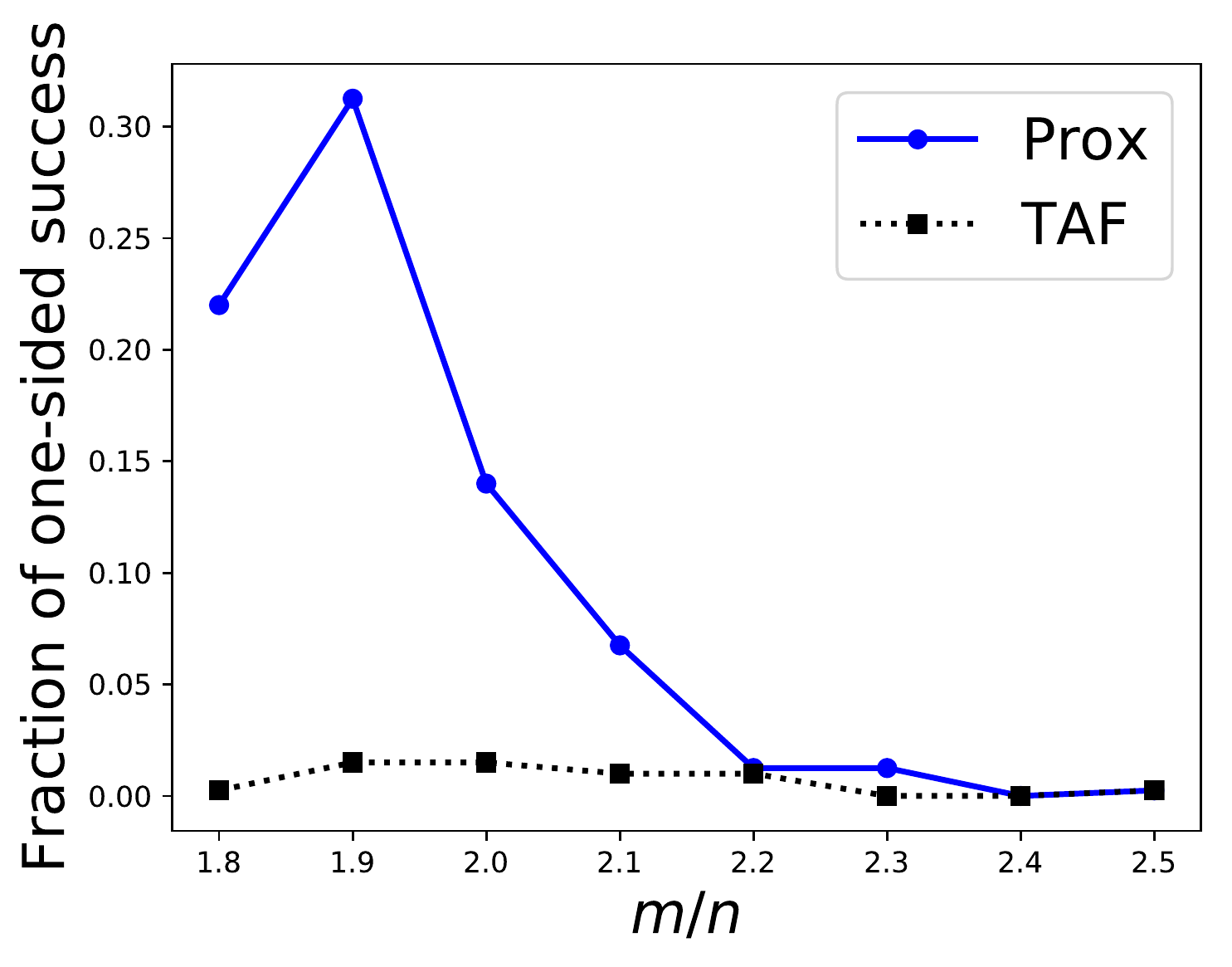} &
      \hspace{-.3cm}
      \includegraphics[width=.34\columnwidth]{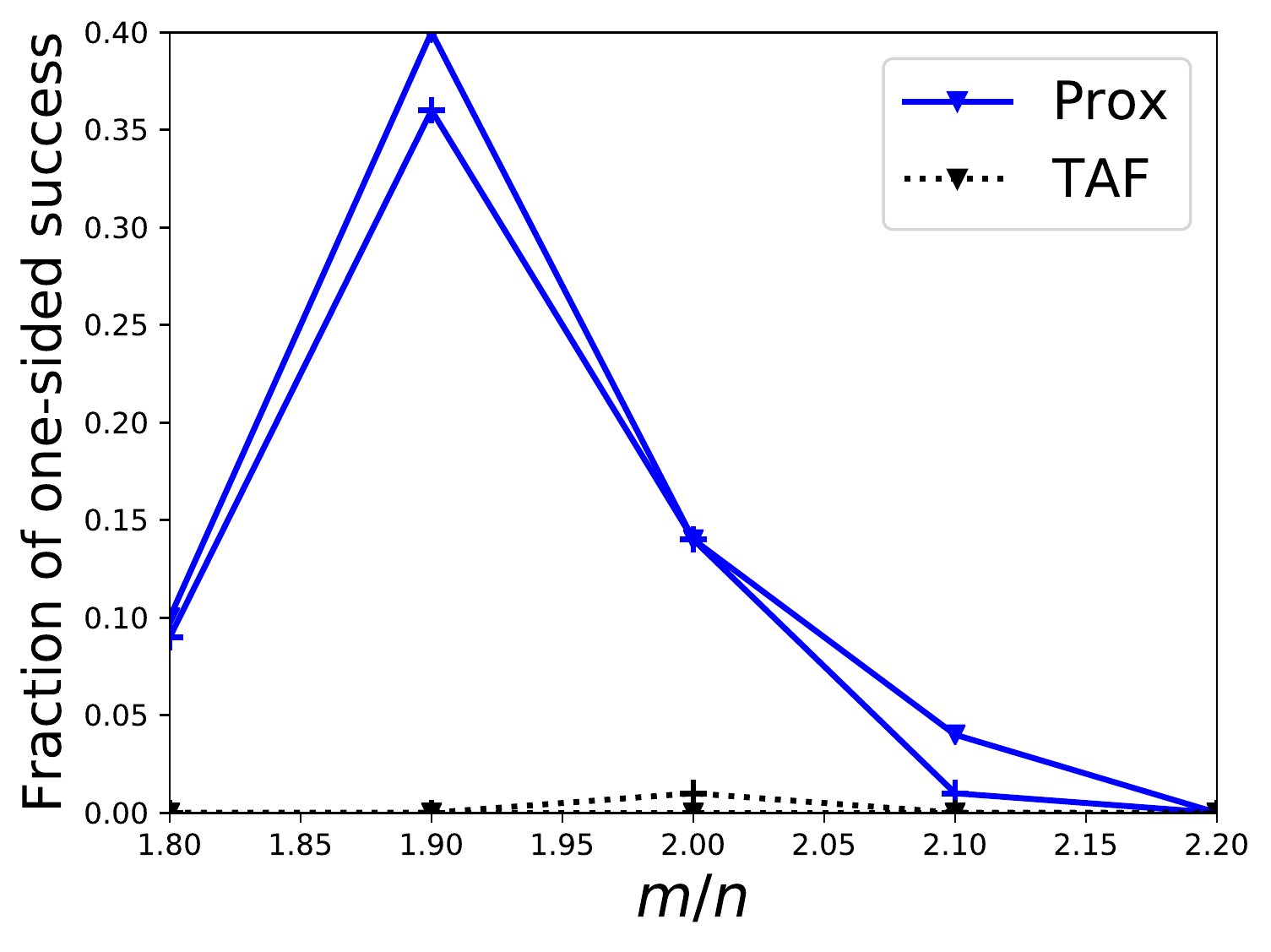} &
      \hspace{-.3cm}
      \includegraphics[width=.34\columnwidth]{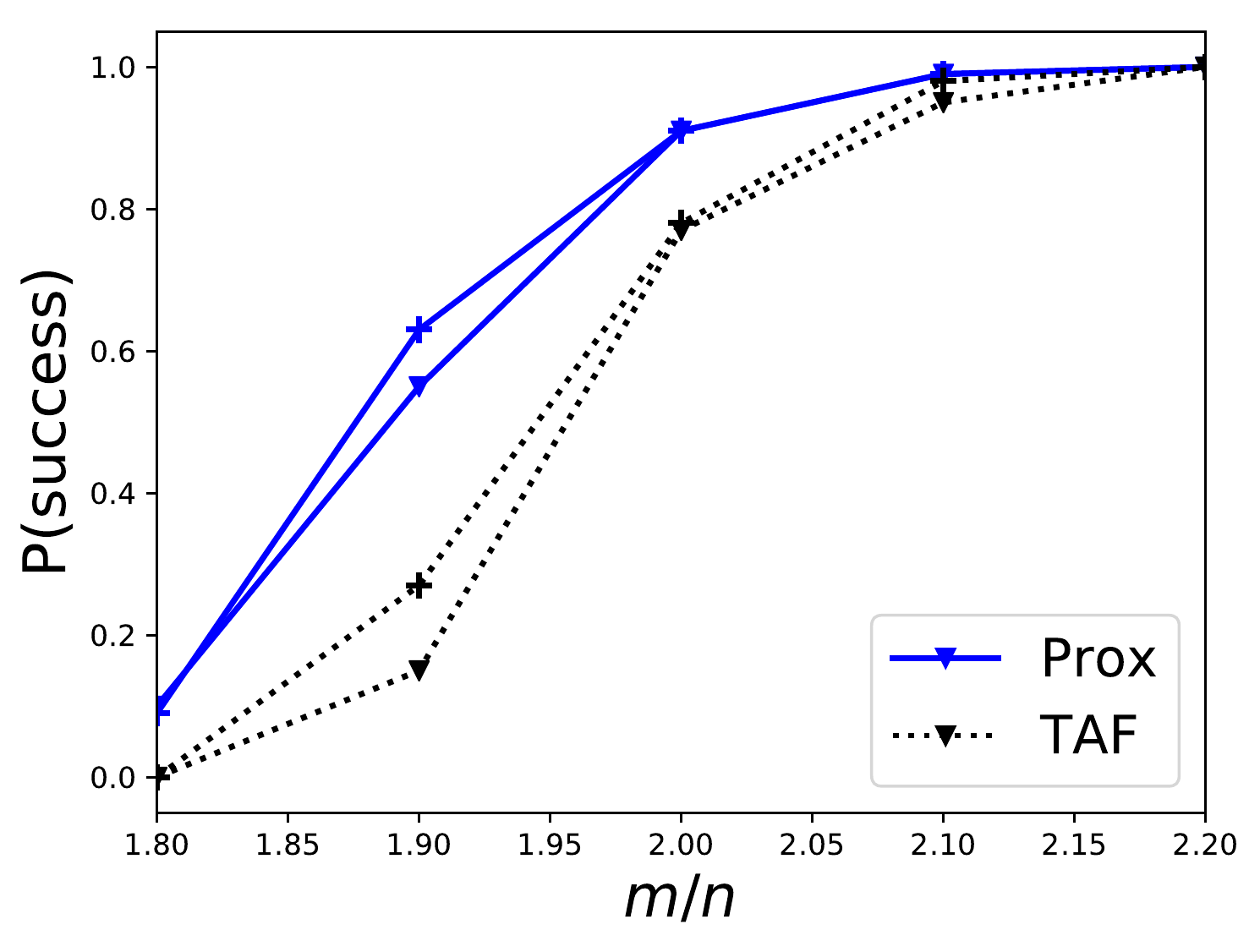} \\
      (a) & (b) & (c)
    \end{tabular}
    \caption{\label{fig:big-dim-zero-noise} Zero-noise experiments.
      (a) Fraction of trials (of 400) in which
      one method succeeds while other fails for $n = 1000$.
      (b) Fraction of trials (of 100) in which
      one method succeeds while other fails for $n = 3000$.
      (c) Fraction of successful recoveries for $n = 3000$.
      }
  \end{center}
\end{figure}

We provide two sets of plots for these results, where again we measure
success by the criterion~\eqref{eqn:def-success}, $\ltwo{x \pm x\subopt} \le
\epsilon_{\rm acc} = 10^{-5}$. In the first,
Fig.~\ref{fig:big-dim-zero-noise}, we show performance of prox-linear
against TAF specifically for dimensions $n = 1000$ and $3000$.  In
Fig.~\ref{fig:big-dim-zero-noise}(a), we plot the number of trials in which
one method succeeds (out of 400 experiments) while the other does not as a
function of the ratio $m/n$. In Fig.~\ref{fig:big-dim-zero-noise}(b) we plot
the number of trials in which the prox-linear or TAF method succeeds, while
the other does not, for $n = 3000$ with $x\subopt$ chosen either $\normal(0,
I_n)$ or uniform in $\{\pm 1\}^n$ ($\triangledown$ and $+$ markers,
respectively). We ignore ratios $m/n \ge 2.2$ as both methods succeed in all
of our trials. Out of 100 trials, there is only \emph{one} (with $m/n = 2$)
in which TAF succeeds but the prox-linear method does not.  In
Fig.~\ref{fig:big-dim-zero-noise}(c), we plot the fraction of successful
runs of the algorithms, again against $m/n$ for $n = 3000$. For these larger
problems, there is a substantial gap in recovery probability between
prox-linear method and TAF, where with $m/n = 2$ the prox-linear method
achieves recovery more than 78\% ($\pm 4$), 88\% ($\pm 6$), and 91\% ($\pm
6$) of the time, with 95\% confidence intervals, for $n = 1000, 2000$, and
$3000$, respectively.

\includelong{
In Figure~\ref{fig:alldims-zero-noise}, we plot results for all of our
experiments with $n \in \{1000, 1500, 2000\}$ simultaneously, where we
include both $x\subopt \sim \uniform(\{-1, 1\}^n)$ and $x\subopt \sim \normal(0,
I_n)$, and all experiments with the initialization. The left plot shows the
fraction of the experiments in which one method succeeds while the other
does not. The other plots the probability of success of the methods versus
the ratio $m/n$. Each line consists of the data of between 100 $(n = 2000,
3000)$ and 400 ($n = 1000, 1500$) experiments. The prox-linear method
appears to have stronger performance and more frequent recovery, even with
the same data and same initialization.

\begin{figure}[t]
  \begin{center}
    \begin{tabular}{cc}
      \hspace{-.4cm}
      \includegraphics[width=.52\columnwidth]{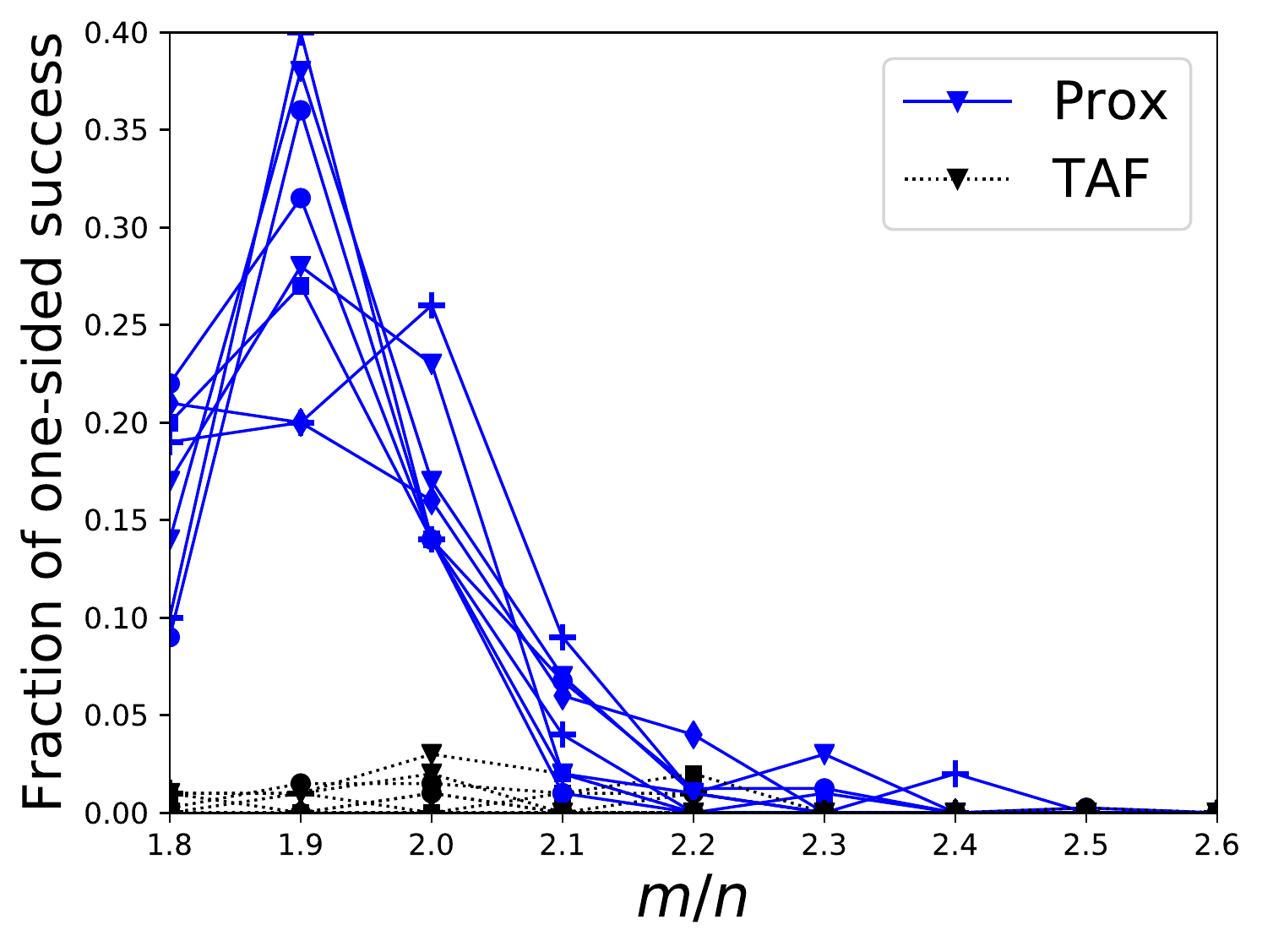} &
      \hspace{-.4cm}
      \includegraphics[width=.52\columnwidth]{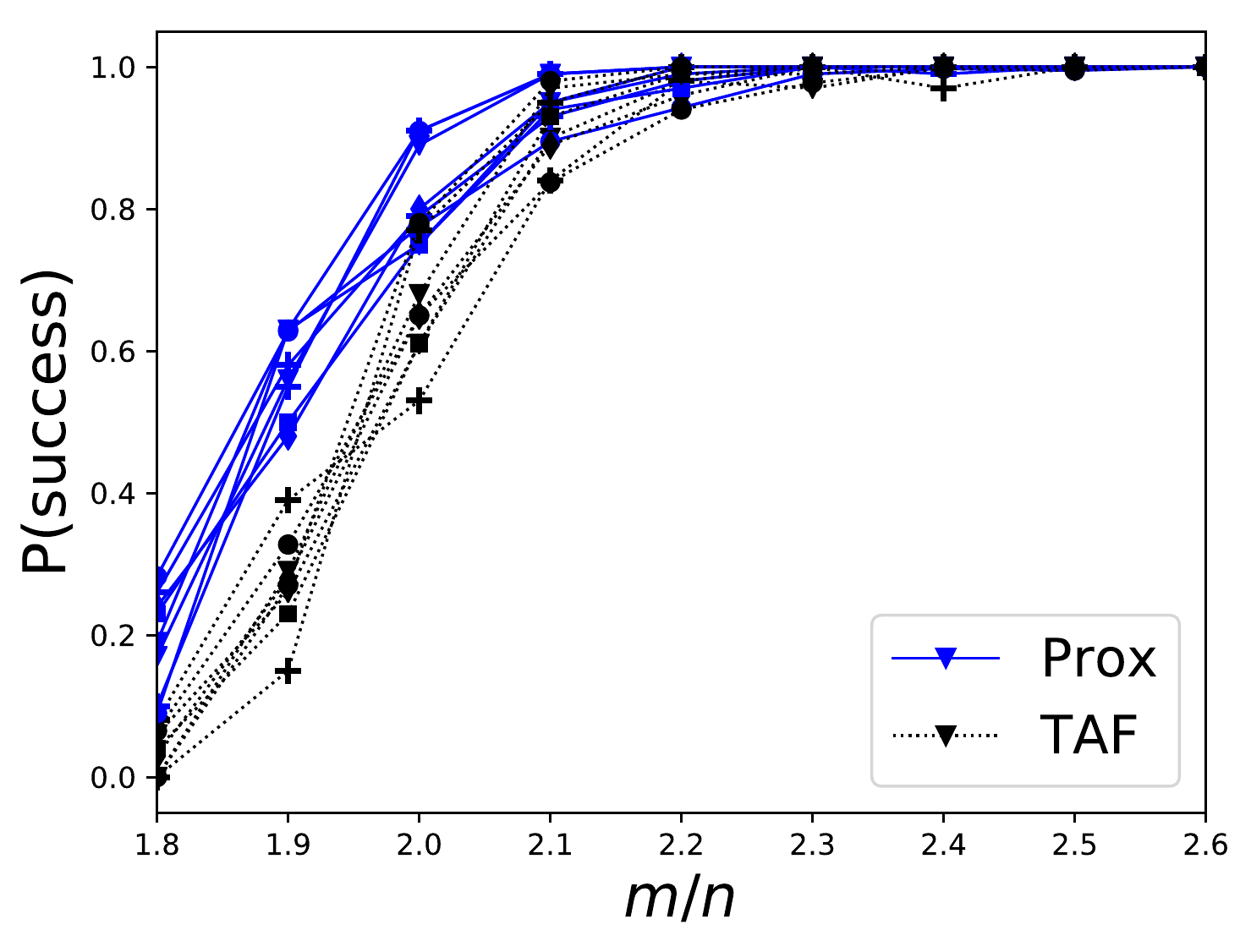}
    \end{tabular}
    \caption{\label{fig:alldims-zero-noise}
      Collected zero-noise experiments in dimensions
      $n \in \{1000, 1500, 2000\}$.}
  \end{center}
\end{figure}
}

\subsection{Phase retrieval with outlying measurements}

One of the advantages we claim for the
objective~\eqref{eqn:optimization-problem} is that it is robust to
outliers. Zhang et al.~\cite{ZhangChLi16} develop a method, which they term
median-truncated Wirtinger flow, for handling outlying measurements, which
(roughly) sets the index set $\mc{I}_k$ used for updates in
Alg.~\ref{alg:taf} to be a set of measurements near the median values of the
$b_i$. Their algorithm requires a number of parameters that strongly rely
on the assumptions that $a_i \simiid \normal(0, I_n)$;
in contrast, the
objective~\eqref{eqn:optimization-problem} and prox-linear method we
investigate are straightforward to implement without any particular
assumptions on $A$ (and require no
parameter tuning, relying on an explicit sequence of convex optimizations).

Nonetheless, to make comparisons between the algorithms as fair as possible,
we implement their procedure and perform experiments in dimension $n \in
\{100, 200\}$ with i.i.d.\ standard normal data matrices $A$.  We perform
100 experiments as follows. Within each experiment, we evaluate each $m \in
\{1.8 n, 2n, 3n, 4n, 6n, 8n\}$ and failure probability
$\pfail \in \{0, .01, .02, \ldots, .29, .3\}$. For fixed $m, n$, we draw a
data matrix $A \in \R^{m \times n}$, then choose $x\subopt \in \R^n$ either by
drawing $x\subopt \sim \uniform(\{-1, 1\}^n)$ or $x\subopt \sim \normal(0,
I_n)$. We then generate $b_i = \<a_i, x\subopt\>^2$ for $i \in [m]$. For our
experiments with $n = 100$, we simply set $b_i = 0$, which is more difficult
for our initialization strategy, as it corrupts a large fraction of the
vectors $a_i$ used to initialize $x_0$. For our experiments with $n = 200$,
we draw $b_i$ from a Cauchy distribution.  Each problem setting ($b_i$
Cauchy vs.\ zeroing and $x\subopt$ discrete or normal) yields qualitatively
similar results.

Figures~\ref{fig:outlier-recoveries}, \ref{fig:outlier-200-recoveries},
and~\ref{fig:outlier-single-performances} summarize our results. In
Figures~\ref{fig:outlier-recoveries} and~\ref{fig:outlier-200-recoveries},
we display success rates of the median truncated Wirtinger flow (the left
column in each figure) and our composite optimization-based procedure (right
column) for a number of initializations; each plot represents results of 100
experiments.  Within each plot, a white square indicates that 100\% of
trials were successful, meaning the signal is recovered to accuracy
$\dist(\what{x}, X\subopt) \le 10^{-5} \ltwo{x\subopt}$, while black squares
indicate 0\% success rates. Within each row of the figure, we present
results for the two methods using the same initialization
scheme. Figure~\ref{fig:outlier-recoveries} gives results with $n = 100$,
using precise (\texttt{Mosek}-based) solutions of the prox-linear updates,
while Figure~\ref{fig:outlier-200-recoveries} gives results with $n = 200$
using the POGS-based updates (recall step~\eqref{eqn:pogs-update}), with
identical parameters as in the previous section. It is clear from the
figures that the composite objective yields better recovery.

\begin{figure}[t]
  \begin{center}
    \begin{tabular}{cc}
      \multicolumn{2}{c}{
        \includegraphics[width=.7\columnwidth]{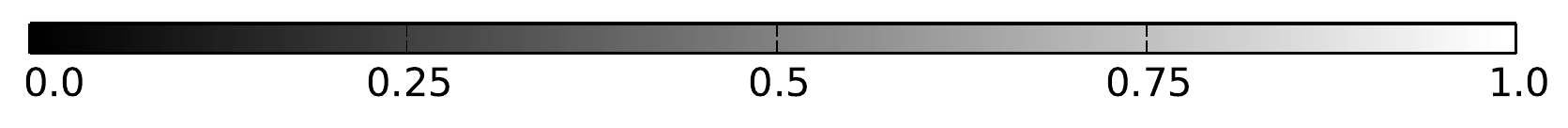}}
      \vspace{-.35cm}\\
      \multicolumn{2}{c}{
        {\small $\P({\rm success})$}} \\
      \hspace{-.4cm}
      \includegraphics[width=.5\columnwidth]{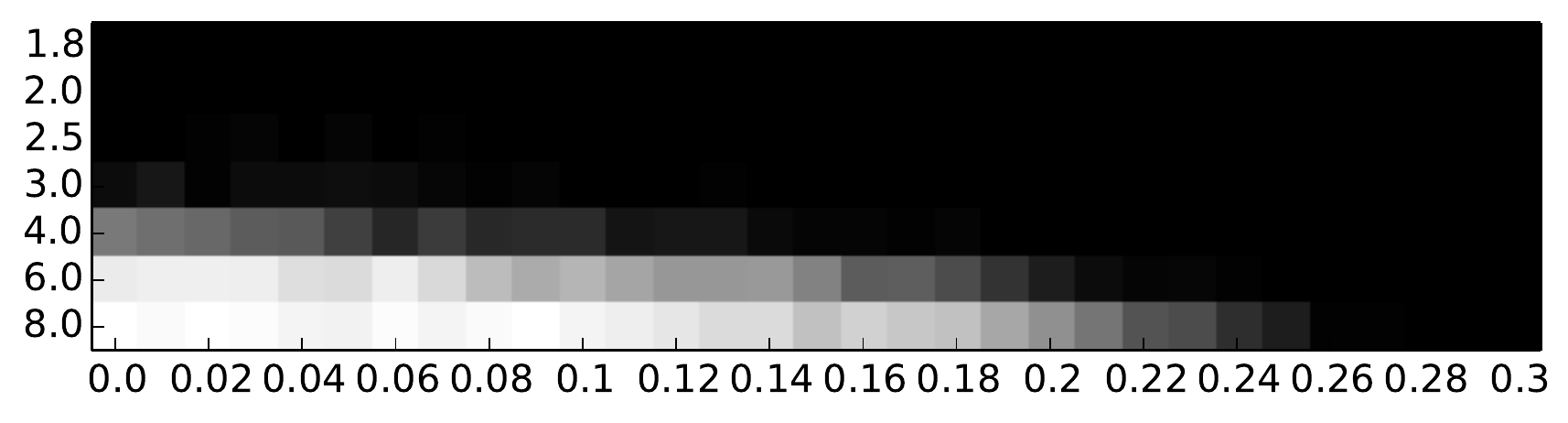} &
      \hspace{-.4cm}
      \includegraphics[width=.5\columnwidth]{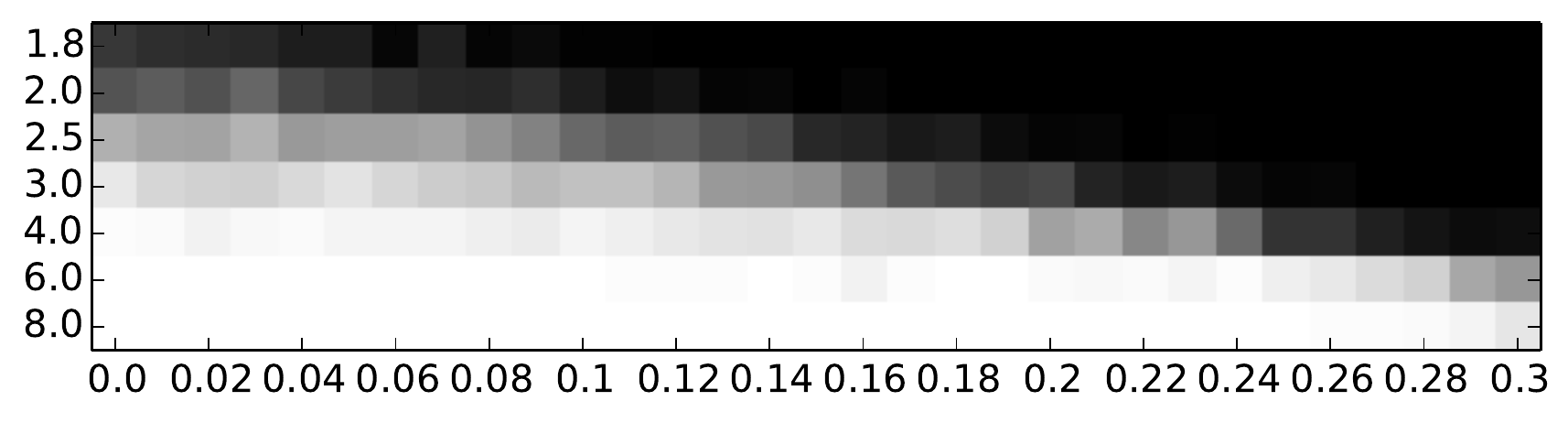} \\
      (a) & (b) \\
      \hspace{-.4cm}
      \includegraphics[width=.5\columnwidth]{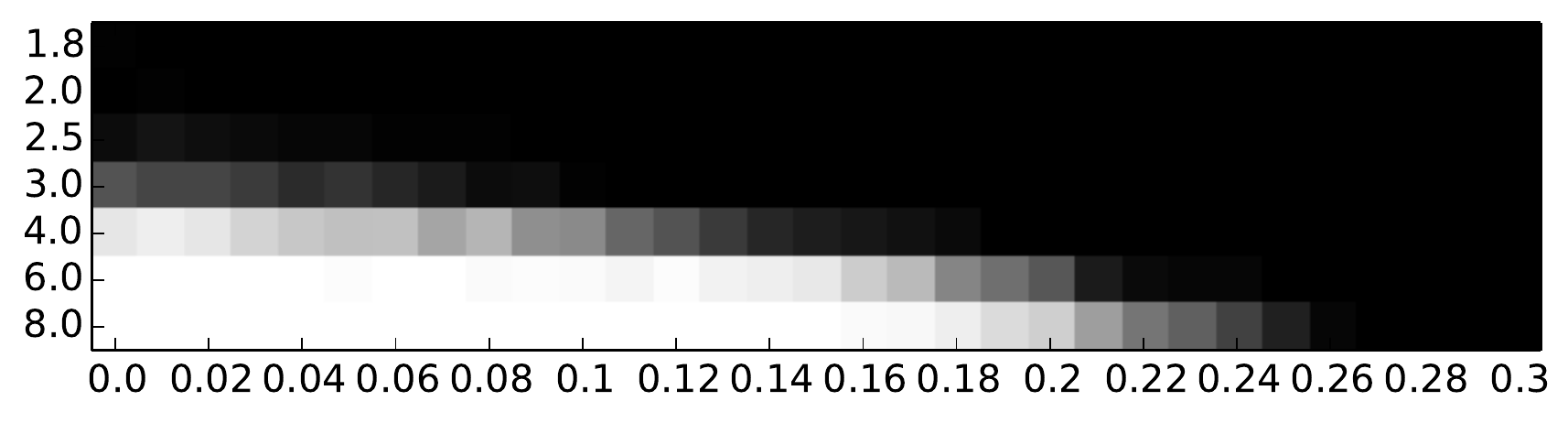} &
      \hspace{-.4cm}
      \includegraphics[width=.5\columnwidth]{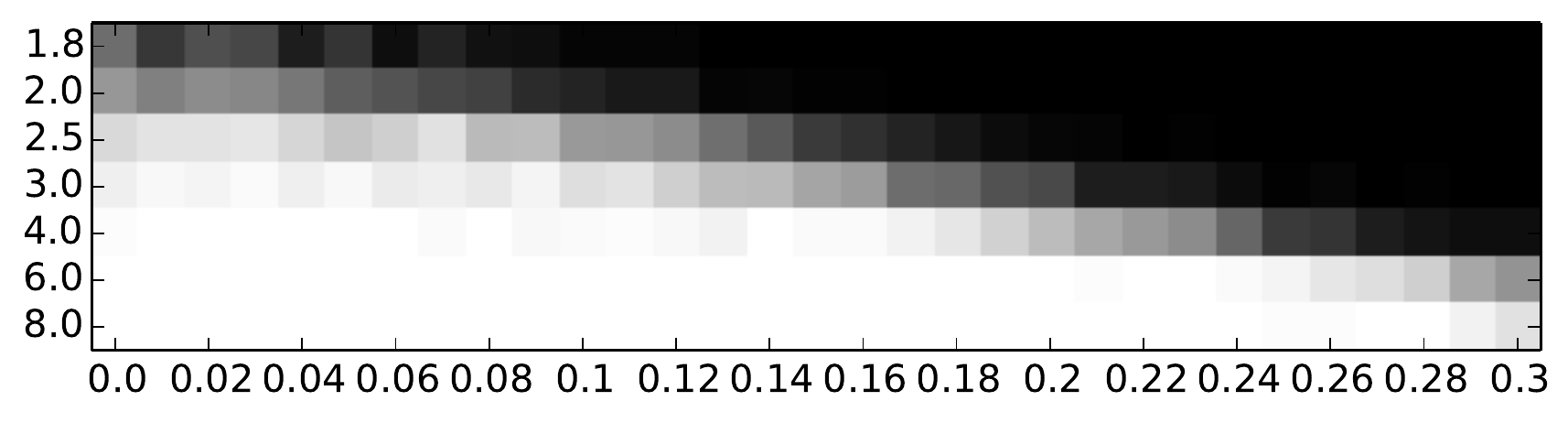} \\
      (c) & (d) \\
      \hspace{-.4cm}
      \includegraphics[width=.5\columnwidth]{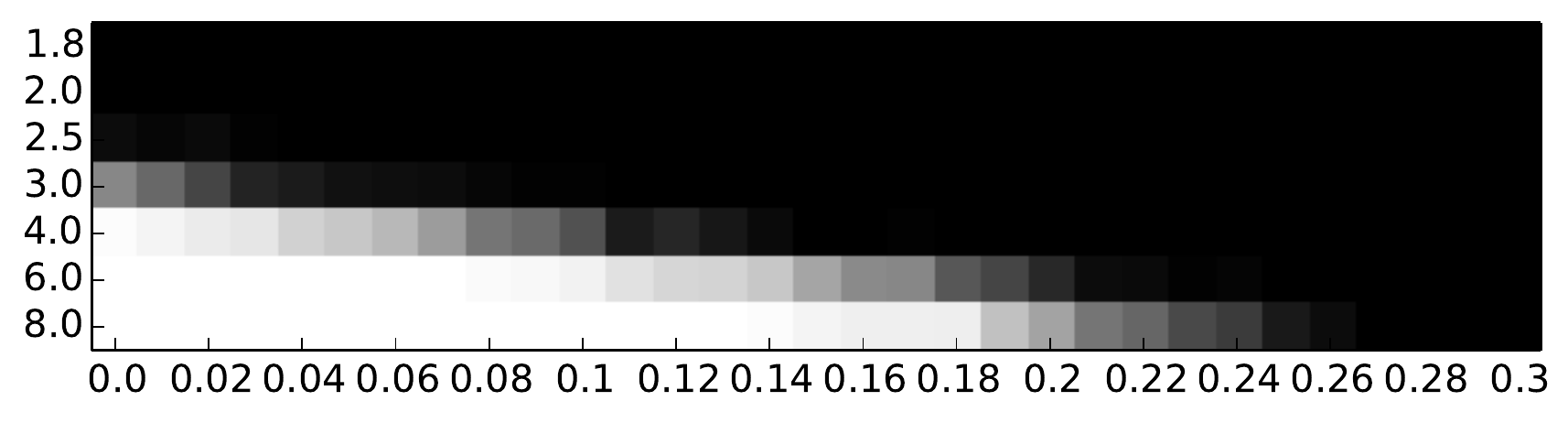} &
      \hspace{-.4cm}
      \includegraphics[width=.5\columnwidth]{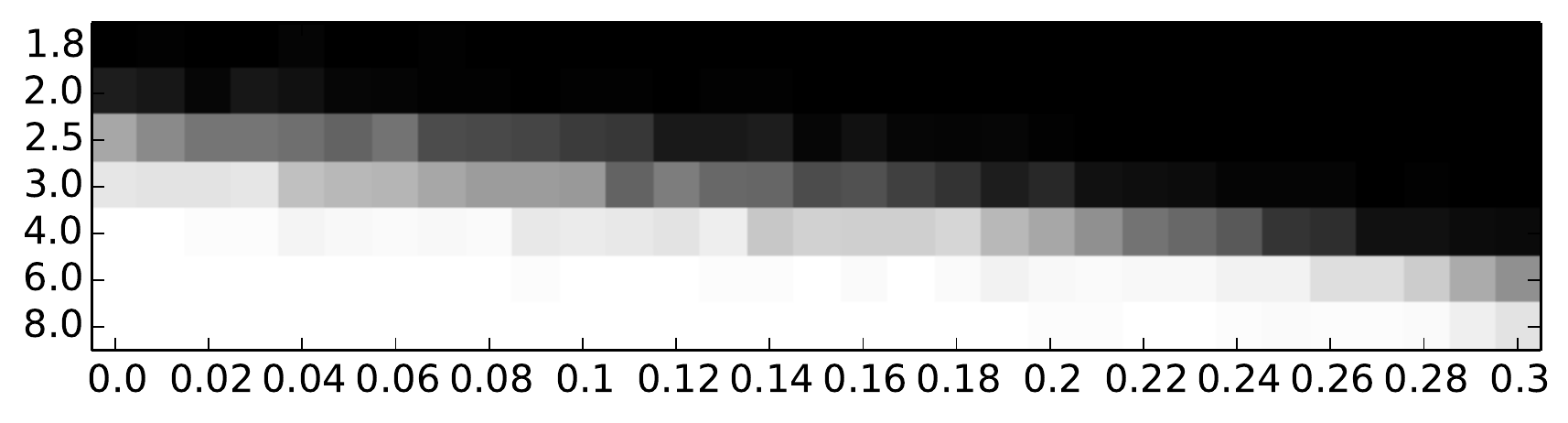}
      \vspace{-.3cm} \\ {\footnotesize $\pfail$} & {\footnotesize $\pfail$} \\
      (e) & (f)
    \end{tabular}
    \caption{\label{fig:outlier-recoveries} Probability of success
      for median-truncated Wirtinger flow (left) and
      the prox-linear method (right) for different initializations,
      where dimension $n = 100$.
      Horizontal axis indexes $\pfail$ while vertical axis
      indexes the measurement ratio $m / n$. Each pixel 
      represents the fraction of successful recoveries
      (defined as $\dist(\what{x}, X\subopt) / \ltwo{x\subopt} \le 10^{-5}$)
      over 100 experiments. (a) \& (b): ``Big'' initialization,
      with $\what{r}$ estimated by Alg.~\ref{alg:noisy-initialization}.
      (c) \& (d): ``Median'' initialization.
      (e) \& (f): ``Small'' initialization.
    }
  \end{center}
\end{figure}

\begin{figure}[t]
  \begin{center}
    \begin{tabular}{cc}
      \hspace{-.4cm}
      \includegraphics[width=.5\columnwidth]{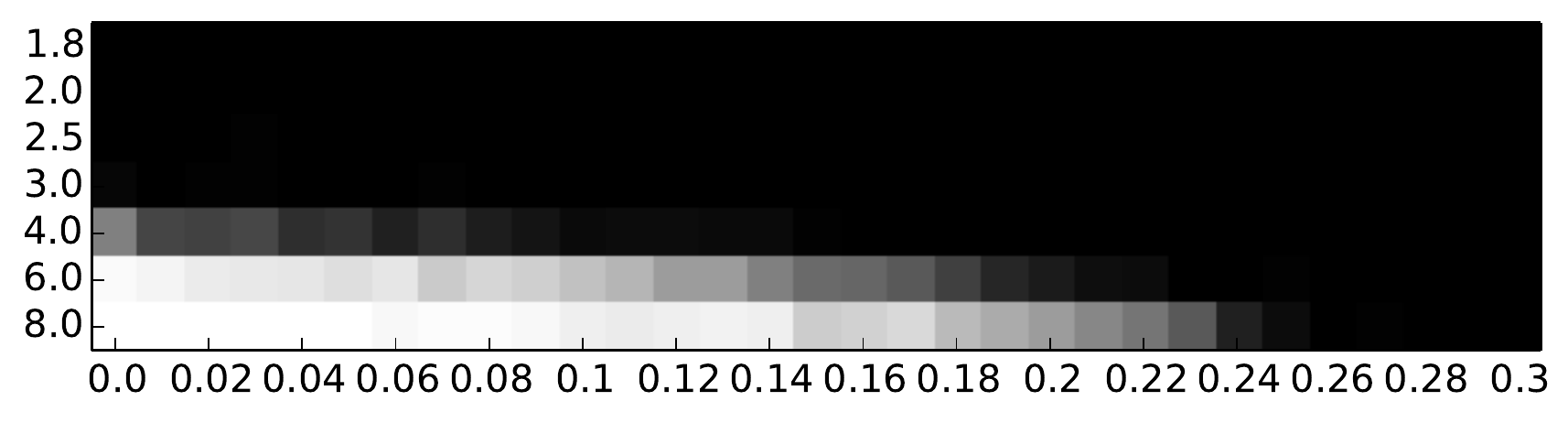} &
      \hspace{-.4cm}
      \includegraphics[width=.5\columnwidth]{%
        Images/proxgraph-big-outlier-box} \\
      (a) & (b) \\
      \hspace{-.4cm}
      \includegraphics[width=.5\columnwidth]{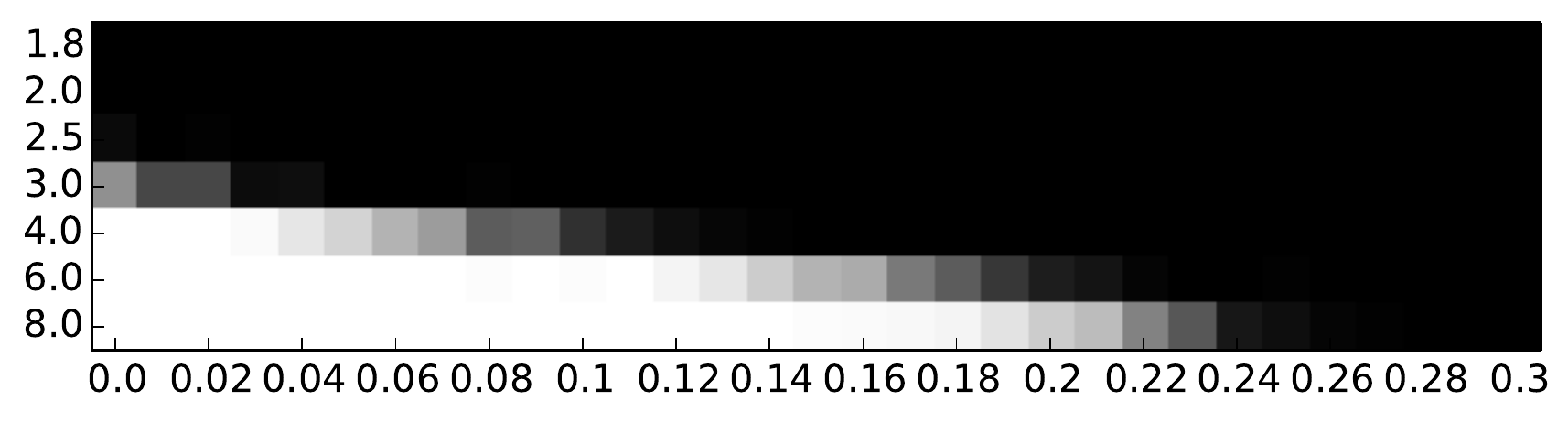} &
      \hspace{-.4cm}
      \includegraphics[width=.5\columnwidth]{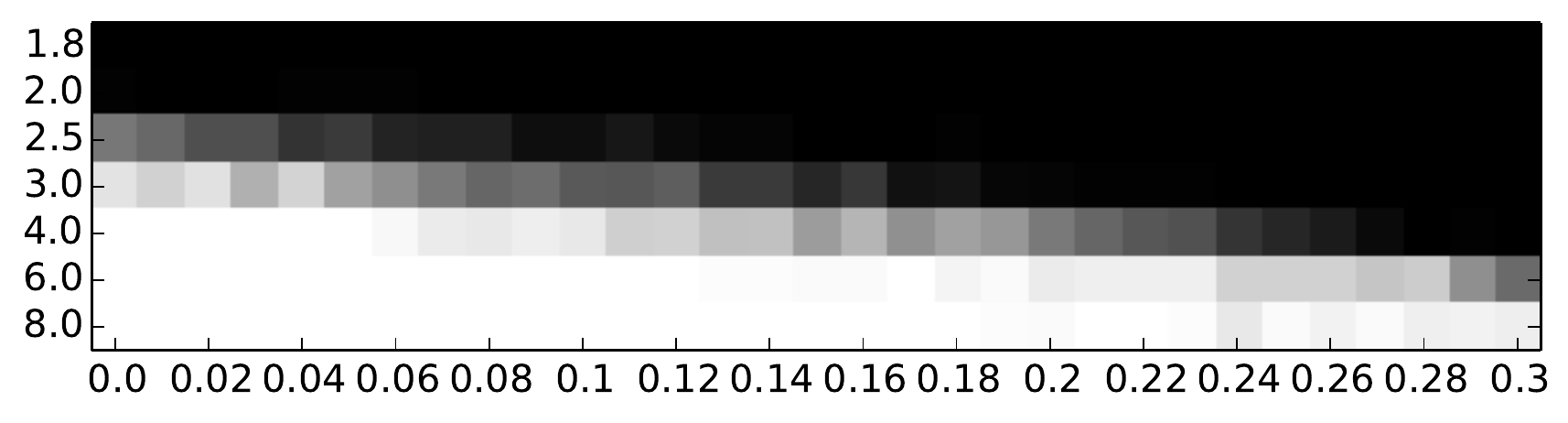} \\
      (c) & (d)
    \end{tabular}
    \caption{\label{fig:outlier-200-recoveries} Probability of success for
      median-truncated Wirtinger flow (left) and the prox-linear method
      (right), using the approximate proximal graph splitting method for
      sub-problem solves.  Dimension $n = 200$. (a) \& (b): ``Big''
      initialization, with $\what{r}$ estimated by
      Alg.~\ref{alg:noisy-initialization}. (c) \& (d): ``Small''
      initialization.}
  \end{center}
\end{figure}

In Figure~\ref{fig:outlier-single-performances}, we present a different view
into the behavior of the prox-linear and MTWF methods. In the left two
plots, we show the recovery probability (over 100 trials) for our composite
optimization method (Fig.~\ref{fig:outlier-single-performances}(a)) and MTWF
(Fig.~\ref{fig:outlier-single-performances}(c)). The success
probability for the composite method is higher. In
Fig.~\ref{fig:outlier-single-performances}(b), we plot the average number of
\emph{iterations} (along with standard error bars) the prox-linear method
performs when the dimension $n = 100$ and we use accurate sub-problem
solves. We give iteration counts only for those trials that result in
successful recovery; the iteration counts on unsuccessful trials are larger
(indeed, if the method is not converging rapidly, this serves as a proxy for
failure). In the high measurement regime, $m/n \ge 2.5$ or so, we see that
if $\pfail = 0$ no more than 7 iterations are required:
this is the quadratic convergence of the method. (Indeed, for $m/n = 8$, for
$\pfail \le .15$ each execution of the prox-linear method uses precisely 5
iterations, never more, and never fewer.)  In
Fig.~\ref{fig:outlier-single-performances}(d), we show the number of matrix
multiplications by the inverse matrix $(I_n + A^T DA)^{-1}$ the method uses
(recall the update~\eqref{eqn:pogs-update} in the proximal graph operator
splitting method). We see that for well-conditioned problems and those with
little noise, the methods require relatively few matrix multiplications,
while for more outliers and when $m/n$ shrinks, there is a non-trivial
increase.

\begin{figure}
  \begin{center}
    \begin{tabular}{cc}
      \hspace{-.5cm}
      \includegraphics[width=.52\columnwidth]{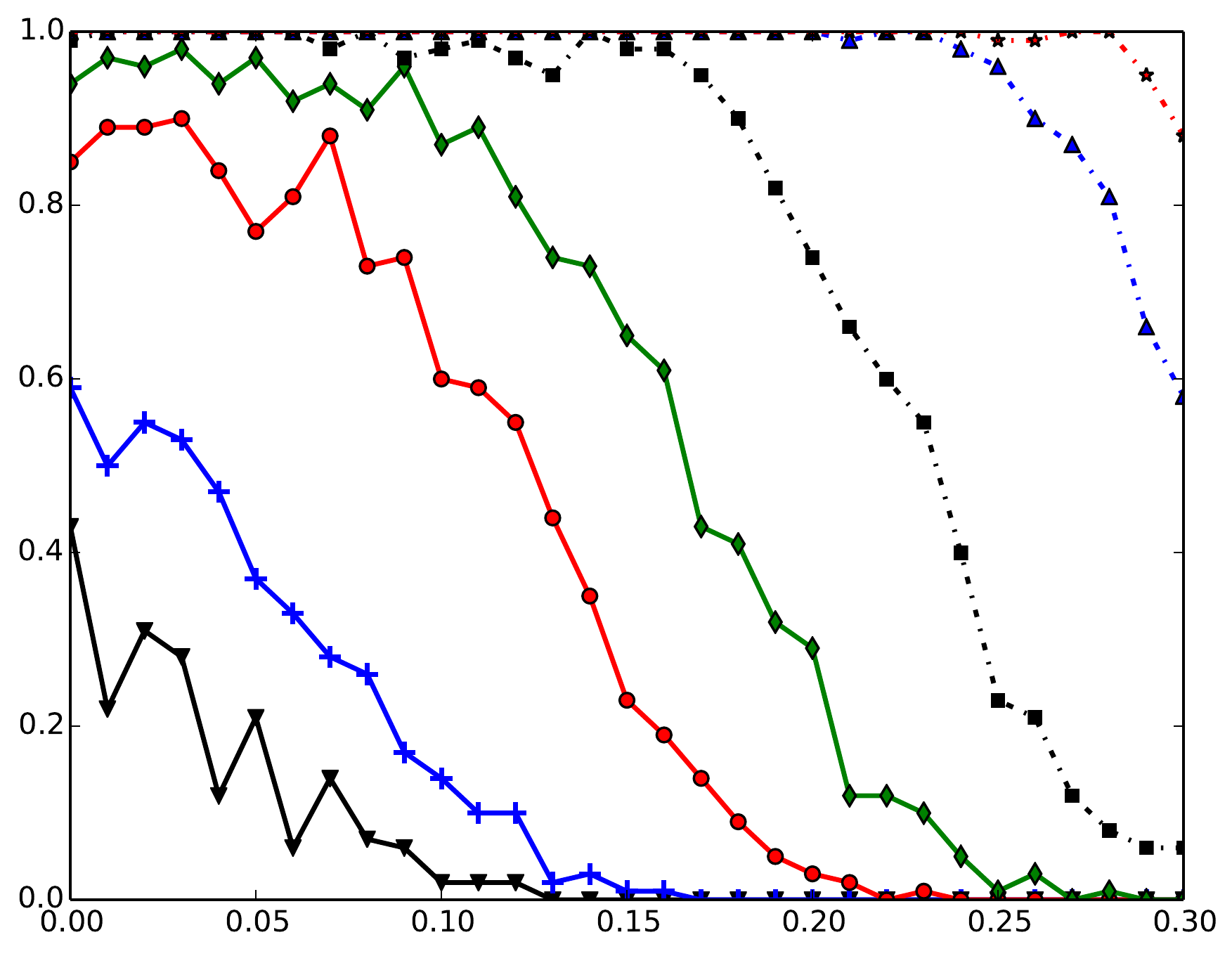} &
      \hspace{-.5cm}
      \includegraphics[width=.52\columnwidth]{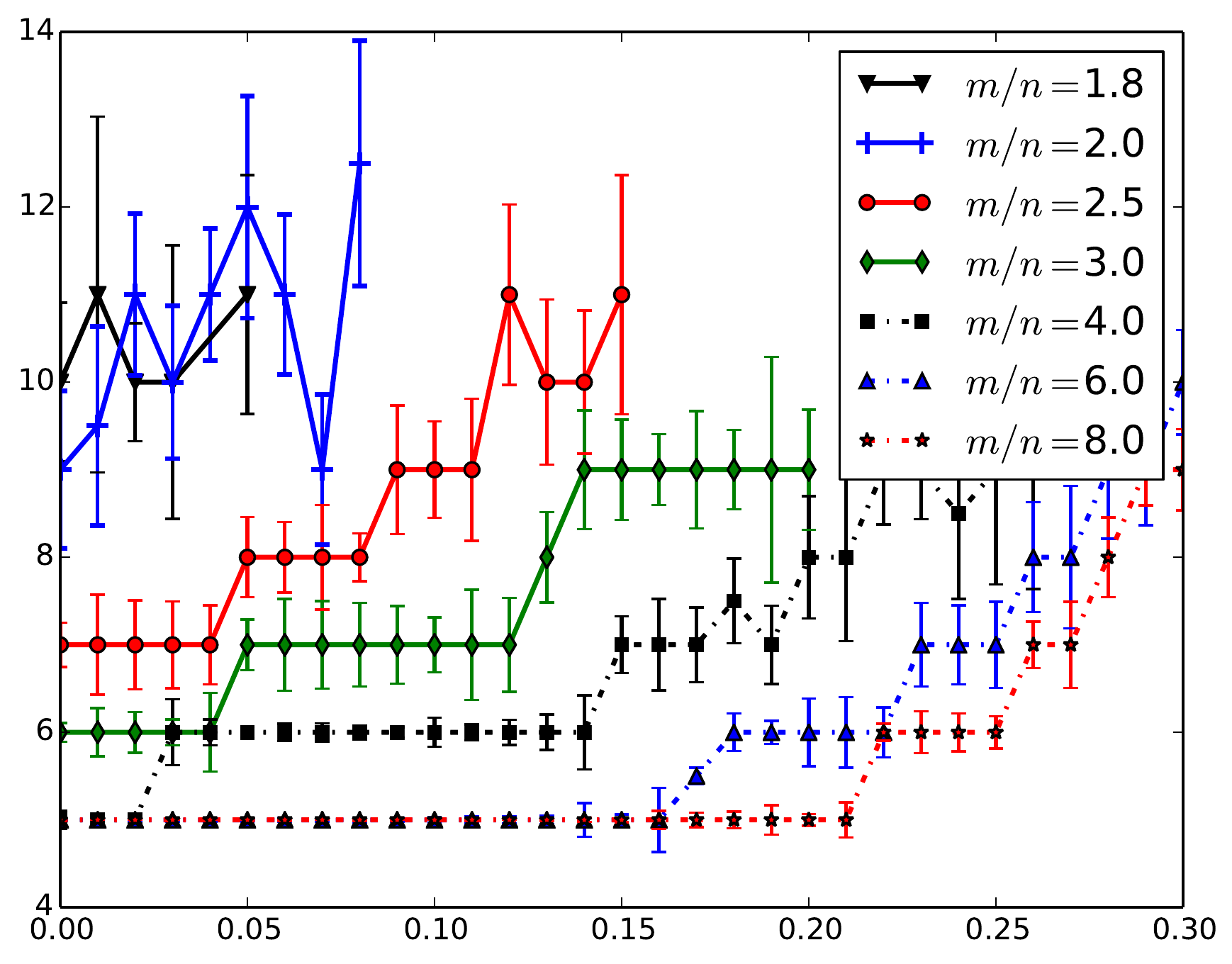} \\
      (a) & (b) \\
      \hspace{-.5cm}
      \includegraphics[width=.52\columnwidth]{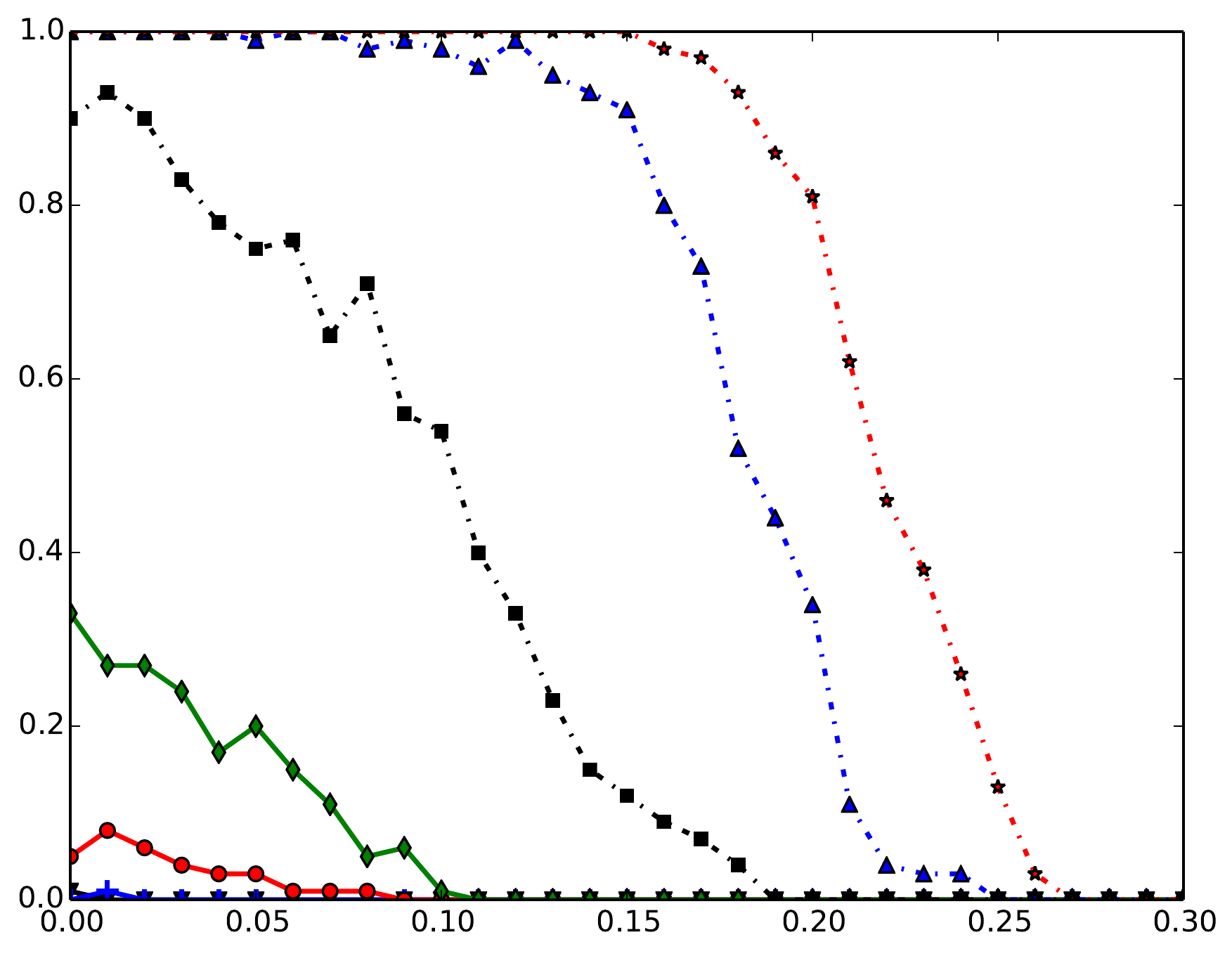} &
      \hspace{-.5cm}
      \includegraphics[width=.52\columnwidth]{%
        Images/proxgraph-big-iterations} \\
      (c) & (d)
    \end{tabular}
  \caption{\label{fig:outlier-single-performances}
    Comparison of median-truncated Wirtinger flow and
    composite minimization with ``Median'' initialization.
    Horizontal axis of each plot indexes $\pfail \in [0, .3]$.
    (a) Proportion of successful solves for
    prox-linear method. (b) Number of iterations of
    prox-linear step~\eqref{eqn:prox-iteration} over \emph{successful}
    solves (with error bars). (c) Proportion of successful
    solves for median-truncated Wirtinger flow method.
    (d) Number of matrix multiplications of the form
    $x \mapsto (I + A^T D^2 A)^{-1} A x$ for the prox-linear method with
    proximal graph solves.}
  \end{center}
\end{figure}

\subsection{Recovery of real images}

\begin{figure}[t]
  \begin{center}
    \begin{tabular}{cccccc}
      \hspace{-.4cm}
      \includegraphics[width=.17\columnwidth]{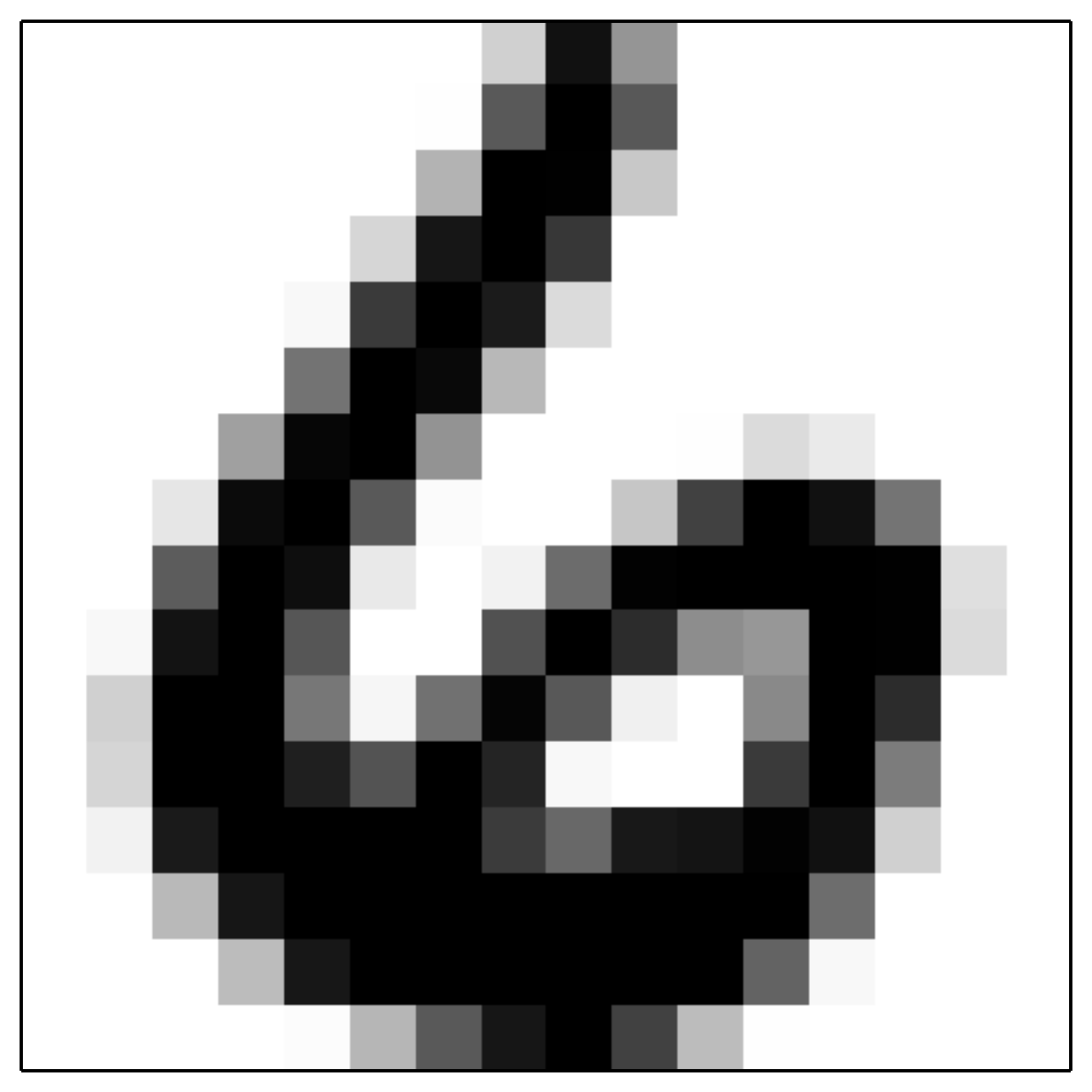} &
      \hspace{-.6cm}
      \includegraphics[width=.17\columnwidth]{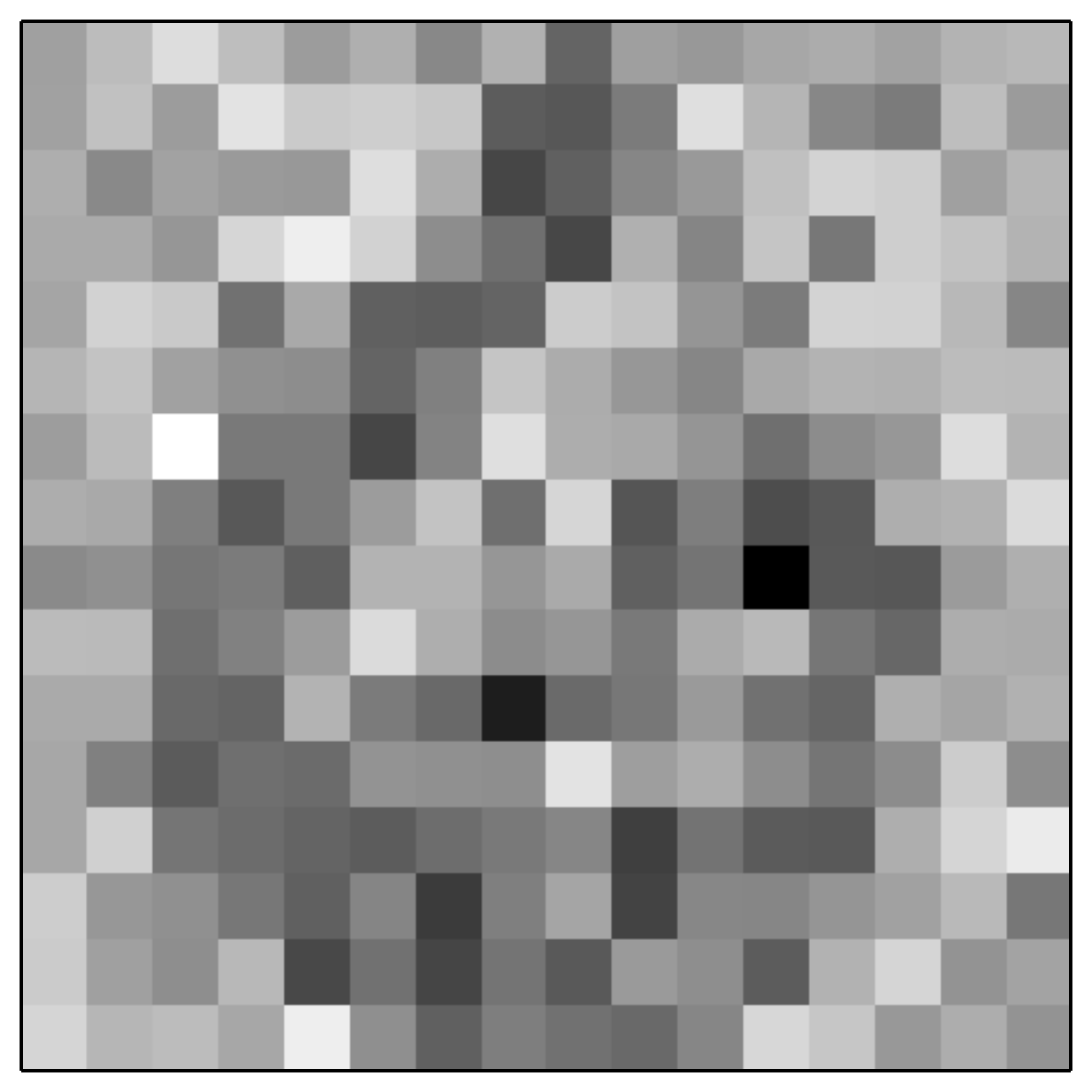} &
      \hspace{-.6cm}
      \includegraphics[width=.17\columnwidth]{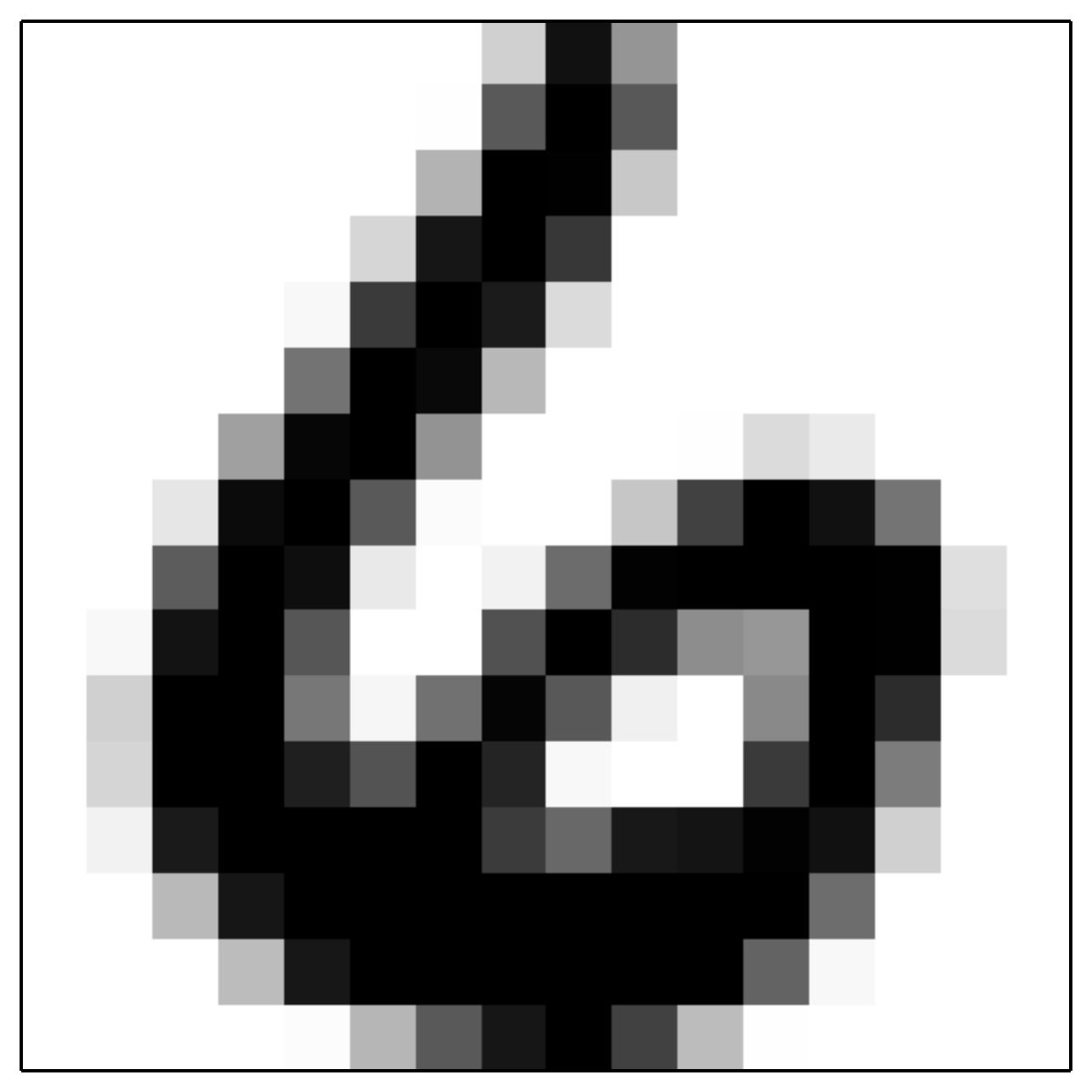} &
      \hspace{-.4cm}
      \includegraphics[width=.17\columnwidth]{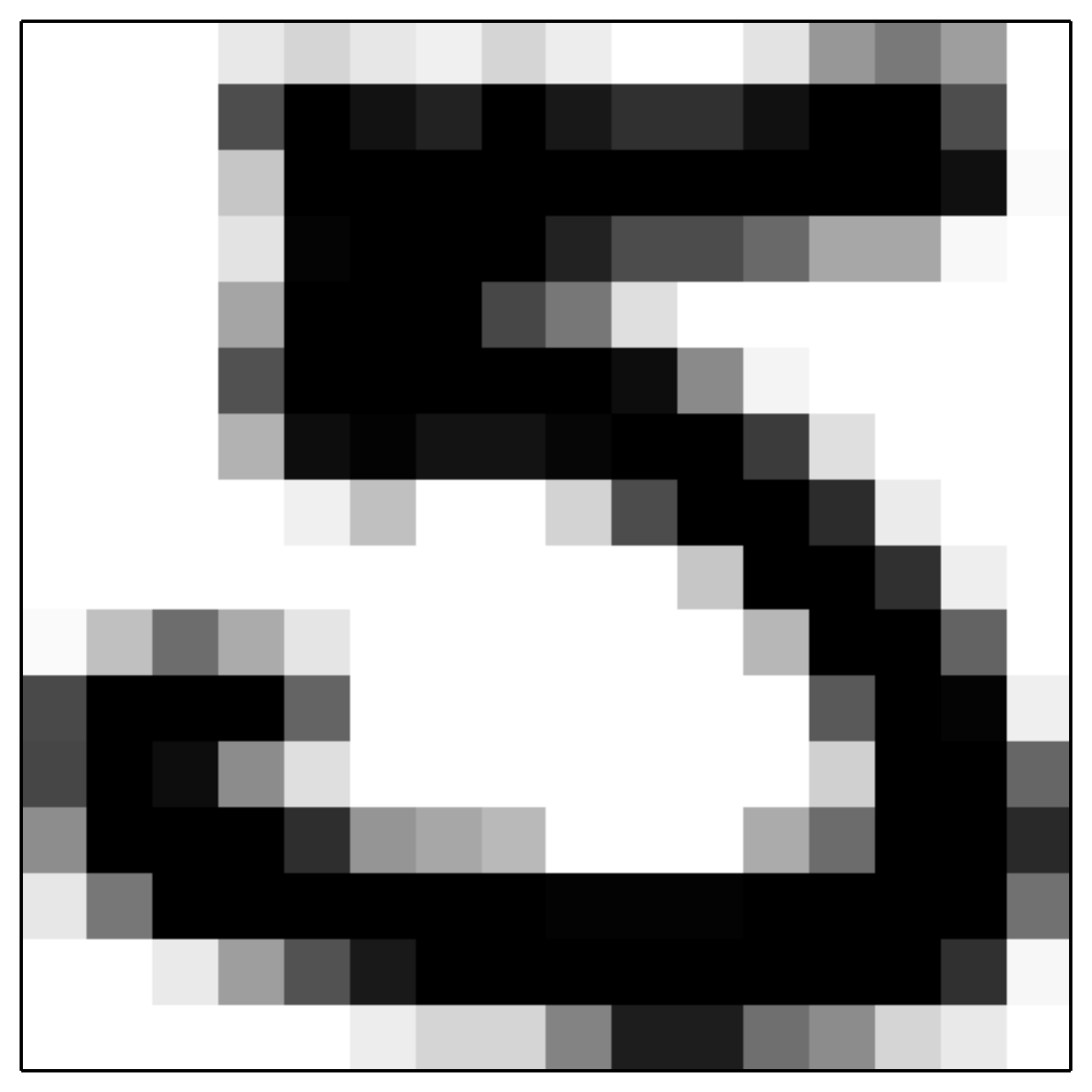} &
      \hspace{-.6cm}
      \includegraphics[width=.17\columnwidth]{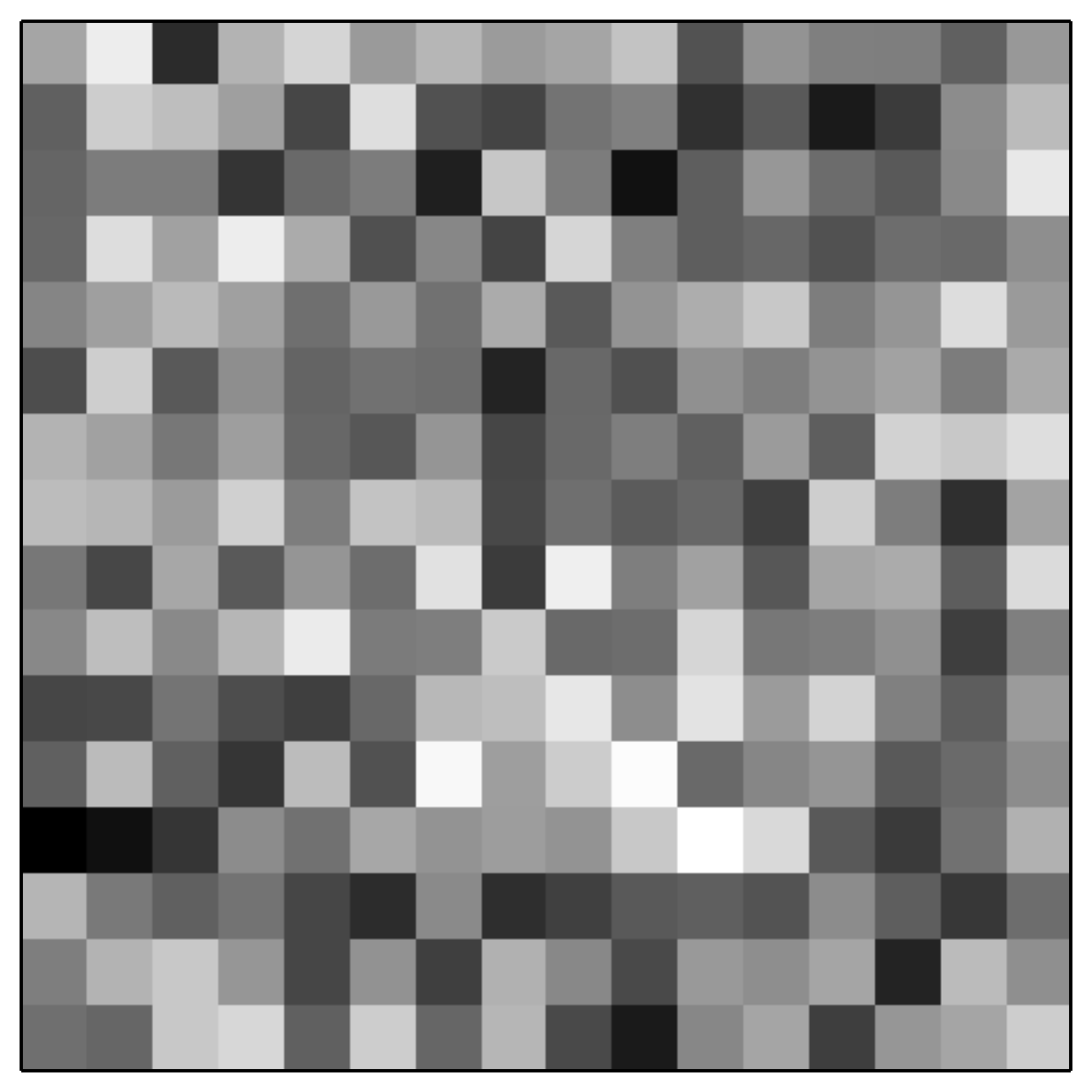} &
      \hspace{-.6cm}
      \includegraphics[width=.17\columnwidth]{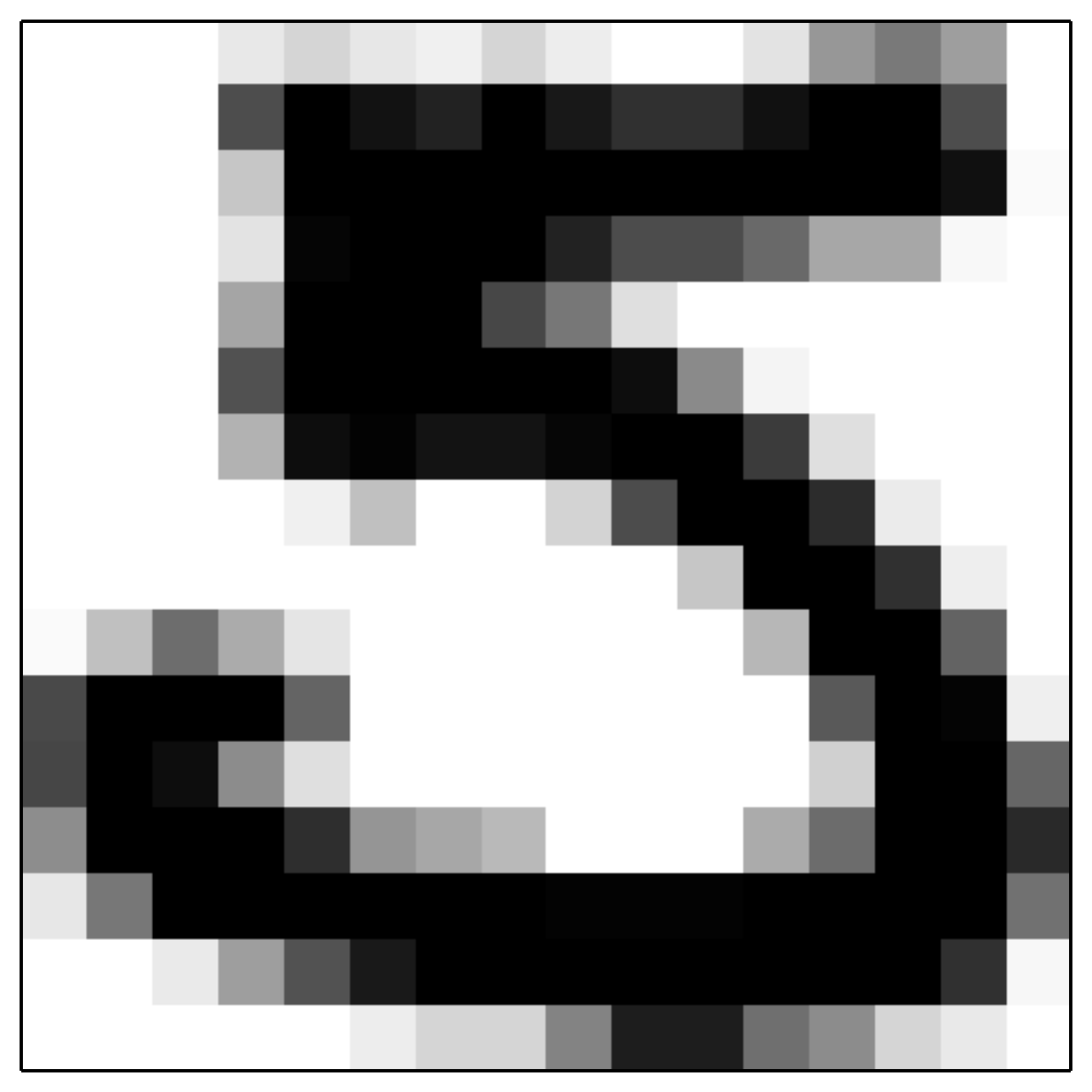}
    \end{tabular}
    \caption{\label{fig:digit-example} Example recoveries of digits.
      Left digit is true digit, middle is initialization,
      right is recovered image.}
  \end{center}
\end{figure}

\begin{figure}[t]
  \begin{center}
    \begin{overpic}[width=.7\columnwidth]
      {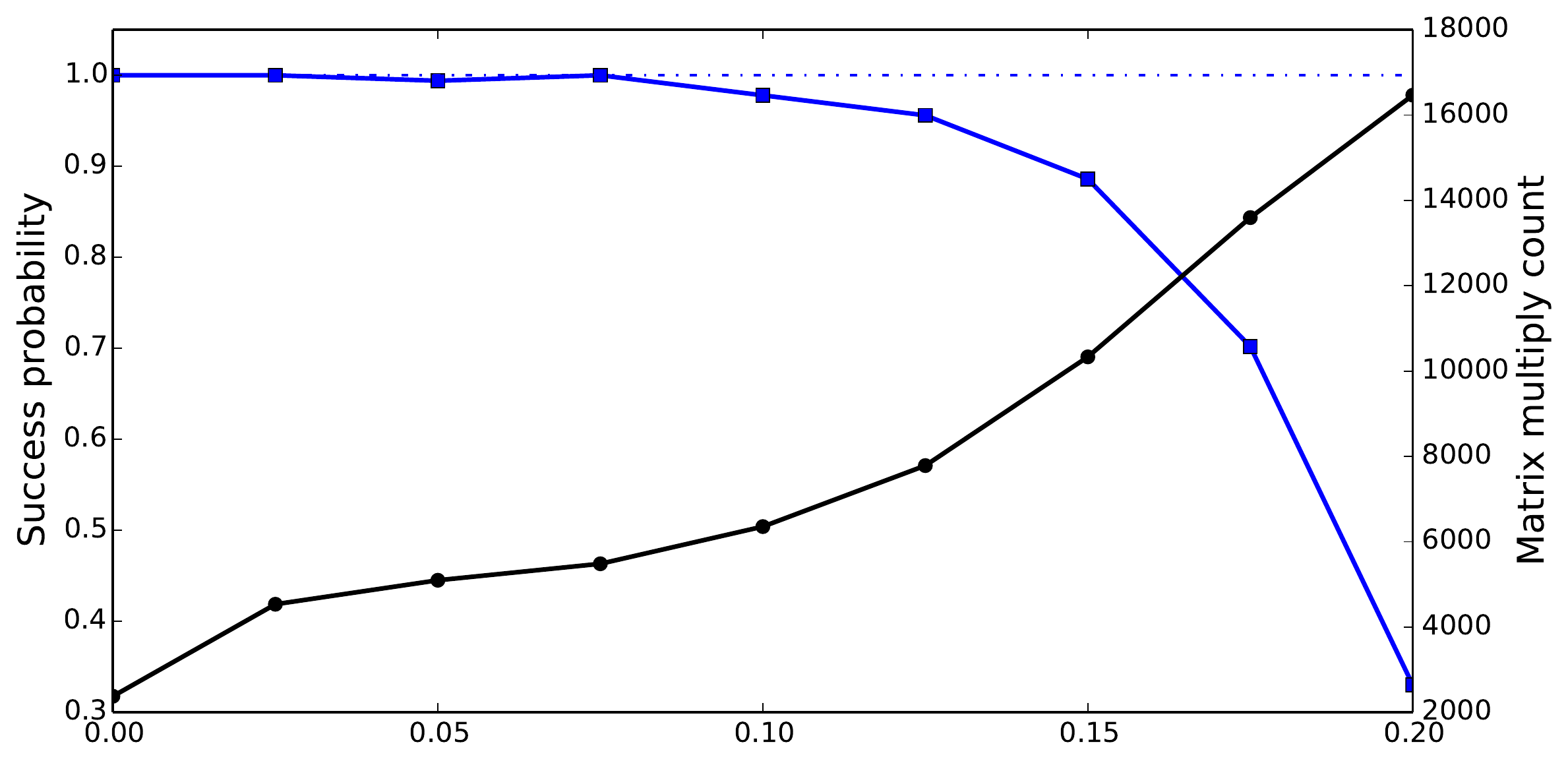}
      \put(55,1){$\pfail$}
    \end{overpic}
    \caption{\label{fig:digit-summary} Performance of composite optimization
      scheme using conjugate gradient method and proximal graph splitting to
      solve sub-problems (Sections~\ref{sec:pogs}--\ref{sec:cg-sub-prob}).}
  \end{center}
\end{figure}

\begin{figure}[ht!]
  \begin{center}
    \begin{tabular}{cc}
      \hspace{-.6cm}
      \includegraphics[width=.52\columnwidth]{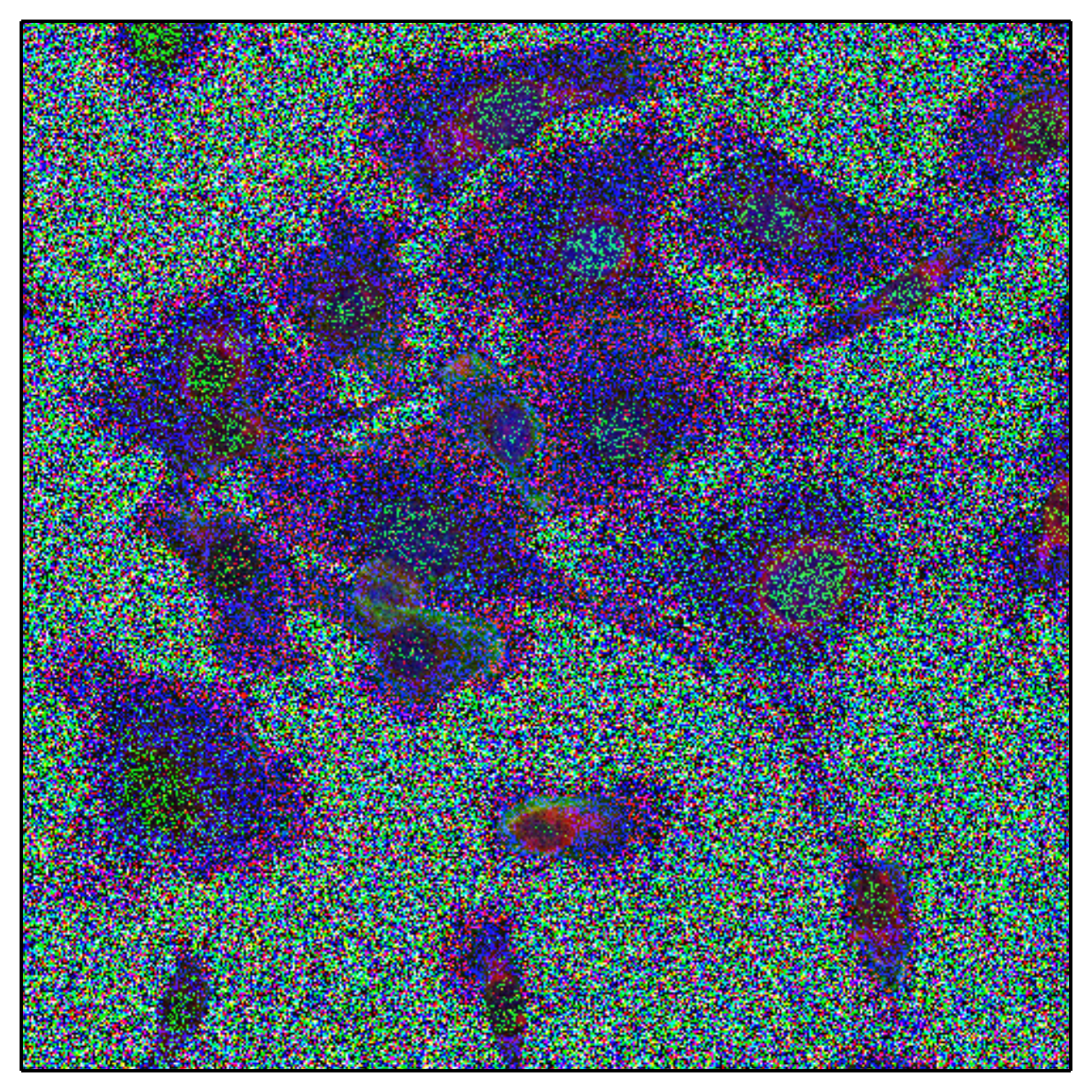} &
      \hspace{-.6cm}
      \includegraphics[width=.52\columnwidth]{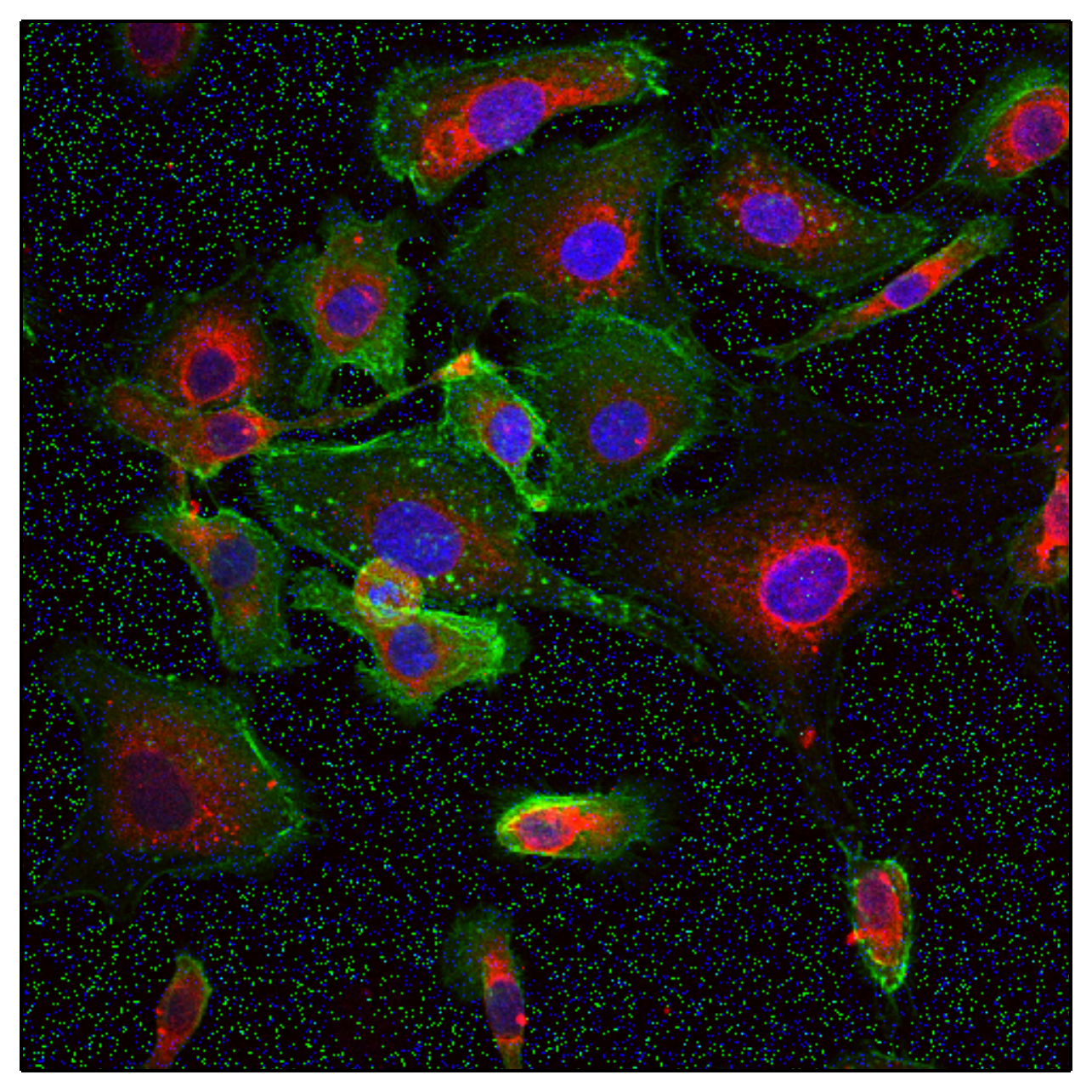} \\
      (a) & (b) \\
      \hspace{-.6cm}
      \includegraphics[width=.52\columnwidth]{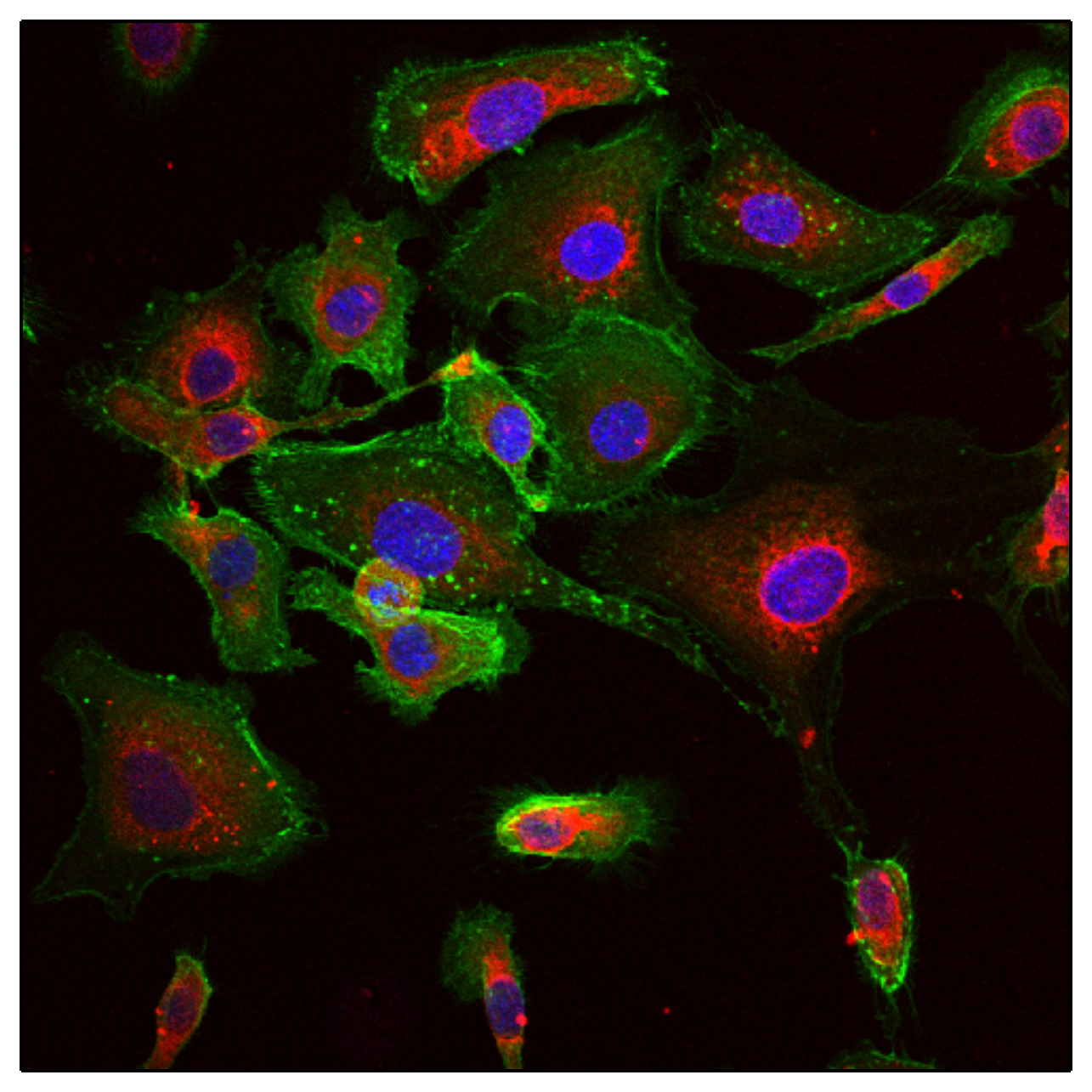} &
      \hspace{-.6cm}
      \includegraphics[width=.52\columnwidth]{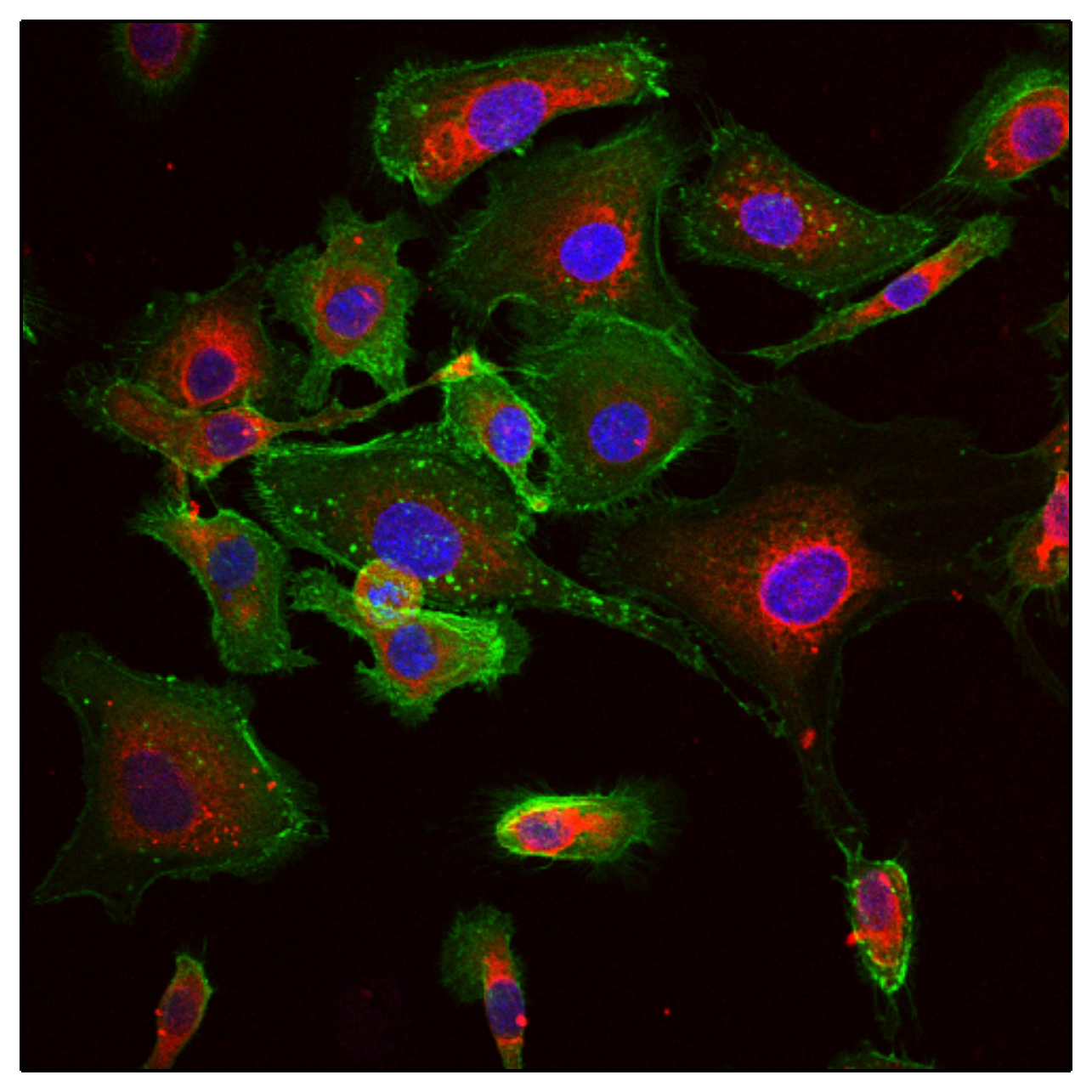} \\
      (c) & (d)
    \end{tabular}
    \caption{\label{fig:rna-recovery} Reconstruction of RNA image
      ($n = 2^{22}$) from $m = 3n$ measurements. (a) Initialization $x_0$ of
      prox-linear method.  (b) Result of 10 inaccurate ($\epsilon =
      10^{-3}$) solutions of prox-linear step~\eqref{eqn:prox-iteration}.  (c)
      Result of \emph{one} additional accurate ($\epsilon = 10^{-7}$)
      solution of prox-linear step~\eqref{eqn:prox-iteration}. (d)
      Original image.}
  \end{center}
\end{figure}

Our final collection of experiments investigates recovery of real-world
images using more specialized measurement matrices. In this case, we let
$H_n \in \{-1, 1\}^{n \times n} / \sqrt{n}$ denote a normalized Hadamard
matrix, where the multiplication $H_n v$ requires time $n \log n$, and $H_n$
satisfies $H_n = H_n^T$ and $H_n^2 = I_n$. For some $k \in \N$, we then take
$k$ i.i.d.\ diagonal sign matrices $S_1, \ldots, S_k \in \diag(\{-1,
1\}^n)$, uniformly at random, and define $A = [H_n S_1 ~ H_n S_2 ~ \cdots ~
  H_n S_k]^T \in \R^{kn \times n}$, as in
expression~\eqref{eqn:hadamard-sensing}.  We note that this matrix
explicitly \emph{does not} satisfy the stability conditions that we use for
our theoretical guarantees. Indeed, letting $e_1$ and $e_2$ be the first
standard basis vectors, we have $H_n S (e_1 + e_2) \perp H_n S (e_1 - e_2)$
no matter the sign matrix $S$; there are similarly pathological vectors for
FFT, discrete cosine, and other structured matrices. Nonetheless, we perform
experiments with this structured $A$ matrix as follows.\footnote{We do not
  compare to other methods designed for outliers because we could not set
  the constants the methods require in such a way as to yield good
  performance.}

Given an image $X$ represented as a matrix, we define $x\subopt = \mathop{\rm
  Vec}(X)$, the vectorized representation of $X$. (In the case of colored
images, where $X \in \R^{n_1 \times n_2 \times 3}$ because of the 3 RGB
channels, we vectorize the channels as well.) We then set $b = \phi(Ax)$,
where $\phi(\cdot)$ denotes elementwise squaring, and corrupt a fraction
$\pfail \in [0, .2]$ of the measurements $b_i$ by zeroing them.  We then
follow our standard experimental protocol, initializing $x_0$ by the
``small'' initialization scheme~\eqref{item:small}, with the slight twist
that now we use the proximal operator graph splitting method (POGS) with
conjugate gradient methods (Sec.~\ref{sec:cg-sub-prob}) to solve the graph
projection step~\eqref{eqn:tall-matrix-update}.

We first give results on a collection of 500 images of handwritten digits
(using $k = 3$),
available on the website for the book~\cite{HastieTiFr09}.  We provide
example results of the execution of our procedure in
Fig.~\ref{fig:digit-example}, which shows that while there is signal in the
initialization, there is substantial work that the prox-linear method must
perform to recover the images. For each of the 500 images, we vary $\pfail
\in \{0, .025, .05, \ldots, .175, .2\}$, then execute the prox-linear
method. We plot summary results in Fig.~\ref{fig:digit-summary}, which in
the blue curve with square markers gives the probability of successful
recovery of the digit (left axis) versus $\pfail$ (horizontal axis).
The right axis indexes the number of matrix multiplications the
method executes until completion (black line with circular marks).
We see that in spite of the demonstrated failure of the matrix $A$
to satisfy our stability assumptions, we have frequent recovery
of the images to accuracy $10^{-3}$ or better.

Finally, we perform experiments with eight real color images with sizes
up to $1024 \times 1024$ (yielding $n = 2^{22}$-dimensional problems), where
we use $k = 3$ random Hadamard sensing matrices. The prox-linear method
successfuly recovers each of the 8 images to relative accuracy at least
$10^{-4}$, and performs an average of $15100$ matrix-vector multiplications
(i.e.\ fast Hadamard transforms) over the eight experiments, with a standard
deviation of $2600$ multiplications. To give a sense of the importance of
different parts of our procedure, and the relative accuracies to which we
solve sub-problems~\eqref{eqn:prox-iteration}, we display one example in
Fig.~\ref{fig:rna-recovery}. In this example, we perform phase retrieval on
an image of RNA nanoparticles in cancer cells~\cite{PiZhGu16}. In
Fig.~\ref{fig:rna-recovery}(a), we show the result of initialization, which
displays non-trivial structure, though is clearly noisy. In
Fig.~\ref{fig:rna-recovery}(b), we show the result of 10 steps of the
prox-linear method, solving each step using POGS until the residual
errors~\eqref{eqn:residual-stopping} are below $\epsilon = 10^{-3}$. There
are clear artifacts in this image. We then perform one refinement step with
higher accuracy ($\epsilon = 10^{-7}$), which results in
Fig.~\ref{fig:rna-recovery}(c). This is indistinguishable, at least to our
eyes, from the original image (Fig.~\ref{fig:rna-recovery}(d)).




\paragraph{Acknowledgments}
We thank Dima Drusvyatskiy and Courtney Paquette for a number of
inspirational and motivating conversations. JCD and FR were partially
supported by NSF-CAREER Award 1553086 and
the Toyota Research Institute.  FR was additionally supported by a
Stanford Graduate Fellowship.

\appendix


\section{Proofs for noiseless phase retrieval}
\label{sec:no-noise-proofs}

In this appendix, we collect the proofs of the propositions and
results in Section~\ref{sec:no-noise}.

Because we use it multiple times in what follows, we state a standard
eigenvector perturbation result, a variant of the Davis-Kahane
$\sin$-$\Theta$ theorem.
\begin{lemma}[Stewart and Sun~\cite{StewartSu90}, Theorem 3.6]
  \label{lemma:simple-sin}
  For vectors $u, v \in \sphere^{n-1}$, define the angle $\theta(u, v) =
  \cos^{-1} |\<u, v\>|$.  Let $X \in \C^{n \times n}$ be symmetric, $\Delta$ a
  symmetric perturbation, and $Z = X + \Delta$, and define $\gap(X) =
  \lambda_1(X) - \lambda_2(X)$ to be the eigengap of $X$.  Let $v_1$ and
  $u_1$ be the first eigenvectors of $X$ and $Z$, respectively. Then
  \begin{equation*}
    \sqrt{1 - |\<u_1, v_1\>|^2} = |\sin \theta(u_1, v_1)|
    \le \frac{\opnorm{\Delta}}{\hinge{\gap(A) - \opnorm{\Delta}}}.
  \end{equation*}
\end{lemma}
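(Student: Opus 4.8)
The plan is to give the standard projection argument behind the Davis--Kahan $\sin\Theta$ theorem, specialized to the leading eigenvector. Write $\lambda_1 \ge \lambda_2 \ge \cdots \ge \lambda_n$ for the (real) eigenvalues of the Hermitian matrix $X$, and let $\mu_1 = \lambda_1(Z)$ be the top eigenvalue of $Z = X + \Delta$, so $Z u_1 = \mu_1 u_1$. Since eigenvectors are only defined up to phase, I would first fix the phase of $u_1$ so that $\<u_1, v_1\> = |\<u_1, v_1\>| = \cos\theta \ge 0$ and decompose $u_1 = \cos\theta \, v_1 + \sin\theta \, w$, where $w \in \sphere^{n-1}$ is orthogonal to $v_1$ and $\sin\theta = \sqrt{1 - |\<u_1, v_1\>|^2}$ is precisely the quantity to be bounded.

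The central step is to project the eigen-relation onto the orthogonal complement of $v_1$. Because $X$ is Hermitian, the subspace $v_1^\perp$ is $X$-invariant and $X v_1 = \lambda_1 v_1$, so applying $I - v_1 v_1\cg$ to the identity $X u_1 = \mu_1 u_1 - \Delta u_1$ (which comes from $Z = X + \Delta$) collapses the $v_1$-component and yields $\sin\theta\,(X - \mu_1 I) w = -(I - v_1 v_1\cg)\Delta u_1$. Taking $\ell_2$ norms and using that $I - v_1 v_1\cg$ is a contraction with $\ltwo{u_1} = 1$ bounds the right-hand side by $\opnorm{\Delta}$.

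The key lower bound is on $\ltwo{(X - \mu_1 I) w}$. Since $w$ lies in $v_1^\perp$, on which $X$ acts with eigenvalues $\lambda_2 \ge \cdots \ge \lambda_n$, we have $\ltwo{(X - \mu_1 I) w} \ge \min_{i \ge 2} |\lambda_i - \mu_1|$. Weyl's inequality gives $\mu_1 \ge \lambda_1 - \opnorm{\Delta}$, hence $\mu_1 - \lambda_2 \ge \gap(X) - \opnorm{\Delta}$; when this is positive we have $\mu_1 \ge \lambda_2 \ge \lambda_i$ for every $i \ge 2$, so $|\lambda_i - \mu_1| = \mu_1 - \lambda_i \ge \gap(X) - \opnorm{\Delta}$ and therefore $\ltwo{(X - \mu_1 I)w} \ge \gap(X) - \opnorm{\Delta}$. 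Combining the two estimates gives $\sin\theta\,(\gap(X) - \opnorm{\Delta}) \le \opnorm{\Delta}$, which is the claimed bound; in the complementary regime $\gap(X) - \opnorm{\Delta} \le 0$ the hinge makes the right-hand side infinite and the inequality is vacuous.

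I expect no genuine obstacle here, only bookkeeping: one must verify that $v_1^\perp$ is truly $X$-invariant (it is, by self-adjointness), ensure the minimum over $i \ge 2$ is controlled by the gap rather than by some $\lambda_i$ lying above $\mu_1$ (guaranteed once $\mu_1 \ge \lambda_2$, i.e.\ once $\gap(X) > \opnorm{\Delta}$), and dispose of the degenerate case $\gap(X) = 0$ of a repeated top eigenvalue, where the bound is trivial. Since the statement is exactly Theorem~3.6 of Stewart and Sun~\cite{StewartSu90}, citing their result suffices, but the projection computation above furnishes a self-contained derivation.
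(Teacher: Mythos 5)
Your proof is correct, but note that the paper does not actually prove this lemma: it is quoted as Theorem~3.6 of Stewart and Sun and used as a black box (the $\gap(A)$ in the display is a typo for $\gap(X)$). Your projection argument is the standard self-contained derivation and all the steps check out: after fixing the phase of $u_1$ so that $\<u_1, v_1\> = \cos\theta \ge 0$ (a step genuinely needed in the complex Hermitian setting), the decomposition $u_1 = \cos\theta\, v_1 + \sin\theta\, w$ together with the $X$-invariance of $v_1^\perp$ gives $\sin\theta\,(X - \mu_1 I)w = -(I - v_1 v_1\cg)\Delta u_1$; then $\ltwo{(I - v_1 v_1\cg)\Delta u_1} \le \opnorm{\Delta}$ and the spectral lower bound $\ltwo{(X - \mu_1 I)w} \ge \min_{i \ge 2}|\lambda_i - \mu_1| \ge \gap(X) - \opnorm{\Delta}$ (via Weyl's inequality $\mu_1 \ge \lambda_1(X) - \opnorm{\Delta}$, which also guarantees $\mu_1 > \lambda_2 \ge \lambda_i$ so the minimum is controlled by the gap rather than by an eigenvalue above $\mu_1$) combine to yield the claim, with the hinge $\hinge{\gap(X) - \opnorm{\Delta}}$ rendering the bound vacuous when $\opnorm{\Delta} \ge \gap(X)$ and the case $\sin\theta = 0$ being trivial. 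What the citation buys the paper is brevity; what your derivation buys is a verification that the inequality holds exactly as stated here---with the angle defined through $|\<u_1, v_1\>|$ so that phase and sign ambiguity of eigenvectors is immaterial, and with the hinge convention in the denominator---rather than leaving the reader to reconcile the paper's formulation with that of Stewart and Sun.
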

\noindent
We will also have occasion to use the following one-sided variant
of Bernstein's inequality.

\begin{lemma}[One-sided Bernstein inequality]
  \label{lemma:one-sided-bernstein}
  Let $X_i$ be non-negative random variables with
  $\var(X_i) \le \sigma^2$. Then
  \begin{equation*}
    \P\left(\frac{1}{m} \sum_{i = 1}^m X_i \le
    \frac{1}{m} \sum_{i = 1}^m \E[X_i] - t \right)
    \le \exp\left(-\frac{m t^2}{2 \sigma^2}\right).
  \end{equation*}
\end{lemma}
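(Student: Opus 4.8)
The plan is to prove the bound by the exponential (Chernoff) method applied to the lower tail, where the crux is controlling a \emph{one-sided} exponential moment using only non-negativity and a second-moment bound, rather than the boundedness or subexponential tails that a two-sided Bernstein inequality would require. Write $\mu_i = \E[X_i]$ and set $Y_i = \mu_i - X_i$, so that $\E[Y_i] = 0$ and the event in question is $\{\tfrac1m \sum_i Y_i \ge t\}$. Assuming the $X_i$ are independent, for any $\lambda > 0$ Markov's inequality applied to $\exp(\lambda \sum_i Y_i)$ gives
\[
  \P\Big(\tfrac1m \sum_{i=1}^m Y_i \ge t\Big)
  \le \exp(-\lambda m t) \prod_{i=1}^m \E\big[\exp(\lambda Y_i)\big].
\]
Everything then reduces to an upper bound on the single-coordinate moment generating function $\E[\exp(\lambda Y_i)] = \exp(\lambda \mu_i)\,\E[\exp(-\lambda X_i)]$.

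The key step, and the only place non-negativity enters, is the elementary inequality $e^{-u} \le 1 - u + \tfrac{u^2}{2}$, valid for all $u \ge 0$ (the difference vanishes to second order at $0$ and has non-negative second derivative on $\R_+$). Since $\lambda > 0$ and $X_i \ge 0$ we may take $u = \lambda X_i$, so that $\E[\exp(-\lambda X_i)] \le 1 - \lambda \mu_i + \tfrac{\lambda^2}{2}\E[X_i^2]$. Using $1 + w \le e^w$ and multiplying by $e^{\lambda \mu_i}$ collapses the $\mu_i$ terms and yields the clean sub-Gaussian bound
\[
  \E\big[\exp(\lambda Y_i)\big]
  \le \exp\Big(\tfrac{\lambda^2}{2}\,\E[X_i^2]\Big).
\]
I would then substitute this into the product, bound $\E[X_i^2] \le \sigma^2$ for each $i$, and optimize the free parameter by taking $\lambda = t/\sigma^2$, which produces exactly $\exp(-m t^2/(2\sigma^2))$.

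The main obstacle is conceptual rather than computational: for a non-negative summand the exponential moment $\E[\exp(-\lambda X_i)]$ is automatically finite and well-behaved for every $\lambda > 0$, since the exponent only sees the harmless direction $X_i \to \infty$, whereas the matching upper tail would demand tail control we have not assumed. Consequently the variance proxy that emerges naturally is the uncentered second moment $\E[X_i^2]$, and this is the quantity the hypothesis $\var(X_i) \le \sigma^2$ must be understood to control; one cannot simply center and invoke the variance alone, since the down-weighting produced by $e^{-\lambda X_i}$ need not decrease the second moment. Verifying the elementary inequality and checking that the per-term bounds telescope correctly after the $e^{\lambda \mu_i}$ cancellation are the only routine pieces.
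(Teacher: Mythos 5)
Your Chernoff argument is the standard proof of this inequality (the paper itself gives no proof, treating the lemma as a known one-sided variant of Bernstein's inequality), and every step you outline checks out: $e^{-u} \le 1 - u + u^2/2$ for $u \ge 0$ gives $\E[e^{\lambda(\E[X_i] - X_i)}] \le \exp\left(\tfrac{\lambda^2}{2}\E[X_i^2]\right)$ for $\lambda > 0$, and taking $\lambda = t/\sigma^2$ produces the stated exponent, provided $\sigma^2$ bounds the \emph{second moments} $\E[X_i^2]$. (You are also right that independence of the $X_i$ must be assumed; it is implicit in the paper, which only applies the lemma to i.i.d.\ sums.)

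The caveat you raise at the end is not cosmetic, and you should not paper over it with the phrase ``must be understood to control'': the step $\E[X_i^2] \le \sigma^2$ genuinely does not follow from $\var(X_i) \le \sigma^2$, since $\E[X_i^2] = \var(X_i) + \E[X_i]^2$, and no proof can close this gap because the lemma as literally stated is false. Take $X_i$ i.i.d.\ Bernoulli with $\P(X_i = 1) = 0.9$, so that $\var(X_i) = 0.09$, and take $t = 0.9$. Since the $X_i$ are non-negative, the event in question is that every $X_i = 0$, so
\begin{equation*}
  \P\left(\frac{1}{m}\sum_{i=1}^m X_i \le \frac{1}{m}\sum_{i=1}^m \E[X_i] - t\right)
  = (0.1)^m = e^{-m \log 10} \approx e^{-2.31 m},
\end{equation*}
whereas the claimed bound is $e^{-m t^2/(2\sigma^2)} = e^{-4.5 m}$, a contradiction for every $m \ge 1$. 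The correct statement---which is exactly what your argument proves---replaces the hypothesis $\var(X_i) \le \sigma^2$ by $\E[X_i^2] \le \sigma^2$. This correction is harmless for the paper: in both invocations of the lemma (the proofs of Proposition~\ref{proposition:complex-growth} and Lemma~\ref{lemma:eldar-mendelson}), the ``variance'' bound is in fact obtained by first bounding $\var(\cdot) \le \E[|\cdot|^2]$ and then bounding the second moment, so the stronger hypothesis your proof requires is satisfied there.
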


\subsection{Proof of Proposition~\ref{proposition:small-ball}}
\label{sec:proof-small-ball}

Our proof uses Mendelson's ``small-ball'' techniques for
concentration~\cite{Mendelson14} along with control over a particular
VC-dimension condition. If we define
$h_i(u, v) \defeq \stabfunc^2 \indic{|\<a_i, u\>| \wedge |\<a_i, v\>|
  \ge \stabfunc}$,
then we certainly have
\begin{equation*}
  \frac{1}{m} \sum_{i=1}^m |\<a_i, u\>\<a_i, v\>| 
  \geq \frac{1}{m}
  \sum_{i=1}^m h_i(u, v)
  ~~ \mbox{for~all~} u, v \in \R^n.
\end{equation*}
We now control the class
$\mc{F} \defeq \{a \mapsto f(a) = |\<a, u\>| \wedge |\<a, v\>|   \mid
u, v \in \R^n\}$ by its VC-dimension:
\begin{lemma}
  \label{lemma:upper-bound-of-VC-dimension}
  There exists some numerical constant $C > 0$, such that, for any $c \geq
  0$, the VC-dimension of the collection of sets
  \begin{equation*}
    \mc{G} \defeq
    \left\{\left\{x \in \R^n ~\mbox{s.t.}~ |\<x, u\>| \wedge |\<x, v\>|
    \geq c\right\}
    \mid u \in \R^n, v\in \R^n\right\}
  \end{equation*}
  is upper bounded by $Cn$.
\end{lemma}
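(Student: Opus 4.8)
The plan is to realize each set in $\mc{G}$ as a \emph{fixed} Boolean combination of affine half-spaces, bound the growth (shatter) function of the resulting class by a product of half-space growth functions, and then invoke the Sauer--Shelah lemma to convert this polynomial growth bound into a linear bound on the VC-dimension.

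First I would rewrite, for fixed $c \ge 0$ and parameters $u, v \in \R^n$,
\begin{equation*}
  \{x : |\<x,u\>| \wedge |\<x,v\>| \geq c\}
  = \big(H^+_u \cup H^-_u\big) \cap \big(H^+_v \cup H^-_v\big),
\end{equation*}
where $H^+_w \defeq \{x : \<x,w\> \geq c\}$ and $H^-_w \defeq \{x : \<x,w\> \leq -c\}$. Each of the four classes $\{H^\pm_w : w \in \R^n\}$ is contained in the collection of affine half-spaces of $\R^n$: writing the constraint $\<x,w\> \geq c$ as $\<(x,1),(w,-c)\> \geq 0$ exhibits it as a homogeneous half-space in $\R^{n+1}$ evaluated at the lift $x \mapsto (x,1)$, and lifting does not change which sets can be shattered, so each of these four classes has VC-dimension at most $n+1$.

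Next, for a finite set $F \subset \R^n$ with $|F| = m$, the trace $S \cap F$ of any $S \in \mc{G}$ is determined by the four traces $H^\pm_u \cap F$ and $H^\pm_v \cap F$, since $S$ is the same Boolean formula in those four half-spaces no matter the parameters. Hence the number of distinct traces of $\mc{G}$ on $F$ is at most the product of the numbers of traces of the four half-space classes. By Sauer--Shelah, each such count is at most $(em/(n+1))^{n+1}$ once $m \geq n+1$, so the growth function of $\mc{G}$ obeys $\Pi_{\mc{G}}(m) \leq (em/(n+1))^{4(n+1)}$.

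Finally, if $\mc{G}$ shatters a set of size $m$ then $2^m = \Pi_{\mc{G}}(m)$; assuming $m \ge n+1$ (the case $m < n+1$ being trivially within the target bound) gives $2^m \leq (em/(n+1))^{4(n+1)}$, and taking $\log_2$ with $t = m/(n+1)$ yields $t \leq 4 \log_2(e t)$, which forces $t$ below a numerical constant and hence $m \leq C n$ for a numerical constant $C$. The one genuinely delicate point is the decoupling step --- the claim that the trace of $S$ on $F$ depends on $(u,v,c)$ only through the four half-space traces --- but this is immediate from the fixed Boolean structure; everything else is standard Sauer--Shelah bookkeeping.
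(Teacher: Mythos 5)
Your proposal is correct and takes essentially the same route as the paper: both decompose each set in $\mc{G}$ as a fixed Boolean combination $(G_1 \cup G_2) \cap (G_3 \cup G_4)$ of four affine half-spaces and then use preservation of VC dimension under such combinations. The only difference is that you prove the preservation step yourself (growth-function product bound plus Sauer--Shelah), whereas the paper simply cites the half-space bound and the preservation result from van der Vaart and Wellner (Lemmas 2.6.15 and 2.6.17), whose proofs are exactly the argument you wrote out.
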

\begin{proof}
  The collection of half planes
  \begin{equation*}
    \mc{G}_{\rm plane} \defeq
    \left\{\left\{x \in \R^n ~\mbox{s.t.}~  \<u, x\> + b \geq 0\right\}
  	 \mid u \in \R^n, b \in \R\right\}
  \end{equation*}
  has VC-dimension at most $n+2$~\cite[Lemma 2.6.15]{VanDerVaartWe96}.
  We have the containment
  \begin{equation*}
    \mc{G} \subset
    \left\{(G_1 \cup G_2) \cap (G_3 \cup G_4)
    \mid G_1, G_2, G_3, G_4 \in \mc{G}\right\},
  \end{equation*}
  and so standard results on preservation of VC-dimension under set
  operations~\cite[Lemma 2.6.17]{VanDerVaartWe96}, imply that the
  VC-dimension of $\mc{G}$ is at most $C$ times the VC-dimension of $\mc{G}$
  for some numerical constant $C > 0$.
%
\end{proof}
\noindent
The associated thresholds $a \mapsto \indic{|\<a, u\>| \wedge |\<a, v\>| \ge
  \stabfunc}$ likewise have VC-dimension at most $Cn$ as $u, v$
vary. Applying standard VC-concentration
inequalities~\cite[Ch.~2.6]{BartlettMe02,VanDerVaartWe96} we immediately
obtain that
\begin{equation*}
  \P\left(\sup_{u, v \in \sphere^{n-1}}
  \frac{1}{\stabfunc} \left|\frac{1}{m} \sum_{i = 1}^m h_i(u, v)
  - \E[h_i(u, v)]\right|
  \ge c \sqrt{\frac{n}{m}} + t\right)
  \le 2 \exp\left(-\frac{m t^2}{2}\right)
\end{equation*}
for all $t \ge 0$, where $c < \infty$ is a numerical constant.
Substitute $t^2 \mapsto 2t$ to achieve the result.

\subsection{Proof of Proposition~\ref{proposition:complex-growth}}
\label{sec:proof-complex-growth}

We begin with a small lemma relating the Frobenius norm
of certain rank-two matrices to the distances between vectors.
\begin{lemma}
  \label{lemma:frob-rank-two}
  Let $X = xx\cg - yy\cg \in \C^{n \times n}$. Then
  $\lfro{X} \ge \inf_\theta \ltwos{x - e^{\imagunit\theta} y}
  \sup_\theta \ltwos{x - e^{\imagunit\theta} y}$.
\end{lemma}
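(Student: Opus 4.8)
The plan is to pass to the spectrum of the Hermitian matrix $X = xx\cg - yy\cg$. Since $X$ is a difference of two rank-one positive semidefinite matrices, Weyl's inequalities (equivalently Cauchy interlacing) show it has at most one strictly positive and at most one strictly negative eigenvalue; I write these $\lambda_1 \ge 0 \ge \lambda_2$, with every remaining eigenvalue equal to $0$, so that on its range $X$ has signature $(1,1)$. I would then read off the two symmetric functions of $\lambda_1,\lambda_2$ that I need directly from $X$: the trace gives $\lambda_1 + \lambda_2 = \tr X = \ltwo{x}^2 - \ltwo{y}^2$, while expanding $\tr(X^2)$ and using $\tr(xx\cg yy\cg) = |\<x,y\>|^2$ gives $\lambda_1^2 + \lambda_2^2 = \lfro{X}^2 = \ltwo{x}^4 + \ltwo{y}^4 - 2|\<x,y\>|^2$.

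Next I would put the right-hand side in closed form. From $\ltwos{x - e^{\imagunit\theta}y}^2 = \ltwo{x}^2 + \ltwo{y}^2 - 2\Real(e^{\imagunit\theta}\<x,y\>)$, letting $\theta$ range over the circle makes the cross term sweep $[-2|\<x,y\>|,\, 2|\<x,y\>|]$, so $\inf_\theta \ltwos{x - e^{\imagunit\theta}y}^2 = \ltwo{x}^2 + \ltwo{y}^2 - 2|\<x,y\>|$ and $\sup_\theta \ltwos{x - e^{\imagunit\theta}y}^2 = \ltwo{x}^2 + \ltwo{y}^2 + 2|\<x,y\>|$, and their product collapses to $(\ltwo{x}^2 + \ltwo{y}^2)^2 - 4|\<x,y\>|^2$. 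The identity that drives the whole argument is that this equals $(\lambda_1 - \lambda_2)^2$: from the two displays above, $2\lambda_1\lambda_2 = (\lambda_1+\lambda_2)^2 - (\lambda_1^2+\lambda_2^2) = 2(|\<x,y\>|^2 - \ltwo{x}^2\ltwo{y}^2)$, so that $(\lambda_1 - \lambda_2)^2 = (\lambda_1+\lambda_2)^2 - 4\lambda_1\lambda_2 = (\ltwo{x}^2+\ltwo{y}^2)^2 - 4|\<x,y\>|^2$. Thus the right-hand side of the lemma is exactly $|\lambda_1 - \lambda_2|$, and the signature fact $\lambda_1 \ge 0 \ge \lambda_2$ lets me rewrite $|\lambda_1 - \lambda_2| = |\lambda_1| + |\lambda_2|$, identifying the geometric product of extremal distances with a purely spectral quantity.

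With this reduction in hand, the lemma becomes a statement comparing $\lfro{X} = \sqrt{\lambda_1^2 + \lambda_2^2}$ with $|\lambda_1| + |\lambda_2|$ on the rank-$\le 2$ matrix $X$, controlled entirely by the cross term $\lambda_1\lambda_2 = |\<x,y\>|^2 - \ltwo{x}^2\ltwo{y}^2$. I expect the main obstacle to be the signature step itself: verifying that $X$ has exactly one nonnegative and one nonpositive eigenvalue is what makes $\inf_\theta(\cdot)\,\sup_\theta(\cdot)$ coincide with $\lambda_1 - \lambda_2$ (rather than with $|\lambda_1| + |\lambda_2|$ carrying a common sign), and it is governed by Cauchy--Schwarz, which forces $\lambda_1\lambda_2 \le 0$ with equality precisely in the degenerate parallel case $y = c\,x$. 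I would therefore carry out the final comparison through the sign and magnitude of $\lambda_1\lambda_2$, treating the rank-one parallel case (where both sides collapse) separately, since that cross term is exactly what ties the spectral quantity $|\lambda_1| + |\lambda_2|$ back to $\lfro{X}$.
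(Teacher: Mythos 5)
Your spectral reduction is correct as far as it goes, and precisely because it is correct it exposes that the final comparison you defer cannot be carried out: the lemma as stated is false. Your identities show the right-hand side equals $\lambda_1 - \lambda_2 = |\lambda_1| + |\lambda_2|$ (the signature fact does hold, since $2\lambda_1 \lambda_2 = (\lambda_1 + \lambda_2)^2 - (\lambda_1^2 + \lambda_2^2) = 2(|\<x, y\>|^2 - \ltwo{x}^2 \ltwo{y}^2) \le 0$ by Cauchy--Schwarz), while the left-hand side is $\sqrt{\lambda_1^2 + \lambda_2^2}$. But for every pair of reals $\sqrt{\lambda_1^2 + \lambda_2^2} \le |\lambda_1| + |\lambda_2|$, with equality only when $\lambda_1 \lambda_2 = 0$; no bookkeeping on the sign of $\lambda_1\lambda_2$ reverses this. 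Concretely, take $x = e_1$ and $y = e_2$ orthonormal: then $X = \diag(1, -1, 0, \dots, 0)$, so $\lfro{X} = \sqrt{2}$, while $\ltwos{x - e^{\imagunit\theta} y}^2 = 2$ for every $\theta$, so the right-hand side equals $2 > \sqrt{2}$. What your computation does prove---sharply---is the corrected bound $\lfro{X} \ge \frac{1}{\sqrt{2}} \inf_\theta \ltwos{x - e^{\imagunit\theta}y} \sup_\theta \ltwos{x - e^{\imagunit\theta}y}$, since $|\lambda_1| + |\lambda_2| \le \sqrt{2}\,(\lambda_1^2 + \lambda_2^2)^{1/2}$ with equality exactly at $|\lambda_1| = |\lambda_2|$, which the example above attains; the inequality with constant $1$ holds only in the degenerate rank-one case ($y$ parallel to $x$ or one vector zero) that you proposed to treat separately.

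For comparison, the paper's own proof contains the matching slip. Its displayed ``equality''
\begin{equation*}
  \lfro{X}^2 = \norm{x}^4 + \norm{y}^4 - 2|\<x,y\>|^2
  = \left(\sqrt{2\norm{x}^4 + 2\norm{y}^4} - 2|\<x,y\>|\right)
  \left(\sqrt{2\norm{x}^4 + 2\norm{y}^4} + 2|\<x,y\>|\right)
\end{equation*}
is off by a factor of two: the product on the right expands to $2\norm{x}^4 + 2\norm{y}^4 - 4|\<x,y\>|^2 = 2\lfro{X}^2$. With that factor restored, the paper's subsequent bound $\sqrt{2a^4 + 2b^4} \ge a^2 + b^2$ establishes exactly the $1/\sqrt{2}$ version you reach spectrally; your eigenvalue route is arguably cleaner, since it also shows the constant $1/\sqrt{2}$ is sharp, which the paper's algebraic manipulation does not reveal. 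The weakening is harmless downstream: the lemma enters only the proof of Proposition~\ref{proposition:complex-growth}, through homogeneity of $F$ in its second argument, and the correction merely replaces the constant $\frac{c^2 \tau^2}{4}$ there by $\frac{c^2 \tau^2}{4\sqrt{2}}$, a change absorbed into the numerical constants everywhere the proposition is invoked.
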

\noindent
We defer the proof of the lemma to Section~\ref{sec:proof-frob-rank-two}.

To prove the proposition, we define the function $F : \C^{m \times n} \times
\C^{n \times n}$, for $Z = [z_1 ~ \cdots ~ z_m]\cg$, by
\begin{equation}
  \label{eqn:def-F-complex}
  F(Z, X)
  = \frac{1}{m} \sum_{i = 1}^m |\<z_i, X z_i\>|.
\end{equation}
Now, for a constant $c > 0$ to be chosen, define the truncated variables
\begin{equation}
  \label{eqn:def-z-complex}
  z_i \defeq \sqrt{n} a_i / \ltwo{a_i} \indic{\ltwo{a_i}
    \ge c \sqrt{n}}.
\end{equation}
Then we have that for any $A \in \C^{m \times n}$ that
\begin{equation*}
  F(A, xx\cg - yy\cg)
  \ge c^2 F(Z, xx\cg - yy\cg).
\end{equation*}
As the function $F$ is homogenous in its second argument,
based on Lemma~\ref{lemma:frob-rank-two}, the result of the theorem
will hold if we can show that (with suitably high probability)
\begin{equation}
  \label{eqn:F-to-show-complex}
  F(Z, X) \ge \frac{\absWconst^2}{4}
  ~~~ \mbox{for~all~rank~2~or~less}~ X \in \C^{n \times n}
  ~ \mbox{with}~ \lfro{X} = 1.
\end{equation}
It is, of course, no loss of generality to assume that $X$ is Hermitian
in inequality~\eqref{eqn:F-to-show-complex}.

With our desired inequality~\eqref{eqn:F-to-show-complex} in mind,
we present a covering-number-based argument to prove the theorem.
The first step in this direction is to lower bound the expectation
of $F(Z, X)$.
Let $X \in \mc{S}_r, X = \sum_{j = 1}^r \lambda_j u_j u_j\cg$. Then we have
that for $w_i = \sqrt{n} a_i / \ltwo{a_i}$ and our choice of $z_i$
that
\begin{align*}
  \E\left[|z_i\cg X z_i|\right]
  & = \E\left[|w_i\cg X w_i|\right]
  - \E\left[|w_i\cg X w_i| \indic{\ltwo{a_i} \le c \sqrt{n}}
    \right].
\end{align*}
Now, we have
\begin{equation*}
  \E[|w_i\cg X w_i| \indic{\ltwo{a_i} \le \epsilon \sqrt{n}}]
  \stackrel{(i)}{\le} \sqrt{\E[|w_i\cg X w_i|^2]}
  \sqrt{\smallfunc(c)}
  \stackrel{(ii)}{\le} \bigg(\sum_{i=1}^r \lambda_j^2\bigg)^\half
  \E\bigg[\sum_{i=1}^r |\<u_i, w_i\>|^4 \bigg]^\half
  \sqrt{\smallfunc(c)}
\end{equation*}
where the inequalities follow $(i)$ by Cauchy-Schwartz and
Assumption~\ref{assumption:rank-2-growth} (for the small-ball
probability bound $\smallfunc(c)$) and $(ii)$ by Cauchy-Schwartz,
respectively. As $w_i$ is a $\subg^2$-sub-Gaussian vector, standard results
on sub-Gaussians imply that
$\E[|\<u, w_i\>|^4] \le
(1 + e) \subg^4$ for any unit vector $u$, and thus
the final expectation has bound $\sum_{i=1}^r
\E[\<u_i, w_i\>^4] \le (1 + e) r \subg^4$,
and thus we obtain for any choice of $c$ in our
construction~\eqref{eqn:def-z-complex} of $z_i$ and any rank $r$ matrix
$X$ that
\begin{equation*}
  \E\left[|z_i\cg X z_i|\right]
  \ge \E\left[|w_i\cg X w_i|\right]
  - \sqrt{1 + e} \lfro{X} \subg^2 \sqrt{r \smallfunc(c)}.
\end{equation*}
In particular, as we consider only rank $r = 2$ matrices,
Assumption~\ref{assumption:rank-2-growth} allows us to choose $c$ in the
definition~\eqref{eqn:def-z-complex} small enough that
\begin{equation*}
  \smallfunc(c) \le \frac{1}{2(1 + e)} \cdot \frac{\absWconst^4}{\subg^4},
\end{equation*}
and then
\begin{equation}
  \label{eqn:truncated-a-good-growth-complex}
  \E\left[|z_i\cg X z_i|\right]
  \ge \E\left[|w_i\cg X w_i|\right]
  - \absWconst^2 \lfro{X} / 2
  \ge \frac{\absWconst^2}{2} \lfro{X}.
\end{equation}

The remainder of our argument now proceeds by a covering argument,
which we begin with a lemma on the continuity properties
of $F$.
\begin{lemma}
  \label{lemma:continuity-F-func-complex}
  Let Hermitian matrices $X, Y \in \C^{n \times n}$ have
  rank at most $r$ and
  eigen-decompositions $X = U \Lambda U\cg$ and $Y = V \Sigma V\cg$,
  and let $Z = [z_1 ~ \cdots ~ z_m]\cg \in \C^{m \times n}$.
  Then
  \begin{equation*}
    |F(Z, X) - F(Z, Y)|
    \le \frac{2 \sqrt{r}}{m} \opnorm{Z}^2 \left[
      \norm{U - V}_{1,2}
      + \lfro{\Lambda - \Sigma} \right].
  \end{equation*}
\end{lemma}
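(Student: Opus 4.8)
The plan is to reduce the whole estimate to a bound on $\frac{1}{m}\sum_{i=1}^m |z_i\cg(X-Y)z_i|$ and then to split the perturbation $X-Y$ into an eigenvalue part and an eigenvector part via a telescoping (hybrid) argument. First I would apply the reverse triangle inequality $\big||a|-|b|\big|\le|a-b|$ to each summand, so that $|F(Z,X)-F(Z,Y)|\le \frac{1}{m}\sum_i\big||z_i\cg X z_i|-|z_i\cg Y z_i|\big|\le \frac{1}{m}\sum_i|z_i\cg X z_i - z_i\cg Y z_i|$. Writing $X=\sum_{j=1}^r\lambda_j u_ju_j\cg$ and $Y=\sum_{j=1}^r\sigma_j v_jv_j\cg$ from the given decompositions, each quadratic form is $z_i\cg X z_i=\sum_j\lambda_j|u_j\cg z_i|^2$, and I telescope as $z_i\cg X z_i - z_i\cg Y z_i=\sum_j(\lambda_j-\sigma_j)|u_j\cg z_i|^2+\sum_j\sigma_j(|u_j\cg z_i|^2-|v_j\cg z_i|^2)$. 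The triangle inequality then separates the bound into an eigenvalue term $T_\Lambda=\sum_j|\lambda_j-\sigma_j|\cdot\frac1m\sum_i|u_j\cg z_i|^2$ and an eigenvector term $T_U=\sum_j|\sigma_j|\cdot\frac1m\sum_i\big||u_j\cg z_i|^2-|v_j\cg z_i|^2\big|$.

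The eigenvalue term is straightforward. Since $u_j$ is a unit vector, $\frac1m\sum_i|u_j\cg z_i|^2=\frac1m\ltwo{Zu_j}^2\le \opnorm{Z}^2/m$, using that the $i$-th row of $Z$ is $z_i\cg$. Cauchy--Schwarz across the $r$ coordinates gives $\sum_j|\lambda_j-\sigma_j|\le\sqrt r\,\lfro{\Lambda-\Sigma}$, so that $T_\Lambda\le\frac{\sqrt r\,\opnorm{Z}^2}{m}\lfro{\Lambda-\Sigma}$.

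The eigenvector term is the crux and where the real work lies. The idea is to factor $\big||u_j\cg z_i|^2-|v_j\cg z_i|^2\big|=\big||u_j\cg z_i|-|v_j\cg z_i|\big|\cdot(|u_j\cg z_i|+|v_j\cg z_i|)$ and then apply Cauchy--Schwarz over the index $i$ to upgrade these row-wise quantities into the operator norm $\opnorm{Z}$. Using $\big||u_j\cg z_i|-|v_j\cg z_i|\big|\le|(u_j-v_j)\cg z_i|$ yields $\sum_i\big||u_j\cg z_i|-|v_j\cg z_i|\big|^2\le\ltwo{Z(u_j-v_j)}^2\le\opnorm{Z}^2\ltwo{u_j-v_j}^2$, while $\sum_i(|u_j\cg z_i|+|v_j\cg z_i|)^2\le 2(\ltwo{Zu_j}^2+\ltwo{Zv_j}^2)\le 4\opnorm{Z}^2$. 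Multiplying the two and taking square roots gives $\frac1m\sum_i\big||u_j\cg z_i|^2-|v_j\cg z_i|^2\big|\le\frac{2\opnorm{Z}^2}{m}\ltwo{u_j-v_j}$. Since this lemma is invoked in the covering argument where $\lfro{Y}=1$, the eigenvalues obey $|\sigma_j|\le 1$, so summing over $j$ gives $T_U\le\frac{2\opnorm{Z}^2}{m}\sum_j\ltwo{u_j-v_j}=\frac{2\opnorm{Z}^2}{m}\norm{U-V}_{1,2}$.

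Combining the two bounds and using $\sqrt r\ge 1$ to absorb constants yields $|F(Z,X)-F(Z,Y)|\le T_\Lambda+T_U\le\frac{2\sqrt r}{m}\opnorm{Z}^2\big(\norm{U-V}_{1,2}+\lfro{\Lambda-\Sigma}\big)$, as claimed. The hard part is the eigenvector term: the factorization of the difference of squared magnitudes, followed by the Cauchy--Schwarz step that converts the per-row sums $\sum_i|u_j\cg z_i|^2$ into $\opnorm{Z}^2$, is the only nonroutine maneuver; everything else is triangle inequalities and the elementary identity $\sum_i|u\cg z_i|^2=\ltwo{Zu}^2$. I would also flag explicitly that the bound as written relies on $|\sigma_j|\le 1$, which is exactly the normalization present in the intended application.
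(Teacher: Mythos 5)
Your proof is correct and takes essentially the same route as the paper's: the same telescoping into an eigenvalue-difference term and an eigenvector-difference term, with Cauchy--Schwarz over the rows of $Z$ producing the $\opnorm{Z}^2$ factors; the only cosmetic difference is that you bound $\big||\<u_j, z_i\>|^2 - |\<v_j, z_i\>|^2\big|$ via the factorization $|a^2-b^2| = |a-b|(a+b)$ and the reverse triangle inequality, whereas the paper reaches the same product $|\<u_j - v_j, z_i\>\<z_i, u_j+v_j\>|$ by observing that the cross term $u_j\cg z_i z_i\cg v_j - v_j\cg z_i z_i\cg u_j$ is purely imaginary. Your flag that the stated constant requires the normalization $|\sigma_j| \le 1$ is also consistent with the paper, whose own proof bounds the eigenvector term by $2\lone{\lambda}\opnorm{Z}^2\norm{U-V}_{1,2}$ and then implicitly uses $\lone{\lambda} \le \sqrt{r}$ (i.e.\ $\lfro{X} \le 1$), valid in the intended application to matrices in $\mc{S}_r$ of unit Frobenius norm.
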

\noindent
We provide the proof of Lemma~\ref{lemma:continuity-F-func-complex}
in Section~\ref{sec:proof-continuity-F-func-complex}.

Based on Lemma~\ref{lemma:continuity-F-func-complex}, we develop a
particular covering set of the $n \times n$ Hermitian matrices of rank $r$,
following a construction due to \citet[Thm.~2.3]{CandesPl11}. Let
$\mc{S}_r \subset \{X \in \C^{n \times n}$ be the set of Hermitian rank $r$
matrices with $\lfro{X} = 1$, and let
$\mathsf{O}_{n,r} = \{U \in \C^{n \times r} \mid U\cg U = I_r\}$ denote the
set of $n\times r$ unitary matrices; for each $\epsilon > 0$ there exists an
$\epsilon$-cover $\wb{\mathsf{O}}_{n,r}$ of $\mathsf{O}_{n,r}$ in
$\norm{\cdot}_{1,2}$-norm of cardinality at most $(3 /
\epsilon)^{2nr}$. Similarly, there exists an $\epsilon$-cover $D \subset
\R^r$ of all vectors $v \in \R^r$ in $\ell_2$-norm of cardinality at most
$(3 / \epsilon)^{r}$. Now, let $\wb{\mc{S}}_r = \{U \diag(d) U\cg \mid U \in
\wb{\mathsf{O}}_{n,r}, d \in D\}$ be a subset of the rank-$r$ Hermitian
matrices. Then $\card(\wb{\mc{S}}_r) \le (3 / \epsilon)^{(2n + 1)r}$, and we
have the following immediate consequence of
Lemma~\ref{lemma:continuity-F-func-complex}.
\begin{lemma}
  \label{lemma:cover-F-complex}
  For any $\epsilon > 0$, there exists a set
  $\wb{\mc{S}}_r \subset \mc{S}_r$ of cardinality at most
  $\card(\wb{\mc{S}}_r) \le (3/\epsilon)^{(2 n + 1)r}$ such that
  \begin{equation*}
    \inf_{X \in \mc{S}_r}
    F(Z, X) \ge \min_{X \in \wb{\mc{S}}_r}
    F(Z, X)
    - \frac{4\sqrt{r}}{m} \opnorm{Z}^2 \epsilon.    
  \end{equation*}
\end{lemma}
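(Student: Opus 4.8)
The plan is to approximate every matrix in $\mc{S}_r$ by an element of the finite set $\wb{\mc{S}}_r$ just constructed, and to control the resulting change in $F$ through the continuity estimate of Lemma~\ref{lemma:continuity-F-func-complex}; the lemma is then an immediate consequence. First I would fix an arbitrary $X \in \mc{S}_r$ and write its eigendecomposition $X = U \Lambda U\cg$, where $U \in \mathsf{O}_{n,r}$ has orthonormal columns and $\Lambda = \diag(\lambda)$ for some $\lambda \in \R^r$. Since $\lfro{X} = 1$ and $U$ is unitary, the eigenvalue vector satisfies $\ltwo{\lambda} = 1$, so $\lambda$ lies on the unit sphere of $\R^r$ and is therefore within $\epsilon$ of some point of the cover $D$.

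Next I would invoke the two covers simultaneously: choose $V \in \wb{\mathsf{O}}_{n,r}$ with $\norm{U - V}_{1,2} \le \epsilon$ and $d \in D$ with $\ltwo{\lambda - d} \le \epsilon$, and set $Y = V \diag(d) V\cg$, which by construction belongs to $\wb{\mc{S}}_r$. Because both $\Lambda$ and $\diag(d)$ are diagonal, $\lfro{\Lambda - \diag(d)} = \ltwo{\lambda - d} \le \epsilon$, so applying Lemma~\ref{lemma:continuity-F-func-complex} to the pair $(X,Y)$ with rank at most $r$ gives $|F(Z, X) - F(Z, Y)| \le \frac{2\sqrt{r}}{m} \opnorm{Z}^2 (\epsilon + \epsilon) = \frac{4\sqrt{r}}{m} \opnorm{Z}^2 \epsilon$.

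From this I would conclude $F(Z, X) \ge F(Z, Y) - \frac{4\sqrt{r}}{m}\opnorm{Z}^2\epsilon \ge \min_{X' \in \wb{\mc{S}}_r} F(Z, X') - \frac{4\sqrt{r}}{m}\opnorm{Z}^2\epsilon$, and since $X \in \mc{S}_r$ was arbitrary, taking the infimum over $\mc{S}_r$ yields the claimed inequality. The cardinality bound $\card(\wb{\mc{S}}_r) \le (3/\epsilon)^{(2n+1)r}$ is already supplied by the product of the two covering numbers established before the lemma statement.

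The argument is essentially mechanical once the two covers are in place, so I do not anticipate a genuine obstacle; the proof is short. The only points requiring care are (i) matching the $\norm{\cdot}_{1,2}$ metric used for the unitary factor against the $\ell_2$ metric used for the eigenvalue vector, precisely as Lemma~\ref{lemma:continuity-F-func-complex} is phrased, and (ii) taking $D$ to cover the unit sphere (equivalently the unit ball) of $\R^r$, so that the approximant $Y$ again has unit Frobenius norm and genuinely lies in $\mc{S}_r$ as the inclusion $\wb{\mc{S}}_r \subset \mc{S}_r$ asserts. Both are guaranteed by the covering construction preceding the statement.
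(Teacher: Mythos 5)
Your proposal is correct and matches the paper's own argument, which treats the lemma as an immediate consequence of Lemma~\ref{lemma:continuity-F-func-complex} applied to the product of the two covers $\wb{\mathsf{O}}_{n,r} \times D$: approximate $U$ and the eigenvalue vector separately, each to accuracy $\epsilon$, and the factor $\frac{2\sqrt{r}}{m}\opnorm{Z}^2(\epsilon + \epsilon)$ gives exactly the stated bound. Your two points of care (the $\norm{\cdot}_{1,2}$ versus $\ell_2$ metrics, and taking $D$ to cover the unit sphere so that $\wb{\mc{S}}_r \subset \mc{S}_r$) are precisely the right ones, and resolve a looseness in the paper's phrasing that $D$ covers ``all vectors $v \in \R^r$.''
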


For our final step, we
control the variance of $F(Z, Y)$, so that we can
apply the one-sided Bernstein inequality
(Lemma~\ref{lemma:one-sided-bernstein}) and then a covering argument.
For $X \in \mc{S}_r$, then, we have
\begin{equation*}
  \var(z_i\cg X z_i)
  \le \E[|z_i\cg X z_i|^2]
  = \E\left[\bigg|\sum_{j = 1}^r \lambda_j |\<z_i, u_j\>|^2\bigg|^2
    \right]
  \le \lfro{X}^2 \sum_{j = 1}^r \E[|\<z_i, u_j\>|^4]
  \le r (1 + e) \lfro{X}^2 \subg^2
\end{equation*}
by Cauchy-Schwartz and that $\E[|\<z_i, u_j\>|^4] \le (1 + e) \subg^4$
by sub-Gaussian moment bounds. Thus, Lemma~\ref{lemma:one-sided-bernstein}
implies that for any fixed $X \in \mc{S}_r$ we have
\begin{equation*}
  \P\left(F(Z, X) \le \frac{\absWconst^2}{2}
  - \sqrt{2(1 + e) r} \cdot \subg^2 t\right) \le e^{-mt^2}
\end{equation*}
for all $t \ge 0$.
Now, let $\mc{E}_t$ be the event that $\frac{1}{m} \opnorm{Z}^2 \le
\varepsilon_{n,m}(t) \defeq \subg^2 + 11 \subg^2 \max\{ \sqrt{4n / m + t},
4n/m + t\}$.  Then by Lemma~\ref{lemma:cover-F-complex} and
Lemma~\ref{lemma:matrix-concentration} with $r = 2$, we have for any $K$ and
$\epsilon$ that
\begin{align*}
  \P\left(\inf_{X \in \mc{S}_r} F(Z, X) \le K \right)
  & \le \P\left(\min_{X \in \wb{\mc{S}}_r} F(Z, X)
  - \frac{4}{m} \opnorm{Z}^2 \epsilon \le K\right) \\
  & \le \P\left(\min_{X \in \wb{\mc{S}}_r} F(Z, X)
  - \frac{4}{m} \opnorm{Z}^2 \epsilon \le K,
  \mc{E}_t\right) + \P(\mc{E}_t) \\
  & \le \P\left(\min_{X \in \wb{\mc{S}}_r} F(Z, X)
  - 4 \varepsilon_{n,m}(t) \epsilon \le K \right) + e^{-mt} \\
  & \le \left(\frac{3}{\epsilon}\right)^{(2n + 1) r}
  \sup_{X \in \mc{S}_r} \P\left(F(Z, X)
  - 4 \varepsilon_{n,m}(t) \epsilon \le K\right)
  + e^{-mt}.
\end{align*}
Letting $K = \frac{\absWconst^2}{2} - 2\sqrt{(1 + e)} \subg^2 t
- 4 \varepsilon_{n,m}(t) \epsilon$,
we thus see that
\begin{equation*}
  \P\left(\inf_{X \in \mc{S}_2} F(Z, X)
  \le \frac{\absWconst^2}{2} - 2 \sqrt{(1 + e)} \subg^2 t
  - 4 \varepsilon_{n,m}(t) \epsilon\right)
  \le \left(\frac{3}{\epsilon}\right)^{2 (2n + 1)}
  e^{-mt^2} + e^{-mt}.
\end{equation*}
Summarizing, we have the following lemma.
\begin{lemma}
  There exist numerical constants $C_1 \le 2 \sqrt{(1 + e)}$ and
  $C_2 \le 44$ such that
  \begin{align*}
    & \P\left(\inf_{X \in \mc{S}_2}
    F(Z, X)
    \le \frac{\absWconst^2}{2}
    - C_1 \subg^2 t
    - C_2 \subg^2
    \left(\max\left\{\sqrt{\frac{n}{m} + t},
    \frac{n}{m} + t\right\} \epsilon + \epsilon \right)
    \right) \\
    & \qquad\qquad\qquad\qquad\qquad\qquad\qquad\qquad
    \le
    \exp\left(-mt^2 + 2 (2n + 1)\log\frac{3}{\epsilon}\right)
    + e^{-mt}.
  \end{align*}
\end{lemma}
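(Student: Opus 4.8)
The plan is to assemble the ingredients already in hand into a single uniform tail bound through a net-plus-union-bound argument, the one delicate point being that the net-discretization error is random and must be frozen before any union bound. Four facts drive the proof. First, inequality~\eqref{eqn:truncated-a-good-growth-complex} gives the expectation lower bound $\E[|z_i\cg X z_i|] \ge \frac{\absWconst^2}{2}\lfro{X}$ for Hermitian $X$ of rank at most $2$. Second, for each \emph{fixed} such $X$ with $\lfro{X}=1$ the variance bound $\var(z_i\cg X z_i)\le 2(1+e)\subg^4$ together with the one-sided Bernstein inequality (Lemma~\ref{lemma:one-sided-bernstein}) yields $\P(F(Z,X)\le \frac{\absWconst^2}{2} - 2\sqrt{1+e}\,\subg^2 t)\le e^{-mt^2}$, since $\sqrt{2\cdot 2(1+e)}\,\subg^2 = 2\sqrt{1+e}\,\subg^2$. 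Third, the covering estimate of Lemma~\ref{lemma:cover-F-complex} replaces $\inf_{X\in\mc{S}_2} F(Z,X)$ by a minimum over a net $\wb{\mc{S}}_2$ with $\card(\wb{\mc{S}}_2)\le(3/\epsilon)^{2(2n+1)}$, incurring the error $\tfrac{4}{m}\opnorm{Z}^2\epsilon$ (the factor $\sqrt 2$ from $r=2$ being folded into the numerical constants). Fourth, Lemma~\ref{lemma:matrix-concentration} shows $\tfrac{1}{m}\opnorm{Z}^2\le \varepsilon_{n,m}(t)$ on the event $\mc{E}_t$ of probability at least $1-e^{-mt}$.

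The central step is to make the continuity error deterministic before discretizing the randomness over the net. I would first condition on $\mc{E}_t$, paying $\P(\mc{E}_t^c)\le e^{-mt}$, and on $\mc{E}_t$ bound the random quantity $\tfrac{4}{m}\opnorm{Z}^2\epsilon$ by the fixed surrogate $4\varepsilon_{n,m}(t)\epsilon$. This reduces the target event to $\{\min_{X\in\wb{\mc{S}}_2} F(Z,X) - 4\varepsilon_{n,m}(t)\epsilon\le K\}$, over which a union bound applies: its probability is at most $(3/\epsilon)^{2(2n+1)}\sup_{X\in\mc{S}_2}\P(F(Z,X) - 4\varepsilon_{n,m}(t)\epsilon\le K)$, where the supremum over all of $\mc{S}_2$ is a harmless over-estimate of the maximum over the net. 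Choosing $K = \frac{\absWconst^2}{2} - 2\sqrt{1+e}\,\subg^2 t - 4\varepsilon_{n,m}(t)\epsilon$ turns each summand into exactly the fixed-$X$ tail from the second ingredient, of size $e^{-mt^2}$, so the union contributes $(3/\epsilon)^{2(2n+1)}e^{-mt^2}$ and the operator-norm event contributes $e^{-mt}$.

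It remains only to put the bound in the stated cosmetic form. Writing the cover cardinality as $(3/\epsilon)^{2(2n+1)} = \exp\!\big(2(2n+1)\log\tfrac{3}{\epsilon}\big)$ converts the first term into $\exp(-mt^2 + 2(2n+1)\log\tfrac{3}{\epsilon})$. Substituting $\varepsilon_{n,m}(t) = \subg^2 + 11\subg^2\max\{\sqrt{4n/m+t},\,4n/m+t\}$ splits $4\varepsilon_{n,m}(t)\epsilon$ into the additive piece $4\subg^2\epsilon$, which I absorb into the ``$+\epsilon$'' term, and the piece $44\subg^2\max\{\sqrt{4n/m+t},\,4n/m+t\}\epsilon$; writing the latter as $C_2\subg^2\max\{\sqrt{n/m+t},\,n/m+t\}\epsilon$ after collapsing the internal factors of $4$ into the numerical constant, and setting $C_1 = 2\sqrt{1+e}$, gives the claim.

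I expect the only genuine obstacle to be the randomness of the discretization error: a direct union bound either over the infinite family $\mc{S}_2$ or over the net while keeping the random term $\tfrac1m\opnorm{Z}^2$ would fail to close, so the order of operations---condition on $\mc{E}_t$, replace $\tfrac1m\opnorm{Z}^2$ by $\varepsilon_{n,m}(t)$, then union bound with the Bernstein tail---is essential. Everything else is bookkeeping of constants already controlled by the cited lemmas.
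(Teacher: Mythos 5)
Your proposal is correct and follows essentially the same route as the paper's own proof: the same fixed-$X$ one-sided Bernstein tail with constant $2\sqrt{1+e}\,\subg^2$, the same covering lemma and cardinality bound, and crucially the same order of operations---condition on the event $\mc{E}_t$ to replace the random error $\tfrac{4}{m}\opnorm{Z}^2\epsilon$ by the deterministic surrogate $4\varepsilon_{n,m}(t)\epsilon$ before taking the union bound over the net, then choose $K$ to match the Bernstein tail. The constant bookkeeping (including the loose treatment of the $\sqrt{r}$ factor from the covering lemma) also mirrors the paper's.
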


Rewriting this slightly by taking $\epsilon, t \lesssim
\frac{\absWconst^2}{\subg^2}$, we see that there exist numerical
constants $c_0 > 0$ and $c_1 < \infty$ such that
\begin{equation*}
  \P\left(\inf_{X \in \mc{S}_2}
  F(Z, X)
  \le \frac{\absWconst^2}{4}
  \right)
  \le \exp\left(- c_0 m \frac{\absWconst^4}{\subg^4}
  + c_1 n \log \frac{\subg^2}{\absWconst^2}\right).
\end{equation*}
Now, returning to inequality~\eqref{eqn:F-to-show-complex}, we see
we have that for $z_i = \sqrt{n}
a_i / \ltwo{a_i} \indics{\ltwo{a_i} \ge c \sqrt{n}}$ chosen as
in~\eqref{eqn:def-z-complex} and $c > 0$ chosen small enough so that
inequality~\eqref{eqn:truncated-a-good-growth-complex} holds,
we have
\begin{equation*}
  \inf_{X \in \mc{S}_2}
  F(Z, X)
  \ge
  \frac{\absWconst^2}{4}
\end{equation*}
with probability at least $1 - e^{-c_m
  \frac{\absWconst^4}{\subg^4} + c_1 n \log \frac{\subg^2}{\absWconst^2}}$.
Comparing with inequality~\eqref{eqn:F-to-show-complex} gives the theorem.

\subsubsection{Proof of Lemma~\ref{lemma:frob-rank-two}}
\label{sec:proof-frob-rank-two}

For $X = xx\cg - yy\cg$ that
\begin{align*}
  \lefteqn{\lfro{X}^2
    = \ltwo{x}^4 + \ltwo{y}^4 - 2 |\<x, y\>|^2} \\
  & = \ltwo{x}^4 + \ltwo{y}^4 - 2\left(
  \Real(\<x, y\>)^2 + \Imag(\<x, y\>)^2\right) \\
  & = \left(
  \sqrt{2 \norm{x}^4 + 2 \norm{y}^4}
  - 2 \sqrt{\Real(\<x, y\>)^2
    + \Imag(\<x, y\>)^2}\right)
  \left(
  \sqrt{2 \norm{x}^4 + 2 \norm{y}^4}
  + 2 \sqrt{\Real(\<x, y\>)^2
    + \Imag(\<x, y\>)^2}\right).
\end{align*}
By using that
\begin{equation*}
  \norms{x - e^{\imagunit \theta} y}^2
  = \norm{x}^2 + \norm{y}^2
  - 2\left(\cos \theta \cdot \Real(\<x, y\>)
  - \sin \theta \cdot \Imag(\<x, y\>)\right),
\end{equation*}
and that the concavity of $\sqrt{\cdot}$ implies
$\sqrt{2 a^4 + 2 b^4} \ge a^2 + b^2$,
the final expression above has lower bound
\begin{align*}
  & \left(
  \norm{x}^2 + \norm{y}^2
  - 2 \sqrt{\Real(\<x, y\>)^2
    + \Imag(\<x, y\>)^2}\right)
  \left(
  \norm{x}^2 + \norm{y}^2
  + 2 \sqrt{\Real(\<x, y\>)^2
    + \Imag(\<x, y\>)^2}\right)\\
  & ~~ = 
  \inf_\theta \norms{x - e^{\imagunit \theta} y}^2
  \cdot \sup_\theta \norms{x - e^{\imagunit \theta} y}^2.
\end{align*}

\subsubsection{Proof of Lemma~\ref{lemma:continuity-F-func-complex}}
\label{sec:proof-continuity-F-func-complex}

We have
\begin{align}
  \nonumber
  \lefteqn{m |F(Z, X) - F(Z, Y)|
    \le \sum_{i = 1}^m \left|\sum_{j = 1}^r
    \left(\lambda_j |\<u_j, z_i\>|^2 - \sigma_j |\<v_j, z_i\>|^2 \right)\right|}
  \\
  & \qquad ~ \le \sum_{i = 1}^m \left|\sum_{j = 1}^r
  \left(\lambda_j (|\<u_j, z_i\>|^2 - |\<v_j, z_i\>|^2) \right)\right|
  + \sum_{i = 1}^m \left|\sum_{j = 1}^r
  \left((\lambda_j - \sigma_j)|\<v_j, z_i\>|^2 \right)\right|.
  \label{eqn:bound-F-minus-F-complex}
\end{align}
We bound each of the terms in expression~\eqref{eqn:bound-F-minus-F-complex}
in term.
For the first, we note that for any complex $u, v, z \in \C^n$, we have
\begin{equation*}
  |\<u, z\>|^2
  - |\<v, z\>|^2
  \le \left|z\cg uu\cg z - z\cg vv\cg z
  + u\cg zz\cg v - v\cg zz\cg u\right|
  = \left|\<u - v, z\> \<z, u + v\>\right|,
\end{equation*}
as $u\cg zz\cg v - v\cg zz\cg u$ is purely imaginary.
As a consequence, we have
\begin{align*}
  \sum_{i = 1}^m \left|\sum_{j = 1}^r
  \lambda_j (|\<u_j, z_i\>|^2 - |\<v_j, z_i\>|^2) \right|
  & = \sum_{i = 1}^m \left|\sum_{j = 1}^r
  \lambda_j \<u_j u_j\cg - v_j v_j\cg, z_i z_i\cg\> \right| \\
  & \le 
  \sum_{j = 1}^r |\lambda_j|
  \sum_{i = 1}^m |\<z_i, u_j - v_j\>|
  |\<z_i, u_j + v_j\>| \\
  & \le \sum_{j = 1}^r |\lambda_j|
  \bigg(\sum_{i = 1}^m |\<z_i, u_j - v_j\>|^2
  \bigg)^\half
  \bigg(\sum_{i = 1}^m |\<z_i, u_j + v_j\>|^2
  \bigg) \\
  & \le 2 \sum_{j = 1}^r |\lambda_j|
  \opnorm{Z^T Z} \ltwo{u_j - v_j}
   \le 2 \lone{\lambda} \opnorm{Z^T Z}
  \norm{U - V}_{1,2},
\end{align*}
where we have used that $\ltwo{u_j} = \ltwo{v_j} = 1$ so
$\ltwo{u_j + v_j} \le 2$.
For the second term in inequality~\eqref{eqn:bound-F-minus-F-complex},
we have
\begin{align*}
  \sum_{i = 1}^m \left|\sum_{j = 1}^r
  \left((\lambda_j - \sigma_j) |\<v_j, z_i\>|^2 \right)\right|
  & \le \sum_{j = 1}^r |\lambda_j - \sigma_j| \sum_{i = 1}^m
  |\<v_j, z_i\>|^2
  \le \lone{\lambda - \sigma}
  \opnorm{Z^TZ}.
\end{align*}
Noting that for vectors $\lambda, \sigma \in \R^r$ we have
$\lone{\lambda} \le \sqrt{r} \ltwo{\lambda}$, we obtain the lemma.

\subsection{Proof of Proposition~\ref{proposition:non-noisy-initialization}:
  deterministic part}
\label{sec:proof-non-noisy-initialization-cond}

Our proof of Proposition~\ref{proposition:non-noisy-initialization}
eventually reduces to applying eigenvector perturbation results to the random
matrices $X\init$. To motivate our
approach, note that
\begin{equation}
  X\init \defeq \frac{1}{|\selected|}
  \sum_{i \in \selected} a_i a_i\cg 
  = I_n - \perploss(\epsilon) \cdot d\subopt d\subopt\cg 
  + \residual,
  \label{eqn:approximately-great-sum}
\end{equation}
where $\residual$ is a random error term that we will
show has small norm. From the
quantity~\eqref{eqn:approximately-great-sum} we can apply eigenvector
perturbation arguments to derive that the directional estimate $\what{d} =
\argmin_{d \in \sphere^{n-1}} d\cg  X\init d$ satisfies $\what{d} \approx
d\subopt$. This will hold so long as $\opnorm{\residual}$ is
small because there is substantial separation in the eigenvalues
of $I_n$ and $I_n - \perploss(\epsilon) d\subopt d\subopt\cg $.

With this goal in mind, we define two index sets that with high
probability surround $\selected$. Let
\begin{align*}
  \yesselected \defeq \left\{i \in [m]
  \mid |\<a_i, d\subopt\>|^2 < \frac{1-\epsilon}{2} \right\}
  ~~\text{and}~~
  \maybeselected \defeq \left\{i \in[m] \mid
  |\<a_i, d\subopt\>|^2 \leq \frac{1+\epsilon}{2} \right\}.
\end{align*}
We now define events $\event_1$ through $\event_5$, showing that conditional
on these five, the result of the proposition holds. These events roughly
guarantee ($\event_1$) that $\sum_{i = 1}^m a_i a_i\cg $ is well-behaved,
($\event_2$ and $\event_3$) that $\maybeselected \setminus \yesselected$ is
small, and ($\event_4$ and $\event_5$) that most of the vectors $a_i$ for
indices $i \in \selected$ are close enough to uniform on the subspace
perpendicular to $d\subopt$ that we have a good directional estimate. Now,
let $q \in [1, \infty]$ and let $1/p + 1/q = 1$, so that $p$ is its
conjugate. Recalling the definition of the error $\residual(\epsilon)$ in
Assumption~\ref{assumption:well-spread}.\eqref{item:conditional-id}, we
define
\begin{equation}
  \begin{split}
    \event_1 & \defeq \left\{
    \frac{1}{m} \opnorm{A\cg A} \in [1 - \epsilon, 1 + \epsilon] \right\},
    ~~
    \event_2 \defeq \left\{|\maybeselected| \le |\yesselected|
    + 2 \epsilon \smallprob m \right\}, \\
    \event_3 & \defeq \left\{|\yesselected| \ge \half m \probdir \right\},
    ~~
    \event_4 \defeq 
    \left\{ \opnormbigg{\frac{1}{m} \sum_{i \in \maybeselected \setminus \yesselected}
      a_i a_i\cg } \le 4 q \subgauss^2 (\smallprob \epsilon)^\frac{1}{p}
    \right\} \\
    \event_5 & \defeq
    \left\{\opnormbigg{
      \frac{1}{|\yesselected|}
      \sum_{i \in \yesselected} a_i a_i\cg 
      - \left(I_n - \perploss(\epsilon) d\subopt d\subopt\cg \right)}
    \le \opnorm{\residual(\epsilon)} + \epsilon \right\}.
  \end{split}
  \label{eqn:all-defined-events}
\end{equation}

We prove the result of the proposition when each $\event_i$ occurs.
Decompose the matrix $X\init$ into 
\begin{align*}
  X\init
  & = \underbrace{I_n - \perploss(\epsilon) d\subopt d\subopt\cg }_{\defeq Z_0}
  \\
  & ~~ +
  \underbrace{\left[\frac{1}{|\yesselected|} \sum_{i\in \yesselected} a_i a_i\cg - 
      (I_n - \perploss(\epsilon)d\subopt d\subopt\cg )\right]}_{\defeq Z_1}  
  + \underbrace{\frac{1}{|\yesselected|} \sum_{i \in \selected \setminus \yesselected} a_i a_i\cg }_{\defeq Z_2}
  - \underbrace{\left(\frac{1}{|\yesselected|} - \frac{1}{|\selected|}\right) 
    \sum_{i \in \selected} a_i a_i\cg }_{\defeq Z_3}.
\end{align*}
We bound the operator norms of $Z_1, Z_2, Z_3$ in turn.
On the event $\event_5$, 
we have
\begin{equation}
  \label{eqn:op-upper-bound-one}
  \opnorm{Z_1} \leq \opnorm{\Delta(\epsilon)} + \epsilon.
\end{equation}
We turn to the error matrix $Z_2$.
On the event $\event_1$, we evidently have
$\what{r} \in \ltwo{x\subopt} (1 \pm \epsilon)$ by definition
of $\what{r}^2 = \frac{1}{m} \ltwo{A x\subopt}^2$, so that
\begin{equation*}
  \yesselected \subset \selected \subset \maybeselected.
\end{equation*}
Using that the summands $a_i a_i\cg $ are all positive semidefinite,
we thus obtain the upper bound
\begin{equation}
  \label{eqn:op-upper-bound-two}
  \opnorm{Z_2}
  =  \frac{1}{|\yesselected|} \opnormbigg{
    \sum_{i\in \selected \setminus \yesselected} a_i a_i\cg }
  \leq \frac{1}{|\yesselected|} \opnormbigg{
    \sum_{i\in \maybeselected \setminus \yesselected} a_i a_i\cg }
  \stackrel{(i)}{\leq}
  \frac{1}{\probdir} \cdot 4 q \subgauss^2 (\smallprob \epsilon)^\frac{1}{p},
\end{equation}
where in inequality~$(i)$ we use that $2|\yesselected| \geq \probdir m$ on
$\event_3$ and $\opnorms{\sum_{i\in \maybeselected \setminus \yesselected} a_i a_i\cg } \leq 4 q
m\subgauss^2 (\smallprob \epsilon)^\frac{1}{p}$ on $\event_4$. Lastly, we
provide an upper bound on $\opnorm{Z_3}$.  Again using that $\yesselected \subset
\selected \subset \maybeselected$ on event $\event_1$, we have
\begin{equation*}
  \left|\frac{1}{|\yesselected|} - \frac{1}{|\selected|}\right|
  \leq \frac{1}{|\yesselected|} - \frac{1}{|\maybeselected|} 
  = \frac{|\maybeselected| - |\yesselected|}{|\yesselected||\maybeselected|}
  \leq \frac{2 \epsilon \smallprob }{\probdir^2 m}, 
\end{equation*}
where in the last inequality, we use that
$|\maybeselected| - |\yesselected| \leq 2 \epsilon \smallprob m$ 
on $\event_2$ and that $|\maybeselected| \geq |\yesselected| \geq \probdir m/2$ 
on $\event_3$. Thus, by the definition of $\event_1$, 
we have
\begin{equation}
  \label{eqn:op-upper-bound-three}
  \opnorm{Z_3} \leq
  \frac{2 \epsilon \smallprob}{\probdir^2}
  \opnorm{\frac{1}{m}\sum_{i=1}^m a_i a_i\cg } 
  \leq \frac{2 \epsilon(1+\epsilon) \smallprob}{\probdir^2}.
\end{equation}

Combining inequalities~\eqref{eqn:op-upper-bound-one},
\eqref{eqn:op-upper-bound-two} and~\eqref{eqn:op-upper-bound-three}
on the error matrices $Z_i$, the triangle inequality
gives
\begin{equation*}
  \opnorm{Z_1 + Z_2 + Z_3}
  \le \opnorm{\residual(\epsilon)} +
  \underbrace{\left[
      \epsilon
      + \frac{4 q \subgauss^2 (\smallprob \epsilon)^\frac{1}{p}}{
        \probdir}
      + \frac{2 \epsilon (1 + \epsilon) \smallprob}{
        \probdir^2}\right]}_{\defeq \errorterm(\epsilon)}. 
\end{equation*}
This implies equality~\eqref{eqn:approximately-great-sum} with error bound
$\opnorm{\residual} \le \opnorm{\residual(\epsilon)} +
\errorterm(\epsilon)$.  Recall the definition of $Z_0 = I_n -
\perploss(\epsilon) d\subopt d\subopt\cg $, which has smallest eigenvector $d\subopt$
and eigengap $\perploss(\epsilon)$.  Lastly, we simplify
$\errorterm(\epsilon)$ by a specific choice of $p$ and $q$ in the
definition~\eqref{eqn:all-defined-events} of $\event_4$. Without loss of
generality, we assume $\smallprob\epsilon < 1$ (recall
Assumption~\ref{assumption:well-spread} on $\smallprob$), and define $p = 1
+ \frac{1}{\log\frac{1}{\smallprob\epsilon}}$ and $q = 1 + \log \frac{1}{
  \smallprob\epsilon}$. Using
that for any $z < 0$ we have
$\exp(\frac{z}{1 - 1/z}) \le e^{z + 1}$,
we have
$(\smallprob \epsilon)^\frac{1}{p} \le e \smallprob \epsilon$,
allowing us to bound
$q (\smallprob \epsilon)^\frac{1}{p} \le
e (1 + \log\frac{1}{\smallprob \epsilon}) (\smallprob \epsilon)$
in the error term $\errorterm(\epsilon)$.


We now apply the eigenvector perturbation inequality
of Lemma~\ref{lemma:simple-sin}. Using that, for 
$\theta \in \R$,  $\ltwo{u- e^{\imagunit\theta} v}^2 \ge  2 - 
2|\<u, v\>| \ge 2 - 2 |\<u, v\>|^2$ for
$\ltwo{u} = \ltwo{v} = 1$, a minor rearrangement of 
Lemma~\ref{lemma:simple-sin} applied
to $X\init = Z_0 + \Delta$ for $\Delta
= Z_1 + Z_2 + Z_3$ yields
\begin{equation*}
  \dist \left(\hat{d}, \frac{1}{r} X\subopt\right) = 
  \inf_{\theta \in [0, 2\pi]} \ltwo{\hat{d} - e^{\imagunit\theta} d}
  \le
  \frac{\sqrt{2} (\opnorm{\residual(\epsilon)} + \errorterm(\epsilon))}{
    \hinge{\perploss(\epsilon)
      - (\opnorm{\residual(\epsilon)} + \errorterm(\epsilon))}}.
\end{equation*}
Finally, using that $x_0 = \what{r} \what{d}$ and defining
$r\subopt = \norm{x\subopt}$, we have by the triangle inequality
that
\begin{equation*}
  \dist(x_0, X\subopt)
  \le \what{r} \dist \left(\what{d}, \frac{1}{r}X\subopt\right) + |r\subopt - \what{r}| 
  \stackrel{(i)}{\le} \left[\sqrt{1 + \epsilon}
    \dist(\what{d}, d\subopt)
    + \epsilon\right] r\subopt, 
\end{equation*}
where inequality~$(i)$ uses $\event_1$, which we recall implies $(1 -
\epsilon) \ltwo{x\subopt}^2 \le \what{r}^2 \le (1 + \epsilon)
\ltwo{x\subopt}^2$, where $\epsilon \in [0, 1]$. This is the claimed
consequence~\eqref{eqn:no-noise-dist}
in Proposition~\ref{proposition:non-noisy-initialization}.

\subsection{Proof of Proposition~\ref{proposition:non-noisy-initialization}:
  high probability events}
\label{sec:proof-non-noisy-initialization-prob}

It remains to demonstrate that each of the events $\event_1, \ldots,
\event_5$ (recall definition~\eqref{eqn:all-defined-events}) holds with
high probability, to which we dedicate the remainder of this argument in
the next series of lemmas, each of which argues that one of the five
events occurs with high probability.
Before the statement of each lemma, we recall the corresponding
event whose high probability we wish to demonstrate.
Throughout, $c > 0$ and $C < \infty$ denote numerical constants
whose values may change.

We begin with $\event_1 \defeq \{
\frac{1}{m} \opnorms{A\cg A} \in [1 \pm \epsilon]\}$.
\begin{lemma}
  \label{lemma:non-noisy-initial-one}
  We have
  $\P(\event_1) \ge 1 - \exp(-c m \epsilon^2 / \subgauss^4)$
  for $m$ large enough that
  $m/n \ge \subgauss^4/(c\epsilon^2)$.
\end{lemma}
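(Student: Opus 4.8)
The plan is to recognize $\event_1$ as a routine matrix-concentration statement and to invoke Lemma~\ref{lemma:matrix-concentration} essentially as a black box. By Assumption~\ref{assumption:well-spread}\eqref{item:isotropic} the covariance is $\Sigma = \E[aa\cg] = I_n$, while Assumption~\ref{assumption:sub-gaussian-vector} supplies the $\subgauss^2$-sub-Gaussianity that Lemma~\ref{lemma:matrix-concentration} requires (note $\subgauss^2 \ge 1$ automatically, since $\E[|\<a,v\>|^2]=1$ together with Jensen's inequality forces $e \ge \E[\exp(|\<a,v\>|^2/\subgauss^2)] \ge e^{1/\subgauss^2}$). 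Applying that lemma with $\Sigma = I_n$ yields, for every $t \ge 0$,
\begin{equation*}
  \P\left(\opnorm{\tfrac{1}{m} A\cg A - I_n} \ge 11\subgauss^2 \max\Big\{\sqrt{\tfrac{4n}{m}+t},\ \tfrac{4n}{m}+t\Big\}\right) \le e^{-mt}.
\end{equation*}
The event $\{\opnorm{\frac{1}{m}A\cg A - I_n}\le \epsilon\}$ implies $\event_1$ (indeed it gives the two-sided control on the operator norm and on the quadratic form $\what{r}^2 = \frac{1}{m}\ltwo{Ax\subopt}^2$ used later), so it suffices to force the deviation bound below $\epsilon$.

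The only computation is to verify we can do so in the claimed regime. I would take $t = \frac{\epsilon^2}{2\cdot 121\,\subgauss^4}$ and require $\frac{m}{n}\ge \frac{\subgauss^4}{c\,\epsilon^2}$ with $c$ small enough that $\frac{4n}{m} \le \frac{\epsilon^2}{2\cdot 121\,\subgauss^4}$. Then $\frac{4n}{m}+t \le \frac{\epsilon^2}{121\,\subgauss^4} \le 1$ (using $\epsilon \le 1 \le 11\subgauss^2$), so the maximum in the concentration bound is achieved by the square-root branch and
\begin{equation*}
  11\subgauss^2\sqrt{\tfrac{4n}{m}+t}\le 11\subgauss^2\cdot\frac{\epsilon}{11\subgauss^2}=\epsilon.
\end{equation*}
Substituting this $t$ gives failure probability $e^{-mt}=\exp\!\big(-m\epsilon^2/(242\,\subgauss^4)\big)\le\exp(-c\,m\epsilon^2/\subgauss^4)$, which is exactly the asserted bound.

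There is no genuine obstacle here beyond bookkeeping of constants; all of the probabilistic content is packaged inside Lemma~\ref{lemma:matrix-concentration}. The one mild subtlety worth flagging is ensuring we land in the $\sqrt{\cdot}$ branch of the $\max$ rather than the linear branch, which is precisely what the sample-size condition $m/n \gtrsim \subgauss^4/\epsilon^2$ guarantees; this is also the reason the exponent scales as $m\epsilon^2/\subgauss^4$ rather than $m\epsilon/\subgauss^2$.
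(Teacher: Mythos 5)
Your proposal is correct and follows essentially the same route as the paper's proof: both apply Lemma~\ref{lemma:matrix-concentration} with $\Sigma = I_n$ (from Assumption~\ref{assumption:well-spread}\eqref{item:isotropic}), observe that $\subgauss^2 \ge 1$, and choose $t \propto \epsilon^2/\subgauss^4$ so that the sample-size condition $m/n \gtrsim \subgauss^4/\epsilon^2$ forces the deviation below $\epsilon$. The only differences are cosmetic: you track explicit constants and derive $\subgauss^2 \ge 1$ via Jensen's inequality, whereas the paper cites $\opnorm{\E[aa\cg]} \le \subgauss^2$ from the concentration lemma.
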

\begin{proof}
  Set $t = c \frac{\epsilon^2}{\subgauss^4}$ in
  Lemma~\ref{lemma:matrix-concentration}, noting that
  we must have $\subgauss^2 \ge 1$ because $\E[aa\cg ] = I_n$
  and $\opnorm{\E[aa\cg ]} \le \subgauss^2$ (recall the final
  part of Lemma~\ref{lemma:matrix-concentration}). Moreover,
  $\epsilon \in [0, 1]$ by assumption. Then
  taking $c$ small enough, once we have
  $\frac{n}{m} \le c \frac{\epsilon^2}{\subgauss^4}$
  we obtain the result.
\end{proof}

The event $\event_2 \defeq \{|\yesselected| \ge |\maybeselected| - 2 \smallprob \epsilon m\}$
likewise holds with high probability.
\begin{lemma}
  \label{lemma:non-noisy-initial-two}
  We have
  $\P(\event_2) \ge 1-\exp(-2 \epsilon^2 \smallprob^2 m)$.
\end{lemma}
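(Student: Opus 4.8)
The plan is to recognize $|\maybeselected| - |\yesselected|$ as a sum of i.i.d.\ indicator variables and apply Hoeffding's inequality to its upper tail. First I would observe that, by the definitions of $\yesselected$ and $\maybeselected$, an index $i$ belongs to $\maybeselected \setminus \yesselected$ precisely when $|\<a_i, d\subopt\>|^2 \in [\frac{1-\epsilon}{2}, \frac{1+\epsilon}{2}]$. Setting $X_i \defeq \indic{|\<a_i, d\subopt\>|^2 \in [\frac{1-\epsilon}{2}, \frac{1+\epsilon}{2}]}$, the $X_i$ are i.i.d.\ and take values in $\{0,1\}$, and $|\maybeselected| - |\yesselected| = \sum_{i=1}^m X_i$.

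Second, Assumption~\ref{assumption:well-spread}\eqref{item:continuity-direction} directly controls the common mean: $p \defeq \E[X_i] = \P(|\<a, d\subopt\>|^2 \in [\frac{1-\epsilon}{2}, \frac{1+\epsilon}{2}]) \le \smallprob \epsilon$. Since each $X_i \in [0,1]$, Hoeffding's inequality gives the upper-tail estimate $\P(\frac{1}{m}\sum_{i=1}^m X_i - p \ge t) \le \exp(-2mt^2)$ for every $t \ge 0$; note that the Hoeffding constant $2$ is exactly what is needed to match the target exponent, so Hoeffding rather than the one-sided Bernstein bound (Lemma~\ref{lemma:one-sided-bernstein}) is the right tool here.

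Finally I would exploit the slack between the threshold $2\smallprob\epsilon$ defining $\event_2$ and the mean $p \le \smallprob\epsilon$. Since $\event_2^c = \{\sum_{i=1}^m X_i > 2\smallprob\epsilon\, m\}$ and $2\smallprob\epsilon - p \ge \smallprob\epsilon$, choosing $t = \smallprob\epsilon$ yields $\P(\event_2^c) \le \P(\frac{1}{m}\sum_{i=1}^m X_i - p \ge \smallprob\epsilon) \le \exp(-2m\smallprob^2\epsilon^2)$, which is precisely the claimed bound.

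There is no substantive obstacle here: the result reduces to a single Hoeffding bound. The only points requiring care are correctly reading off the Bernoulli mean from Assumption~\ref{assumption:well-spread}\eqref{item:continuity-direction}, and using the factor-of-two slack in the threshold so that the final bound is uniform over the unknown exact value of $p$ (rather than depending on $p$ through $2\smallprob\epsilon - p$).
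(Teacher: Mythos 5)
Your proof is correct and is essentially identical to the paper's: both recognize $|\maybeselected| - |\yesselected|$ as a sum of i.i.d.\ Bernoulli indicators whose mean is bounded by $\smallprob\epsilon$ via Assumption~\ref{assumption:well-spread}\eqref{item:continuity-direction}, and both apply Hoeffding's inequality, using the factor-of-two slack in the threshold $2\smallprob\epsilon m$ to absorb the unknown mean. The only difference is that you spell out the slack step ($2\smallprob\epsilon - p \ge \smallprob\epsilon$) that the paper leaves implicit.
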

\begin{proof}
  We always have that
  \begin{align*}
    \yesselected \subset \maybeselected ~~\text{and}~~\maybeselected \setminus \yesselected = \left\{i\in [m]: 
    \half (1-\epsilon) \leq |\<a_i, d\subopt\>|^2 \leq \half(1+\epsilon)\right\}.
  \end{align*}
  Therefore, the difference in cardinalities of
  $|\yesselected|$ and $|\maybeselected$ is
  \begin{equation*}
    \frac{1}{m} \left(|\maybeselected| - |\yesselected|\right) 
    = \frac{1}{m}\sum_{i=1}^m 
    \indic{1-\epsilon \leq 2|\<a_i, d\subopt\>|^2 \leq 1+\epsilon}.
  \end{equation*}
  The right hand side is an average of i.i.d.\ Bernoulli random variables
  with means bounded by $\smallprob \epsilon$ by
  Assumption~\ref{assumption:well-spread}\eqref{item:continuity-direction}.
  Hoeffding's inequality gives the result
  that $\P(|\maybeselected| > |\yesselected| + 2\smallprob m \epsilon)
  \le e^{-2\smallprob^2 \epsilon^2 m}$.
\end{proof}

\begin{lemma}
  \label{lemma:non-noisy-initial-three}
  The event 
  $\event_3 \defeq \{|\yesselected| \geq \half m \probdir\}$
  satisfies $\P(\event_3) \ge 1 - \exp(-\half m \probdir^2)$.
\end{lemma}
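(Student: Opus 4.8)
The plan is to recognize $|\yesselected|$ as a sum of $m$ i.i.d.\ $\{0,1\}$-valued indicators and to apply a one-sided concentration bound to its lower tail, exactly in the spirit of Lemma~\ref{lemma:non-noisy-initial-two}. First I would write
\begin{equation*}
  |\yesselected| = \sum_{i = 1}^m \indic{|\<a_i, d\subopt\>|^2 < \half(1 - \epsilon)},
\end{equation*}
an i.i.d.\ sum of bounded indicators. Letting $p \defeq \P(|\<a, d\subopt\>|^2 < \half(1 - \epsilon))$ denote the common mean, Assumption~\ref{assumption:well-spread}\eqref{item:continuity-direction} supplies the lower bound $p \ge \probdir$ (the directional-likelihood half of that assumption), so that $\E[|\yesselected|] = m p \ge m \probdir$.

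I would then apply Hoeffding's inequality to the bounded average $\frac{1}{m} |\yesselected|$ with deviation $t \defeq p - \half \probdir$, noting that $t \ge \half \probdir$ since $p \ge \probdir$. Because $p - t = \half\probdir$, this gives
\begin{equation*}
  \P(\event_3^c)
  = \P\left(\frac{1}{m} |\yesselected| < \half \probdir \right)
  \le \exp\left(-2 m t^2\right)
  \le \exp\left(-\half m \probdir^2\right),
\end{equation*}
where the last inequality uses $t^2 \ge \frac14 \probdir^2$. This is precisely the stated conclusion $\P(\event_3) \ge 1 - \exp(-\half m \probdir^2)$.

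I do not expect any real obstacle, as this is a routine binomial tail estimate of the same type as the two preceding lemmas. The only point meriting a word of care is the mismatch between the strict inequality in the definition of $\yesselected$ and the non-strict inequality in Assumption~\ref{assumption:well-spread}\eqref{item:continuity-direction}; for the continuous distributions relevant here (for instance the $\chi^2_1$ law in the Gaussian case) there is no atom at the threshold $\half(1 - \epsilon)$, so the two probabilities coincide and $p \ge \probdir$ holds as used. If desired, the concentration step may instead be routed through the one-sided Bernstein inequality (Lemma~\ref{lemma:one-sided-bernstein}) with the variance bound $\var(\indic{i \in \yesselected}) \le \frac14$, which reproduces the identical exponent.
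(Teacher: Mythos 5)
Your proof is correct and follows essentially the same route as the paper's: both recognize $|\yesselected|$ as an i.i.d.\ sum of Bernoulli indicators whose mean is at least $\probdir$ by Assumption~\ref{assumption:well-spread}\eqref{item:continuity-direction}, and both conclude via Hoeffding's inequality. You merely spell out the deviation parameter and the strict-versus-nonstrict inequality point that the paper's one-line proof glosses over, which is fine.
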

\begin{proof}
  As in Lemma~\ref{lemma:non-noisy-initial-two}, this result
  is immediate from Hoeffding's inequality.
  We have $|\yesselected| = \sum_{i=1}^m 
  \indics{|\<a_i, d\subopt\>|^2 \leq (1-\epsilon)/2}$,
  an i.i.d\ sum of Bernoullis with
  $\P(|\<a_i, d\subopt\>|^2 \le (1 - \epsilon)/2) \ge \probdir$ by
  Assumption~\ref{assumption:well-spread}\eqref{item:continuity-direction}.
  Hoeffding's inequality gives the result.
\end{proof}

Showing that events $\event_4$ and $\event_5$ in
Eq.~\eqref{eqn:all-defined-events} each happen with high probability
requires a little more work. We begin with $\event_4$, 
defined in terms of a conjugate pair $p, q \ge 1$ with
$1/p + 1/q = 1$, as
$\event_4 \defeq
\{\opnorms{\sum_{i \in \maybeselected \setminus \yesselected} a_i
  a_i\cg } \le 4 q \subgauss^2 (\smallprob \epsilon)^\frac{1}{p}\}$.
\begin{lemma}
  \label{lemma:non-noisy-initial-four}
  If
  $m/n > c^{-1} (\smallprob \epsilon)^{-\frac{2}{p}}$, then
  $\P(\event_4) \ge 1-\exp(-c m (\smallprob \epsilon)^\frac{2}{p})$.
\end{lemma}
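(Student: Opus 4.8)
The plan is to exploit that
\[
  \maybeselected \setminus \yesselected = \Big\{i\in[m] : |\<a_i,d\subopt\>|^2\in\Big[\tfrac{1-\epsilon}{2},\tfrac{1+\epsilon}{2}\Big]\Big\},
\]
so, writing $E_i$ for the event $\{|\<a_i,d\subopt\>|^2\in[\frac{1-\epsilon}{2},\frac{1+\epsilon}{2}]\}$, each $i$ lands in this set with probability at most $\smallprob\epsilon$ by Assumption~\ref{assumption:well-spread}\eqref{item:continuity-direction}. The matrix $\frac1m\sum_{i\in\maybeselected\setminus\yesselected}a_ia_i\cg=\frac1m\sum_{i=1}^m a_ia_i\cg\indic{E_i}$ is positive semidefinite, so its operator norm equals $\sup_{v\in\sphere^{n-1}}\frac1m\sum_{i=1}^m|\<a_i,v\>|^2\indic{E_i}$. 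First I would control this supremum by splitting it into its mean and a uniform fluctuation obtained from a covering argument.

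For the mean, fix a unit vector $v$ and apply H\"older's inequality with the conjugate pair $p,q$ from the definition of $\event_4$:
\[
  \E[|\<a,v\>|^2\indic{E}] \le \big(\E[|\<a,v\>|^{2q}]\big)^{1/q}\,\P(E)^{1/p} \le \big(\E[|\<a,v\>|^{2q}]\big)^{1/q}(\smallprob\epsilon)^{1/p}.
\]
The moment bound $\E[|\<a,v\>|^{2q}]\le\Gamma(q+1)e\,\subgauss^{2q}$ from Lemma~\ref{lemma:matrix-concentration}, with $(\Gamma(q+1))^{1/q}\le q$, gives $\E[|\<a,v\>|^2\indic{E}]\le e\,q\subgauss^2(\smallprob\epsilon)^{1/p}$, comfortably below the target $4q\subgauss^2(\smallprob\epsilon)^{1/p}$.

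For the fluctuation I would pass to a $\delta$-net $\cover$ of the sphere, of cardinality $\card(\cover)\le e^{Cn}$, bounding the operator norm of the PSD average by $(1-2\delta)^{-1}$ times its maximal quadratic form over $\cover$ (a small constant $\delta$ keeps the prefactor near $1$). For fixed $v$ the variables $Y_i\defeq|\<a_i,v\>|^2\indic{E_i}$ are nonnegative, and the same H\"older split bounds all their moments, $\E[Y_i^k]\le(\Gamma(kq+1)e)^{1/q}\subgauss^{2k}(\smallprob\epsilon)^{1/p}\le e\,k!\,(eq\subgauss^2)^k(\smallprob\epsilon)^{1/p}$, which is exactly a sub-exponential Bernstein moment condition with scale $\asymp q\subgauss^2$ and a variance proxy carrying the small factor $(\smallprob\epsilon)^{1/p}$. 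A standard one-sided (sub-exponential) Bernstein inequality then yields, at deviation level $\asymp q\subgauss^2(\smallprob\epsilon)^{1/p}$, a per-point upper tail of order $\exp(-cm(\smallprob\epsilon)^{1/p})$; since $\smallprob\epsilon\le1$ this is at most $\exp(-cm(\smallprob\epsilon)^{2/p})$. A union bound over $\cover$, together with the hypothesis $m/n\ge c^{-1}(\smallprob\epsilon)^{-2/p}$, which guarantees $Cn\le\frac{c}{2}m(\smallprob\epsilon)^{2/p}$ dominates the net entropy, combines the mean and fluctuation into the claimed $\P(\event_4)\ge1-\exp(-cm(\smallprob\epsilon)^{2/p})$.

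The main obstacle is the fluctuation step: $Y_i$ is unbounded with only sub-exponential tails, so neither Hoeffding nor matrix Bernstein (which would require a uniform operator-norm bound on $a_ia_i\cg\indic{E_i}$) applies directly. The crux is to exploit \emph{simultaneously} the sub-Gaussian moment growth of $|\<a_i,v\>|^2$ and the small activation probability $\P(E_i)\le\smallprob\epsilon$; the H\"older split with the conjugate pair $p,q$ — ultimately optimized in the proof of the proposition as $p=1+1/\log\frac{1}{\smallprob\epsilon}$ and $q=1+\log\frac{1}{\smallprob\epsilon}$ — is what balances these effects and keeps the constants, in particular the leading factor $4$, compatible with a mean of order $q\subgauss^2(\smallprob\epsilon)^{1/p}$.
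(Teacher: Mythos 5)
Your proof is correct, but it handles the fluctuation term differently from the paper. Both arguments share the same first half: bound the mean $\E[|\<a,v\>|^2\indic{E}]$ by H\"older's inequality with the conjugate pair $(p,q)$ and the moment bound $\E[|\<a,v\>|^{2q}] \le \Gamma(q+1)e\,\subgauss^{2q}$ from Lemma~\ref{lemma:matrix-concentration}, yielding $q e^{1/q}\subgauss^2(\smallprob\epsilon)^{1/p}$. For the deviation, however, the paper uses a one-line trick that sidesteps the difficulty you identify as the crux: since the indicator $B_i = \indic{E_i}$ is bounded by $1$, the truncated vectors $\{B_i a_i\}$ are still independent and still $\subgauss^2$-sub-Gaussian, so the off-the-shelf matrix concentration result (Lemma~\ref{lemma:matrix-concentration}) applies directly to them, giving $\opnorm{\frac{1}{m}\sum_i B_i a_i a_i\cg - \E[aa\cg B(a)]} \le C\subgauss^2\sqrt{n/m + t}$; the smallness of the deviation then comes \emph{entirely} from the hypothesis $n/m \lesssim (\smallprob\epsilon)^{2/p}$ and the choice $t \asymp (\smallprob\epsilon)^{2/p}$, not from any smallness in the variance. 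Your route instead re-derives the concentration by hand — net over the sphere plus a scalar sub-exponential Bernstein inequality whose variance proxy carries the factor $(\smallprob\epsilon)^{1/p}$ — which is more work but extracts a genuinely stronger per-direction tail, $\exp(-cm(\smallprob\epsilon)^{1/p})$ rather than $\exp(-cm(\smallprob\epsilon)^{2/p})$; after the union bound over the net and the stated sample-size condition, the two conclusions coincide. In effect, the paper's proof packages your net-plus-Bernstein work inside the citation to Vershynin's theorem, at the cost of not exploiting the small activation probability in the variance; your proof is more self-contained and would generalize to settings where the truncation does not preserve sub-Gaussianity of the vector itself, while the paper's is shorter given that Lemma~\ref{lemma:matrix-concentration} is already in place. (Two minor points to tighten if you write this up: carry the net prefactor $(1-2\delta)^{-1}$ through the constant accounting against the target $4q\subgauss^2(\smallprob\epsilon)^{1/p}$, where $e^{1/q} \cdot (1-2\delta)^{-1}$ still leaves room below $4$; and note that the centered moments $\E|Y_i - \E Y_i|^k \le 2^k \E Y_i^k$ satisfy the Bernstein condition with only adjusted constants.)
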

\noindent
It is no loss of generality to assume that $\smallprob \epsilon
\le 1$ by Assumption~\ref{assumption:well-spread}, so
$(\smallprob \epsilon)^\frac{2}{p} \ge (\smallprob \epsilon)^2$.

\begin{proof}
  The $\{a_i\}_{i=1}^m$ are independent $\subgauss^2$-sub-Gaussian random
  vectors by Assumption~\ref{assumption:sub-gaussian-vector}, and for any
  random variable $B_i$ with $|B_i| \le 1$, which may depend on $a_i$, it is
  clear that the collection $\{B_i a_i\}_{i = 1}^m$ are mutually
  independent and still satisfy
  Definition~\ref{def:sub-gaussian-vector}. To that end,
  define the Bernoulli variables
  $B(a) = \indics{|\<a, d\subopt\>|^2 \in [\frac{1 - \epsilon}{2},
      \frac{1 + \epsilon}{2}]}$ (letting $B_i = B(a_i)
  = \indic{i \in \maybeselected \setminus \yesselected}$ for shorthand).
  Then
  Lemma~\ref{lemma:matrix-concentration}
  implies for a numerical constant $C < \infty$ that
  \begin{equation}
    \label{eqn:B-matrix-concentration}
    \P \left(\opnormbigg{\frac{1}{m}\sum_{i=1}^m a_i a_i\cg  B_i
      - \E[aa\cg  B(a)]} \ge
    C \subgauss^2
    \max\left\{\sqrt{\frac{n}{m} + t},
    \frac{n}{m} + t\right\}\right)
    \leq e^{-m t}.
  \end{equation}
  Now, note by H\"older's inequality that
  \begin{align*}
    \E[\<v, a\>^2 B(a)]
    & \le \E[\<v, a\>^{2q}]^\frac{1}{q}
    \P\left(|\<a, d\subopt\>|^2 \in \left[\frac{1 - \epsilon}{2},
      \frac{1 + \epsilon}{2}\right]\right)^\frac{1}{p} \\
    & \le (\subgauss^{2q} \Gamma(q + 1) e)^\frac{1}{q}
    (\smallprob \epsilon)^\frac{1}{p}
    \le q e^{1/q} \subgauss^2 (\smallprob \epsilon)^\frac{1}{p}
  \end{align*}
  where we have applied
  Assumption~\ref{assumption:well-spread}\eqref{item:continuity-direction}
  and Lemma~\ref{lemma:matrix-concentration} to bound
  $\E[\<a, d\subopt\>^{2q}]$.  Using the triangle inequality
  and substituting $t = \frac{1}{4C}(\smallprob \epsilon)^\frac{2}{p}$ into
  inequality~\eqref{eqn:B-matrix-concentration}, we find that
  for any $q \in (1, \infty)$ and $1/p + 1/q = 1$, if
  $\frac{n}{m} \le \frac{1}{4C}(\smallprob \epsilon)^{\frac{2}{p}}$ we have
  \begin{equation*}
    \P \left(\opnormbigg{\frac{1}{m}\sum_{i=1}^m a_i a_i\cg  B_i
      - \E[aa\cg  B(a)]} \ge
    q \subgauss^2
    (\smallprob \epsilon)^\frac{1}{p}\right)
    \leq \exp\left(-\frac{m(\smallprob \epsilon)^\frac{2}{p}}{4C}\right).
  \end{equation*}
  Applying the triangle inequality and that $1 + e^{1/q} < 4$ gives the result.
\end{proof}

The final high probability guarantee is the most complex, and
applies to the event $\event_5
\defeq \{\opnorms{\frac{1}{|\yesselected|}
  \sum_{i \in \yesselected} a_i a_i\cg 
  - (I_n - \perploss(\epsilon) d\subopt d\subopt\cg )}
\le \opnorms{\residual(\epsilon)} + \epsilon\}$.
\begin{lemma}
  \label{lemma:non-noisy-initial-five}
  Let $\event_3 = \{|\yesselected| \ge \half m \probdir\}$ as in
  Eq.~\eqref{eqn:all-defined-events}. Then
  \begin{equation*}
    c \frac{m}{n} \ge
    \frac{\subgauss^4\log^2 \probdir}{
      \probdir \epsilon^2}
  \end{equation*}
  implies $\P(\event_5 \mid \event_3 ) \geq 1 - \exp(-c
  \frac{m \probdir
    \epsilon^2}{\subgauss^4 \log^2 \probdir})$.
\end{lemma}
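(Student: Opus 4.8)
The plan is to recognize $\event_5$ as a matrix-concentration statement for the empirical second moment of the \emph{conditionally truncated} measurement vectors, and to control the effect of the random, data-dependent index set $\yesselected$ by conditioning on its cardinality. First I would peel off the nuisance term $\residual(\epsilon)$. By Assumption~\ref{assumption:well-spread}\eqref{item:conditional-id} the conditional second moment is
\[
  \E\left[aa\cg \mid |\<a, d\subopt\>|^2 \le \tfrac{1-\epsilon}{2}\right]
  = I_n - \perploss(\epsilon)\, d\subopt d\subopt\cg + \residual(\epsilon),
\]
so the triangle inequality reduces $\event_5$ to showing
\[
  \opnormbigg{\frac{1}{|\yesselected|} \sum_{i \in \yesselected} a_i a_i\cg
    - \E\big[aa\cg \mid |\<a, d\subopt\>|^2 \le \tfrac{1-\epsilon}{2}\big]}
  \le \epsilon
\]
with the claimed probability; the $\opnorm{\residual(\epsilon)}$ slack in the definition of $\event_5$ exactly absorbs the difference between the two centering matrices.

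Next I would handle the conditioning. Fix $k$ and condition on $\{|\yesselected| = k\}$. Because the $a_i$ are i.i.d.\ and the selection event $\{i \in \yesselected\} = \{|\<a_i, d\subopt\>|^2 \le \tfrac{1-\epsilon}{2}\}$ depends on $a_i$ alone, conditionally on $|\yesselected| = k$ the vectors $\{a_i : i \in \yesselected\}$ are $k$ mutually independent draws from the truncated law of $a$ given $|\<a, d\subopt\>|^2 \le \tfrac{1-\epsilon}{2}$. I would then check that this truncated vector $\tilde a$ remains sub-Gaussian: since the truncation event has probability at least $\probdir$ by Assumption~\ref{assumption:well-spread}\eqref{item:continuity-direction}, for any unit $v$,
\[
  \E\left[\exp\Big(\frac{|\<\tilde a, v\>|^2}{\subgauss^2}\Big)\right]
  \le \frac{1}{\probdir}\, \E\left[\exp\Big(\frac{|\<a, v\>|^2}{\subgauss^2}\Big)\right]
  \le \frac{e}{\probdir},
\]
and a power-mean (Jensen) argument with exponent $\subgauss^2/\tilde\subgauss^2$ then shows $\tilde a$ is $\tilde\subgauss^2$-sub-Gaussian for $\tilde\subgauss^2 = (1 + \log\tfrac{1}{\probdir})\subgauss^2$, so that $\tilde\subgauss^4 \lesssim \subgauss^4 \log^2\probdir$.

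Finally I would apply the matrix-concentration Lemma~\ref{lemma:matrix-concentration} to the $k$ truncated samples with parameter $\tilde\subgauss^2$ and a choice $t \asymp \epsilon^2/\tilde\subgauss^4$. On $\event_3$ we have $k = |\yesselected| \ge \tfrac{1}{2} m \probdir$, so $n/k \le 2n/(m\probdir)$, and the sample-size condition $c\, m/n \ge \subgauss^4 \log^2\probdir/(\probdir \epsilon^2)$ forces $11\tilde\subgauss^2 \sqrt{4n/k + t} \le \epsilon$, giving the desired $\epsilon$-deviation with conditional failure probability $\exp(-c\, k\, t) \le \exp\big(-c\, m \probdir \epsilon^2/\tilde\subgauss^4\big) = \exp\big(-c\, m \probdir \epsilon^2/(\subgauss^4 \log^2\probdir)\big)$. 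Since this bound only improves with larger $k$, it holds uniformly over all $k \ge \tfrac{1}{2}m\probdir$; averaging over the conditional law of $|\yesselected|$ given $\event_3$ preserves it, yielding the stated $\P(\event_5 \mid \event_3)$.

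The main obstacle is precisely the data-dependence of $\yesselected$: the summands $a_i a_i\cg$ and the normalization $1/|\yesselected|$ are coupled through the very event defining selection, so one cannot treat the sum as an average of unconditioned i.i.d.\ matrices. Conditioning on the exact value $|\yesselected| = k$ decouples this—restoring i.i.d.\ structure on the selected vectors and the correct conditional mean—while the monotonicity of the concentration bound in the sample count $k$ lets the worst case $k = \tfrac{1}{2} m\probdir$ control the conditional probability uniformly. The only other quantitative care needed is tracking the sub-Gaussian inflation factor $1 + \log\tfrac{1}{\probdir}$, whose square produces the $\log^2\probdir$ appearing in both the sample-complexity requirement and the exponent.
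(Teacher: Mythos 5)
Your proposal is correct and follows essentially the same route as the paper's proof: decouple the data-dependent selection by conditioning (the paper conditions on the realized set $\yesselected = \indset$, you on its cardinality, which is equivalent by exchangeability), show the truncated law is sub-Gaussian with the inflated parameter $\subgauss^2 \log\frac{e}{\probdir}$ (the paper's H\"older argument and your Jensen/power-mean argument yield the identical bound), then apply Lemma~\ref{lemma:matrix-concentration} with $|\yesselected| \ge \half m \probdir$ on $\event_3$ and absorb $\residual(\epsilon)$ by the triangle inequality. No gaps; the only shared caveat with the paper is that the constant factor $\log^2\frac{e}{\probdir}$ versus $\log^2 \probdir$ is harmless only when $\probdir$ is bounded away from $1$, which holds in the intended regime.
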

\begin{proof}
  For notational simplicity, define the following shorthand:
  \begin{equation*}
    E_{d\subopt}
    \defeq \E\left[aa\cg  \mid |\<a, d\subopt\>|^2 \le
      \frac{1 - \epsilon}{2}\right]
    = I_n - \perploss(\epsilon) d\subopt d\subopt\cg  + \residual(\epsilon),
  \end{equation*}
  where the equality uses
  Assumption~\ref{assumption:well-spread}\eqref{item:conditional-id}.
  The main idea of the proof is to show the following crucial fact: 
  define the new sub-Gaussian parameter
  $\tau^2 = \subgauss^2 \log \frac{e}{\probdir} \ge 1$.
  Then there exists a numerical constant $1 \le C < \infty$ such that for
  all $t \ge 0$,
  \begin{equation}
    \P\left(\opnormbigg{
      \frac{1}{|\yesselected|} \sum_{i \in \yesselected} a_i a_i\cg  - 
      E_{d\subopt}}
    \ge C \tau^2
    \max\left\{\sqrt{\frac{n}{|\yesselected|} + t},
    \frac{n}{|\yesselected|} + t\right\} \mid \yesselected\right)
    \leq \exp(- |\yesselected| t) .
    \label{eqn:cond-subgaussian}
  \end{equation}
  
  Suppose that the bound~\eqref{eqn:cond-subgaussian} holds.
  On the event $\event_3$, we have that
  $|\yesselected| \ge \half m \probdir$, and so choosing
  \begin{equation*}
    t = \frac{\epsilon^2}{4 C^2 \tau^4} < 1,
    ~~ \mbox{and letting} ~~
    \frac{m}{n}
    \ge \frac{2 C^2 \tau^4}{\probdir \epsilon^2}
  \end{equation*}
  yields that $\frac{n}{|\yesselected|} + t < 1$ on $\event_3$ and
  \begin{equation*}
    \P\left(\opnormbigg{
      \frac{1}{|\yesselected|} \sum_{i \in \yesselected} a_i a_i\cg  - 
      E_{d\subopt}}
    \ge \epsilon \mid \yesselected\right)
    \le 
    \exp\left(-\frac{m \probdir \epsilon^2}{8 C^2 \tau^4}\right).
  \end{equation*}
  By the definition
  of $E_{d\subopt}$ and the triangle inequality
  we have
  \begin{align*}
    & \P\left(\opnormbigg{
      \frac{1}{|\yesselected|} \sum_{i \in \yesselected} a_i a_i\cg  - (I_n - 
      \perploss\left(\epsilon\right) d\subopt d\subopt\cg )}
    \ge \opnorm{\residual(\epsilon)} + \epsilon \mid \yesselected \right) \\
    & \qquad\qquad\qquad\qquad\qquad\qquad\qquad\qquad~
    \le \P\left(\opnormbigg{
      \frac{1}{|\yesselected|} \sum_{i \in \yesselected} a_i a_i\cg  - 
      E_{d\subopt}}
    \ge \epsilon \mid \yesselected\right).
  \end{align*} 
  The lemma follows from the fact that $\event_3$ is measurable 
  with respect to the indices $\yesselected$.

  Now, we show the key inequality~\eqref{eqn:cond-subgaussian}. The main idea
  is to show that, conditioning on the set $\yesselected$, the distribution $\{a_i\}_{i
    \in \yesselected}$ is still conditionally independent and sub-Gaussian. To do so,
  we introduce a bit of (more or less standard) notation. For a random
  variable $X$, let $\mc{L}(X)$ denote the law of distribution of $X$. Using
  the independence of the vectors $a_i$, we have the fact that for any
  \emph{fixed} subset $\indset \subset [m]$, the collection $\{a_i\}_{i\in \indset}$
  is independent of $\{a_i\}_{i\not\in \indset}$. Therefore, using the definition
  that $\yesselected = \{i \in [m] : |\<a_i, d\subopt\>|^2 \le \frac{1 - \epsilon}{2}\}$, we
  have the key identity
  \begin{equation*}
    \mc{L}\bigg(\frac{1}{|\yesselected|} \sum_{i \in \yesselected} a_i a_i\cg  \mid \yesselected = \indset\bigg)
    \equivdist 
    \mc{L}\bigg(\frac{1}{|\indset|} \sum_{i\in \indset} a_i a_i\cg 
    \mid \max_{i\in \indset} |\<a_i, d\subopt\>|^2 \leq \frac{1-\epsilon}{2}\bigg).
  \end{equation*}
  This implies that, conditioning on $\yesselected = \indset$, the vectors $\{a_i\}_{i \in
    \indset}$ are still conditionally independent, and their conditional
  distribution is identical to the law $\mc{L}(a\mid |\<a, d\subopt\>|^2 \leq
  \frac{1-\epsilon}{2})$. The claimed inequality~\eqref{eqn:cond-subgaussian}
  will thus follow by the matrix concentration inequality in
  Lemma~\ref{lemma:matrix-concentration}, so long as
  we can demonstrate appropriate sub-Gaussianity of the
  conditional law
  $\mc{L}(a\mid |\<a, d\subopt\>|^2 \leq \frac{1 - \epsilon}{2})$.

  Indeed, let us temporarily assume that $a \mid |\<a, d\subopt\>|^2 \le \frac{1
    - \epsilon}{2}$ is $\tau^2$-sub-Gaussian, let $\mc{J}$ denote all
  subsets $\indset \subset [m]$ such that $|\indset| \ge \half m \probdir$,
  and define the shorthand $E_{d\subopt} = \E[aa\cg  \mid |\<a, d\subopt\>|^2 \le
    \frac{1 - \epsilon}{2}]$.  Then by summing over $\mc{J}$, we have on the
  event $\event_3$ that for a numerical constant $C < \infty$,
  \begin{align*}
    & \P\left(\opnormbigg{\frac{1}{|\yesselected|}
      \sum_{i \in \yesselected} a_i a_i\cg  - E_{d\subopt} }
    \ge C \tau^2 \max\left\{\sqrt{\frac{n}{|\yesselected|} + t},
    \frac{n}{|\yesselected|} + t\right\} \mid \event_3 \right) \\
    & = \sum_{\indset \in \mc{J}} \P\left(
    \opnormbigg{
      \frac{1}{|\indset|}
      \sum_{i \in \indset} a_i a_i\cg  - E_{d\subopt}}
    \ge C \tau^2 
    \max\left\{\sqrt{\frac{n}{|\indset|} + t},
    \frac{n}{|\indset|} + t\right\} \mid \yesselected = \indset \right)
    \P(\indset = \yesselected \mid \event_3) \\
    & = \sum_{\indset \in \mc{J}} \P\left(
    \opnormbigg{
      \frac{1}{|\indset|}
      \sum_{i \in \indset} a_i a_i\cg  - E_{d\subopt}}
    \ge C \tau^2 
    \max\left\{\sqrt{\frac{n}{|\indset|} + t},
    \frac{n}{|\indset|} + t\right\} \mid 
    \max_{i \in \indset} |\<a_i, d\subopt\>|^2
    \le \frac{1 - \epsilon}{2} \right)
    \P(\indset = \yesselected \mid \event_3) \\
    & \stackrel{(i)}{\le} \sum_{\indset \in \mc{J}} e^{-|\indset| t}
    \cdot \P(\indset = \yesselected \mid \event_3)
    \le e^{- \half m \probdir t},
  \end{align*}
  where inequality~$(i)$ is an application of
  Lemma~\ref{lemma:matrix-concentration}.
  This is evidently inequality~\eqref{eqn:cond-subgaussian} with
  appropriate choice of $\tau^2$.

  We thus show that $\mc{L}(a\mid |\<a, d\subopt\>|^2 \leq \frac{1-\epsilon}{2})$
  is subgaussian with parameter $\tau^2 = \subgauss^2
  \log\frac{e}{\probdir}$ by bounding the conditional moment generating
  function.
  Let $\lambda \in [1, \infty]$ and $\lambda'$ be conjugate,
  so that $1/ \lambda + 1/\lambda' = 1$.
  Then by H\"older's inequality, for any $v \in \sphere^{n-1}$ we have
  \begin{align*}
    \E \left[\exp\left(\frac{|\<a, v\>|^2}{\lambda \subgauss^2}\right) 
      \mid |\<a, v\>|^2 \leq \frac{1-\epsilon}{2}\right]
    &= \frac{\E \left[\exp\left(\frac{|\<a, v\>|^2}{\lambda \subgauss^2}\right) 
        \indic{|\<a, v\>|^2 \leq \frac{1-\epsilon}{2}}\right]}{
      \P\left(|\<a, v\>|^2 \leq \frac{1-\epsilon}{2}\right)}  \\
    & \leq \frac{(\E[\exp(\frac{|\<a, v\>|^2}{\subgauss^2})])^\frac{1}{\lambda}
      \P\left(|\<a, v\>|^2 \leq \frac{1-\epsilon}{2}\right)^\frac{1}{\lambda'}}{
      \P\left(|\<a, v\>|^2 \leq \frac{1-\epsilon}{2}\right)} \\
    & = \E\left[\exp\left(\frac{|\<a, v\>|^2}{\subgauss^2}\right)
      \right]^\frac{1}{\lambda} 
    \P\left(|\<a, v\>|^2 \leq \frac{1-\epsilon}{2}\right)^{-\frac{1}{\lambda}}
    \leq \left(\frac{e}{\probdir}\right)^\frac{1}{\lambda},
  \end{align*}
  where the final inequality uses the $\subgauss^2$-sub-Gaussianity
  of $a$.
  Set $\lambda = \log \frac{e}{\probdir}$ to see that
  conditional on $|\<a, d\subopt\>|^2 \le \frac{1 - \epsilon}{2}$,
  the vector $a$ is $\subgauss^2 \log \frac{e}{\probdir}$-sub-Gaussian,
  as desired.
\end{proof}


\section{Proofs for phase retrieval with outliers}
\label{sec:proofs-noisy}

In this section, we collect the proofs of the various results
in Section~\ref{sec:noise}.
Before providing the proofs, we state one inequality that we use frequently
that will be quite useful.  Let $W_i \in \{0, 1\}$ satisfy $W_i = 1$ if $i
\in \outliers$ and $W_i = 0$ otherwise, so that $W$ indexes the outlying
measurements.  Because $W_i$ are independent of the $a_i$ vectors and
$\sum_i W_i = \pfail m$, Lemma~\ref{lemma:matrix-concentration} implies
for all $t \ge 0$ that
\begin{equation}
  \P\left(\opnormbigg{\frac{1}{m} \sum_{i = 1}^m
    W_i a_i a_i^T
  - \pfail \E[a_i a_i^T]}
  \ge C \subgauss^2 \max\left\{\sqrt{\frac{n}{m} + t},
  \frac{n}{m} + t \right\}\right)
  \le e^{-m t}.
  \label{eqn:concentration-outlier-A}
\end{equation}

\subsection{Proof of Proposition~\ref{proposition:noise-stability}}
\label{sec:proof-noise-stability}

Recalling the set $\outliers$ of outlying indices, we evidently
have
\begin{align}
  f(x) - f(x\subopt)
  & = \frac{1}{m} \sum_{i = 1}^m
  |\<a_i, x\>^2 - \<a_i, x\subopt\>^2|
  + \frac{1}{m} \sum_{i \in \outliers}
  \left(|\<a_i, x\>^2 - \noise_i|
  - 
  |\<a_i, x\>^2 - \<a_i, x\subopt\>^2|\right)
  - f(x\subopt) \nonumber \\
  & = \frac{\lone{(Ax)^2 - (Ax\subopt)^2}}{m}
  + \frac{1}{m} \sum_{i \in \outliers}
  \left(|\<a_i, x\>^2 - \noise_i|
  - |\<a_i, x\>^2 - \<a_i, x\subopt\>^2|
  - |\<a_i, x\subopt\>^2 - \noise_i|\right) \nonumber \\
  & \ge \frac{\lone{(Ax)^2 - (Ax\subopt)^2}}{m}
  - \frac{2}{m} \sum_{i \in \outliers}
  |\<a_i, x\>^2 - \<a_i, x\subopt\>^2|.
  \label{eqn:outlier-stability-simple-lower}
\end{align}
Now, we note the trivial fact that
if we define $W_i = 1$ for $i \in \outliers$ and $W_i = 0$
for $i \in \inliers$, then
\begin{align*}
  \sum_{i \in \outliers}
  |\<a_i, x\>^2 - \<a_i, x\subopt\>^2|
  & = \sum_{i = 1}^m
  W_i |(x - x\subopt)^T a_i a_i^T (x + x\subopt)| \\
  & \le \opnorm{A^T \diag(W) A} \ltwo{x - x\subopt}
  \ltwo{x + x\subopt}.
\end{align*}
Inequality~\eqref{eqn:concentration-outlier-A}
shows that the matrix $\frac{1}{m}\sum_{i = 1}^m W_i a_i a_i^T$ is well
concentrated.

Now, let $\event$ denote the event that
$\frac{1}{m} \opnorms{A^T \diag(W) A} \le \pfail + C \subgauss^2
\sqrt{n/m + t}$, where $t$ is chosen so that $n/m + t \le 1$.
Returning to
inequality~\eqref{eqn:outlier-stability-simple-lower}, we
have
\begin{equation*}
  f(x) - f(x\subopt)
  \ge \frac{1}{m} \lone{(Ax)^2 - (Ax\subopt)^2}
  - 2 (\pfail + C \subgauss^2 \sqrt{n/m + t}) \ltwo{x - x\subopt}
  \ltwo{x + x\subopt}
\end{equation*}
for all $x \in \R^n$ with probability at least $1 - e^{-mt}$
by inequality~\eqref{eqn:concentration-outlier-A}.
We finish with the following lemma, which is a minor sharpening of
Theorem~2.4 of Eldar and Mendelson~\cite{EldarMe14} so that we have sharp concentration in all
dimensions $n$. We provide a proof for completeness in
Section~\ref{sec:proof-eldar-mendelson}.
\begin{lemma}
  \label{lemma:eldar-mendelson}
  Let $a_i$ be independent $\subgauss^2$-sub-Gaussian vectors, and define
  $\stabfunc(u, v) \defeq \E[|\<a, u\> \<a, v\>|]$ for $u, v \in \R^n$. Then
  there exist a numerical constants $c > 0$ and $C < \infty$ such that
  \begin{equation*}
    \frac{1}{m} \sum_{i = 1}^m |\<a_i, u\> \<a_i, v\>|
    \ge \stabfunc(u, v)
    - C \subgauss^2 \sqrt[3]{\frac{n}{m}} - \subgauss^2 t
    ~~ \mbox{for~all~} u, v \in \sphere^{n - 1}
  \end{equation*}
  with probability at least 
  $1 - e^{-c mt^2} - e^{-cm}$ when
  $m/n \ge C$.
\end{lemma}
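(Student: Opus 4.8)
The plan is to reduce the uniform lower bound to a bounded empirical process via a soft truncation, and then control that process by symmetrization together with the matrix concentration already available in Lemma~\ref{lemma:matrix-concentration}. First I would fix a threshold $T > 0$ and pass to the nonnegative, $1$-Lipschitz truncation $g_T(s) \defeq \min(|s|, T)$, which obeys $g_T(s) \le |s|$ pointwise, so that
\begin{equation*}
  \frac{1}{m} \sum_{i=1}^m |\<a_i, u\> \<a_i, v\>|
  \ge \frac{1}{m} \sum_{i=1}^m g_T(\<a_i, u\> \<a_i, v\>)
  ~~ \mbox{for all } u, v \in \sphere^{n-1}.
\end{equation*}
Writing $X^{u,v} \defeq \<a, u\> \<a, v\>$, the price of truncation is the bias $\E|X^{u,v}| - \E g_T(X^{u,v}) = \E[(|X^{u,v}| - T)_+]$. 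Since each factor is $\subgauss$-sub-Gaussian by Assumption~\ref{assumption:sub-gaussian-vector}, the product $X^{u,v}$ is sub-exponential with parameter a constant multiple of $\subgauss^2$, and the moment bounds of Lemma~\ref{lemma:matrix-concentration} ($\E|\<a, v\>|^k \le \Gamma(\frac k2 + 1) e \subgauss^k$) give a uniform tail bound, hence $\sup_{u,v} \E[(|X^{u,v}| - T)_+] \le C \subgauss^2 e^{-cT/\subgauss^2}$ (or, more crudely, $C \subgauss^4 / T$). This is the first of two competing error terms.

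The core step is to bound the uniform deviation $\Delta_m \defeq \sup_{u,v \in \sphere^{n-1}} |\frac1m \sum_i g_T(X_i^{u,v}) - \E g_T(X^{u,v})|$. I would first bound $\E \Delta_m$ by the standard symmetrization inequality followed by the Ledoux--Talagrand contraction principle, which applies because $g_T$ is $1$-Lipschitz with $g_T(0) = 0$; this removes the truncation from the expectation and yields
\begin{equation*}
  \E \Delta_m \le 4\, \E \sup_{u,v \in \sphere^{n-1}}
  \bigg|\frac1m \sum_{i=1}^m \epsilon_i \<a_i, u\> \<a_i, v\>\bigg|
  = 4\, \E \opnorm{\frac1m \sum_{i=1}^m \epsilon_i a_i a_i^T},
\end{equation*}
where $\epsilon_i$ are independent Rademacher signs. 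The right-hand operator norm is a mean-zero symmetrized sum of rank-one sub-Gaussian matrices and concentrates at the sub-Gaussian rate $C \subgauss^2 \sqrt{n/m}$ by the symmetrized form of Lemma~\ref{lemma:matrix-concentration} once $m \ge Cn$ (the event that this holds contributes the $e^{-cm}$ failure term). To upgrade the expectation to a high-probability lower deviation I would invoke Talagrand's functional concentration inequality for the supremum of the bounded process $g_T \in [0, T]$ with variance proxy $\var(g_T(X^{u,v})) \le \E[(X^{u,v})^2] \le C \subgauss^4$; in the sub-Gaussian regime this produces a deviation of order $\subgauss^2 t$ with probability $1 - e^{-cmt^2}$. (An alternative, fully self-contained route avoids contraction: cover $\sphere^{n-1} \times \sphere^{n-1}$ by a net, apply the one-sided Bernstein bound of Lemma~\ref{lemma:one-sided-bernstein} at each net point using the VC/covering machinery of Lemma~\ref{lemma:upper-bound-of-VC-dimension}, and control the discretization error through the $\|a_i\|^2$-Lipschitz constant of $g_T \circ X^{u,v}$.)

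Assembling the three pieces gives, uniformly over $u, v \in \sphere^{n-1}$,
\begin{equation*}
  \frac1m \sum_{i=1}^m |\<a_i, u\> \<a_i, v\>|
  \ge \stabfunc(u, v) - C \subgauss^2 e^{-cT/\subgauss^2}
  - C \subgauss^2 \sqrt{\tfrac nm} - C \subgauss^2 t,
\end{equation*}
and it remains to choose $T$. Taking $T$ a growing multiple of $\subgauss^2 \log(m/n)$ is what is needed to drive the bias below the statistical rate while retaining the full leading coefficient $\stabfunc(u,v)$ (a constant $T$ would leave a constant-order bias and spoil the coefficient of $\stabfunc$); balancing this bias against the threshold-dependent portion of the deviation term (which enters through the range $T$ in Talagrand's inequality, and which I suppressed above) is exactly the optimization that produces the stated polynomial exponent $\sqrt[3]{n/m}$. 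I would not expect to optimize the exponent sharply — the clean contraction bound already suggests a better $\sqrt{n/m}$ scaling, so the $\sqrt[3]{\cdot}$ rate in the lemma is a deliberately loose but dimension-sharp consequence. The main obstacle is precisely this simultaneous demand: recovering the \emph{exact} mean $\stabfunc(u,v)$ forces a growing truncation level, while keeping a clean $e^{-cmt^2}$ tail forces that growing level to be traded against the variance and range in the concentration step. Because $|X^{u,v}|$ is only sub-exponential and unbounded, naive netting loses logarithmic factors and degrades the dimension dependence; the truncation-plus-contraction device is what confines the range-dependence to the lower-order deviation term and secures the sharp $n/m$ scaling valid for all $n$, which is the improvement over Eldar and Mendelson's Theorem~2.4.
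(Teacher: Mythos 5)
Your main route has a genuine gap at the concentration step. After truncating at level $T$, the high-probability upgrade you invoke (Talagrand/Bousquet for suprema of bounded empirical processes, or its Klein--Rio lower-tail analogue) is Bernstein-type: the exponent has the form $-\,m s^2/(c_1\sigma^2 + c_2 T s)$, where $T$ is the envelope of the class. Your own bias calculation forces $T \gtrsim \subgauss^2 \log (m/n)$ (and the cruder bound $\subgauss^4/T$ forces $T \gtrsim \subgauss^2 (m/n)^{1/3}$, which is worse). Plugging in $s = \subgauss^2 t$ and $\sigma^2 \le C\subgauss^4$, the exponent becomes $-\,m t^2 /(C + c\, t \log(m/n))$. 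The only nontrivial range of $t$ is $t \le C_0$ for a numerical constant $C_0$ (beyond that the claim is vacuous since the left side is nonnegative and $\stabfunc(u,v) \le C \subgauss^2$), and in that range you obtain a tail of order $\exp(-c m t^2/\log(m/n))$, not $\exp(-cmt^2)$ with a numerical constant $c$. Since the lemma asserts numerical constants uniformly over $m/n \ge C$, and $m/n$ may be arbitrarily large, this is a real loss, not constant bookkeeping; the ``sub-Gaussian regime'' of Talagrand's inequality you appeal to shrinks to $t \lesssim 1/\log(m/n)$ and cannot cover the needed range. Relatedly, your explanation of the exponent $\sqrt[3]{n/m}$ as a bias-versus-range tradeoff is not correct: no choice of $T$ in your scheme produces it.

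The fix is to abandon truncation altogether, which is what the paper does: its one-sided Bernstein inequality (Lemma~\ref{lemma:one-sided-bernstein}) bounds the \emph{lower} tail of a sum of nonnegative random variables using only a variance bound, with no range term whatsoever---this is exactly the one-sidedness your truncation was trying to manufacture, available for free. The paper applies it pointwise to $|\<a_i,u\>\<a_i,v\>|$, whose variance is at most $2e\subgauss^4$ by Cauchy--Schwarz and the moment bound of Lemma~\ref{lemma:matrix-concentration}, then takes a union bound over an $\epsilon$-net of $\sphere^{n-1}$, controlling the discretization error deterministically via $|Z_{u,v} - Z_{u',v'}| \le m^{-1}\opnorm{A}^2\,(\ltwo{u - u'} + \ltwo{v - v'})$ and the event $\opnorm{A}^2 \lesssim m\subgauss^2$, which holds with probability $1 - e^{-m}$ by Lemma~\ref{lemma:matrix-concentration}. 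The cube root then arises from balancing the net entropy $n/\epsilon$ (via $\card(\mc{N})^2 \le e^{n/\epsilon}$) against the Bernstein gain $m\epsilon^2$ obtained by shifting $t$ by a multiple of $\subgauss^2\epsilon$: equating $n/\epsilon \asymp m \epsilon^2$ gives $\epsilon = (n/m)^{1/3}$. Your parenthetical ``alternative route'' is essentially this argument and should be promoted to the main one---though note it needs neither the truncation (one-sided Bernstein handles the unbounded nonnegative summands directly) nor the VC machinery of Lemma~\ref{lemma:upper-bound-of-VC-dimension}, which belongs to the small-ball argument of Proposition~\ref{proposition:small-ball}, not to this lemma.
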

\noindent
Noting that $|\<a_i, x\>^2 - \<a_i, x\subopt\>^2|
= |\<a_i, x - x\subopt\> \<a_i, x + x\subopt\>|$ and substituting
the result of the lemma into the preceding display, we have
\begin{equation*}
  f(x) - f(x\subopt)
  \ge \left(\stabfunc
  - 2 \pfail
  - C \subgauss^2 \sqrt[3]{\frac{n}{m}}
  - C \subgauss^2 t\right)
  \ltwo{x - x\subopt} \ltwo{x + x\subopt}
\end{equation*}
uniformly in $x$ with probability at least
$1 - 2 e^{-c m t^2} - e^{-c m}$.

\subsection{Proof of
  Proposition~\ref{proposition:good-direction-to-good-radius}}
\label{sec:proof-good-direction-to-good-radius}

We first state a lemma providing a deterministic bound on the errors of
the minimizing radius.
\begin{lemma}
  Let
  \begin{equation}
    \label{eqn:technical-condition-abstract}
    \delta = \frac{6 \ltwo{x\subopt}^2 \opnorm{A^T A}}{
      \sum_{i \in \inliers}
      \<a_i, x\subopt\>^2 - \sum_{i \in \outliers}
      \<a_i, x\subopt\>^2} \dist(\what{d}, \{\pm d\subopt\}).
  \end{equation}
  If $\delta \le 1$, then all minimizers of
  $G(\cdot)$ belong to the set $[1 \pm \delta] \ltwo{x\subopt}^2$.
\end{lemma}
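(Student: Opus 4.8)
The plan is to exploit convexity of $G$ in the scalar $r$. Since $G(r) = \frac{1}{m}\sum_{i=1}^m |b_i - r c_i|$ with $c_i \defeq \<a_i, \what{d}\>^2 \ge 0$ is a nonnegative combination of convex functions, it is convex, so its minimizers form an interval and it suffices to control one-sided derivatives at the endpoints. Writing $\rho\subopt \defeq \ltwo{x\subopt}^2$, I would show
\[
  G'_-\big((1-\delta)\rho\subopt\big) \le 0
  \quad\text{and}\quad
  G'_+\big((1+\delta)\rho\subopt\big) \ge 0 ,
\]
after which monotonicity of the subdifferential of a convex function forces every minimizer into $[(1-\delta)\rho\subopt, (1+\delta)\rho\subopt]$, which is exactly the conclusion that all minimizers lie in $[1\pm\delta]\ltwo{x\subopt}^2$.

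First I would record that each term satisfies $\frac{d}{dr}|b_i - rc_i| = c_i\,\sign(rc_i - b_i)$ away from its kink, so $m\,G'_\pm(r) = \sum_i c_i\,\sign^\pm(rc_i - b_i)$ with $\sign^+(0) = +1$, $\sign^-(0) = -1$. I then split the sum over $\inliers$ and $\outliers$. The outlier terms are handled crudely and uniformly in the (possibly adversarial) $\noise_i$: they contribute at least $-\sum_{i\in\outliers} c_i$ to $G'_+$ and at most $+\sum_{i\in\outliers} c_i$ to $G'_-$. For the inliers I substitute $b_i = \rho\subopt\<a_i, d\subopt\>^2$ and introduce the perturbation $e_i \defeq \<a_i, \what{d}\>^2 - \<a_i, d\subopt\>^2$, so that $c_i = \<a_i, d\subopt\>^2 + e_i$.

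The crux is the inlier sign analysis. At $r_1 = (1+\delta)\rho\subopt$ an inlier term has the wrong (negative) sign precisely when $(1+\delta)c_i < \<a_i, d\subopt\>^2$, and a one-line rearrangement of this event gives $\delta c_i < |e_i|$, i.e. $c_i < |e_i|/\delta$; hence the total mass of these bad indices is at most $\frac{1}{\delta}\sum_{i\in\inliers}|e_i|$. Combining with the outlier bound and $\sum_{i\in\inliers}c_i - \sum_{i\in\outliers}c_i \ge S - \sum_i |e_i|$, where $S \defeq \sum_{i\in\inliers}\<a_i,d\subopt\>^2 - \sum_{i\in\outliers}\<a_i,d\subopt\>^2$ is the denominator of $\delta$ rescaled by $\rho\subopt$, yields
\[
  m\,G'_+(r_1) \;\ge\; S \;-\; \sum_{i} |e_i| \;-\; \frac{2}{\delta}\sum_{i\in\inliers}|e_i| ,
\]
and an entirely symmetric computation gives the matching upper bound for $m\,G'_-(r_0)$ at $r_0 = (1-\delta)\rho\subopt$.

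It remains to control $\sum_i |e_i|$, which I would do by factoring $e_i = \<a_i, \what{d} - d\subopt\>\<a_i, \what{d} + d\subopt\>$, applying Cauchy--Schwarz across $i$, and using $\sum_i \<a_i, v\>^2 = \ltwo{Av}^2 \le \opnorm{A^T A}\ltwo{v}^2$; choosing the sign of $d\subopt$ that realizes $\dist(\what{d}, \{\pm d\subopt\})$ (legitimate, since both $e_i$ and $b_i$ are insensitive to $d\subopt \mapsto -d\subopt$) gives $\sum_i |e_i| \le 2\opnorm{A^T A}\,\dist(\what{d}, \{\pm d\subopt\})$. Substituting $\delta = 6\opnorm{A^TA}\dist(\what{d},\{\pm d\subopt\})/S$ makes the two error terms equal to $\tfrac{S\delta}{3}$ and $\tfrac{2S}{3}$, so $m\,G'_+(r_1) \ge \tfrac{S}{3}(1-\delta) \ge 0$ whenever $\delta \le 1$ (and $S > 0$, which holds since the denominator of $\delta$ is positive), and likewise $G'_-(r_0) \le 0$. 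I expect the main obstacle to be exactly the inlier sign analysis: extracting the $1/\delta$ factor from the wrong-sign indices and tracking constants so the error budget closes with the stated $6$; the norm estimate and the convexity wrap-up are routine bookkeeping.
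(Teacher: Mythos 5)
Your proof is correct, and it takes a genuinely different route from the paper's. The paper never touches the subdifferential of $G$ itself; instead it introduces the idealized objective $g(\delta) = \frac{1}{m}\sum_{i=1}^m |b_i - (1+\delta)\<a_i, x\subopt\>^2|$ built from the \emph{true} direction, shows by a one-line subgradient computation that $g$ grows linearly away from $\delta = 0$ with slope $\frac{1}{m}\rho\subopt S$ (in your notation $\rho\subopt = \ltwo{x\subopt}^2$, $S$ the rescaled denominator), and then bounds the uniform gap between $g(\delta)$ and $\what{g}(\delta) \defeq G((1+\delta)\rho\subopt)$ over $\delta \in [-1,1]$ by $\frac{4}{m}\rho\subopt \opnorm{A^TA} D$, where $D \defeq \dist(\what{d}, \{\pm d\subopt\})$ (and by $\frac{2}{m}\rho\subopt\opnorm{A^TA}D$ at $\delta = 0$), using exactly your factorization $\<a_i,d\subopt\>^2 - \<a_i,\what{d}\>^2 = \<a_i, d\subopt - \what{d}\>\<a_i,d\subopt+\what{d}\>$ plus Cauchy--Schwarz; since any minimizer satisfies $\what{g}(\delta^*) \le \what{g}(0)$, the linear growth must be absorbed by the $4+2=6$ error budget, which is where the paper's constant $6$ comes from. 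You instead run a sign-counting argument directly on the one-sided derivatives of $G$ at the endpoints $(1\pm\delta)\rho\subopt$: wrong-signed inlier residuals force $c_i < |e_i|/\delta$, so their mass is at most $\delta^{-1}\sum_i|e_i|$, and your budget closes as $\frac{\delta S}{3} + \frac{2S}{3} \le S$ when $\delta \le 1$, so the constant $6$ enters through a different accounting. Both arguments consume the same two ingredients---positivity of the inlier-minus-outlier mass $S$ and the bound $\sum_i |e_i| \le 2\opnorm{A^TA} D$---so neither is stronger in substance; yours is more self-contained (no auxiliary objective, no function-value perturbation step), while the paper's value-comparison avoids classifying the sign of any individual residual, which makes its bookkeeping slightly more forgiving. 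One shared boundary caveat: at $\delta = 1$ exactly your endpoint derivatives are only $\ge 0$ and $\le 0$, so a flat stretch of $G$ could in principle carry a minimizer outside the interval; the paper's argument degenerates in the identical way at $\delta = 1$, so this is not a defect relative to the paper's own proof.
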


Temporarily assuming the conclusions of the lemma, let us show that the
random quantities in the bound~\eqref{eqn:technical-condition-abstract}
are small with high probability. We apply
the matrix concentration inequality~\eqref{eqn:concentration-outlier-A}
to see that for a numerical constant $C < \infty$ and all
$t \in [0, 1 - \frac{n}{m}]$, we have
\begin{equation*}
  \frac{1}{m} \sum_{i \in \inliers}
  \<a_i, v\>^2
  \ge \left[(1 - \pfail)
    - C \subgauss^2 \sqrt{\frac{n}{m} + t}
    \right]
  ~~ \mbox{and} ~~
  \frac{1}{m} \sum_{i \in \outliers}
  \<a_i, v\>^2
  \le \left[\pfail + C \subgauss^2 \sqrt{\frac{n}{m} + t}
    \right] 
\end{equation*}
with probability at least $1 - e^{-mt}$ for all
vectors $v \in \sphere^{n - 1}$, and $\frac{1}{m} \opnorms{A^TA} \le
\subgauss^2(1 + C \sqrt{\frac{n}{m} + t})$ with the same probability. That
is, for $t \in [0, 1 - \frac{n}{m}]$ with probability at least $1 - 2
e^{-mt}$ we have that $\delta$ in
expression~\eqref{eqn:technical-condition-abstract} satisfies
\begin{equation*}
  \delta \le \frac{6 \subgauss^2}{
    1 - 2 \pfail - C \subgauss^2 \sqrt{\frac{n}{m} + t}}
  \dist(\what{d}, \{\pm d\subopt\}).
\end{equation*}
If we assume that $\frac{n}{m} \le c (1 - 2\pfail)^2 / \subgauss^4$
and replace $t$ with $c (1 - 2\pfail)^2 / \subgauss^4$ for small
enough constant $c$, we find
that
\begin{equation*}
  \delta \le \frac{C \subgauss^2}{1 - 2\pfail}
  \dist(\what{d}, \{\pm d\subopt\})
\end{equation*}
with probability at least $1 - 2 e^{-c m(1 - 2\pfail)^2 / \subgauss^4}$,
where $C$ is a numerical constant. This is our desired result.

\begin{proof}
  We define a few pieces of notation for shorthand.
  Let
  \begin{equation*}
    \what{\subgauss}^2
    \defeq \frac{1}{m} \opnorm{A^T A}
    ~~ \mbox{and} ~~
    \what{\subgauss}_{\outliers}^2
    \defeq \frac{1}{m} \sum_{i \in \inliers}
    \<a_i, x\subopt\>^2
    - \frac{1}{m} \sum_{i \in \outliers} \<a_i, x\subopt\>^2,
  \end{equation*}
  and define the functions
  $\what{g}(\delta) = G((1 + \delta) \ltwo{x\subopt}^2)$, 
  equivalently
  \begin{equation*}
    \what{g}(\delta) \defeq
    \frac{1}{m} \sum_{i = 1}^m
    |b_i - \ltwo{x\subopt}^2 \langle a_i, \what{d}\rangle (1 + \delta)|,
  \end{equation*}
  and a slightly more accurate counterpart
  \begin{equation*}
    g(\delta) \defeq
    \frac{1}{m} \sum_{i=1}^m \left|b_i - (1+\delta) \<a_i, x\subopt\>^2\right|= 
    \frac{1}{m} \sum_{i\not\in \inliers}
    \left|\<a_i, x\subopt\>^2 - (1+\delta) \<a_i, x\subopt\>^2\right|
    + \frac{1}{m} \sum_{i\in \outliers} 
    \left|b_i - (1+\delta) \<a_i, x\subopt\>^2 \right|.
  \end{equation*}
  Note that if $\delta$ minimizes $\what{g}(\delta)$, then
  $(1 + \delta) \ltwo{x\subopt}^2$ minimizes $G(r)$.
  By inspection we find that the subgradients
  of $g$ with respect to $\delta$ are
  \begin{equation*}
    \partial_\delta g(\delta)
    = \frac{1}{m}
    \sum_{i \in \inliers} \sgn(\delta) \<a_i, x\subopt\>^2
    - \frac{1}{m} \sum_{i \in \outliers}
    \sgn((1 + \delta) \<a_i, x\subopt\>^2 - b_i) \<a_i, x\subopt\>^2,
  \end{equation*}
  where $\sgn(t) = 1$ if $t > 0$, $\setminus 1$ if $t < 0$, and
  $\sgn(0) = [\setminus 1, 1]$. Evidently, for $\delta > 0$ we have
  $g'(\delta) \ge \what{\subgauss}_{\outliers}^2$
  and
  $g'(-\delta)
  \le -\what{\subgauss}_{\outliers}^2$,
  so that
  \begin{equation}
    \label{eqn:g-grows}
    g(\delta) \ge \what{\subgauss}_{\outliers}^2 |\delta| + g(0).
  \end{equation}

  Now, we consider the gaps between $\what{g}$ and $g$: for $\delta \in [\setminus 1,
    1]$, we have the gap
  \begin{align*}
    |g(\delta) - \what{g}(\delta)|
    & \le \frac{(1 + \delta) \ltwo{x\subopt}^2}{m}
    \sum_{i = 1}^m |\langle a_i, d\subopt\rangle^2 -
    \langle a_i, \what{d}\rangle^2| \\
    & \le \frac{(1 + \delta) \ltwo{x\subopt}^2}{m}
    \opnorm{A^TA} \ltwos{d\subopt - \what{d}}
    \ltwos{d\subopt + \what{d}}
    \le 4 \ltwo{x\subopt}^2 \what{\subgauss}^2
    \dist(d\subopt, \{\pm \what{d}\}),
  \end{align*}
  where we have used the triangle inequality
  and Cauchy-Schwarz.
  Thus we obtain
  \begin{equation*}
    \what{g}(\delta)
    - \what{g}(0)
    \ge g(\delta) - g(0)
    + \what{g}(\delta) - g(\delta)
    + g(0) - \what{g}(0)
    \ge \what{\subgauss}_{\outliers}^2 |\delta|
    - 6 \ltwo{x\subopt}^2 \what{\subgauss}^2 \dist(\what{d}, \{\pm d\subopt\}),
  \end{equation*}
  where we have applied inequality~\eqref{eqn:g-grows}. Rearranging,
  we have that if $\what{g}(\delta) \le \what{g}(0)$ we must have
  \begin{equation*}
    |\delta| \le \frac{6 \ltwo{x\subopt}^2 \what{\subgauss}^2}{
      \what{\subgauss}_{\outliers}^2} \dist(\what{d}, \{\pm d\subopt\}).
  \end{equation*}
  By convexity, any minimizer of $\what{g}$ must thus lie
  in the above region, which gives the result
  when we recall that minimizers $\delta$ of $\what{g}$
  are equivalent to minimizers $(1 + \delta) \ltwo{x\subopt}^2$ of $G$.
\end{proof}

\subsection{Proof of
  Proposition~\ref{proposition:noisy-good-initial-direction}}
\label{sec:proof-noisy-good-initial-direction}

We introduce a bit of notation before giving the proof proper.
Recall that $\outliers \subset [m]$ denotes the outliers, or failed
measurements, and $\inliers = [m] \setminus \outliers$ the set
of $i$ such that $b_i = \<a_i, x\subopt\>^2$ (the inliers).
Recalling the selected set of indices $\selected$, we define the shorthand
\begin{equation*}
  \inliersin \defeq \inliers \cap \selected
  ~~ \mbox{and} ~~
  \outliersin \defeq \outliers \cap \selected.
\end{equation*}
for the chosen inliers and outliers.



We decompose the matrix $X\init$ into four matrices, each of which we
control to guarantee that $X\init \approx I_n - c d\subopt {d\subopt}^T$ for some
constant $c$, thus guaranteeing $\what{d} \approx d\subopt$.  Let $\proj =
d\subopt {d\subopt}^T$ and $\projperp = I_n - d\subopt {d\subopt}^T$ be the projection
operator onto the span of $d\subopt$ and its orthogonal complement. Then
we may decompose the matrix $X\init$ into the four parts
\begin{equation}
  \label{eqn:x-init-outlier-expansion}
  \begin{split}
    X\init &= \underbrace{\frac{1}{m}
      \sum_{i=1}^m \proj a_i a_i^T \proj
      \indic{i \in \inliersin}}_{\defeq Z_0}
    + \underbrace{\frac{1}{m} \sum_{i=1}^m
      \left[\proj a_i a_i^T \projperp
        +  \projperp a_i a_i^T \proj\right]
      \indic{i \in \inliersin}}_{\defeq {Z_1}}\\
    & \qquad ~
    + \underbrace{\frac{1}{m} \sum_{i=1}^n \projperp a_i a_i^T \projperp
      \indic{i \in \inliersin}}_{\defeq Z_2}
    + \underbrace{\frac{1}{m} \sum_{i=1}^m a_i a_i^T
      \indic{i \in \outliersin}}_{\defeq Z_3}.
  \end{split}
\end{equation}
Let us briefly motivate this decomposition.  We expect that $Z_0$ should be
small because we choose indices $\selected$ by taking the smallest values of
$b_i$, which should be least correlated with $d\subopt$ (recall the $\proj =
d\subopt {d\subopt}^T$).  We expect $Z_1$ to be small because of the independence
of the vectors $\proj a_i$ and $\projperp a_i$ for Gaussian measurement
vectors, and $Z_3$ to be small because $\outliersin$ should be not too
large. This leaves $Z_2$, which (by Gaussianity) we expect to
be some multiple of $I_n - d\subopt {d\subopt}^T$, which will then allow
us to apply eigenvector perturbation guarantees using the eigengap
of the matrix $X\init$.

The rotational invariance of the Gaussian means that
it is no loss of generality to assume that $d\subopt = e_1$, the first
standard basis vector, so for the remainder of the argument we assume
this without further comment. This means that
we may decompose $a_i$ as $a_i = [a_{i,1} ~ a_{i, 2} ~ \cdots ~
a_{i, n}]^T = [a_{i,1} ~ a_{i, \setminus 1}^T]^T$, which we will do
without further comment for the remainder of the proof.

We now present four lemmas, each controlling one of the terms $Z_l$ in the
expansion~\eqref{eqn:x-init-outlier-expansion}.  We defer proofs of each
of the lemmas to the end of this argument.
We begin by considering the $Z_0$ term, which
(because $\proj$ is rank one) satisfies
\begin{equation*}
  Z_0 = \underbrace{\bigg(\frac{1}{m} \sum_{i = 1}^m \<a_i, d\subopt\>^2
    \indic{i \in \inliersin}\bigg)}_{\defeq z_0} d\subopt {d\subopt}^T.
\end{equation*}
Recalling the definition~\eqref{eqn:q-fail} of the
constants $\delta_q$ and $w_q$ in the statement of the proposition,
we have the following lemma.
\begin{lemma}
  \label{lemma:z0-bound}
  Define the random quantities $z_0 = \frac{1}{m} \sum_{i=1}^m \<a_i,
  d\subopt\>^2 \indic{i\in \inliersin}$ and $z_2 = \frac{1}{m}
  |\inliersin|$. Then for $t \in [0, 1]$,
  \begin{equation*}
    \P\left(z_2 \ge z_0 + (1 - 2\pfail)(1 - t)\delta_q \right) 
    \ge 1 -
    \exp\left(-\frac{1}{10} (1 - 2\pfail) m\right)
    - 2 \exp\left(-\frac{m}{4}\right)
    - \exp\left(-\frac{m t^2 \delta_q^2}{4 w_q^2}\right).
  \end{equation*}
\end{lemma}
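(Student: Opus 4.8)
The plan is to reduce everything to a statement about the order statistics of the inlying Gaussian coordinates $a_{i,1} := \langle a_i, d\subopt\rangle$, and then to tame the data-dependent median threshold by a deterministic $\chi^2_1$ quantile. By rotational invariance of the Gaussian (Assumption~\ref{assumption:normal}) I may take $d\subopt = e_1$, so that $b_i = \ltwo{x\subopt}^2 a_{i,1}^2$ on $\inliers$ and $z_2 - z_0 = \frac1m\sum_{i \in \inliersin}(1 - a_{i,1}^2)$. Because the selection $\selected = \{i : b_i \le \median(\{b_j\})\}$ is scale-invariant, the chosen inliers are exactly those with $a_{i,1}^2 \le \tau\opt$, where $\tau\opt := \median(\{b_j\})/\ltwo{x\subopt}^2$ is a random threshold; equivalently $\inliersin$ consists of the $K := |\inliersin|$ inliers with smallest $a_{i,1}^2$. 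Two facts I would record immediately: since the median forces $|\selected| \ge m/2$ while $|\outliersin| \le |\outliers| = \pfail m$, we have the deterministic bound $z_2 = K/m \ge \frac{1-2\pfail}{2}$; and writing $z_0 = z_2\,\bar a^2$ with $\bar a^2$ the mean of the $K$ smallest inlying $a_{i,1}^2$, the progress bound $z_2 - z_0 = z_2(1-\bar a^2)$ reduces to showing $1 - \bar a^2 \ge (1-t)\delta_q$.

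Next I would pin $\tau\opt$ below the deterministic quantile $w_q^2$ defined by $\P(W^2 \le w_q^2) = \qfail$, $W \sim \normal(0,1)$. The point is that $\qfail = \frac{3-2\pfail}{4(1-\pfail)}$ is chosen so that $(1-\pfail)\qfail = \frac{3-2\pfail}{4} = \frac12 + \frac{1-2\pfail}{4}$, i.e.\ the expected number $N := \#\{i \in \inliers : a_{i,1}^2 \le w_q^2\}$ of inliers below $w_q^2$ exceeds $m/2$ by a margin proportional to $(1-2\pfail)m$. Since the $a_{i,1}^2$ for $i \in \inliers$ are i.i.d.\ $\chi^2_1$, a (one-sided Bernstein) bound gives $N \ge m/2$ with probability of the form $1 - \exp(-c(1-2\pfail)m)$. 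On this event at least $m/2$ of the $b_j$ lie below $\ltwo{x\subopt}^2 w_q^2$ — the inliers alone suffice, so no independence of the corrupted values $\noise_i$ is needed and the argument is valid under both Models~\ref{model:full-independence} and~\ref{model:partial-independence} — hence $\median(\{b_j\}) \le \ltwo{x\subopt}^2 w_q^2$, i.e.\ $\tau\opt \le w_q^2$. In particular $\inliersin \subseteq \{i \in \inliers : a_{i,1}^2 \le w_q^2\}$, so $K \le N$.

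Finally I would upper bound $\bar a^2$. Being the smallest $K \le N$ of the $N$ inliers below $w_q^2$, the selected inliers have average $\bar a^2 \le \frac{1}{N}\sum_{i\in\inliers}a_{i,1}^2\indic{a_{i,1}^2 \le w_q^2}$, the empirical version of $\E[W^2 \mid W^2 \le w_q^2] = 1-\delta_q$. The numerator is an average of i.i.d.\ variables bounded by $w_q^2$ (variance proxy $\lesssim w_q^2$) with mean $(1-\pfail)\qfail(1-\delta_q)$, while $N/m$ concentrates at $(1-\pfail)\qfail$; a one-sided Bernstein inequality (Lemma~\ref{lemma:one-sided-bernstein}) applied to the numerator, together with the lower bound on $N$, yields $\bar a^2 \le 1 - (1-t)\delta_q$ with probability $1 - \exp(-c\,m t^2 \delta_q^2 / w_q^2)$ — the $t$-slack and the $\delta_q^2/w_q^2$ rate reflecting a tolerable deviation $t\delta_q$ against range $w_q^2$. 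Combining this deficit bound with $z_2 \ge \frac{1-2\pfail}{2}$ through $z_2 - z_0 = z_2(1-\bar a^2)$, and union bounding over the events above (one for $N$, one for the numerator, plus the crude $\chi^2$ concentration used along the way), produces a progress bound of the stated form together with the claimed failure probability.

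The crux, and the only genuinely delicate point, is that $\tau\opt$ is a complicated statistic of \emph{all} the data, including the adversarially chosen corruptions $\noise_i$ under Model~\ref{model:partial-independence}; consequently $\inliersin$ is not an i.i.d.-indexed set and concentration cannot be applied to it directly. The device that resolves this is the deterministic sandwich by $w_q^2$: once $\tau\opt \le w_q^2$ is established using the inliers alone, every quantity of interest is dominated by the fixed-threshold, fully independent statistics $\sum_{i\in\inliers}(\cdot)\,\indic{a_{i,1}^2 \le w_q^2}$, to which Lemmas~\ref{lemma:matrix-concentration} and~\ref{lemma:one-sided-bernstein} apply cleanly. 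The remaining effort is bookkeeping — choosing $\qfail$ so the slack over $m/2$ is exactly proportional to $(1-2\pfail)$, and pushing the parameter $t$ through the conditional-mean estimate so it surfaces as the factor $(1-t)\delta_q$.
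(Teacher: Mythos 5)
Your proposal is correct and is essentially the paper's own argument: after the same rotational reduction to $\frac{1}{m}\sum_{i\in\inliersin}(1 - a_{i,1}^2)$, you pin the random median threshold below the deterministic quantile $w_q^2$ using the inliers alone via a Bernstein bound on indicator counts (the paper's Lemma~\ref{lemma:quantile-concentration-gaussian}), exploit that the selected inliers are the smallest inlying values below $w_q^2$ so their average deficit dominates that over $\{i \in \inliers : a_{i,1}^2 \le w_q^2\}$, and finish with the deterministic count $|\inliersin| \ge (\half - \pfail)m$, exactly as in Section~\ref{sec:proof-z0-bound}. The only difference is cosmetic---where the paper conditions on the random index set and applies Hoeffding to the truncated law (Lemma~\ref{lemma:small-inliers-good}), you bound a ratio of two unconditional fixed-threshold i.i.d.\ sums via Bernstein, which if anything matches the stated $\delta_q^2/w_q^2$ exponent more faithfully (a strict Hoeffding application with range $w_q^2$ would give $\delta_q^2/w_q^4$)---and, like the paper's own proof, your argument actually delivers $z_2 - z_0 \ge \frac{1-2\pfail}{2}(1-t)\delta_q$, a factor of $2$ short of the lemma's display.
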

\noindent
See Sec.~\ref{sec:proof-z0-bound} for a proof of the lemma.
We thus see that
it is likely that $z_0$ is substantially smaller than the
rough fraction of inlying indices selected.

We now argue that $Z_1$ is likely to be small because it is the
sum of products of independent vectors.
\begin{lemma}
  \label{lemma:z1-bound}
  For $t \ge 0$ we have
  \begin{equation*}
    \P\left(\opnorm{Z_1}
    \geq 2 \sqrt{\frac{n}{m}} + t
    \right)
    \leq \exp \left(-\frac{mt^2}{8}\right) + \exp\left(-\frac{m}{2}\right).
  \end{equation*}
\end{lemma}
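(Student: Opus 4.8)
The plan is to exploit Gaussian rotation invariance to collapse $Z_1$ into the Euclidean norm of a single vector that is \emph{conditionally} Gaussian, and then invoke standard $\chi^2$ and Gaussian-norm concentration. First I would assume without loss of generality that $d\subopt = e_1$ (by Assumption~\ref{assumption:normal} and rotational invariance), and split each measurement as $a_i = (a_{i,1}, a_{i,\setminus 1})$ with $a_{i,1}\in\R$ and $a_{i,\setminus 1}\in\R^{n-1}$, which are independent. With $\proj = e_1 e_1^T$ and $\projperp = I_n - e_1 e_1^T$, the summand reduces to the matrix whose only nonzero blocks are $a_{i,1} a_{i,\setminus 1}^T$ (first row) and its transpose (first column). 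Hence $Z_1$ is the anti-diagonal block matrix with off-diagonal block
\begin{equation*}
  v \defeq \frac{1}{m} \sum_{i \in \inliersin} a_{i,1} a_{i,\setminus 1} \in \R^{n-1},
\end{equation*}
and a one-line eigenvalue computation gives $\opnorm{Z_1} = \ltwo{v}$. This reduces the lemma to a tail bound on $\ltwo{v}$.

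The crux of the argument is an independence observation. The selected inlying set $\inliersin = \inliers \cap \selected$ is determined by $\median(\{b_j\}_{j=1}^m)$ together with the inlier values $b_i = \ltwo{x\subopt}^2 a_{i,1}^2$ (for $i \in \inliers$) and the corrupted values $b_j = \noise_j$ (for $j \in \outliers$). Under either Model~\ref{model:full-independence} or~\ref{model:partial-independence}, none of these quantities depends on the perpendicular components $\{a_{i,\setminus 1}\}_{i \in \inliers}$: the inlier $b_i$ depend only on the first coordinates, and the inlier vectors are independent of the outlier data in both models. Consequently, conditioning on the $\sigma$-field generated by the selection $\inliersin$ and the first coordinates $\{a_{i,1}\}_{i=1}^m$, the vectors $\{a_{i,\setminus 1}\}_{i \in \inliersin}$ remain i.i.d.\ $\normal(0, I_{n-1})$. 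Thus $v$ is conditionally $\normal\!\left(0, \frac{S}{m^2} I_{n-1}\right)$, where $S \defeq \sum_{i \in \inliersin} a_{i,1}^2$.

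To finish, I would write $\ltwo{v} = \frac{\sqrt{S}}{m} \ltwo{g}$ with $g \sim \normal(0, I_{n-1})$ conditionally, and apply Gaussian concentration of the $1$-Lipschitz map $g \mapsto \ltwo{g}$ (with $\E[\ltwo{g}] \le \sqrt{n-1} \le \sqrt{n}$) to get $\P(\ltwo{g} \ge \sqrt{n} + s) \le e^{-s^2/2}$. Separately, since $S \le \sum_{i=1}^m a_{i,1}^2 \sim \chi^2_m$, a standard $\chi^2$ tail bound gives $S \le 4m$ with probability at least $1 - e^{-m/2}$. On this event $\frac{\sqrt{S}}{m} \le \frac{2}{\sqrt{m}}$, so taking $s = \frac{t\sqrt{m}}{2}$ yields $\ltwo{v} \le 2\sqrt{n/m} + t$ off an event of conditional probability at most $e^{-mt^2/8}$; a union bound over the two events produces the stated $\exp(-mt^2/8) + \exp(-m/2)$. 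The main obstacle I anticipate is making the conditional independence precise: one must argue carefully that conditioning on the \emph{globally defined} median threshold and the full collection of first coordinates (which is what determines $\inliersin$) leaves the perpendicular inlier components i.i.d.\ Gaussian, and that this remains valid under the adversarial dependence permitted by Model~\ref{model:partial-independence}.
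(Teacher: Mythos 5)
Your proposal is correct and follows essentially the same route as the paper: reduce $\opnorm{Z_1}$ to the Euclidean norm of $v = \frac{1}{m}\sum_{i \in \inliersin} a_{i,1} a_{i,\setminus 1}$, observe that conditioning on the first coordinates and the noise values (which determine $\inliersin$) leaves the perpendicular inlier components i.i.d.\ Gaussian, apply Gaussian norm concentration, and control the random scale factor ($S \le 4m$ in your version, $\what{L} \le 2$ with $\what{L}^2 = \frac{1}{m}\sum_{i\in\inliers} a_{i,1}^2$ in the paper's) with probability $1 - e^{-m/2}$ before a union bound. Your phrasing of the weighted sum as a conditionally $\normal(0, \frac{S}{m^2} I_{n-1})$ vector is just a repackaging of the paper's Lipschitz-concentration step with random Lipschitz constant, and yields the identical bound $e^{-mt^2/8} + e^{-m/2}$.
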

\noindent
See Section~\ref{sec:proof-z1-bound} for a proof.
We can also show that $Z_2$ is well-behaved in the sense that it is
approximately a scaled multiple of $(I - d\subopt {d\subopt}^T)$.
\begin{lemma}
  \label{lemma:z2-bound}
  Let $z_2$ be the random quantity $z_2 \defeq
  \frac{1}{m}|\inliersin|$. There exists a numerical constant $C$ such that
  for $t \in [0, 1]$ we have
  \begin{equation*}
    \P \left(\opnorm{Z_2 - z_2 (I_n - d\subopt {d\subopt}^T)}
    \geq C\left(\sqrt{\frac{n}{m} + t}\right) \right)
    \le \exp(-mt).
  \end{equation*}
\end{lemma}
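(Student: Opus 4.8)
The plan is to mirror the conditioning argument used in the proof of Lemma~\ref{lemma:non-noisy-initial-five}: the only obstacle is that the selection set $\selected$ (and hence $\inliersin$) is a complicated random object, but---crucially---it depends on the measurement vectors \emph{only through their components along $d\subopt$}, which are independent of the components $\projperp a_i$ that actually enter $Z_2$. Working under Assumption~\ref{assumption:normal} and the convention $d\subopt = e_1$, write $a_i = [a_{i,1}~a_{i,\setminus 1}^T]^T$, so that $\projperp a_i = [0~a_{i,\setminus 1}^T]^T$ depends only on $a_{i,\setminus 1}$. First I would observe that $\inliersin = \inliers \cap \selected$ is measurable with respect to the $\sigma$-field $\mc{G}$ generated by $(\{a_{i,1}\}_{i=1}^m, \{\noise_i\}_{i \in \outliers}, \inliers)$: for inliers $b_i = \ltwo{x\subopt}^2 a_{i,1}^2$ depends only on $a_{i,1}$, the outlier values $\noise_i$ and the random partition into $\inliers,\outliers$ are $\mc{G}$-measurable by construction, and the threshold $\median(\{b_i\})$ is a function of these. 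Since the $a_i$ are independent across $i$ and $a_{i,1}\perp a_{i,\setminus 1}$, the family $\{a_{i,\setminus 1}\}_{i\in\inliers}$ is independent of $\mc{G}$; this holds under both Model~\ref{model:full-independence} and Model~\ref{model:partial-independence}, since even when $\noise_j$ depends on $a_j$ for an outlier $j$, it cannot depend on $a_{i,\setminus 1}$ for a distinct inlier $i$.

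Consequently, conditioning on $\{\inliersin = \indset\}$ for a fixed $\indset$ does not alter the law of $\{\projperp a_i\}_{i\in\indset}$, which remains i.i.d.\ $\normal(0, \projperp)$ on the orthogonal complement of $d\subopt$; in particular $\E[\projperp a_i a_i^T\projperp] = \projperp = I_n - d\subopt d\subopt^T$. With $k = |\indset|$ and $z_2 = k/m$ deterministic under this conditioning, I would apply the matrix concentration bound of Lemma~\ref{lemma:matrix-concentration} to the $k$ vectors $\{\projperp a_i\}_{i\in\indset}$ (which are $O(1)$-sub-Gaussian), obtaining, after factoring $Z_2 - z_2\projperp = \frac km(\frac1k\sum_{i\in\indset}\projperp a_i a_i^T\projperp - \projperp)$,
\begin{equation*}
  \P\left(\opnorm{Z_2 - z_2\projperp}
  \ge \frac{k}{m}\, C\max\left\{\sqrt{\tfrac nk + s},\ \tfrac nk + s\right\}
  \,\Big|\, \inliersin = \indset\right) \le e^{-k s}.
\end{equation*}

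The key trick is the choice $s = (m/k)\,t$, which makes the conditional failure probability exactly $e^{-mt}$ for \emph{every} $\indset$ while collapsing the deviation: since $k \le m$ one checks $\frac km\sqrt{n/k + (m/k)t} = \sqrt{\tfrac km}\sqrt{\tfrac nm + t} \le \sqrt{\tfrac nm + t}$ and $\frac km(n/k + (m/k)t) = n/m + t$, so in the regime $n/m + t \le 1$ the bound is $\le C\sqrt{n/m+t}$ (outside it one keeps the $\max$ form, harmless since $m\gtrsim n$ and $t\le 1$). Finally, summing over the disjoint events $\{\inliersin=\indset\}$ by the law of total probability---not a union bound---gives $\P(\opnorm{Z_2 - z_2\projperp}\ge C\sqrt{n/m+t}) \le \sum_\indset e^{-mt}\,\P(\inliersin=\indset) = e^{-mt}$, as claimed. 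The main obstacle, and the step deserving the most care, is the measurability/independence claim of the first paragraph, since it is what lets us treat the data-dependent index set $\inliersin$ as if it were fixed; everything afterward is a routine application of Gaussian matrix concentration.
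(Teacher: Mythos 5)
Your proposal is correct and follows essentially the same route as the paper: the paper likewise conditions on the selection (formalized by introducing independent copies $\{a_i'\}$ and using that $\inliersin$ is determined by $\{a_{i,1}\}_{i\in\inliers}$ and $\{\noise_i\}_{i\in\outliers}$, which are independent of the orthogonal components $a_{i,\setminus 1}$), applies Lemma~\ref{lemma:matrix-concentration} conditionally, and integrates out the conditioning. The only cosmetic difference is that the paper applies the concentration bound with the $1/m$ normalization to the zero-padded vectors $a'_{i,\setminus 1}\indic{i\in\inliersin}$ rather than your $1/k$ normalization followed by the rescaling $s = (m/k)t$; both yield the stated $C\sqrt{n/m+t}$ deviation with failure probability $e^{-mt}$.
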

\noindent
See Section~\ref{sec:proof-z2-bound} for a proof of the lemma.

Finally, we control the size of the error matrix $Z_3$ in the
expansion~\eqref{eqn:x-init-outlier-expansion}, which
corresponds to the contribution of the outlying measurements
$a_i$ that are included in the initialization matrix $X\init$.
We provide two slightly different guarantees,
depending on the model (strength) of adversarial noise $\noise$ assumed.
\begin{lemma}
  \label{lemma:z3-bound}
  Define the random quantity
  $z_3 \defeq \frac{1}{m}|\outliers \cap \selected|
  \le \pfail$.
  There is a numerical constant $C$ such that the following hold.
  \begin{enumerate}[(i)]
  \item Under the independent noise model~\ref{model:full-independence}, 
    for all $t\in [0, 1]$,
    \begin{equation*}
      \P\left(\opnorm{Z_3 - z_3 I_n} \geq C\sqrt{\frac{n}{m} + t}\right)
      \leq \exp(-mt).
    \end{equation*}
  \item Under the adversarial noise model~\ref{model:partial-independence},
    for all $t\in [0, 1]$,
  \begin{equation*}
    \P\left(\opnorm{Z_{3}} \geq \pfail + C \sqrt{\frac{n}{m} + t}\right)
    \leq \exp(-mt).
  \end{equation*}
  \end{enumerate}
\end{lemma}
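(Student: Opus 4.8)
The plan is to treat the two models separately, since Model~\ref{model:partial-independence} only asks for a crude operator-norm bound while Model~\ref{model:full-independence} requires the sharper statement that $Z_3$ concentrates around the scalar multiple $z_3 I_n$. Throughout I use that $Z_3 \defeq \frac1m\sum_{i} a_i a_i^T \indic{i\in\outliersin}$, that $z_3 = \frac1m|\outliersin|$, and that the $a_i$ are standard Gaussian so $\subgauss^2 = O(1)$.

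I would dispatch Model~\ref{model:partial-independence} first, as it is immediate from monotonicity of the operator norm on the positive semidefinite cone. Since $\outliersin = \outliers \cap \selected \subseteq \outliers$ and each $a_i a_i^T \succeq 0$, we have $Z_3 \preceq \frac1m\sum_{i\in\outliers} a_i a_i^T$, so $\opnorm{Z_3} \le \opnorm{\frac1m\sum_i \indic{i\in\outliers} a_i a_i^T}$. The outlier indicators are independent of the measurement vectors (the failed indices are drawn at random), so inequality~\eqref{eqn:concentration-outlier-A} applies verbatim and gives $\opnorm{\frac1m\sum_i \indic{i\in\outliers} a_i a_i^T - \pfail I_n} \le C\subgauss^2\max\{\sqrt{n/m+t},\, n/m+t\}$ with probability at least $1 - e^{-mt}$. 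Using $\subgauss^2 = O(1)$, $t \le 1$, and the triangle inequality yields $\opnorm{Z_3} \le \pfail + C\sqrt{n/m+t}$, which is part~(ii). Crucially this argument never looks at how the adversary routed corrupted vectors into $\selected$, so it is robust to the arbitrary dependence Model~\ref{model:partial-independence} permits.

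For Model~\ref{model:full-independence} the extra independence lets me freeze the \emph{set} $\outliersin$ before exposing the outlying measurement vectors. Condition on the $\sigma$-field $\mathcal{C}$ generated by the noise $\{\noise_i\}$, the outlier set $\outliers$, and the inlier vectors $\{a_j\}_{j\in\inliers}$. Under Model~\ref{model:full-independence} these are jointly independent of $\{a_i\}_{i\in\outliers}$, and they determine every $b_j$ (inlier values $\<a_j, x\subopt\>^2$ and corrupted values $\noise_j$), hence the threshold $\median(\{b_j\})$ and therefore the now-deterministic set $\outliersin = \{i\in\outliers : \noise_i \le \median\}$. Given $\mathcal{C}$ the vectors $\{a_i\}_{i\in\outliersin}$ are thus an i.i.d.\ standard Gaussian family of fixed cardinality $k \defeq |\outliersin| = m z_3$, with $z_3$ measurable with respect to $\mathcal{C}$. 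I would then apply Lemma~\ref{lemma:matrix-concentration} to these $k$ vectors with deviation parameter $s = mt/k$: writing $Z_3 - z_3 I_n = \frac{k}{m}\big(\frac1k\sum_{i\in\outliersin} a_i a_i^T - I_n\big)$ and multiplying the lemma's conclusion by $k/m$ gives $\opnorm{Z_3 - z_3 I_n} \le \frac{k}{m}\, C\max\{\sqrt{4n/k + s},\, 4n/k+s\}$ with conditional probability at least $1 - e^{-ks} = 1 - e^{-mt}$. A short computation using $k \le m$ shows $\frac{k}{m}\sqrt{4n/k+s} \le \sqrt{4n/m+t}$ and $\frac km(4n/k+s) = 4n/m+t$, so the bound collapses to $C\sqrt{n/m+t}$ after adjusting constants and using $t\le 1$ (the degenerate case $k=0$, where $Z_3=0$ and $z_3=0$, is trivial). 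Since the failure probability $e^{-mt}$ does not depend on $\mathcal{C}$, taking expectations over $\mathcal{C}$ removes the conditioning and yields part~(i).

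The delicate point is the conditioning step in Model~\ref{model:full-independence}: one must argue carefully that, after conditioning on $\mathcal{C}$, the index set $\outliersin$ is frozen while the outlying measurement vectors retain their i.i.d.\ $\normal(0,I_n)$ law. This is exactly where full independence of noise and measurements is used, and it is what fails under Model~\ref{model:partial-independence}, forcing the weaker bound. The secondary technical care is the scaling $s = mt/k$, chosen so that the $e^{-ks}$ tail of Lemma~\ref{lemma:matrix-concentration} becomes a clean, $k$-independent $e^{-mt}$ while the deviation still contracts by the factor $k/m$.
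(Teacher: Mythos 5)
Your proposal is correct and takes essentially the same approach as the paper: part (ii) by PSD monotonicity $Z_3 \preceq \frac{1}{m}\sum_{i\in\outliers} a_i a_i^T$ together with the concentration bound~\eqref{eqn:concentration-outlier-A}, and part (i) by conditioning so that $\outliersin$ is frozen while the outlying measurement vectors remain i.i.d.\ Gaussian, then invoking Lemma~\ref{lemma:matrix-concentration}. The only difference is bookkeeping: the paper applies the matrix concentration to the $m$ indicator-truncated independent copies $a_i'\indic{i\in\outliersin}$ normalized by $m$, while you apply it to the $|\outliersin|$ retained vectors with the rescaled deviation parameter $s = mt/|\outliersin|$; the two computations yield the same bound.
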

\noindent
See Section~\ref{sec:proof-z3-bound} for a proof.

With these four lemmas in hand, we can prove the result of the proposition
by applying the eigenvector perturbation
Lemma~\ref{lemma:simple-sin}. The
expansion~\eqref{eqn:x-init-outlier-expansion},
coupled with the four lemmas defining
the constants $z_0 = \frac{1}{m} \sum_{i \in \inliersin} \<a_i, d\subopt\>^2$,
$z_2 = \frac{1}{m} |\inliersin|$, and $z_3 = \frac{1}{m}
|\outliersin|$, guarantees that
\begin{align*}
  X\init = Z_0 + Z_1 + Z_2 + Z_3
  & = z_0 d\subopt {d\subopt}^T
  + z_2 (I_n - d\subopt {d\subopt}^T)
  + z_3 I_n
  + \Delta \\
  & = (z_2 + z_3) I_n - (z_2 - z_0) d\subopt {d\subopt}^T
  + \Delta,
\end{align*}
where the perturbation $\Delta \in \R^{n \times n}$ satisfies
\begin{equation*}
  \opnorm{\Delta}
  \le
  \opnorm{Z_1} + \opnorm{Z_2 - z_2(I_n - d\subopt {d\subopt}^T)}
  +
  \begin{cases} \opnorm{Z_3 - z_3 I_n} &
    \mbox{under model~\ref{model:full-independence}} \\
    \opnorm{Z_3} & \mbox{under model~\ref{model:partial-independence}}.
  \end{cases}
\end{equation*}
On the event that $z_2 > z_0$, the minimal eigenvector
of $(z_2 + z_3) I_n - (z_2 - z_0) d\subopt {d\subopt}^T$ is $d\subopt$
with eigengap $z_2 - z_0$.
Applying Lemma~\ref{lemma:simple-sin} gives that
$\what{d} = \argmin_{d \in \sphere^{n-1}} d^T X\init d$ satisfies
\begin{equation}
  \label{eqn:dist-d-hat-d-opt}
  2^{-\half} \dist(\what{d}, \{\pm d\subopt\})
  \le \sqrt{1 - \langle \what{d}, d\subopt \rangle^2}
  \le \frac{\opnorm{\Delta}}{
    \hinge{z_2 - z_0 - \opnorm{\Delta}}}.
\end{equation}

Applying Lemmas~\ref{lemma:z1-bound} and~\ref{lemma:z2-bound},
we have for some numerical constant $C < \infty$ that
\begin{equation*}
  \opnorm{Z_1} + \opnorm{Z_2 - z_2 (I_n - d\subopt {d\subopt}^T)}
  \le C \sqrt{\frac{n}{m} + t}
  ~~ \mbox{with~probability~} \ge 1 - e^{-mt} - e^{-m/2}
\end{equation*}
for any $t \ge 0$. We now consider the two noise models in turn.
Under Model~\ref{model:full-independence}, Lemma~\ref{lemma:z3-bound}
then implies that
$\opnorm{\Delta} \le C \sqrt{\frac{n}{m} + t}$ with probability
at least $1 - e^{-mt} - e^{-m/2}$.
Recalling Lemma~\ref{lemma:z0-bound}, we have for the constants
$w_q^2$ and $\delta_q > 0$ defined in the lemma that 
$z_2 \ge z_0 + \frac{1 - 2 \pfail}{2} \delta_q$ with probability at least
$1 - \exp(-c (1 - 2 \pfail) m) - \exp(-c m \delta_q^2 / w_q^2)$,
where $c > 0$ is a numerical constant. That is, the perturbatoin
inequality~\eqref{eqn:dist-d-hat-d-opt} implies that
under Model~\ref{model:full-independence}, we have with
probability at
least $1 - \exp(-m t) - \exp(-c(1 - 2 \pfail) m)
- \exp(-c m \delta_q^2 / w_q^2)$ that
\begin{equation*}
  2^{-\half} \dist(\what{d}, \{\pm d\subopt\})
  \le \frac{C \sqrt{n/m + t}}{
    \hinge{(1 - 2 \pfail) \delta_q - C \sqrt{n/m + t}}},
\end{equation*}
where $0 < c, C < \infty$ are numerical constants.
Under Model~\ref{model:partial-independence},
we can bound $\opnorm{Z_3}$ by
$\pfail + C \sqrt{n/m + t}$ with probability $1 - e^{-mt} - e^{-m/2}$
(recall Lemma~\ref{lemma:z3-bound}), so that with the same
probability as above, we have
\begin{equation*}
  2^{-\half}
  \dist(\what{d}, \{\pm d\subopt\})
  \le \frac{\pfail + C \sqrt{n/m + t}}{
    \hinge{(1 - 2\pfail) \delta_q - C \sqrt{n/m + t} - \pfail}}
  ~~ \mbox{under Model~\ref{model:partial-independence}}.
\end{equation*}
This is the proposition.

\subsubsection{Proof of Lemma~\ref{lemma:z0-bound}}
\label{sec:proof-z0-bound}

As noted earlier, it is no loss of generality to
assume that $d\subopt = e_1$, the first standard basis vector, so that using
our definitions of $z_0 = \frac{1}{m} \sum_{i \in \inliersin} \<a_i,
d\subopt\>^2$ and $z_2 = \frac{1}{m} |\inliersin|$, we have
\begin{equation}
  \label{eqn:clementines}
  z_2 - z_0 = \frac{1}{m}
  \sum_{i=1}^m (1 - a_{i, 1}^2) \indic{i\in \inliersin}.
\end{equation}
Given that we choose in indices $\selected$ by a median,
it is helpful to have the following median concentration result.
\begin{lemma}
  \label{lemma:quantile-concentration-gaussian}
  Let $\{W_i\}_{i=1}^m \simiid \normal(0, 1)$.
  Fix $p \in (0, 1)$ and choose $w_q \ge 0$ so that
  $q \defeq \P(W^2 \le w_q^2) = 2(1 - \Phi(w_q)) >
  p$. Then
  \begin{equation*}
    \P\left(\quantile_p \left(\{W_i^2\}_{i=1}^m\right) \geq w_q^2\right)
    \leq \exp\left(-\frac{m (q-p)^2}{2(q-p)/3 + 2q(1-q)}\right).
  \end{equation*}
\end{lemma}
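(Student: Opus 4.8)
The plan is to convert the statement about the empirical $p$-quantile into a statement about a sum of i.i.d.\ indicator variables, and then apply a Bernstein-type concentration bound. Let $N \defeq \sum_{i=1}^m \indic{W_i^2 \le w_q^2}$ count the observations falling below the threshold $w_q^2$, and write $c_{(1)} \le \cdots \le c_{(m)}$ for the order statistics of the sample $\{W_i^2\}_{i=1}^m$. By definition the $p$-quantile interpolates linearly between $c_{(\lfloor mp\rfloor)}$ and $c_{(\lceil mp\rceil)}$, so in particular $\quantile_p(\{W_i^2\}) \le c_{(\lceil mp\rceil)}$. Consequently, on the event $\{\quantile_p(\{W_i^2\}) \ge w_q^2\}$ we must have $c_{(\lceil mp\rceil)} \ge w_q^2$, meaning at most $\lceil mp\rceil - 1$ of the values $W_i^2$ lie strictly below $w_q^2$.

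Because $W^2$ has a continuous distribution, almost surely no $W_i^2$ equals $w_q^2$, so $N$ coincides with the number of values strictly below $w_q^2$; hence $N \le \lceil mp\rceil - 1 \le mp$ on this event. This yields the key containment $\{\quantile_p(\{W_i^2\}) \ge w_q^2\} \subseteq \{N/m \le p\}$ up to a null set, and therefore $\P(\quantile_p(\{W_i^2\}) \ge w_q^2) \le \P(N/m \le p)$.

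It remains to bound the lower-tail probability $\P(N/m \le p)$. The summands $X_i \defeq \indic{W_i^2 \le w_q^2}$ are i.i.d.\ Bernoulli with mean $q = \P(W^2 \le w_q^2)$ and variance $q(1-q)$, so $\frac{1}{m}\E[N] = q$. Since $q > p$, the event $\{N/m \le p\}$ is a deviation of size $t = q - p > 0$ below the mean, and I would apply the one-sided Bernstein inequality (Lemma~\ref{lemma:one-sided-bernstein}) with $\sigma^2 = q(1-q)$ to obtain $\P(N/m \le q - t) \le \exp(-m t^2 / (2 q(1-q)))$. Finally, since $q > p$ gives $2q(1-q) \le 2q(1-q) + 2(q-p)/3$, this bound is at least as strong as the claimed $\exp(-m(q-p)^2/(2(q-p)/3 + 2q(1-q)))$, completing the argument; equivalently one may invoke the standard variance-form Bernstein inequality for bounded summands, which reproduces the stated denominator verbatim.

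The main obstacle is the first step: the careful bookkeeping relating the interpolated empirical quantile to the count $N$ through the floor/ceiling conventions in the quantile definition, together with the appeal to continuity of $W^2$ to rule out ties, so that the containment $\{\quantile_p(\{W_i^2\}) \ge w_q^2\} \subseteq \{N \le mp\}$ holds almost surely. Once this deterministic reduction is in place, the probabilistic content is an immediate application of Bernstein's inequality to a bounded i.i.d.\ sum.
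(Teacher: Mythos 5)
Your proposal is correct and follows the same two-step route as the paper's proof: convert the quantile event into the event that the count $N = \sum_{i=1}^m \indic{W_i^2 \le w_q^2}$ is at most $mp$, then apply Bernstein's inequality to this i.i.d.\ Bernoulli sum with deviation $t = q - p$ and variance $q(1-q)$. Your bookkeeping in the reduction step is in fact more careful than the paper's, which simply asserts the equivalence of the two events and moves on.

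One caveat on the probabilistic step. Your preferred route through Lemma~\ref{lemma:one-sided-bernstein} should not be relied upon, because that lemma as stated (non-negativity plus a \emph{variance} bound) is false in general, and it fails precisely in the regime relevant here. Take $X_i$ i.i.d.\ Bernoulli$(q)$ with $q = 0.9$ and $t = 0.4$: by Cram\'er's theorem, $\P(\frac{1}{m}\sum_i X_i \le 0.5)$ decays as $\exp\left(-m\,\mathrm{KL}(0.5\,\|\,0.9) + o(m)\right) \approx \exp(-0.51\,m)$, which for large $m$ exceeds the bound $\exp\left(-m t^2/(2q(1-q))\right) \approx \exp(-0.89\,m)$ that the lemma asserts; the valid one-sided inequality for non-negative summands has $\E[X_i^2]$ in place of $\var(X_i)$ in the denominator. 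This is not a harmless technicality in the present application, since $q = \qfail \ge 3/4$: the corrected denominator $2\E[X_i^2] = 2q$ exceeds the target denominator $2q(1-q) + 2(q-p)/3$ whenever $3q^2 > q - p$, which always holds when $q \ge 3/4$, so the repaired one-sided route gives a bound strictly weaker than the one claimed in Lemma~\ref{lemma:quantile-concentration-gaussian} and cannot be used to deduce it. Your argument therefore stands only through your ``equivalently'' clause---the classical Bernstein inequality for bounded summands, whose denominator $2q(1-q) + 2(q-p)/3$ matches the statement verbatim---and that is exactly the inequality the paper invokes.
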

\begin{proof}
  Note that
  $\quantile_p (\{W_i^2\}_{i=1}^m) \geq w_q^2$
  if and only if
  $\frac{1}{m} \sum_{i=1}^m \indic{W_i^2 \leq w_q^2} \leq p$.
  Using that $\var(\indic{W_i^2 \le w_q^2}) = q(1 - q)$,
  Bernstein's inequality applied to the i.i.d.\ sum
  $\sum_{i = 1}^m (\indics{W_i^2 \le w_q^2} - q)$ gives the result.
\end{proof}

We now control the median of the perturbed vector $b \in \R^m$.  Since we
have $|\outliers| \leq \pfail m$, we have deterministic result
\begin{equation*}
  \median\left(\{b_i\}_{i=1}^m\right) 
  \leq 
  \quantile_{\frac{1}{2(1-\pfail)}}
  \left(\{\<a_i, x\subopt\>^2\}_{i\in \inliers}\right),
\end{equation*}
so by upper bounding the right hand quantity we can upper
bound $\median(\{b_i\})$, which in turn allows us to control
$\selected$. 
By the definition of $w_q$ and $\qfail$ in
Eq.~\eqref{eqn:q-fail}, which satisfies $\delta = \qfail -
\frac{1}{2(1 - \pfail)} = \frac{1 - 2 \pfail}{4 (1 - \pfail)}$,
Lemma~\ref{lemma:quantile-concentration-gaussian} with $q = \qfail$ and the
fact that $|\inliers| = (1 - \pfail) m$ then implies
\begin{align}
  \lefteqn{\P\left(\median(\{b_i\}_{i=1}^m)
    \ge w_q^2 \ltwo{x\subopt}^2 \right)
    \le \P\left(\quant_{(2(1 - \pfail))^{-1}}(\{\<a_i, x\subopt\>^2\}_{
      i \in \inliers}) \ge w_q^2 \ltwo{x\subopt}^2\right)} \nonumber \\
  & \qquad ~ \le
  \exp\left(-\frac{m(1 - \pfail)\delta^2}{2\delta/3 + 2\delta(1-\delta)}\right)
  = \exp\left(-\frac{3(1 - 2\pfail) m}{4
    (2 + 6(1 - \delta))}\right)
  \le \exp\left(-\frac{3}{32}(1-2\pfail)m\right).
  \label{eqn:medians-usually-good}
\end{align}

\newcommand{\smallinliers}{\inliers_q}

We now consider the indices $i$ that are inliers for which
$\<a_i, d\subopt\>^2$ is small; again letting $w_q \ge 0$ be defined
as in the quantile~\eqref{eqn:q-fail}, we define
\begin{equation*}
  \smallinliers \defeq
  \left\{i \in \inliers \mid \<a_i, d\subopt\>^2 \le w_q^2\right\}
  = \left\{i \in \inliers \mid \<a_i, x\subopt\>^2 \le w_q^2 \ltwo{x\subopt}^2
  \right\}.
\end{equation*}
\begin{lemma}
  \label{lemma:small-inliers-good}
  Let the set of inliers $\smallinliers$ be defined
  as above, and let
  $\delta_q = 1 - \E[W^2 \mid W^2 \le w_q^2]$
  for $W \sim \normal(0, 1)$. Then for
  all $t \ge 0$ we have
  \begin{equation*}
    \P\left(\frac{1}{|\smallinliers|}
    \sum_{i \in \smallinliers} (1 - a_{i,1}^2)
    \le \delta_q - t
    \mid \smallinliers \right)
    \le \exp\left(-\frac{2 |\smallinliers| t^2}{
      w_q^2}\right).
  \end{equation*}
\end{lemma}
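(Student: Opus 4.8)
The plan is to exploit that, with the reduction $d\subopt = e_1$ already in force, the quantities $\<a_i, d\subopt\> = a_{i,1}$ are i.i.d.\ standard normal, and that $\smallinliers$ simply collects those inlying indices for which $a_{i,1}^2 \le w_q^2$. First I would establish the conditional distributional structure that makes everything i.i.d.: conditioning on the realized index set $\smallinliers = \indset$, the coordinates $\{a_{i,1}\}_{i \in \indset}$ are conditionally independent, each distributed as $\mc{L}(W \mid W^2 \le w_q^2)$ for $W \sim \normal(0,1)$. This is the same conditioning device used in the proof of Lemma~\ref{lemma:non-noisy-initial-five}: the inlier/outlier labels are independent of the vectors $a_i$, and the membership condition $a_{i,1}^2 \le w_q^2$ depends only on the mutually independent first coordinates, so for any fixed $\indset$ one has $\mc{L}(\{a_{i,1}\}_{i \in \smallinliers} \mid \smallinliers = \indset) \equivdist \mc{L}(\{a_{i,1}\}_{i \in \indset} \mid \max_{i \in \indset} a_{i,1}^2 \le w_q^2)$, and the latter law factorizes into a product of the one-dimensional truncated laws.

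Given this structure, the remaining steps are short. By the definition $\delta_q = 1 - \E[W^2 \mid W^2 \le w_q^2]$, the conditional mean of each summand is $\E[1 - a_{i,1}^2 \mid i \in \smallinliers] = \delta_q$. Moreover, conditionally each $a_{i,1}^2$ lies in $[0, w_q^2]$, so the summands $1 - a_{i,1}^2$ are conditionally i.i.d.\ and bounded in the interval $[1 - w_q^2, 1]$. Applying Hoeffding's inequality (its one-sided lower-tail form) to these bounded variables then yields, conditionally on the realized set $\smallinliers$, a bound of the form $\P(\frac{1}{|\smallinliers|} \sum_{i \in \smallinliers}(1 - a_{i,1}^2) \le \delta_q - t \mid \smallinliers) \le \exp(-c |\smallinliers| t^2 / w_q^2)$, which is the claimed inequality; the denominator is governed by the length of the interval $[1-w_q^2, 1]$ containing the summands, so that no summation over realizations of $\smallinliers$ is needed since the statement is itself conditional.

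The main obstacle --- really the only nonroutine point --- is the first step: verifying that conditioning on the data-dependent random set $\smallinliers$ preserves independence and leaves the surviving coordinates with exactly the truncated law $\mc{L}(W \mid W^2 \le w_q^2)$. Once this is in place, the mean computation is definitional and the concentration is an immediate invocation of a bounded-difference (Hoeffding) inequality for scalar i.i.d.\ variables; no heavier machinery --- neither Bernstein nor the matrix concentration of Lemma~\ref{lemma:matrix-concentration} --- is required, since here the summands are scalar and uniformly bounded by construction of $\smallinliers$.
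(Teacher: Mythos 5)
Your proposal is correct and takes essentially the same approach as the paper's proof: the same conditioning device (reducing conditioning on the random set $\smallinliers$ to conditioning on the truncation events $a_{i,1}^2 \le w_q^2$ for $i$ in a fixed index set, justified by independence across $i$), the same identification of the conditional mean of each summand with $\delta_q$, and the same application of Hoeffding's inequality to conditionally i.i.d.\ summands confined to an interval of length $w_q^2$. The only difference is cosmetic: you state the final bound with a generic constant $c$, whereas the paper (and the lemma statement) carries the explicit factor $2$ from Hoeffding.
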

\noindent
We defer the proof of Lemma~\ref{lemma:small-inliers-good}, continuing on to
give the proof of Lemma~\ref{lemma:z0-bound}.

We now integrate out the conditioning in
Lemma~\ref{lemma:small-inliers-good}.
Recalling the definition~\eqref{eqn:q-fail}
of $w_q$ in terms of $\pfail$, we have that
$\qfail = \frac{3 - 2 \pfail}{4(1 - \pfail)}
> \half$ for $\pfail \in \openright{0}{1/2}$.
By Hoeffding's inequality we have
$\P(|\smallinliers| \le m/8) \le e^{-m/4}$ because
$|\inliers| \ge (1 - \pfail) m \ge m/2$, whence we obtain
\begin{align*}
  \P\left(\frac{1}{|\smallinliers|}
  \sum_{i \in \smallinliers} (1 - a_{i,1}^2) \le \delta_q
  - t \right)
  & \le \exp\left(-\frac{m t^2}{4 w_q^2}\right)
  \P(|\smallinliers| \ge m/8)
  + \P(|\smallinliers| \le m/8) \\
  & \le \exp\left(-\frac{m t^2}{4 w_q^2}\right)
  + \exp\left(-\frac{m}{4}\right).
\end{align*}
Using the notation $\delta_q$ in Lemma~\ref{lemma:small-inliers-good},
for $t \in [0, 1]$ we
define the event
\begin{equation*}
  \event \defeq \left\{\median(\{b_i\}_{i = 1}^m)
  \le w_q^2 \ltwo{x\subopt}^2,
  |\smallinliers| \ge \frac{m}{8},
  \frac{1}{|\smallinliers|}\sum_{i \in \smallinliers}
  (1 - a_{i,1}^2) \ge (1 - t) \delta_q
  \right\}.
\end{equation*}
We immediately find that
\begin{equation}
  \P(\event) \ge 1 - \exp\left(-\frac{1}{10} (1 - 2 \pfail) m\right)
  - 2 \exp\left(-\frac{m}{4}\right)
  - \exp\left(-\frac{m t^2 \delta_q^2}{4 w_q^2}\right)
\end{equation}
by the preceding display and inequality~\eqref{eqn:medians-usually-good}.
Recalling the set $\smallinliers = \{i\in \inliers: \<a_i, d\subopt\>^2 \leq
w_q^2\}$, we have on the event $\event$ that the selected inliers satisfy
$\inliersin \subset \smallinliers$ (because $\median(b) \le w_q^2
\ltwo{x\subopt}^2$).
Because of our selection mechanism
with $\inliersin$ as the smallest $b_i$ in the sample,
we have that
\begin{equation*}
  \frac{1}{|\inliersin|}
  \sum_{i \in \inliersin}
  (1 - a_{i,1}^2)
  \ge \frac{1}{|\smallinliers|} \sum_{i \in
    \smallinliers|} (1 - a_{i, 1}^2).
\end{equation*}
Moreover, on
the event $\event$ the rightmost sum is positive, and 
using that
$|\inliersin| \geq |\inliers| + |\selected| - m \geq (\half - \pfail)
m$, we obtain that on $\event$ we have
\begin{equation*}
  \frac{1}{m} \sum_{i \in \inliersin}
  (1- a_{i, 1}^2)
  \ge \frac{1- 2\pfail}{2 |\inliersin|}
  \sum_{i \in \inliersin} (1 - a_{i, 1}^2)
  \ge \frac{1- 2\pfail}{2 |\smallinliers|}
  \sum_{i \in \smallinliers} (1-a_{i, 1}^2)
  \ge \frac{1 - 2\pfail}{2} \delta_q.
\end{equation*}
Recalling expression~\eqref{eqn:clementines} thus gives
Lemma~\ref{lemma:z0-bound}.

\begin{proof-of-lemma}[\ref{lemma:small-inliers-good}]
  Fix any set of indices $\indset_0 \subset [m]$,
  and note that by Hoeffding's inequality for bounded random variables
  we have
  \begin{equation*}
    \P\left(\frac{1}{|\indset_0|}
    \sum_{i \in \indset_0}
    a_{i,1}^2 - \E[a_{i,1}^2 \mid a_{i,1}^2 \le w_q^2]
    \ge t
    \mid a_{i,1}^2 \le w_q^2
    ~ \mbox{for~} i \in \indset_0\right)
    \le \exp\left(-\frac{2 |\indset_0| t^2}{w_q^2}\right)
  \end{equation*}
  for $t \ge 0$.
  Recalling the definition
  $\delta_q \defeq 1 - \E[W^2 \mid W^2 \le w_q^2]$
  for $W \sim \normal(0, 1)$,
  we thus find that
  for any index set $\indset_0 \subset [m]$, we have
  \begin{align*}
    \lefteqn{\P\left(\frac{1}{|\smallinliers|}
      \sum_{i \in \smallinliers}
      (1 - a_{i,1}^2) \le \delta_q
      - t \mid \smallinliers = \indset_0\right)} \\
    & =
    \P\left(\frac{1}{|\indset_0|}
    \sum_{i \in \indset_0}
    (1 - a_{i,1}^2) \le \delta_q - t \mid
    a_{i,1}^2 \le w_q^2 ~ \mbox{for~} i \in \indset_0\right)
    \le \exp\left(-\frac{2|\indset_0| t^2}{w_q^2}\right).
  \end{align*}
  Noticing that the random vectors
  $\{a_i\}_{i \in \indset_0}$ are independent 
  of $\{a_i\}_{i \not \in \indset_0}$,
  we have for any measurable set $\mc{C} \subset \R$ that
  \begin{equation*}
    \P\left(\sum_{i \in \smallinliers} (1 - a_{i,1}^2)
    \in \mc{C} \mid \smallinliers = \indset_0\right)
    = \P\left(\sum_{i \in \indset_0}
    (1 - a_{i,1}^2) \in \mc{C}
    \mid a_{i,1}^2 \le w_q^2
    ~ \mbox{for~} i \in \indset_0\right).
  \end{equation*}
  Combining the preceding two displays
  yields Lemma~\ref{lemma:small-inliers-good}.
\end{proof-of-lemma}

\subsubsection{Proof of Lemma~\ref{lemma:z1-bound}}
\label{sec:proof-z1-bound}

By our assumption (w.l.o.g.) that $d\subopt = e_1$,
we have
\begin{equation}
  \label{eqn:z-1-to-ltwo}
  \opnorm{Z_1} = 
  \ltwo{\frac{1}{m} \sum_{i=1}^n a_{i, 1} a_{i, \setminus 1}
    \indic{i\in \inliersin}}
\end{equation}
Letting $\{a_i'\}_{i=1}^m \simiid \normal(0, I_n)$ be an independent
collection of vectors, the collections
$\{a_i\}_{i \in \inliers}$ and $\{\noise_i\}_{i\in \outliers}$
are independent, as are $a_{i,1}$ and $a_{i, \setminus 1}$
for each $i$. Thus, because $\inliersin \subset \inliers$, that for
any measurable set $\mc{C} \subset \R^{n-1}$ we have
\begin{align}
  \P \left(
  \sum_{i \in \inliersin} a_{i, 1} 
  a_{i, \setminus 1} \in \mc{C}
  \mid \{a_{i, 1}\}_{i\in \inliers},
  \{\noise_i\}_{i\in \outliers}\right)
  = \P \left( \sum_{i \in \inliersin} a_{i,1}
  a_{i, \setminus 1}' \in \mc{C}
  \mid \{a_{i, 1}\}_{i\in \inliers}, \inliersin\right). 
  \label{eqn:independent-a-copies}
\end{align}
Now, we use the standard result~\cite{Ledoux01} that if $f$ is an
$L$-Lipschitz function with respect to the $\ell_2$-norm, then for any
standard Gaussian vector $Z$ we have
$\P(f(Z) - \E[f(Z)] \ge t)
\le \exp(-\frac{t^2}{2 L^2})$ for all $t \ge 0$.
Thus, defining the random Lipschitz constant
$\what{L}^2 = \frac{1}{m} \sum_{i \in \inliers} a_{i,1}^2$ we obtain
\begin{equation*}
  \E\left[\ltwobigg{\frac{1}{m}
      \sum_{i = 1}^m a_{i,1} a_{i, \setminus 1}'
      \indic{i \in \inliersin} }^2 \mid \inliersin,
    \{a_{i,1}\}_{i \in \inliers}\right]
  = \frac{1}{m^2} \sum_{i \in \inliersin} a_{i,1}^2 (n - 1)
  \le \frac{n-1}{m} \what{L}^2,
\end{equation*}
and thus for $t \ge 0$
\begin{equation*}
  \P\left(\ltwobigg{\frac{1}{m}
    \sum_{i = 1}^m a_{i,1} a'_{i,\setminus 1} \indic{i \in \inliersin}
  } \ge
  \what{L} \sqrt{\frac{n-1}{m}} +
  t \mid \{a_{i,1}\}_{i \in \inliers}, \inliersin\right)
  \le \exp\left(-\frac{m t^2}{2\what{L}^2}\right).
\end{equation*}
Moreover, $\what{L}$ is $\{a_{i,1}\}_{i \in \inliers}$-measurable, and
$\P(\what{L} \ge 2) \le \exp(-m / 2)$, again by the Lipschitz
concentration of Gaussian random variables. We thus apply
Eq.~\eqref{eqn:independent-a-copies} and obtain
\begin{align*}
  \lefteqn{\P\left(
    \ltwobigg{\frac{1}{m} \sum_{i \in \inliersin}
      a_{i,1} a_{i, \setminus 1}} \ge
    2 \sqrt{\frac{n - 1}{m}} + t
    \right)} \\
  & \le \P \left(
  \ltwobigg{\frac{1}{m} \sum_{i \in \inliersin}
    a_{i,1} a_{i, \setminus 1}} \ge
  2 \sqrt{\frac{n - 1}{m}} + t
  \mid \what{L} \le 2 \right) \P(\what{L} \le 2)
  + \P(\what{L} \ge 2) \\
  & \le \exp\left(-\frac{mt^2}{8}\right) + \exp\left(-\frac{m}{2}\right).
\end{align*}
Recalling the equality~\eqref{eqn:independent-a-copies}
on $\opnorm{Z_1}$ shows that the previous display gives the lemma.

\subsubsection{Proof of Lemma~\ref{lemma:z2-bound}}
\label{sec:proof-z2-bound}

Our first observation is simply the definition
of the projection operator $\projperp$,
which gives
\begin{equation*}
  \opnorm{Z_{m, 2} - z_2 (I_n - d\subopt {d\subopt}^T)}  
  = \opnormbigg{\frac{1}{m}\sum_{i\in \inliersin}
    a_{i, \setminus 1} a_{i, \setminus 1}^T
    - z_2 I_{n - 1}}.
\end{equation*}
As in the proof of Lemma~\ref{lemma:z1-bound}, we let $\{a_i'\}_{i=1}^m
\simiid \normal(0, I_n)$ be an independent copy of the $a_i$.  The
independence of $\{a_i\}_{i\in \inliers}$ and $\{\noise_i\}_{i\in
  \outliers}$ imply that for any measurable set $\mc{C}$ we have
\begin{align*}
  \P \left(\sum_{i\in \inliersin} 
  (a_{i, \setminus 1} a_{i, \setminus 1}^T - z_2 I_{n - 1})
  \in \mc{C} \mid \{a_{i, 1}\}_{i\in \inliers}, 
  \{\noise_i\}_{i\in \outliers}\right)
  &= \P \left(\sum_{i \in \inliersin}
  \left(a'_{i, \setminus 1} {a'_{i, \setminus 1}}^T
  - I_{n - 1} \right) \in \mc{C}
  \mid \inliersin\right).
\end{align*}
As $\{a_{i, \setminus 1}'\}_{i=1}^m$ are i.i.d.\ Gaussian vectors,
Lemma~\ref{lemma:matrix-concentration} implies that for $t\in [0, 1]$:
\begin{equation*}
  \P \left(\opnorm{\frac{1}{m} \sum_{i=1}^m
    \left(a'_{i, \setminus 1} {a'_{i, \setminus 1}}^T - I_{n - 1}
    \right)\indic{i \in \inliersin}  } 
  \geq C \sqrt{\frac{n}{m} + t}
  \mid \inliersin\right) \leq \exp(-mt)
\end{equation*}
for some constant $C$. The claim of the lemma follows by integration
of the above inequality.

\subsubsection{Proof of Lemma~\ref{lemma:z3-bound}}
\label{sec:proof-z3-bound}

For notational convenience, let us denote the
selected outlying indices by $\outliersin = \outliers\cap
\selected$.

We begin our proof by considering Model~\ref{model:full-independence}, in
which case the proof is essentially identical to
Lemma~\ref{lemma:z2-bound} (see Sec.~\ref{sec:proof-z2-bound}).
Indeed, we have the identity
\begin{equation*}
  \opnorm{Z_3 - z_3 I_n}
  = \opnormbigg{\frac{1}{m}\sum_{i=1}^m 
    \left(a_i a_i^T - I_n\right)\indic{i\in \outliersin}
  },
\end{equation*}
and so following the completely parallel route of introducing
the independent sample $\{a_i'\}_{i = 1}^m \simiid \normal(0, I_n)$,
we have for any $t \ge 0$ that
\begin{align*}
  \P\left(\opnorm{Z_3 - z_3 I_n} \ge t\right)
  & = \E\left[\P\left(\opnorm{Z_3 - z_3 I_n} \ge t \mid
    \outliersin\right)\right] \\
  & = \E\left[\P\left(
    \opnormbigg{\frac{1}{m} \sum_{i = 1}^m
      (a_i' {a_i'}^T - I_n) \indic{i \in \outliersin}}
    \ge t \mid \outliersin
    \right)\right].
\end{align*}
Applying Lemma~\ref{lemma:matrix-concentration}
to the vectors $a_i' \indic{i \in \outliersin}$ gives the result.

In the case in which we assume Model~\ref{model:partial-independence},
we have the deterministic upper bound
\begin{equation*}
  \opnorm{Z_3}
  = \opnorm{\frac{1}{m} \sum_{i=1}^m a_i a_i^T \indic{i\in \outliersin}}
  \leq \opnorm{\frac{1}{m} \sum_{i=1}^m a_i a_i^T \indic{i\in \outliers}}.
\end{equation*}
As $a_i \indic{i \in \outliers}$ is an $O(1)$-sub-Gaussian vector,
Lemma~\ref{lemma:matrix-concentration}
implies
\begin{equation*}
  \P \left(\opnorm{\frac{1}{m} \sum_{i=1}^m 
    \left(a_{i} a_{i}^T - I_{n} \right)\indic{i \in \outliers}  } 
  \geq C\sqrt{\frac{n}{m} + t}
  \mid \outliersin \right) \leq \exp(-mt). 
\end{equation*}
Since $|\outliers|/m = \pfail$, the desired claim follows via the
triangle inequality.


\section{Technical results}

\subsection{Proof of Lemma~\ref{lemma:ez-square}}
\label{sec:proof-ez-square}

Let $p(z)$ denote the density of $z$ and
$p_c(z') = p(z') / \int_{-c}^c p(z) dz$ for $z' \in [-c, c]$.
Let $F_c(z) = \int_{-c}^{z} p_c(z') dz'$ be the CDF of $p_c$, and
define the mapping $T : [-c, c] \to [-c, c]$ by
\begin{equation*}
  T(u) = F_c^{-1}\left(\frac{u + c}{2c}\right)
  = z ~ \mbox{for~the~} z ~\mbox{s.t.}~
  F_c(z) = \frac{u + c}{2c}.
\end{equation*}
Evidently, for $Z \sim P_c$ we have
$T(Z) \sim \uniform[-c, c]$, and
by the symmetry and monotonicity properties of $p$ and $p_c$ we have
$|T(z)| \ge |z|$ for $z \in [-c, c]$. In particular,
letting $U \sim \uniform[-c ,c]$, we have
\begin{equation*}
  \var(U) = \E[U^2]
  = \E[T(Z)^2 \mid |Z| \le c]
  \ge \E[Z^2 \mid |Z| \le c].
\end{equation*}
Using that $\var(U) =
\frac{1}{2c} \int_{-c}^c u^2 du
= \frac{c^2}{3}$ gives the result.

\subsection{Proof of Lemma~\ref{lemma:eldar-mendelson}}
\label{sec:proof-eldar-mendelson}

The proof is an essentially standard concentration and covering number
argument, with a few minor wrinkles.
First, we note that
\begin{equation*}
  \var(|\<u, a_i\> \<v, a_i\>|)
  \le \E[|\<u, a_i\>\<v, a_i\>|^2]
  \stackrel{(i)}{\le} \sup_{u \in \sphere^{n-1}}
  \E[\<u, a_i\>^4]
  \le 2 e \subgauss^4,
\end{equation*}
where inequality~$(i)$ is Cauchy-Schwarz and the final inequality follows
Lemma~\ref{lemma:matrix-concentration}.  Thus, the lower tail Bernstein's
inequality (Lemma~\ref{lemma:one-sided-bernstein}) applied to the positive
random variables $|\<u, a_i\> \<v, a_i\>| \ge 0$ with variance bounded by $2
e \subgauss^4$ implies that
\begin{equation}
  \label{eqn:bernstein}
  \P\left(\frac{1}{m}
  \sum_{i = 1}^m |\<u, a_i\> \<v, a_i\>|
  - \stabfunc(u, v) \le -t\right)
  \le \exp\left(-\frac{m t^2}{4e \subgauss^4}\right).
\end{equation}
Define $Z_{u,v} \defeq \frac{1}{m} \sum_{i = 1}^m |\<u, a_i\> \<v, a_i\>|$
for shorthand.
Using that for vectors $w, x \in \R^n$ we have
$\sum_{i = 1}^m |\<a_i, w\> \<a_i, x\>|
\le \ltwo{A w} \ltwo{A x}$ by
Cauchy-Schwarz, we
see that for
any $u, v, u', v' \in \sphere^{n-1}$,
\begin{align*}
  |Z_{u,v} - Z_{u',v'}|
  & \le \frac{1}{m}
  \sum_{i = 1}^m |\<a_i, u\> \<a_i, v\>
  - \<a_i, u'\> \<a_i, v'\>| \\
  & \le \frac{1}{m}
  \sum_{i = 1}^m \left(|\<a_i, u - u'\> \<a_i, v\>|
  + |\<a_i, u\> \<a_i, v - v'\>|\right) \\
  & \le \frac{1}{m}
  \ltwo{A (u - u')} \ltwo{A v}
  + \frac{1}{m} \ltwo{A (v - v')} \ltwo{A u}
  \le \frac{\opnorm{A}^2}{m} \left(\ltwo{u - u'}
  + \ltwo{v - v'}\right).
\end{align*}
Now, let $\mc{N}$ be an $\epsilon$-cover of the sphere $\sphere^{n-1}$, which
we use to control $\inf_{u, v \in \sphere^{n-1}}
\{Z_{u, v} - \E[Z_{u, v}]\}$.
Using the previous display, we obtain by
the definition of an $\epsilon$-cover argument that
\begin{align*}
  \P\left(
  \inf_{u, v \in \sphere^{n-1}} \{Z_{u,v} - \stabfunc(u, v)\}
  \le -t \right)
  & \le
  \P\left(
  \min_{u, v \in \mc{N}}
  \{Z_{u, v} - \stabfunc(u, v)\}
  \le -t + \frac{2 \opnorm{A^TA}}{m} \epsilon \right).
\end{align*}
From Lemma~\ref{lemma:matrix-concentration}, we know that
$\opnorms{A^TA}$ is well-concentrated, and thus by
considering the event that $\opnorms{A^TA} \gtrsim m \subgauss^2$ that
for some numerical constant $C < \infty$ we have
\begin{align}
  \lefteqn{\P\left(
    \inf_{u, v \in \sphere^{n-1}} \{Z_{u,v} - \stabfunc(u, v)\}
    \le -t \right)} \nonumber \\
  & \le
  \P\left(
  \min_{u, v \in \mc{N}}
  \{Z_{u, v} - \stabfunc(u, v)\}
  \le -t + 2C\subgauss^2 \epsilon \right)
  + \P\left(\opnorm{A^TA} \ge C m \subgauss^2\right) \nonumber \\
  & \le
  \card(\mc{N})^2
  \exp\left(-\frac{c m \hinge{t - C \subgauss^2 \epsilon}^2}{\subgauss^4}
  \right)
  + \exp(-m),
  \label{eqn:covering-plus-bernstein}
\end{align}
where the first term comes from Bernstein's inequality~\eqref{eqn:bernstein}
and the second from Lemma~\ref{lemma:matrix-concentration}.

Now, let us assume that $\mc{N}$ is an $\epsilon$-cover of $\sphere^{n-1}$
with minimal cardinality, which by standard volume arguments~\cite[Lemma
  2]{Vershynin12} satisfies $N(\epsilon) \defeq \card(\mc{N}) \le (1 +
2/\epsilon)^n$ for $\epsilon > 0$.  Noting that $(1 + 2 / \epsilon)^{2n} \le
\exp(n / \epsilon)$ for $\epsilon \le \epsilon_0 \defeq .21398$, we may
replace inequality~\eqref{eqn:covering-plus-bernstein} with
\begin{equation*}
  \P\left(
  \inf_{u, v \in \sphere^{n-1}} \{Z_{u,v} - \stabfunc(u, v)\}
  \le -t \right)
  \le \exp\left(\frac{n}{\epsilon}
  - \frac{c m \hinge{t - C \subgauss^2 \epsilon}^2}{\subgauss^4}
  \right) + e^{-m}
\end{equation*}
valid for all $\epsilon \le \epsilon_0$.
Now, if we set
$\epsilon = \sqrt[3]{n/m}$ and
define $t = \subgauss^2 \wb{t} + (C + 1)\subgauss^2 \epsilon$, then
we find that if $m/n \ge \epsilon_0^{-3}$, we have
\begin{equation*}
  \P\left(
  \inf_{u, v \in \sphere^{n-1}} \{Z_{u,v} - \stabfunc(u, v)\}
  \le -\subgauss^2 \wb{t}
  - (C + 1) \subgauss^2 \sqrt[3]{\frac{n}{m}}
  \right)
  \le \exp\left(c m \wb{t}^2\right)
  + e^{-m}.
\end{equation*}
This is the desired result.

\subsection{Properties of Gaussian Random Variable}
\label{sec:properties-gaussians}

\begin{lemma}
  \label{lemma:conditional-expectation-truncated-gaussian}
  Let $W \sim \normal(0, 1)$. Then for all $c \in \R_+$
  \begin{equation*}
    \E\left[W^2 \mid W^2 \leq c \right] \le 1 - \half c \P(W^2 > c).
  \end{equation*}
\end{lemma}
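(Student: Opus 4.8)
The plan is to reduce the claim to a classical Gaussian Mills-ratio bound. Write $p \defeq \P(W^2 > c)$ and use $\E[W^2] = 1$ to rewrite the conditional expectation as
\begin{equation*}
  \E[W^2 \mid W^2 \le c]
  = \frac{\E[W^2 \indic{W^2 \le c}]}{\P(W^2 \le c)}
  = \frac{1 - \E[W^2 \indic{W^2 > c}]}{1 - p}.
\end{equation*}
Since $1 - p > 0$ for $c > 0$ (the statement being vacuous at $c = 0$), the target inequality $\E[W^2 \mid W^2 \le c] \le 1 - \half c\, p$ is, after clearing the denominator, subtracting, and cancelling, equivalent to
\begin{equation*}
  \E[(W^2 - 1)\, \indic{W^2 > c}]
  \ge \half c\, p\, (1 - p).
\end{equation*}

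First I would compute the left-hand side exactly. Writing $\phi(w) = (2\pi)^{-\half} e^{-w^2/2}$ for the standard normal density and using the identity $w\phi(w) = -\phi'(w)$, integration by parts on $\int_{\sqrt c}^\infty w^2 \phi(w)\, dw$ yields $\E[W^2 \indic{W^2 > c}] = 2\sqrt{c}\,\phi(\sqrt c) + p$, so that
\begin{equation*}
  \E[(W^2 - 1)\, \indic{W^2 > c}] = 2 \sqrt{c}\, \phi(\sqrt c).
\end{equation*}
It therefore suffices to prove $2\sqrt{c}\,\phi(\sqrt c) \ge \half c\, p\,(1 - p)$, and since $1 - p \le 1$ it is enough to show the simpler bound $2\sqrt{c}\,\phi(\sqrt c) \ge \half c\, p$.

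Finally, substituting $p = \P(W^2 > c) = 2(1 - \Phi(\sqrt c))$ and dividing through by $\sqrt c > 0$, this last inequality becomes $2\phi(\sqrt c) \ge \sqrt{c}\,(1 - \Phi(\sqrt c))$, which follows immediately from the classical tail estimate $1 - \Phi(t) \le \phi(t)/t$ (itself proved via $\int_t^\infty \phi(s)\, ds \le \int_t^\infty \tfrac{s}{t}\phi(s)\, ds = \phi(t)/t$), giving $\sqrt c\,(1 - \Phi(\sqrt c)) \le \phi(\sqrt c) \le 2\phi(\sqrt c)$. I do not expect a genuine obstacle here: the only substantive computation is the integration-by-parts identity $\E[(W^2-1)\indic{W^2>c}] = 2\sqrt c\,\phi(\sqrt c)$, and the slack incurred by dropping the factor $1 - p \le 1$ is comfortably absorbed by the factor of $2$ separating the Mills-ratio bound $\phi(\sqrt c)$ from the required $2\phi(\sqrt c)$.
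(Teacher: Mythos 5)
Your proof is correct, but it takes a genuinely different route from the paper's. Both arguments start from $\E[W^2] = 1$, but they diverge immediately: the paper rewrites the claim as $1 - \E[W^2 \mid W^2 \le c] = \P(W^2 > c)\left(\E[W^2 \mid W^2 > c] - \E[W^2 \mid W^2 \le c]\right)$ and then bounds the two conditional expectations separately, using the trivial bound $\E[W^2 \mid W^2 > c] \ge c$ and a symmetry--rearrangement argument (comparing $\P(W^2 \in [c-t,c])$ with $\P(W^2 \in [0,t])$ under the tail-integral formula) to get $\E[W^2 \mid W^2 \le c] \le c/2$, so the gap is at least $c/2$. You instead reduce to $\E[(W^2-1)\indic{W^2 > c}] \ge \half c\,p\,(1-p)$, evaluate the left side \emph{exactly} as $2\sqrt{c}\,\phi(\sqrt{c})$ via integration by parts, and finish with the Mills-ratio bound $1 - \Phi(t) \le \phi(t)/t$; all of your algebra checks out, including the equivalence after clearing the denominator $1-p$ and the factor-of-two slack after discarding $(1-p) \le 1$. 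The trade-off: the paper's argument uses no Gaussian-specific integrals at all, only that the density is symmetric and decreasing on $\R_+$ with unit second moment, so it transfers to other symmetric unimodal laws (matching the style of their Lemma~\ref{lemma:ez-square}); your computation is Gaussian-specific but yields the exact value of the truncated excess $\E[(W^2-1)\indic{W^2>c}]$, which makes the slack in the stated inequality explicit and would allow a sharper constant than $\half$ if one wanted it.
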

\begin{proof}
  By the law of total probability, we have 
  \begin{equation*}
    \E[W^2] = \E[W^2 \mid W^2 \leq c] \P(W^2 \leq c)
    + \E[W^2 \mid W^2 > c] \P(W^2 > c).
  \end{equation*}
  Using that $\E[W^2] = 1$ yields
  \begin{align*}
    1 - \E[W^2 \mid W^2 \leq c]
    = \P(W^2 > c) \left(\E[W^2 \mid W^2 > c] -\E[W^2 \mid W^2 \leq c] \right).
  \end{align*}
  For $t \in [0, c]$, we have
  $\P(W^2 \in [c-t, c]) \leq \P(W^2 \in [0, t])$,
  so that for such $t$ we have
  %
  $\P(W^2 \geq c - t \mid W^2 \leq c) \leq \P(W^2 \leq t \mid W^2 \leq c)$, 
  or
  $\P(W^2 \geq t \mid W^2 \leq c) +  \P(W^2 \geq c-t \mid W^2 \leq c) \leq 1$. 
  Performing the standard change of variables to compute $\E[W^2 \mid W^2 \le c]$,
  we thus obtain
  \begin{align*}
    \E[W^2 \mid W^2 \leq c] &= \int_0^c \P(W^2 \geq t \mid W^2 \leq c)dt  \\
    &= \int_0^c \half\left[\P(W^2 \geq t \mid W^2 \leq c) +  \P(W^2 \geq c-t \mid W^2 \leq c)\right] dt  \\
    &\leq \int_0^c \half dt = \frac{c}{2}.
  \end{align*}
  Using that $\E[W^2 \mid W^2 > c] \geq c$ gives the lemma.
\end{proof}

\begin{lemma}
  \label{lemma:funny-rank-2-gaussian}
  Let $X \in \C^{n \times n}$ be rank-2 and Hermitian and
  $z = \frac{1}{\sqrt{2}} (\normal(0, I_n) + \imagunit
  \normal(0, I_n))$ be standard complex normal. Then
  $\E[|z\cg X z|] \ge \frac{2 \sqrt{2}}{\pi} \lfro{X}$.
\end{lemma}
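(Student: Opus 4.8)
The plan is to diagonalize $X$ and reduce the inequality to a two-parameter scalar optimization. Since $X$ is Hermitian of rank $2$, write its spectral decomposition $X = \lambda_1 u_1 u_1\cg + \lambda_2 u_2 u_2\cg$ with real eigenvalues $\lambda_1,\lambda_2$ and orthonormal $u_1,u_2 \in \C^n$, so that $\lfro{X}^2 = \lambda_1^2 + \lambda_2^2$ and $z\cg X z = \lambda_1 |\<u_1, z\>|^2 + \lambda_2 |\<u_2, z\>|^2$. Both sides of the claimed bound are positively homogeneous of degree one in $X$, so it suffices to treat the normalized case $\lfro{X} = 1$, i.e.\ $\lambda_1^2 + \lambda_2^2 = 1$.

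Next I would exploit the unitary invariance of the standard complex normal. Because $z = \frac{1}{\sqrt 2}(\normal(0, I_n) + \imagunit \normal(0, I_n))$ is a proper, rotationally invariant complex Gaussian, the pair $(\<u_1, z\>, \<u_2, z\>)$ is distributed as two independent standard complex normal scalars---this is precisely where orthonormality of $u_1, u_2$ is used, via $U z \equivdist z$ for the unitary $U$ with first two rows $u_1\cg, u_2\cg$. Consequently the variables $E_i \defeq |\<u_i, z\>|^2$ are independent, each the modulus-squared of a standard complex normal, hence $E_i \sim \mathrm{Exp}(1)$ (equivalently $\tfrac12\chi^2_2$, of mean $1$). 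It is worth flagging that the exponential (i.e.\ $\chi^2_2$) nature of these factors, as opposed to the $\chi^2_1$ factors appearing in the real case, is exactly what controls the final numerical constant. After this reduction the task becomes lower bounding $g(\lambda_1,\lambda_2) \defeq \E\big[\,|\lambda_1 E_1 + \lambda_2 E_2|\,\big]$ over the unit circle $\lambda_1^2 + \lambda_2^2 = 1$.

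The remaining and main step is this scalar minimization, which I would handle by a sign analysis. If $\lambda_1,\lambda_2$ share a sign there is no cancellation and $g = |\lambda_1| + |\lambda_2| \ge \sqrt{\lambda_1^2 + \lambda_2^2} = 1$, so the bound holds with room to spare. The binding regime is the indefinite one, $\lambda_1 = a > 0 > -b = \lambda_2$ with $a^2 + b^2 = 1$; here I would use the identity $|U - V| = U + V - 2\min\{U,V\}$ for the independent scaled exponentials $U = a E_1$ and $V = b E_2$, together with the fact that the minimum of two independent exponentials is again exponential, to obtain a clean closed form for $g$ as a function of $a+b$ alone. Optimizing the resulting expression subject to $a^2 + b^2 = 1$ concentrates the extremum at the balanced, maximally indefinite configuration $a = b$, corresponding to $X = \tfrac{1}{\sqrt 2}(u_1 u_1\cg - u_2 u_2\cg)$, and the constant is then read off by evaluating $g$ there. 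I expect the only genuine obstacle to be confirming that this balanced indefinite point is the global minimizer over the circle---that is, ruling out competing interior critical points and the same-sign corners---after which the stated lower bound follows from the (routine) evaluation of $g$ at the optimum.
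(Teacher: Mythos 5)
Your distributional reduction is correct, and it is worth noting that it is \emph{not} the paper's: after diagonalizing $X$ and using unitary invariance, you identify $E_i \defeq |\langle u_i, z\rangle|^2$ as independent $\mathrm{Exp}(1)$ (i.e.\ $\frac{1}{2}\chi_2^2$) variables, whereas the paper's proof asserts $z\cg X z \equivdist \lambda_1 Z_1^2 + \lambda_2 Z_2^2$ with $Z_1, Z_2$ \emph{real} standard normals, i.e.\ $\chi_1^2$ factors, and then evaluates $\frac{1}{\sqrt{2}} \E[|Z_1^2 - Z_2^2|] = \frac{1}{\sqrt{2}} \cdot \frac{4}{\pi} = \frac{2\sqrt{2}}{\pi}$. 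For the complex $z$ of the statement, your identification is the right one, and your remark that the $\mathrm{Exp}(1)$-versus-$\chi_1^2$ distinction ``controls the final numerical constant'' is exactly the crux.

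But this is also where your proposal breaks: the final step, ``the stated lower bound follows from the (routine) evaluation of $g$ at the optimum,'' does not go through. Carrying out your own plan: for $a, b > 0$ with $a^2 + b^2 = 1$, the identity $|U - V| = U + V - 2\min\{U, V\}$ and the fact that $\min\{a E_1, b E_2\}$ is exponential with mean $\frac{ab}{a+b}$ give $\E[|a E_1 - b E_2|] = a + b - \frac{2ab}{a+b} = \frac{1}{a+b}$, which is minimized at $a = b = \frac{1}{\sqrt{2}}$ with value $\frac{1}{\sqrt{2}} \approx 0.707$, strictly less than $\frac{2\sqrt{2}}{\pi} \approx 0.900$. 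So your argument proves the inequality with constant $\frac{1}{\sqrt{2}}$, and at the same time exhibits a counterexample to the stated one: for $X = \frac{1}{\sqrt{2}}(u_1 u_1\cg - u_2 u_2\cg)$ one has $\E[|z\cg X z|] = \frac{1}{\sqrt{2}} \lfro{X} < \frac{2\sqrt{2}}{\pi} \lfro{X}$. The gap is therefore not in your probability; it is that the lemma's constant is the real-Gaussian one and is unattainable in the complex setting, so no completion of your (correct) approach can reach it---the paper's own proof reaches it only through the erroneous $\chi_1^2$ substitution. The honest conclusion of your argument is the lemma with $\frac{2\sqrt{2}}{\pi}$ replaced by the sharp constant $\frac{1}{\sqrt{2}}$; this still serves the lemma's only purpose (Example~\ref{example:rotation-invariance} needs only some positive numerical constant), but you should flag the discrepancy rather than assert that the stated constant follows.
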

\begin{proof}
  Without loss of generality, we assume that
  $\lfro{X} = 1$, so that $X = \lambda_1 u_1 u_1\cg +
  \lambda_2 u_2 u_2\cg$ for $\lambda_1^2 + \lambda_2^2 = 1$
  and $u_i$ orthonormal.
  Thus, if $Z_1, Z_2$ are standard normal, we have
  \begin{equation*}
    z\cg X z = \lambda_1 |\<z, u_1\>|^2
    + \lambda_2 |\<z, u_2\>|^2
    \stackrel{{\rm dist}}{=}
    \lambda_1 Z_1^2 + \lambda_2 Z_2^2.
  \end{equation*}
  Without loss of generality, we assume $\lambda_1 \ge 0$,
  so that by inspection we have for $\lambda = \lambda_1$ that
  $\E[|\lambda_1 Z_1^2 + \lambda_2 Z_2^2|]
  \ge \E[|\lambda Z_1^2 - \sqrt{1 - \lambda^2} Z_2^2|]$.
  Letting $f(\lambda) = \E[|\lambda Z_1^2 - \sqrt{1 - \lambda^2} Z_2^2|]$,
  we have that
  \begin{equation*}
    f'(\lambda) = \P\left(\frac{\lambda}{\sqrt{1 - \lambda^2}}
    Z_1^2 \ge Z_2^2\right)
    - \P\left(\frac{\lambda}{\sqrt{1 - \lambda^2}}
    Z_1^2 \le Z_2^2\right)
    = \begin{cases} > 0 & \mbox{if}~ \lambda > \frac{1}{\sqrt{2}} \\
      = 0 & \mbox{if~} \lambda = \frac{1}{\sqrt{2}} \\
      < 0 & \mbox{if~} \lambda < \frac{1}{\sqrt{2}}.
    \end{cases}
  \end{equation*}
  In particular, $f$ is quasi-convex and minimized at $\lambda =
  1 / \sqrt{2}$, and thus
  \begin{equation*}
    \frac{1}{\lfro{X}} \E[|z\cg X z|]
    \ge \frac{1}{\sqrt{2}} \E[|Z_1^2 - Z_2^2|]
    = \frac{2 \sqrt{2}}{\pi},
  \end{equation*}
  where the final equality is a calculation.
\end{proof}

\bibliographystyle{abbrvnat}
\bibliography{bib}

\end{document}